\documentclass[12pt]{amsart}
\usepackage{amsmath,amsthm,amssymb,amscd,cite,array}
\usepackage[dvips]{graphicx}

\oddsidemargin=-0.2in \evensidemargin=-0.2in \textwidth=6.8in
\topmargin=0.0in \textheight=9in

\newtheorem{pr}{Proposition}
\newtheorem{lem}[pr]{Lemma}
\newtheorem{thm}[pr]{Theorem}
\newtheorem{s}[pr]{Corollary}
\theoremstyle{remark}
\newtheorem{zam}{Remark}
\newtheorem*{obozn}{{\rm\bf Denotation}}
\newtheorem*{obozns}{{\rm\bf Denotations}}

\renewcommand\div{\text{ }\vdots\text{ }}
\newcommand\ndiv{\not\vdots\text{ }}
\newcommand{\myChar}{\mathrm{char\,}K}
\newcommand{\myNod}{\text{{\rm gcd}}}
\newcommand{\Hom}{\mathrm{Hom}}
\renewcommand{\Im}{\mathrm{Im}}
\newcommand{\Ker}{\mathrm{Ker}}
\newcommand{\HH}{\mathrm{HH}}
\newcommand{\N}{\mathbb{N}}
\newcommand{\Z}{\mathbb{Z}}
\newcommand{\cl}{\mathrm{cl}}

\def\a{\alpha}

\def\g{\gamma}
\def\le{\leqslant}
\def\ge{\geqslant}
\def\ra{\rightarrow}

\begin{document}

\title{Hochschild cohomology ring for self-injective algebras of tree class $E_6$. II.}
\author{Mariya Kachalova}
\email{mashakachalova@mail.ru}

\begin{abstract}
We describe the Hochschild cohomology ring for a family of self-injective
algebras of tree class $E_6$ in terms of generators and relations. 
Together with the results of the previous paper, 
this gives a complete description of the Hochschild cohomology ring for a self-injective
algebras of tree class $E_6$.
\end{abstract}
\maketitle

\tableofcontents

\section{Introduction}

Consider a self-injective basic algebra of finite representation
type over an algebraically closed field. According to Riedtmann's
classification, the stable $AR$-quiver of such an algebra can be
described with the help of an associated tree, which must be
congruent with one of the Dynkin diagrams $A_n, D_n, E_6, E_7$, or
$E_8$ (see \cite{Riedt}). The complete description of the Hochschild
cohomology ring was obtained for an algebras of the types $A_n$ and
$D_n$, see \cite{Erd,Gen&Ka,Ka,Pu} (type $A_n$) and
\cite{Volkov1,Volkov2,Volkov3,Volkov4,Volkov5,Volkov6} (type $D_n$).
For one of the two self-injective algebras of tree class $E_6$ Hochschild 
cohomology ring was obtained in \cite{Pu2}. 

In this paper we consider 
the second part of algebras of tree class $E_6$. Any algebra of the class
$E_6$ is derived equivalent to the path algebra for some quiver with
relations. Namely, let $\mathcal Q_s$ ($s\in\N$) is the following
quiver:
\newpage

\begin{figure}[h]
\includegraphics[width=12cm, scale=1]{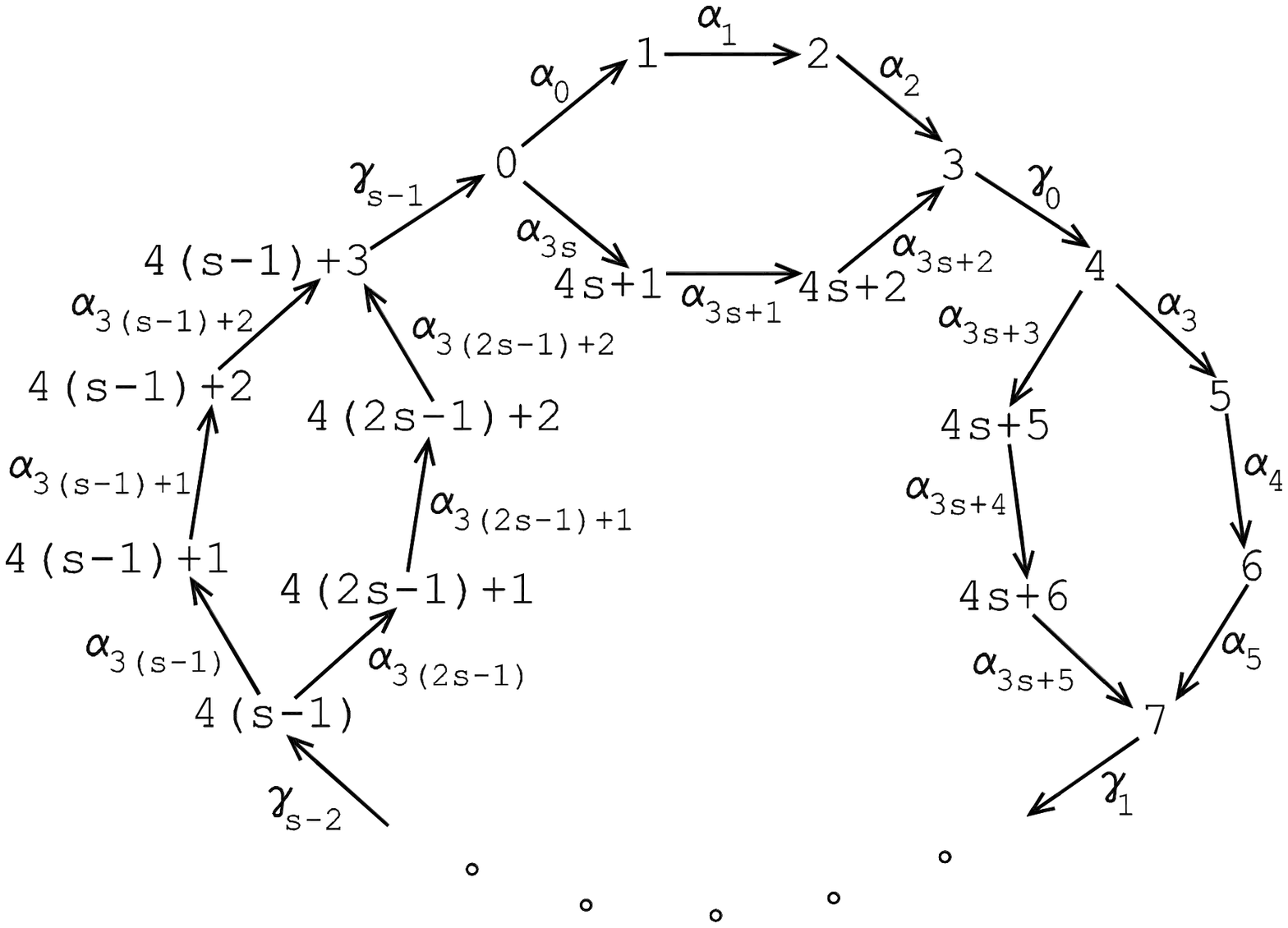}
\end{figure}

Then any algebra of the class $E_6$ is derived equivalent to one of the two following algebras:

1) $R_s=K\left[\mathcal Q_s\right]/I$, where $K$ is a field, and $I$ is the ideal in the path
algebra $K\left[\mathcal Q_s\right]$ of the quiver $\mathcal Q_s$, generated by

a) all the paths of length $5$;

b) the expressions of the form: 
$$\a_{3t+2}\a_{3t+1}\a_{3t}-\a_{3(t+s)+2}\a_{3(t+s)+1}\a_{3(t+s)},$$ 
$$\a_{3t}\g_{t-1}\a_{3(t+s)-1},\text{ }t\in [0,2s-1] \setminus\{0,s\},$$
$$\a_0\g_{s-1}\a_{3(2s-1)+2},\text{ }\a_{3s}\g_{s-1}\a_{3(s-1)+2}.$$

2) $R_s^\prime=K\left[\mathcal Q_s\right]/I^\prime$, where $K$ is a field, and $I^\prime$ is the
ideal in the path algebra $K\left[\mathcal Q_s\right]$ of the quiver $\mathcal Q$, generated by

a) all the paths of length $5$;

b) the expressions of the form: $$\a_{3t+2}\a_{3t+1}\a_{3t}-\a_{3(t+s)+2}\a_{3(t+s)+1}\a_{3(t+s)},$$ 
$$\a_{3t}\g_{t-1}\a_{3(t+s)-1}.$$

Henceforth we will often omit indexes in arrows $\a_i$ and $\g_i$ as long as subscripts are
clear from the context.

The present paper is dedicated to the study of Hochschild cohomology ring structure for algebra
$R_s^\prime$. We obtain the ring structure in terms
of generators and relations. In studies of the structure of cohomology ring we will construct the
bimodule resolution of $R_s^\prime$, which could be seen as a whole result.

\section{Statement of the main results}

In what follows, we assume $n=6$.

Let $\HH^t(R)$ is the $t$th group of the Hochschild cohomology ring of $R$ with coefficients in
$R$. Let $\ell$ be the aliquot, and $r$ be the residue of division of $t$ by $11$, $m$ be the
aliquot of division of $r$ by $2$.

Consider the case of $s>1$. To describe Hochschild cohomology ring of algebra $R_s^\prime$ we must
introduce the following conditions on an arbitrary degree $t$:

$($1$)$ $r=0$, $\ell(n+s)+m\equiv 0(2s)$, $\ell\div 2$ or $\myChar=2$;\label{degs}

$($2$)$ $r=0$, $\ell(n+s)+m\equiv s+1(2s)$, $\ell\div 2$, $\myChar=3$;

$($3$)$ $r=1$, $\ell(n+s)+m\equiv 0(2s)$, $\ell\div 2$ or $\myChar=2$;

$($4$)$ $r=1$, $\ell(n+s)+m\equiv s(2s)$, $\ell\ndiv 2$ or $\myChar=2$;

$($5$)$ $r=2$, $\ell(n+s)+m\equiv s+1(2s)$, $\ell\ndiv 2$ or $\myChar=2$;

$($6$)$ $r=3$, $\ell(n+s)+m\equiv 0(2s)$;

$($7$)$ $r=3$, $\ell(n+s)+m\equiv s(2s)$, $\myChar=2$;

$($8$)$ $r=4$, $\ell(n+s)+m\equiv s+1(2s)$, $\myChar=2$;

$($9$)$ $r=4$, $\ell(n+s)+m\equiv s(2s)$, $\ell\ndiv 2$, $\myChar=3$;

$($10$)$ $r=4$, $\ell(n+s)+m\equiv 1(2s)$;

$($11$)$ $r=5$, $\ell(n+s)+m\equiv 0(2s)$, $\ell\div 2$, $\myChar=3$;

$($12$)$ $r=5$, $\ell(n+s)+m\equiv s(2s)$, $\ell\ndiv 2$, $\myChar=3$;

$($13$)$ $r=6$, $\ell(n+s)+m\equiv 0(2s)$, $\myChar=2$;

$($14$)$ $r=6$, $\ell(n+s)+m\equiv 1(2s)$, $\ell\div 2$, $\myChar=3$;

$($15$)$ $r=6$, $\ell(n+s)+m\equiv s(2s)$;

$($16$)$ $r=7$, $\ell(n+s)+m\equiv 0(2s)$, $\myChar=2$;

$($17$)$ $r=7$, $\ell(n+s)+m\equiv s(2s)$;

$($18$)$ $r=8$, $\ell(n+s)+m\equiv 0(2s)$, $\ell\div 2$ or $\myChar=2$;

$($19$)$ $r=9$, $\ell(n+s)+m\equiv 0(2s)$, $\ell\div 2$ or $\myChar=2$;

$($20$)$ $r=9$, $\ell(n+s)+m\equiv s(2s)$, $\ell\ndiv 2$ or $\myChar=2$;

$($21$)$ $r=10$, $\ell(n+s)+m\equiv s+1(2s)$, $\ell\ndiv 2$ or $\myChar=2$;

$($22$)$ $r=10$, $\ell(n+s)+m\equiv 0(2s)$, $\ell\ndiv 2$, $\myChar=3$.

Let $$M_0=\frac{2s}{\myNod(n+s,2s)},\quad M=\begin{cases}11M_0,\quad\myChar=2\text{ or }M_0\div 4;\\22M_0\quad\text{otherwise.}\end{cases}$$

\begin{zam}
We will prove in paragraph \ref{sect_res} that the minimal period of bimodule resolution of $R_s^\prime$
is $M$.
\end{zam}

Let $\{t_{1, i},\dots,t_{\alpha_i, i} \}$ be a set of all degrees $t$, that satisfy the conditions
of item $i$ from the above list, and such that $0\le t_{j, i}<M$ $(j=1,\dots,\alpha_i)$. Consider
the set $$\mathcal
X=\bigcup_{i=1}^{22}\left\{X^{(i)}_{t_{j,i}}\right\}_{j=1}^{\alpha_i}\cup\{T\},$$ and define a
graduation of polynomial ring $K[\mathcal X]$ such that
\begin{align*}\label{degs2}
&\deg X^{(i)}_{t_{j,i}}=t_{j, i} \:\text{for all} \: i=1,\dots,22 \:\text{and}\: j=1,\dots,\alpha_i;\tag{$\circ$}\\
&\deg T=M.
\end{align*}

\begin{zam}\label{brief_notation}
Hereafter we shall use simplified denotation $X^{(i)}$ instead of $X^{(i)}_{t_{j,i}}$, since lower
indexes are clear from context.
\end{zam}

\begin{obozn}
$$\widetilde X^{(i)}=\begin{cases}X^{(i)},\quad\deg\widetilde
X^{(i)}<\deg T;\\TX^{(i)},\quad\text{otherwise.}\end{cases}$$
\end{obozn}

Define a graduate $K$-algebra $\mathcal A=K[\mathcal X]/I$, where $I$ is the ideal generated by
homogeneous elements corresponding to the following relations.
\begin{align*}
&X^{(3)}X^{(2)}=X^{(3)}X^{(3)}=X^{(3)}X^{(5)}=X^{(3)}X^{(7)}=X^{(3)}X^{(8)}=0;\\
&X^{(3)}X^{(9)}=X^{(3)}X^{(10)}=X^{(3)}X^{(11)}=X^{(3)}X^{(12)}=X^{(3)}X^{(13)}=0;\\
&X^{(3)}X^{(14)}=X^{(3)}X^{(16)}=X^{(3)}X^{(17)}=X^{(3)}X^{(19)}=X^{(3)}X^{(21)}=X^{(3)}X^{(21)}=0;\\
&X^{(3)}X^{(1)}=\widetilde X^{(3)},\quad X^{(3)}X^{(4)}=\widetilde X^{(5)},\quad X^{(3)}X^{(6)}=\widetilde X^{(10)};\\
&X^{(3)}X^{(15)}=\widetilde X^{(17)},\quad X^{(3)}X^{(18)}=\widetilde X^{(19)},\quad X^{(3)}X^{(20)}=\widetilde X^{(21)};\\
\end{align*}
\begin{align*}
X^{(4)}X^{(6)}&=\begin{cases}\widetilde X^{(8)},\quad\myChar
=2,\\0,\quad\text{otherwise};\end{cases}&\text{(r1)}\\
X^{(6)}X^{(6)}&=\begin{cases}-s\widetilde X^{(14)},\quad\myChar
=3,\\0,\quad\text{otherwise};\end{cases}&\text{(r2)}\\
X^{(4)}X^{(15)}&=\begin{cases}\widetilde X^{(16)},\quad\myChar
=2,\\0,\quad\text{otherwise};\end{cases}&\text{(r3)}\\
X^{(6)}X^{(18)}&=\begin{cases}-s\widetilde X^{(2)},\quad\myChar
=3,\\0,\quad\text{otherwise};\end{cases}&\text{(r4)}\\
X^{(15)}X^{(20)}&=\begin{cases}\widetilde X^{(8)},\quad\myChar
=2,\\0,\quad\text{otherwise};\end{cases}&\text{(r5)}\\
X^{(18)}X^{(18)}&=\begin{cases}s\widetilde X^{(12)},\quad\myChar
=3,\\0,\quad\text{otherwise};\end{cases}&\text{(r6)}\\
X^{(18)}X^{(20)}&=\begin{cases}s\widetilde X^{(14)},\quad\myChar
=3,\\0,\quad\text{otherwise}.\end{cases}&\text{(r7)}
\end{align*}

Describe the rest relations as a tables (numbers (r1)--(r7) in tables cells are the number of
relation that defines a multiplication of the following elements).

\setlength{\extrarowheight}{1mm}
\begin{tabular}{c|c|c|c|c|c|c|c|c}
&$X^{(1)}$&$X^{(2)}$&$X^{(4)}$&$X^{(6)}$&$X^{(7)}$&$X^{(8)}$&$X^{(9)}$&$X^{(11)}$\\
\hline
$X^{(1)}$&$X^{(1)}$&$X^{(2)}$&$X^{(4)}$&$X^{(6)}$&$X^{(7)}$&$X^{(8)}$&$X^{(9)}$&$X^{(11)}$ \\
\hline
$X^{(2)}$& &0&0&0&0&0&$-X^{(10)}$&0 \\
\hline
$X^{(4)}$& & & 0&(1)&$-X^{(10)}$&0&$X^{(11)}$&0 \\
\hline
$X^{(6)}$& & & &(2)&0&0&$sX^{(17)}$&0\\
\hline
$X^{(7)}$& & & & &0&0&$0$&0 \\
\hline
$X^{(8)}$& & & & & &0&0&0\\
\hline
$X^{(9)}$& & & & & & &0&0\\
\hline
$X^{(11)}$& & & & & &  & &0 \\
\end{tabular}

$\quad$

$\quad$

\begin{tabular}{c|c|c|c|c|c|c|c|c}
&$X^{(12)}$&$X^{(13)}$&$X^{(14)}$&$X^{(15)}$&$X^{(16)}$&$X^{(18)}$&$X^{(20)}$&$X^{(22)}$\\
\hline
$X^{(1)}$&$X^{(12)}$&$X^{(13)}$&$X^{(14)}$&$X^{(15)}$&$X^{(16)}$&$X^{(18)}$&$X^{(20)}$&$X^{(22)}$ \\
\hline
$X^{(2)}$&0&0&0&$X^{(14)}$&0&0&0&$-X^{(21)}$ \\
\hline
$X^{(4)}$&$X^{(14)}$&$X^{(17)}$&0&(3)&0&$X^{(20)}$&0&0 \\
\hline
$X^{(6)}$&0&$X^{(19)}$&0&$-X^{(20)}$&0&(4)&0&$sX^{(5)}$\\
\hline
$X^{(7)}$&0&0&0&$X^{(19)}$&$X^{(21)}$&0&0&0 \\
\hline
$X^{(8)}$&0&$X^{(21)}$&0&0&0&0&0&0\\
\hline
$X^{(9)}$&$X^{(19)}$&0&$X^{(21)}$&$-X^{(22)}$&0&$sX^{(3)}$&$sX^{(5)}$&0\\
\hline
$X^{(11)}$&$X^{(21)}$&0&0&0&0&$sX^{(5)}$&0&0 \\
\end{tabular}

$\quad$

$\quad$

\begin{tabular}{c|c|c|c|c|c|c|c|c}
&$X^{(12)}$&$X^{(13)}$&$X^{(14)}$&$X^{(15)}$&$X^{(16)}$&$X^{(18)}$&$X^{(20)}$&$X^{(22)}$\\
\hline
$X^{(12)}$&0&0&0&$-X^{(2)}$&0&0&0&$-X^{(10)}$ \\
\hline
$X^{(13)}$& &0&0&$X^{(3)}$&$X^{(5)}$&$X^{(7)}$&$X^{(10)}$&0 \\
\hline
$X^{(14)}$& & &0&0&0&0&0&0\\
\hline
$X^{(15)}$& & & &$-X^{(4)}$&0&$X^{(6)}$&(5)&$X^{(11)}$ \\
\hline
$X^{(16)}$& & & & &0&$X^{(8)}$&0&0 \\
\hline
$X^{(18)}$& & & & & &(6)&(7)&$-sX^{(17)}$\\
\hline
$X^{(20)}$& & & & & & &0&0\\
\hline
$X^{(22)}$& & & & & &  & &0 \\
\end{tabular}

\begin{thm}\label{main_thm}
Let $s>1$, $R=R_s^\prime$ is algebra of the type $E_6$. Then the Hochschild cohomology ring $\HH^*(R)$ is
isomorphic to $\mathcal A$ as a graded $K$-algebra.
\end{thm}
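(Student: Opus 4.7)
The strategy is the standard one for computing Hochschild cohomology rings of self-injective algebras of finite representation type. First I would construct an explicit minimal projective bimodule resolution $P_\bullet\to R$ of $R=R_s^\prime$ over the enveloping algebra $R^e=R\otimes_K R^{op}$; this is the content of the forthcoming Section \ref{sect_res}, announced in the remark preceding the theorem. The combinatorics of the quiver $\mathcal Q_s$, together with the length-$5$ zero relations and the commutativity relations of item (b), force the resolution to be eventually periodic with period exactly $M$, and the indecomposable summands of $P_t$ are in bijection with the conditions (1)--(22) satisfied by $t$. This is what produces the index set for the generators $X^{(i)}_{t_{j,i}}$ and fixes $T$ as the periodicity generator in degree $M$ so that $P_{t+M}\cong P_t$.

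Next I would apply $\Hom_{R^e}(-,R)$ to $P_\bullet$ and read off $\HH^t(R)=\Ker/\Im$ degree by degree. For each item $i$ and each admissible $t$, I would exhibit an explicit cocycle representative $X^{(i)}_{t_{j,i}}\in\Hom_{R^e}(P_{t_{j,i}},R)$ and verify that these cocycles, together with multiplication by powers of $T$, form a $K$-basis of $\HH^*(R)$. This simultaneously gives the Hilbert series and defines a graded $K$-algebra map $\varphi\colon\mathcal A\to\HH^*(R)$ sending each indeterminate to its chosen cocycle.

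To check that $\varphi$ is a ring homomorphism I would realize the cup product at the chain level via the Yoneda composition: each $X^{(i)}$ must be lifted to a chain map $P_{\bullet+\deg X^{(i)}}\to P_\bullet$, and the composites of such lifts then have to be re-expanded in the chosen basis. Each entry of the two multiplication tables and each of the special relations (r1)--(r7) is then read off from this re-expansion. The main obstacle will be exactly this step: the lifts have to be pushed through many homological degrees while tracking the length-$5$ zero relations and the commutativity relations, and the coefficient $s$ or $-s$ that appears in (r1)--(r7) reflects how many of the parallel commutativity relations contribute to a given lift; this is why those products survive only when $\myChar=2$ (signs) or $\myChar=3$ (the factor $s$ is read modulo $3$ together with the extra condition $\ell\div 2$ in the relevant items), and explains the case distinctions.

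Finally, to upgrade surjectivity of $\varphi$ to an isomorphism, I would compare dimensions in each degree. Because the listed relations were read off precisely from the vanishing or collapse of each pair-wise Yoneda product, every monomial in the generators of $\mathcal A$ reduces to a $K$-linear combination of the distinguished basis elements $T^a X^{(i)}$; a counting argument using the formula for $M$ and the cardinalities $\alpha_i$ then matches $\dim_K\mathcal A_t=\dim_K\HH^t(R)$ for every $t\ge 0$, so $\varphi$ is bijective in every graded component and the theorem follows.
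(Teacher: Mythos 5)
Your proposal follows essentially the same route as the paper: an explicit minimal projective bimodule resolution that is periodic of period $M$ (via the twist $\Omega^{11}({}_\Lambda R)\simeq {}_1R_{\sigma}$), explicit cocycle representatives for the generators, chain-level lifts (the $\Omega$-translates) composed via $\cl f_2\cdot \cl f_1=\cl(\Omega^0(f_2)\Omega^{t_2}(f_1))$ to read off the relations, and a dimension comparison to conclude the map from $\mathcal A$ is an isomorphism. The only slip is that the conditions (1)--(22) index the degrees $t$ in which $\HH^t(R)$ is nonzero (hence the generators), not the indecomposable summands of the resolution terms, which are instead governed by Happel's lemma; this does not affect the argument.
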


Consider the case of $s=1$.

Let us introduce the set $$\mathcal X^\prime=\begin{cases}\mathcal X\cup \left\{X^{(23)}_0,
X^{(24)}_0\right\},\quad\myChar\ne 3;\\
\mathcal X\cup \left\{X^{(24)}_0\right\}, \quad
\myChar=3;\end{cases}$$ and define a graduation of polynomial ring $K[\mathcal X^\prime]$ such that
\begin{align*}
&\deg X^{(i)}_{t_{j,i}}=t_{j, i} \:\text{for all} \: i=1,\dots,22 \:\text{and}\: j=1,\dots,\alpha_i;\\
&\deg T=M \text{ (similar to (\ref{degs2}))};\\&\deg X^{(23)}_0=\deg X^{(24)}_0=0.
\end{align*}

Define a graduate $K$-algebra $\mathcal A^\prime=K[\mathcal X^\prime]/I^\prime$, where $I^\prime$
is the ideal generated by homogeneous elements corresponding to the relations described in the case
of $s>1$, and by the following relations:
\begin{align*}
X^{(1)}X^{(23)}=&\begin{cases}\widetilde X^{(23)},\quad
t_1=0;\\0,\quad\text{otherwise};\end{cases}\\
X^{(1)}X^{(24)}=&\begin{cases}\widetilde X^{(24)},\quad
t_1=0;\\\widetilde X^{(2)},\quad t_1>0\text{ and
}\myChar=3;\\0,\quad\text{otherwise};\end{cases}\\
X^{(9)}X^{(24)}=&-\widetilde X^{(10)};\\
X^{(15)}X^{(24)}=&\begin{cases}\widetilde X^{(14)},\quad
\myChar=3;\\0,\quad\text{otherwise};\end{cases}\\
X^{(22)}X^{(24)}=&-\widetilde X^{(21)};\\
X^{(j)}X^{(i)}=&0,\quad j\in[2, 24]\setminus\{9,15,22\},\quad
i\in\{23, 24\},
\end{align*}
where $t_1$ denotes a degree of the element $X^{(1)}$.

\begin{thm}\label{main_thm2}
Let $s=1$, $R=R_1^\prime$ is algebra of the type $E_6$. Then the Hochschild cohomology ring $\HH^*(R)$ is
isomorphic to $\mathcal A^\prime$ as a graded $K$-algebra.
\end{thm}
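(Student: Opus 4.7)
The plan is to parallel the proof of Theorem~\ref{main_thm} and track only the deviations forced by $s=1$. First, we specialize the bimodule resolution of $R_s^\prime$ constructed in paragraph~\ref{sect_res} to $s=1$. A check is required that the resolution remains minimal: when $s=1$, several of the relators $\a_{3t}\g_{t-1}\a_{3(t+s)-1}$ of $R_1^\prime$ either coincide or degenerate, so the differentials in the low-degree portion of the resolution have to be re-verified, though the eventual periodicity of period $M$ carries over unchanged since $M$ depends on $s$ only through $M_0$, which remains well-defined at $s=1$.

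Second, using this resolution we compute $\HH^t(R_1^\prime)$ for each $t\ge 0$. The 22 arithmetic conditions determine the ``generic'' generators $X^{(i)}_{t_{j,i}}$ just as in Theorem~\ref{main_thm}, and the period-$M$ element $T$ is obtained from the periodicity isomorphism. The new feature is in degree~$0$: the center $Z(R_1^\prime)=\HH^0(R_1^\prime)$ is strictly larger than for $s>1$, because the collapse of indices in $\mathcal Q_1$ produces additional central combinations of paths. A direct inspection of $Z(R_1^\prime)$ exhibits two extra central elements in characteristic $\ne 3$ and one extra central element in characteristic $3$, to be taken as representatives of $X^{(23)}_0$ and $X^{(24)}_0$ (respectively, $X^{(24)}_0$ alone).

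Third, we verify the multiplicative structure. All relations from the $s>1$ case continue to hold, since they are cup-product identities computed on the bimodule resolution and their derivation is insensitive to the specialization $s=1$. For the new relations involving $X^{(23)}$ and $X^{(24)}$, we use that both classes are represented by elements of $R_1^\prime$ itself, so the Yoneda product with any $X^{(i)}$ reduces to multiplying a chosen cocycle representative of $X^{(i)}$ by the central element on either side. The listed nonzero products, e.g.\ $X^{(9)}X^{(24)}=-\widetilde X^{(10)}$ and $X^{(15)}X^{(24)}=\widetilde X^{(14)}$ in characteristic $3$, arise from a direct path computation; all other products vanish because the representatives of $X^{(23)}$ and $X^{(24)}$ annihilate the source or target idempotents occurring in the cocycle representing $X^{(i)}$. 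Finally, the map $\varphi\colon\mathcal A^\prime\to\HH^*(R_1^\prime)$ sending each formal generator to its prescribed cohomology class is well-defined by the verified relations, surjective by construction, and injective by a degree-by-degree dimension comparison using the Hilbert series of $\mathcal A^\prime$ against the Euler--Poincar\'e dimensions read off the resolution.

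The main obstacle is the bimodule resolution for $s=1$ together with the precise identification of $\HH^0(R_1^\prime)$. Most of the $22$ arithmetic conditions become degenerate (since $\myNod(n+s,2s)$ and the residues modulo $2s=2$ simplify drastically), and the exceptional syzygies in low degrees are what force the additional degree-$0$ generators; once these are correctly pinned down and the characteristic-$3$ coincidence is understood, the computation of cup products and the final algebra isomorphism are formal analogs of those in Theorem~\ref{main_thm}.
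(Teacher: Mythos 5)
Your proposal follows the same route as the paper: specialize the bimodule resolution of paragraph~\ref{sect_res} to $s=1$, read off the additive structure (in particular $\dim_K\HH^0(R)=3$, which accounts for $X^{(23)}_0$ and $X^{(24)}_0$ in characteristic $\ne 3$, and for $X^{(24)}_0$ together with the degree-zero instance of $X^{(2)}$ in characteristic $3$), exhibit cocycle representatives, and compute products via the translate formula \eqref{mult_formula}; treating products with the degree-zero generators through the $\HH^0(R)=Z(R)$-module structure is a harmless shortcut for what the paper does by listing the $\Omega$-translates of $Y^{(23)}$ and $Y^{(24)}$ explicitly. The one caveat is that the $s>1$ relations do not carry over quite as ``insensitively'' as you claim --- the paper's translate computations branch repeatedly on $s=1$ versus $s>1$ --- but this affects only the bookkeeping, not the strategy.
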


\begin{zam}
From the descriptions of rings $\HH^*(R)$ given in theorems
\ref{main_thm} and \ref{main_thm2} it implies, in particular, that
they are commutative.
\end{zam}

\section{Bimodule resolution}\label{sect_res}

We will construct the minimal projective bimodule resolution of the $R$ in the following form: $$
\dots\longrightarrow Q_3\stackrel{d_2}\longrightarrow Q_2\stackrel{d_1}\longrightarrow
Q_1\stackrel{d_0}\longrightarrow Q_0\stackrel\varepsilon\longrightarrow R\longrightarrow 0
$$

Let $\Lambda$ be an enveloping algebra of algebra $R$. Then $R$--$R$-bimodules can be considered as
left $\Lambda$-modules.

\begin{obozns}$\quad$

(1) Let $e_i,\text{ }i\in \Z_{(n+2)s}=\{0, 1,\dots, (n+2)s-1\},$ be the idempotents of the algebra
$K\left[\mathcal Q_s\right]$, that correspond to the vertices of the quiver $\mathcal Q_s$.

(2) Denote by $P_{i,j}=R(e_i\otimes e_j)R=\Lambda(e_i\otimes e_j)$, $i,j\in \Z_{(n+2)s}$. Note that the
modules $P_{i,j}$, forms the full set of the (pairwise non-isomorphic
 by) indecomposable projective $\Lambda$-modules.

(3) For $a\in\Z$, $t\in\N$ we denote the smallest nonnegative deduction of $a$ modulo $t$ with
$(a)_t$ (in particular, $0\le(a)_t\le t-1$).

\end{obozns}

Let $R=R_s^\prime$. We introduce an automorphism $\sigma\text{: }R\rightarrow R$, which is mapping as
follows:
$$\sigma(e_i)=e_{4(n+s)+i},$$
 $$\sigma(\g_i)=\begin{cases}\g_{i+n},\quad (i)_s=s-1;\\-\g_{i+n},\quad (i)_s<s-1,\end{cases}\quad\sigma(\a_i)=\begin{cases}
 -\a_{3(n+s)+i},\quad (i)_3=0,\text{ }(i)_{6s}<3s;\\
 \a_{3(n+s)+i},\quad (i)_3=0,\text{ }(i)_{6s}\ge 3s;\\
 -\a_{3(n+s)+i},\quad (i)_3=1;\\
 -\a_{3(n+s)+i},\quad (i)_3=2,\text{ }(i)_{6s}\ge 3s;\\
 \a_{3(n+s)+i},\quad (i)_3=2,\text{ }(i)_{6s}<3s.\\
 \end{cases}$$

Define the helper functions $f\text{: }\Z\times\Z\rightarrow\Z$ and $h\text{:
}\Z\times\Z\rightarrow\Z$, which act in the following way:
$$f(x,y)=\begin{cases}1,\quad x=y;\\0,\quad x\ne y,\end{cases}\quad
h(x,y)=\begin{cases}
1,\quad x\div 2,\text{ }x<y;\\0,\quad x\ndiv 2,\text{ }x<y;\\
1,\quad x\ndiv 2,\text{ }x\ge y;\\0,\quad x\div 2,\text{ }x\ge y.
\end{cases}$$

Introduce $Q_r\text{ }(r\le 10)$. Let $m$ be the aliquot of division of $r$ by $2$ for considered
degree $r$. We have
\begin{align*}
Q_{2m}&=\bigoplus_{r=0}^{s-1} Q_{2m,r}^\prime,\quad 0\le m\le n-1,\\
Q_{2m+1}&=\bigoplus_{r=0}^{s-1} Q_{2m+1,r}^\prime,\quad 0\le m\le
n-2,
\end{align*}
where

\begin{multline*}
Q_{2m,r}^\prime=\left(\bigoplus_{i=0}^{f(m,2)}P_{4(r+m)-1+h(m,2)+i,4r}\right)\\
\oplus\bigoplus_{i=0}^{f(m,3)}\bigoplus_{j=0}^1P_{4(r+m+js+f(m,2)s+f(m,5)s)+2-h(m,3)+i(4s+1),4(r+js)+1}\\
\oplus\bigoplus_{i=0}^{f(m,2)}\bigoplus_{j=0}^1P_{(r+m+js+f(m,1)s+f(m,4)s+f(m,5)s)+1+h(m,2)+i(4s+1),4(r+js)+2}\\
\oplus\left(\bigoplus_{i=0}^{f(m,3)}P_{4(r+m+1)-h(m,3)+i,4r+3}\right);
\end{multline*}

\begin{multline*}
Q_{2m+1,r}^\prime=\left(\bigoplus_{i=0}^{1-f(m,4)}P_{4(r+m)+1+h(m,0)+2f(m,4)+4si,4r}\right)\\
\oplus\bigoplus_{j=0}^1P_{4(r+m+1+js)-h(m,0)-2f(m,0),4(r+js)+1}\\
\oplus\bigoplus_{j=0}^1P_{4(r+m+1+js+f(m,4)s)-h(m,5)+2f(m,4),4(r+js)+2}\\
\oplus\left(\bigoplus_{i=0}^{1-f(m,0)}P_{4(r+m+1)+1+h(m,5)-2f(m,0)+4si,4r+3}\right).
\end{multline*}

Now we shall describe differentials $d_r$ for $r\le 10$. Since $Q_i$ are direct sums, their
elements can be concerned as column vectors, hence differentials can be described as matrixes
(which are being multiplied by column vectors from the right). Now let us describe the matrixes of
differentials componentwisely.

\begin{zam}
\textup{Numeration of lines and columns always starts with zero.}
\end{zam}

\begin{obozns}$\quad$

(1) Denote by $w_{i\ra j}$ the way that starts in $i$th vertex and ends in $j$th.

(2) Fot $j$th column of differential matrix let $j_2$ be the aliquot and $i_2$ be the residue of division of $j$ by $s$.

\end{obozns}

Define the helper functions $f_0\text{:
}\Z\times\Z\rightarrow\Z$, $f_1\text{:
}\Z\times\Z\rightarrow\Z$ and $f_2\text{: }\Z\times\Z\rightarrow\Z$,
which act in the following way:
$$
f_0(x,y)=\begin{cases}1,\quad x<y;\\0\quad x\ge y,\end{cases}\quad f_1(x,y)=\begin{cases}1,\quad
x<y;\\-1\quad x\ge y,\end{cases}\quad f_2(x,y)=\begin{cases}1,\quad
x=y;\\-1\quad x\ne y.\end{cases}
$$
\centerline{\bf Description of the $d_0$}
\centerline{$d_0:Q_1\rightarrow Q_0\text{ -- is an } (7s\times 6s)\text{ matrix}.$}
If $0\le j<2s$, then $$(d_0)_{ij}=\begin{cases}
w_{4(j+m)\rightarrow 4(j+m)+1}\otimes e_{4j},\quad i=(j)_s;\\
-e_{4(j+m)+1}\otimes w_{4j\rightarrow 4j+1},\quad i=j+s;\\
0\quad\text{otherwise.}\end{cases}$$ If $2s\le j<4s$, then
$$(d_0)_{ij}=\begin{cases}
w_{4(j+m)+1\rightarrow 4(j+m)+2}\otimes e_{4j+1},\quad i=j-s;\\
-e_{4(j+m)+2}\otimes w_{4j+1\rightarrow 4j+2},\quad i=j+s;\\
0\quad\text{otherwise.}\end{cases}$$ If $4s\le j<6s$, then
$$(d_0)_{ij}=\begin{cases}
w_{4(j+m)+2\rightarrow 4(j+m)+3}\otimes e_{4j+2},\quad i=j-s;\\
-e_{4(j+m)+3}\otimes w_{4j+2\rightarrow 4j+3},\quad i=(j)_s+5s;\\
0\quad\text{otherwise.}\end{cases}$$ If $6s\le j<7s$, then
$$(d_0)_{ij}=\begin{cases}
-e_{4(j+m+1)}\otimes w_{4j+3\rightarrow 4(j+1)},\quad i=(j+1)_s;\\
w_{4(j+m)+3\rightarrow 4(j+m+1)}\otimes e_{4j+3},\quad i=j-s;\\
0\quad\text{otherwise.}\end{cases}$$

\centerline{\bf Description of the $d_1$}
\centerline{$d_1:Q_2\rightarrow Q_1\text{ -- is an } (6s\times 7s)\text{ matrix}.$}
If $0\le j<s$, then $$(d_1)_{ij}=\begin{cases}
w_{4(j+m)+1+j_1\rightarrow 4(j+m)+3}\otimes w_{4j\rightarrow 4j+j_1},\quad i=j+2j_1s,\text{ }0\le j_1\le 2;\\
-w_{4(j+m+s)+1+j_1\rightarrow 4(j+m)+3}\otimes w_{4j\rightarrow 4(j+s)+j_1},\quad i=j+(2j_1+1)s,\text{ }0\le j_1\le 2;\\
0\quad\text{otherwise.}\end{cases}$$ If $s\le j<3s$, then
$$(d_1)_{ij}=\begin{cases}
w_{4(j+m+s+1)+1\rightarrow 4(j+m+s+1)+2}\otimes w_{4(j+s)+1\rightarrow 4(j+1)},\quad i=j-s+1,\text{ }j<2s;\\
w_{4(j+m+s+1)+1\rightarrow 4(j+m+s+1)+2}\otimes w_{4(j+s)+1\rightarrow 4(j+1)},\quad i=(j+s+1)_{2s},\text{ }j\ge 2s;\\
w_{4(j+m+s)+2\rightarrow 4(j+m+s+1)+2}\otimes e_{4(j+s)+1},\quad i=j+s;\\
e_{4(j+m+s+1)+2}\otimes w_{4(j+s)+1\rightarrow 4(j+s+1)+1},\quad i=j+s+1,\text{ }j<2s;\\
e_{4(j+m+s+1)+2}\otimes w_{4(j+s)+1\rightarrow 4(j+s+1)+1},\quad i=(j+s+1)_{2s}+2s,\text{ }j\ge 2s;\\
w_{4(j+m)+3\rightarrow 4(j+m+s+1)+2}\otimes w_{4(j+s)+1\rightarrow 4(j+s)+2},\quad i=j+3s;\\
w_{4(j+m+1)\rightarrow 4(j+m+s+1)+2}\otimes w_{4(j+s)+1\rightarrow 4j+3},\quad i=(j)_s+6s;\\
0\quad\text{otherwise.}\end{cases}$$ If $3s\le j<5s$, then
$$(d_1)_{ij}=\begin{cases}
e_{4(j+m+1)+1}\otimes w_{4(j+s)+2\rightarrow 4(j+1)},\quad i=(j+1)_{2s},\text{ }j<4s;\\
e_{4(j+m+1)+1}\otimes w_{4(j+s)+2\rightarrow 4(j+1)},\quad i=j-4s+1,\text{ }j\ge 4s;\\
w_{4(j+m)+3\rightarrow 4(j+m+1)+1}\otimes e_{4(j+s)+2},\quad i=j+s;\\
w_{4(j+m+1)\rightarrow 4(j+m+1)+1}\otimes w_{4(j+s)+2\rightarrow 4j+3},\quad i=(j)_s+6s;\\
0\quad\text{otherwise.}\end{cases}$$ If $5s\le j<6s$, then
$$(d_1)_{ij}=\begin{cases}
w_{4(j+m+(1-f(i_2, s-1))s+1)+1\rightarrow 4(j+m+2)}\otimes w_{4j+3\rightarrow 4(j+1)},\quad i=(j+1)_s;\\
w_{4(j+m+(1-f(i_2, s-1))s+1)+2\rightarrow 4(j+m+2)}\otimes w_{4j+3\rightarrow 4(j+(1-f(i_2, s-1))s+1)+1},\quad i=(j+1)_s+2s;\\
w_{4(j+m+1)+3\rightarrow 4(j+m+2)}\otimes w_{4j+3\rightarrow 4(j+(1-f(i_2, s-1))s+1)+2},\quad i=(j+1)_s+4s;\\
w_{4(j+m+1)\rightarrow 4(j+m+2)}\otimes e_{4j+3},\quad i=j+s;\\
e_{4(j+m+2)}\otimes w_{4j+3\rightarrow 4(j+1)+3},\quad i=(j+1)_s+6s;\\
0\quad\text{otherwise.}\end{cases}$$

\centerline{\bf Description of the $d_2$}
\centerline{$d_2:Q_3\rightarrow Q_2\text{ -- is an } (8s\times 6s)\text{ matrix}.$}
If $0\le j<2s$, then $$(d_2)_{ij}=\begin{cases}
w_{4(j+m-1)+3\rightarrow 4(j+m)+2}\otimes e_{4j},\quad i=(j)_s;\\
-f_1(j, s)e_{4(j+m)+2}\otimes w_{4j\rightarrow 4j+1},\quad i=j+s;\\
f_1(j, s)w_{4(j+m)+1\rightarrow 4(j+m)+2}\otimes w_{4j\rightarrow 4(j+s)+2},\quad i=(j+s)_{2s}+3s;\\
0\quad\text{otherwise.}\end{cases}$$ If $2s\le j<4s$, then
$$(d_2)_{ij}=\begin{cases}
-f_1(j,  3s)f_1(i_2, s-1)e_{4(j+m)+3}\otimes w_{4j+1\rightarrow 4(j+1)},\quad i=(j+1)_s;\\
w_{4(j+m)+2\rightarrow 4(j+m)+3}\otimes e_{4j+1},\quad i=j-s;\\
-w_{4(j+m+s)+1\rightarrow 4(j+m)+3}\otimes w_{4j+1\rightarrow 4j+2},\quad i=j+s;\\
0\quad\text{otherwise.}\end{cases}$$ If $4s\le j<6s$, then
$$(d_2)_{ij}=\begin{cases}
w_{4(j+m)+3\rightarrow 4(j+m+1)}\otimes w_{4j+2\rightarrow 4(j+1)},\quad i=(j+1)_s,\text{ }j<5s-1\text{ or }j=6s-1;\\
w_{4(j+m+s)+1\rightarrow 4(j+m+1)}\otimes e_{4j+2},\quad i=j-s;\\
-e_{4(j+m+1)}\otimes w_{4j+2\rightarrow 4j+3},\quad i=(j)_s+5s;\\
0\quad\text{otherwise.}\end{cases}$$ If $6s\le j<8s$, then
$$(d_2)_{ij}=\begin{cases}
-w_{4(j+m)+3\rightarrow 4(j+m+1)+1}\otimes w_{4j+3\rightarrow 4(j+1)},\quad i=(j+1)_s,\text{ }j<7s-1\text{ or }j=8s-1;\\
-e_{4(j+m+1)+1}\otimes w_{4j+3\rightarrow 4(j+s+1)+2},\quad i=(j+s+1)_{2s}+3s;\\
w_{4(j+m+1)\rightarrow 4(j+m+1)+1}\otimes e_{4j+3},\quad i=(j)_s+5s;\\
0\quad\text{otherwise.}\end{cases}$$

\newpage\centerline{\bf Description of the $d_3$}
\centerline{$d_3:Q_4\rightarrow Q_3\text{ -- is an } (9s\times 8s)\text{ matrix}.$}
If $0\le j<2s$, then $$(d_3)_{ij}=\begin{cases}
w_{4(j+m+j_2s)+2\rightarrow 4(j+m)+3+j_2}\otimes e_{4j},\quad i=(j)_s;\\
-w_{4(j+m+s)+2+2j_2\rightarrow 4(j+m)+3+j_2}\otimes w_{4j\rightarrow 4j+2j_2},\quad i=j+s+3j_2s;\\
e_{4(j+m)+3+j_2}\otimes w_{4j\rightarrow 4(j+j_2s)+1+j_2},\quad i=j+(2+j_2)s;\\
w_{4(j+m)+3\rightarrow 4(j+m)+3+j_2}\otimes w_{4j\rightarrow 4(j+s)+1},\quad i=(j+s)_{2s}+2s;\\
0\quad\text{otherwise.}\end{cases}$$ If $2s\le j<4s$, then
$$(d_3)_{ij}=\begin{cases}
w_{4(j+m)+3\rightarrow 4(j+m+s+1)+1}\otimes e_{4j+1},\quad i=j;\\
w_{4(j+m+1)\rightarrow 4(j+m+s+1)+1}\otimes w_{4j+1\rightarrow 4j+2},\quad i=j+2s;\\
e_{4(j+m+s+1)+1}\otimes w_{4j+1\rightarrow 4j+3},\quad i=(j+s)_{2s}+6s;\\
0\quad\text{otherwise.}\end{cases}$$ If $4s\le j<6s$, then
$$(d_3)_{ij}=\begin{cases}
-f_1(j, 6s-1)e_{4(j+m+1)+2}\otimes w_{4(j+s)+2\rightarrow 4(j+1)},\quad i=(j+1)_{2s},\text{ }j\ge 5s;\\
w_{4(j+m+1)\rightarrow 4(j+m+1)+j_2-3}\otimes e_{4(j+(j_2-4)s)+2},\quad i=(j)_s+4s;\\
w_{4(j+m+1)+1\rightarrow 4(j+m+1)+j_2-3}\otimes w_{4(j+(j_2-4)s)+2\rightarrow 4j+3},\quad i=j+2s;\\
0\quad\text{otherwise.}\end{cases}$$ If $6s\le j<8s$, then
$$(d_3)_{ij}=\begin{cases}
f_1(j, 8s-1)e_{4(j+m+s+1)+2}\otimes w_{4j+2\rightarrow 4(j+1)},\quad i=j-7s+1,\text{ }j\ge 7s;\\
w_{4(j+m+1)\rightarrow 4(j+m+s+1)+1+j_2-6}\otimes e_{4(j+(7-j_2)s)+2},\quad i=(j)_s+5s;\\
w_{4(j+m+s+1)+1\rightarrow 4(j+m+s+1)+1+j_2-6}\otimes w_{4(j+(7-j_2)s)+2\rightarrow 4j+3},\quad i=(j+s)_{2s}+6s;\\
0\quad\text{otherwise.}\end{cases}$$ If $8s\le j<9s$, then
$$(d_3)_{ij}=\begin{cases}
f_1(j, 9s-1)w_{4(j+m+f(j, 9s-1)s+1)+2\rightarrow 4(j+m+1)+3}\otimes w_{4j+3\rightarrow 4(j+1)},\quad i=(j+1)_s;\\
f_1(j, 9s-1)e_{4(j+m+1)+3}\otimes w_{4j+3\rightarrow 4(j+f(j, 9s-1)s+1)+1},\quad i=(j+1)_s+2s;\\
w_{4(j+m+1)+1\rightarrow 4(j+m+1)+3}\otimes e_{4j+3},\quad i=j-2s;\\
-w_{4(j+m+s+1)+1\rightarrow 4(j+m+1)+3}\otimes e_{4j+3},\quad i=j-s;\\
0\quad\text{otherwise.}\end{cases}$$

\newpage\centerline{\bf Description of the $d_4$}
\centerline{$d_4:Q_5\rightarrow Q_4\text{ -- is an } (8s\times 9s)\text{ matrix}.$}
If $0\le j<2s$, then $$(d_4)_{ij}=\begin{cases}
w_{4(j+m-1)+3\rightarrow 4(j+m)+1}\otimes e_{4j},\quad i=j,\text{ }j<s;\\
-f_1(j, s)w_{4(j+m)\rightarrow 4(j+m)+1}\otimes e_{4j},\quad i=(j)_s+s;\\
-e_{4(j+m)+1}\otimes w_{4j\rightarrow 4(j+s)+1},\quad i=(j+s)_{2s}+2s;\\
e_{4(j+m)+1}\otimes w_{4j\rightarrow 4j+2},\quad i=j+(5-f_0(j, s))s;\\
0\quad\text{otherwise.}\end{cases}$$ If $2s\le j<4s$, then
$$(d_4)_{ij}=\begin{cases}
w_{4(j+m)+3\rightarrow 4(j+m+1)}\otimes w_{4j+1\rightarrow 4(j+1)},\quad i=(j+1)_s,\text{ }j<3s-1\text{ or }j=4s-1;\\
-f_1(i_2, s-1)f_1(j, 3s)e_{4(j+m+1)}\otimes w_{4j+1\rightarrow 4(j+1)},\quad i=(j+1)_s+s;\\
w_{4(j+m+s)+1\rightarrow 4(j+m+1)}\otimes e_{4j+1},\quad i=j;\\
-w_{4(j+m+s)+2\rightarrow 4(j+m+1)}\otimes w_{4j+1\rightarrow 4j+2},\quad i=j+(4-f_0(j, 3s))s;\\
0\quad\text{otherwise.}\end{cases}$$ If $4s\le j<6s$, then
$$(d_4)_{ij}=\begin{cases}
e_{4(j+m)+3}\otimes w_{4j+2\rightarrow 4(j+1)},\quad i=(j+1)_s,\text{ }j<5s-1\text{ or }j=6s-1;\\
w_{4(j+m)+1\rightarrow 4(j+m)+3}\otimes e_{4j+2},\quad i=j+(1-f_0(j, 5s))s;\\
-w_{4(j+m+s)+2\rightarrow 4(j+m)+3}\otimes e_{4j+2},\quad i=j+(2-f_0(j, 5s))s;\\
-f_1(j, 5s)e_{4(j+m)+3}\otimes w_{4j+2\rightarrow 4j+3},\quad i=(j)_s+8s;\\
0\quad\text{otherwise.}\end{cases}$$ If $6s\le j<8s$, then
$$(d_4)_{ij}=\begin{cases}
-f_1(j, 7s)w_{4(j+m)+3\rightarrow 4(j+m+1)+2}\otimes w_{4j+3\rightarrow 4(j+1)},\\
\quad\quad\quad i=(j+1)_s,\text{ }j<7s-1\text{ or }j=8s-1;\\
f_1(j, 7s)w_{4(j+m+1)+1\rightarrow 4(j+m+1)+2}\otimes w_{4j+3\rightarrow 4(j+s+1)+1},\quad i=(j+s+1)_{2s}+2s;\\
-f_1(j, 7s)e_{4(j+m+1)+2}\otimes w_{4j+3\rightarrow 4(j+s+1)+2},\\
\quad\quad\quad i=(j+1)_s+7s,\text{ }j<7s-1\text{ or }j=8s-1;\\
-f_1(j, 7s)e_{4(j+m+1)+2}\otimes w_{4j+3\rightarrow 4(j+s+1)+2},\quad i=(j+1)_s+5s,\text{ }7s-1\le j<8s-1;\\
w_{4(j+m)+3\rightarrow 4(j+m+1)+2}\otimes e_{4j+3},\quad i=(j)_s+8s;\\
0\quad\text{otherwise.}\end{cases}$$

\newpage\centerline{\bf Description of the $d_5$}
\centerline{$d_5:Q_6\rightarrow Q_5\text{ -- is an } (9s\times 8s)\text{ matrix}.$}
If $0\le j<s$, then $$(d_5)_{ij}=\begin{cases}
f_1(j_1, 2)w_{4(j+m)+1+j_1+2f(j_1, 1)\rightarrow 4(j+m+1)}\otimes w_{4j\rightarrow 4j+j_1},\quad i=j+2j_1s,\text{ }0\le j_1\le 2;\\
f_1(j_1, 2)w_{4(j+m+s)+1+j_1+2f(j_1, 1)\rightarrow 4(j+m+1)}\otimes w_{4j\rightarrow 4(j+s)+j_1},\\\quad\quad\quad i=j+(2j_1+1)s,\text{ }0\le j_1\le 2;\\
0\quad\text{otherwise.}\end{cases}$$
If $s\le j<3s$, then
$$(d_5)_{ij}=\begin{cases}
-f_1(j, 2s)w_{4(j+m+s+1)+1\rightarrow 4(j+m+s+1)+j_2}\otimes w_{4(j+s(2-j_2))+1\rightarrow 4(j+1)},\quad i=(j+s+1)_{2s};\\
w_{4(j+m+1)\rightarrow 4(j+m+s+1)+j_2}\otimes e_{4(j+s(2-j_2))+1},\quad i=(j)_s+2s;\\
-w_{4(j+m)+3\rightarrow 4(j+m+s+1)+2}\otimes w_{4j+1\rightarrow 4j+2},\quad i=j+2s,\text{ }j\ge 2s;\\
-e_{4(j+m+s+1)+2}\otimes w_{4j+1\rightarrow 4j+3},\quad i=j+5s,\text{ }j\ge 2s;\\
0\quad\text{otherwise.}\end{cases}$$
If $3s\le j<5s$, then
$$(d_5)_{ij}=\begin{cases}
-f_1(j, 4s)w_{4(j+m+1)+1\rightarrow 4(j+m+1)+j_2-2}\otimes w_{4(j+s(j_2-3))+1\rightarrow 4(j+1)},\quad i=(j+1)_{2s};\\
w_{4(j+m+1)\rightarrow 4(j+m+1)+j_2-2}\otimes e_{4(j+s(j_2-3))+1},\quad i=(j)_s+3s;\\
-w_{4(j+m)+3\rightarrow 4(j+m+1)+2}\otimes w_{4(j+s)+1\rightarrow 4(j+s)+2},\quad i=j+s,\text{ }j\ge 4s;\\
e_{4(j+m+1)+2}\otimes w_{4(j+s)+1\rightarrow 4j+3},\quad i=j+2s,\text{ }j\ge 4s;\\
0\quad\text{otherwise.}\end{cases}$$
If $5s\le j<7s$, then
$$(d_5)_{ij}=\begin{cases}
w_{4(j+m)+3\rightarrow 4(j+m+s+1)+2}\otimes e_{4(j+s)+2},\quad i=j-s;\\
f_1(j, 6s)e_{4(j+m+s+1)+2}\otimes w_{4(j+s)+2\rightarrow 4j+3},\quad i=j+s;\\
0\quad\text{otherwise.}\end{cases}$$
If $7s\le j<8s$, then
$$(d_5)_{ij}=\begin{cases}
w_{4(j+m+s+1)+1\rightarrow 4(j+m+1)+3}\otimes w_{4j+3\rightarrow 4(j+1)},\quad i=j-7s+1;\\
w_{4(j+m+1)+1\rightarrow 4(j+m+1)+3}\otimes w_{4j+3\rightarrow 4(j+1)},\quad i=(j+1)_{2s};\\
-e_{4(j+m+1)+3}\otimes w_{4j+3\rightarrow 4(j+s+1)+2},\quad i=j-3s+1;\\
-e_{4(j+m+1)+3}\otimes w_{4j+3\rightarrow 4(j+1)+2},\quad i=(j+1)_{2s}+4s;\\
w_{4(j+m+s+1)+2\rightarrow 4(j+m+1)+3}\otimes e_{4j+3},\quad i=j-s;\\
-w_{4(j+m+1)+2\rightarrow 4(j+m+1)+3}\otimes e_{4j+3},\quad i=j;\\
0\quad\text{otherwise.}\end{cases}$$ If $8s\le j<9s$, then
$$(d_5)_{ij}=\begin{cases}
-e_{4(j+m+2)}\otimes w_{4j+3\rightarrow 4(j+s+1)+1},\quad i=(j+s+1)_{2s}+2s;\\
w_{4(j+m+1)+2\rightarrow 4(j+m+2)}\otimes e_{4j+3},\quad i=j-2s;\\
0\quad\text{otherwise.}\end{cases}$$

\centerline{\bf Description of the $d_6$}
\centerline{$d_6:Q_7\rightarrow Q_6\text{ -- is an } (8s\times 9s)\text{ matrix}.$}
If $0\le j<2s$, then $$(d_6)_{ij}=\begin{cases}
w_{4(j+m)\rightarrow 4(j+m)+2}\otimes e_{4j},\quad i=(j)_s;\\
-w_{4(j+m)+1\rightarrow 4(j+m)+2}\otimes w_{4j\rightarrow 4j+1},\quad i=j+(j_2+1)s;\\
-e_{4(j+m)+2}\otimes w_{4j\rightarrow 4(j+s)+1},\quad i=j+(4-3j_2)s;\\
e_{4(j+m)+2}\otimes w_{4j\rightarrow 4j+2},\quad i=j+5s;\\
0\quad\text{otherwise.}\end{cases}$$ If $2s\le j<4s$, then
$$(d_6)_{ij}=\begin{cases}
w_{4(j+m)+1\rightarrow 4(j+m)+3}\otimes e_{4j+1},\quad i=j+s(j_2-3);\\
-w_{4(j+m+s)+2\rightarrow 4(j+m)+3}\otimes e_{4j+1},\quad i=j+s(j_2-2);\\
-w_{4(j+m)+2\rightarrow 4(j+m)+3}\otimes w_{4j+1\rightarrow 4j+2},\quad i=j+3s;\\
e_{4(j+m)+3}\otimes w_{4j+1\rightarrow 4j+3},\quad i=(j)_s+7s;\\
0\quad\text{otherwise.}\end{cases}$$ If $4s\le j<6s$, then
$$(d_6)_{ij}=\begin{cases}
e_{4(j+m+1)}\otimes w_{4j+2\rightarrow 4(j+1)},\quad i=(j+1)_s,\text{ }j\ge 5s;\\
w_{4(j+m)+2\rightarrow 4(j+m+1)}\otimes e_{4j+2},\quad i=j+s;\\
-w_{4(j+m)+3\rightarrow 4(j+m+1)}\otimes w_{4j+2\rightarrow 4j+3},\quad i=j+2s,\text{ }j\ge 5s;\\
-f_1(j, 5s)e_{4(j+m+1)}\otimes w_{4j+2\rightarrow 4j+3},\quad i=(j)_s+8s;\\
0\quad\text{otherwise.}\end{cases}$$ If $6s\le j<8s$, then
$$(d_6)_{ij}=\begin{cases}
-w_{4(j+m+1)\rightarrow 4(j+m+1)+1}\otimes w_{4j+3\rightarrow 4(j+1)},\quad i=(j+1)_s,\text{ }j<7s;\\
e_{4(j+m+1)+1}\otimes w_{4j+3\rightarrow 4(j+1)+1},\quad i=(j+1)_{2s}+s,\text{ }j<7s-1\text{ or }j=8s-1;\\
e_{4(j+m+1)+1}\otimes w_{4j+3\rightarrow 4(j+1)+1},\quad i=(j+1)_{2s}+2s,\text{ }7s-1\le j<8s-1;\\
w_{4(j+m)+3\rightarrow 4(j+m+1)+1}\otimes e_{4j+3},\quad i=j+s,\text{ }j<7s;\\
-f_1(j, 7s)w_{4(j+m+1)\rightarrow 4(j+m+1)+1}\otimes e_{4j+3},\quad i=(j)_s+8s;\\
0\quad\text{otherwise.}\end{cases}$$

\newpage\centerline{\bf Description of the $d_7$}
\centerline{$d_7:Q_8\rightarrow Q_7\text{ -- is an } (6s\times 8s)\text{ matrix}.$}
If $0\le j<s$, then $$(d_7)_{ij}=\begin{cases}
w_{4(j+m)+2\rightarrow 4(j+m)+3}\otimes e_{4j},\quad i=j;\\
-w_{4(j+m+s)+2\rightarrow 4(j+m)+3}\otimes e_{4j},\quad i=j+s;\\
e_{4(j+m)+3}\otimes w_{4j\rightarrow 4j+1},\quad i=j+2s;\\
-e_{4(j+m)+3}\otimes w_{4j\rightarrow 4(j+s)+1},\quad i=j+3s;\\
0\quad\text{otherwise.}\end{cases}$$ If $s\le j<3s$, then
$$(d_7)_{ij}=\begin{cases}
-e_{4(j+m+s+1)+2}\otimes w_{4(j+s)+1\rightarrow 4(j+1)},\quad i=(j+s+1)_{2s};\\
w_{4(j+m)+3\rightarrow 4(j+m+s+1)+2}\otimes e_{4(j+s)+1},\quad i=j+s;\\
w_{4(j+m+1)\rightarrow 4(j+m+s+1)+2}\otimes w_{4(j+s)+1\rightarrow 4(j+s)+2},\quad i=j+3s;\\
-w_{4(j+m+s+1)+1\rightarrow 4(j+m+s+1)+2}\otimes w_{4(j+s)+1\rightarrow 4j+3},\quad i=j+5s;\\
0\quad\text{otherwise.}\end{cases}$$ If $3s\le j<5s$, then
$$(d_7)_{ij}=\begin{cases}
w_{4(j+m+1)\rightarrow 4(j+m+1)+1}\otimes e_{4(j+s)+2},\quad i=j+s;\\
e_{4(j+m+1)+1}\otimes w_{4(j+s)+2\rightarrow 4j+3},\quad i=(j)_{2s}+6s;\\
0\quad\text{otherwise.}\end{cases}$$ If $5s\le j<6s$, then
$$(d_7)_{ij}=\begin{cases}
w_{4(j+m+1+sf(j, 6s-1))+2\rightarrow 4(j+m+2)}\otimes w_{4j+3\rightarrow 4(j+1)},\quad i=(j+1)_s+s;\\
-w_{4(j+m+1)+3\rightarrow 4(j+m+2)}\otimes w_{4j+3\rightarrow 4(j+1+(1+f(j, 6s-1))s)+1},\quad i=(j+1)_s+2s;\\
-e_{4(j+m+2)}\otimes w_{4j+3\rightarrow 4(j+1+(1+f(j, 6s-1))s)+2},\quad i=(j+1)_s+4s;\\
-e_{4(j+m+2)}\otimes w_{4j+3\rightarrow 4(j+1+sf(j, 6s-1))+2},\quad i=(j+1)_s+5s;\\
w_{4(j+m+s+1)+1\rightarrow 4(j+m+2)}\otimes e_{4j+3},\quad i=j+s;\\
w_{4(j+m+1)+1\rightarrow 4(j+m+2)}\otimes e_{4j+3},\quad i=j+2s;\\
0\quad\text{otherwise.}\end{cases}$$

\newpage\centerline{\bf Description of the $d_8$}
\centerline{$d_8:Q_9\rightarrow Q_8\text{ -- is an } (7s\times 6s)\text{ matrix}.$}
If $0\le j<s$, then $$(d_8)_{ij}=\begin{cases}
w_{4(j+m-1)+3\rightarrow 4(j+m)+3}\otimes e_{4j},\quad i=j;\\
-f_1(j, s-1)e_{4(j+m)+3}\otimes w_{4j\rightarrow 4(j+1)},\quad i=(j+1)_s;\\
-w_{4(j+m)+2\rightarrow 4(j+m)+3}\otimes w_{4j\rightarrow 4j+1},\quad i=j+s;\\
w_{4(j+m+s)+2\rightarrow 4(j+m)+3}\otimes w_{4j\rightarrow 4(j+s)+1},\quad i=j+2s;\\
w_{4(j+m+s)+1\rightarrow 4(j+m)+3}\otimes w_{4j\rightarrow 4j+2},\quad i=j+3s;\\
-w_{4(j+m)+1\rightarrow 4(j+m)+3}\otimes w_{4j\rightarrow 4(j+s)+2},\quad i=j+4s;\\
0\quad\text{otherwise.}\end{cases}$$ If $s\le j<3s$, then
$$(d_8)_{ij}=\begin{cases}
w_{4(j+m)+3\rightarrow 4(j+m+1)}\otimes w_{4(j+s)+1\rightarrow 4(j+1)},\quad i=(j+1)_s,\text{ }j<2s-1\text{ or }j=3s-1;\\
w_{4(j+m+s)+2\rightarrow 4(j+m+1)}\otimes e_{4(j+s)+1},\quad i=j;\\
-w_{4(j+m)+1\rightarrow 4(j+m+1)}\otimes w_{4(j+s)+1\rightarrow 4(j+s)+2},\quad i=j+2s;\\
e_{4(j+m+1)}\otimes w_{4(j+s)+1\rightarrow 4j+3},\quad i=(j)_s+5s;\\
0\quad\text{otherwise.}\end{cases}$$ If $3s\le j<5s$, then
$$(d_8)_{ij}=\begin{cases}
-w_{4(j+m)+3\rightarrow 4(j+m+1)+1}\otimes w_{4(j+s)+2\rightarrow 4(j+1)},\quad i=(j+1)_s,\text{ }j<4s-1\text{ or }j=5s-1;\\
w_{4(j+m)+1\rightarrow 4(j+m+1)+1}\otimes e_{4(j+s)+2},\quad i=j;\\
-e_{4(j+m+1)+1}\otimes w_{4(j+s)+2\rightarrow 4(j+s+1)+2},\quad i=(j+s+1)_{2s}+3s;\\
-w_{4(j+m+1)\rightarrow 4(j+m+1)+1}\otimes w_{4(j+s)+2\rightarrow 4j+3},\quad i=(j)_s+5s;\\
0\quad\text{otherwise.}\end{cases}$$ If $5s\le j<7s$, then
$$(d_8)_{ij}=\begin{cases}
w_{4(j+m)+3\rightarrow 4(j+m+s+1)+2}\otimes w_{4j+3\rightarrow 4(j+1)},\quad i=(j+1)_s,\text{ }6s-1\le j<7s-1;\\
e_{4(j+m+s+1)+2}\otimes w_{4j+3\rightarrow 4(j+s+1)+1},\quad i=(j+s+1)_{2s}+s;\\
w_{4(j+m+s+1)+1\rightarrow 4(j+m+s+1)+2}\otimes w_{4j+3\rightarrow 4(j+1)+2},\quad i=(j+1)_{2s}+3s;\\
w_{4(j+m+1)\rightarrow 4(j+m+s+1)+2}\otimes e_{4j+3},\quad i=(j)_s+5s;\\
0\quad\text{otherwise.}\end{cases}$$

\newpage\centerline{\bf Description of the $d_9$}
\centerline{$d_9:Q_{10}\rightarrow Q_9\text{ -- is an } (6s\times 7s)\text{ matrix}.$}
If $0\le j<s$, then $$(d_9)_{ij}=\begin{cases}
w_{4(j+m)+3\rightarrow 4(j+m+1)}\otimes e_{4j},\quad i=j;\\
e_{4(j+m+1)}\otimes w_{4j\rightarrow 4j+1},\quad i=j+s;\\
-e_{4(j+m+1)}\otimes w_{4j\rightarrow 4(j+s)+1},\quad i=j+2s;\\
0\quad\text{otherwise.}\end{cases}$$ If $s\le j<3s$, then
$$(d_9)_{ij}=\begin{cases}
w_{4(j+m+1)\rightarrow 4(j+m+1)+1}\otimes e_{4(j+s)+1},\quad i=j;\\
e_{4(j+m+1)+1}\otimes w_{4(j+s)+1\rightarrow 4(j+s)+2},\quad i=j+2s;\\
0\quad\text{otherwise.}\end{cases}$$ If $3s\le j<5s$, then
$$(d_9)_{ij}=\begin{cases}
w_{4(j+m+1)+1\rightarrow 4(j+m+1)+2}\otimes e_{4(j+s)+2},\quad i=j;\\
e_{4(j+m+1)+2}\otimes w_{4(j+s)+2\rightarrow 4j+3},\quad i=(j)_{2s}+5s;\\
0\quad\text{otherwise.}\end{cases}$$ If $5s\le j<6s$, then
$$(d_9)_{ij}=\begin{cases}
f_1(j, 6s-1)e_{4(j+m+1)+3}\otimes w_{4j+3\rightarrow 4(j+1)},\quad i=(j+1)_s;\\
w_{4(j+m+s+1)+2\rightarrow 4(j+m+1)+3}\otimes e_{4j+3},\quad i=j;\\
-w_{4(j+m+1)+2\rightarrow 4(j+m+1)+3}\otimes e_{4j+3},\quad i=j+s;\\
0\quad\text{otherwise.}\end{cases}$$

\centerline{\bf Description of the $d_{10}$}
\centerline{$d_{10}:Q_{11}\rightarrow Q_{10}\text{ -- is an } (6s\times 6s)\text{ matrix}.$}
If $0\le j<s$, then $$(d_{10})_{ij}=\begin{cases}
-f_1(j, s-1)e_{4(j+m+1)}\otimes w_{4j\rightarrow 4(j+1)},\quad i=(j+1)_s;\\
-f_1(j_1,  1)w_{4(j+m+s)+1+j_1\rightarrow 4(j+m+1)}\otimes w_{4j\rightarrow 4j+1+j_1},\quad i=j+(2j_1+1)s,\text{ }0\le j_1\le 2;\\
f_1(j_1,  2)w_{4(j+m)+j_1\rightarrow 4(j+m+1)}\otimes w_{4j\rightarrow 4(j+s)+j_1},\quad i=j+2j_1s,\text{ }0\le j_1\le 2;\\
0\quad\text{otherwise.}\end{cases}$$ If $s\le j<3s$, then
$$(d_{10})_{ij}=\begin{cases}
f_1(i_2,  s-1)f_1(j, 2s)w_{4(j+m+1)\rightarrow 4(j+m+1)+1}\otimes w_{4(j+s)+1\rightarrow 4(j+1)},\quad i=(j+1)_s;\\
w_{4(j+m)+1\rightarrow 4(j+m+1)+1}\otimes e_{4(j+s)+1},\quad i=j;\\
-e_{4(j+m+1)+1}\otimes w_{4(j+s)+1\rightarrow 4(j+s+1)+1},\quad i=(j+s+1)_{2s}+s;\\
-w_{4(j+m)+2\rightarrow 4(j+m+1)+1}\otimes w_{4(j+s)+1\rightarrow 4(j+s)+2},\quad i=j+2s;\\
-f_1(j, 2s)w_{4(j+m)+3\rightarrow 4(j+m+1)+1}\otimes w_{4(j+s)+1\rightarrow 4j+3},\quad i=(j)_s+5s;\\
0\quad\text{otherwise.}\end{cases}$$ If $3s\le j<5s$, then
$$(d_{10})_{ij}=\begin{cases}
-f_1(i_2, s-1)f_1(j, 4s)w_{4(j+m+1)\rightarrow 4(j+m+1)+2}\otimes w_{4(j+s)+2\rightarrow 4(j+1)},\quad i=(j+1)_s;\\
w_{4(j+m+1)+1\rightarrow 4(j+m+1)+2}\otimes w_{4(j+s)+2\rightarrow 4(j+s+1)+1},\quad i=(j+s+1)_{2s}+s;\\
w_{4(j+m)+2\rightarrow 4(j+m+1)+2}\otimes e_{4(j+s)+2},\quad i=j;\\
-e_{4(j+m+1)+2}\otimes w_{4(j+s)+2\rightarrow 4(j+s+1)+2},\quad i=(j+s+1)_{2s}+3s;\\
f_1(j, 4s)w_{4(j+m)+3\rightarrow 4(j+m+1)+2}\otimes w_{4(j+s)+2\rightarrow 4j+3},\quad i=(j)_s+5s;\\
0\quad\text{otherwise.}\end{cases}$$ If $5s\le j<6s$, then
$$(d_{10})_{ij}=\begin{cases}
-f_1(j, 6s-1)w_{4(j+m+1)\rightarrow 4(j+m+1)+3}\otimes w_{4j+3\rightarrow 4(j+1)},\quad i=(j+1)_s;\\
w_{4(j+m+1)+1\rightarrow 4(j+m+1)+3}\otimes w_{4j+3\rightarrow 4(j+s+1)+1},\quad i=j-4s+1;\\
-w_{4(j+m+s+1)+1\rightarrow 4(j+m+1)+3}\otimes w_{4j+3\rightarrow 4(j+1)+1},\quad i=(j+1)_{2s}+s;\\
-w_{4(j+m+1)+2\rightarrow 4(j+m+1)+3}\otimes w_{4j+3\rightarrow 4(j+s+1)+2},\quad i=j-2s+1;\\
w_{4(j+m+s+1)+2\rightarrow 4(j+m+1)+3}\otimes w_{4j+3\rightarrow 4(j+1)+2},\quad i=(j+1)_{2s}+3s;\\
w_{4(j+m)+3\rightarrow 4(j+m+1)+3}\otimes e_{4j+3},\quad i=j;\\
-f_1(j, 6s-1)e_{4(j+m+1)+3}\otimes w_{4j+3\rightarrow 4(j+1)+3},\quad i=(j+1)_s+5s;\\
0\quad\text{otherwise.}\end{cases}$$

\begin{thm}\label{resol_thm}
Let $R=R_s^\prime$ is algebra of the type $E_6^\prime$. Then the minimal projective resolution of the
$\Lambda$-module $R$ is of the form:
\begin{equation}\label{resolv}\tag{$+$} \dots\longrightarrow
Q_3\stackrel{d_2}\longrightarrow Q_2\stackrel{d_1}\longrightarrow
Q_1\stackrel{d_0}\longrightarrow
Q_0\stackrel\varepsilon\longrightarrow R\longrightarrow
0,
\end{equation}
where $\varepsilon$ is the multiplication map $(\varepsilon(a\otimes b)=ab)$; $Q_r\text{ }(r\le
10)$ and $d_r\text{ }(r\le 10)$ were described before; further $Q_{11\ell+r}$, where $\ell\in \N$
and $0\le r\le 10$, is obtained from $Q_r$ by replacing every direct summand $P_{i,j}$ to
$P_{\sigma^\ell(i),j}$ correspondingly $($here $\sigma(i)=j$, if $\sigma(e_i)=e_j)$, and the
differential $d_{11\ell+r}$ is obtained from $d_r$ by act of $\sigma^\ell$ by all left tensor
components of the corresponding matrix.
\end{thm}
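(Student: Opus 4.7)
The plan is to verify in turn the four defining properties of a minimal projective resolution: each $Q_r$ is projective, the differentials compose to zero, the sequence is exact, and every nonzero entry of $d_r$ lies in the radical of $\Lambda$. Projectivity is automatic from the construction, since each $Q_r$ is presented as a direct sum of modules $P_{i,j}=\Lambda(e_i\otimes e_j)$, which form the complete family of indecomposable projective $\Lambda$-modules. The complex property $\varepsilon d_0=0$ is immediate: every entry of $d_0$ has the form $w\otimes e_j-e_i\otimes w$, and under the multiplication map such differences vanish.

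Next I would verify $d_{r-1}d_r=0$ for $1\le r\le 10$ column-by-column using the explicit matrix formulas. For a fixed column $j$ of $d_r$, only a bounded number of rows $i$ carry a nonzero entry, so the $(k,j)$-entry of $d_{r-1}d_r$ is a sum of at most a handful of tensor products of paths in $\mathcal Q_s$. Each such sum vanishes in $\Lambda$ modulo $I'$ by invoking one of the three families of defining relations for $R_s^\prime$: paths of length five collapse to zero; the commutativity relation $\a_{3t+2}\a_{3t+1}\a_{3t}=\a_{3(t+s)+2}\a_{3(t+s)+1}\a_{3(t+s)}$ produces the necessary cancellations in the middle of the $2s$-blocks; and the monomial relations $\a_{3t}\g_{t-1}\a_{3(t+s)-1}$ kill the mixed contributions. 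The indicator functions $f,h,f_0,f_1,f_2$ are exactly the bookkeeping needed to make the signs cancel at the boundary cases $j=s-1,2s-1,\ldots$

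For exactness, I would compute $\dim_K e_i(\ker d_{r-1})e_j$ from the matrix formulas and compare with the total number of summands of the form $P_{i',j}$ appearing in $Q_r$ with $\sigma^0(i')=i$, i.e., verify that the projective cover of each syzygy $\Omega^r R$ has exactly the form $Q_r$. Because $R$ is self-injective of finite representation type, the indecomposable syzygies are determined by their graded $K$-dimensions at each idempotent pair, so a matching count at each $(i,j)$ gives the desired exactness. Minimality then follows at once: inspecting the formulas, every nonzero entry of every $d_r$ is of one of the forms $w_{i\ra j}\otimes e_k$, $e_k\otimes w_{i\ra j}$, or $w\otimes w'$ with paths of length $\ge 1$, hence lies in $\mathrm{rad}(\Lambda)$.

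Finally, the periodic extension is justified by first checking that $\sigma$ is a well-defined $K$-algebra automorphism of $R$ — the signs in its definition have been arranged precisely so that $\sigma$ preserves each of the three families of relations — whence it extends to an automorphism of $\Lambda=R^{op}\otimes R$. Applying $\sigma$ termwise to the already-verified identities $d_{r-1}d_r=0$ for $r\le 10$ gives the same identities for $d_{11\ell+r-1}d_{11\ell+r}=0$, and the "gluing" identity $d_{10}d_{11}=0$ needs to be checked separately, but it reduces under $\sigma^{-1}$ on the left tensor factor to an identity of the same shape as $d_{10}d_0=0$, which is part of the $r\le 10$ verification. Minimality of the period $M$ then follows from computing the least $\ell\in\N$ for which $\sigma^\ell$ acts as the identity on the indexing set $\Z_{(n+2)s}$ of summands \emph{together with} the sign conventions — this gives $\ell=M_0$ in characteristic $2$ or when $4\mid M_0$, and $\ell=2M_0$ otherwise, matching the stated formula $M=11M_0$ or $22M_0$. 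The main obstacle is not conceptual but combinatorial: the verification $d_{r-1}d_r=0$ must be done case by case across roughly eleven differentials, each with several ranges of columns and several indicator-function branches, and it is here that one must be scrupulous to track how the sign functions $f_1$ and the mod-$s$ residues $i_2,j_2$ conspire to cancel.
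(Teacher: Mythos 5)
Your overall architecture matches the paper's: projectivity is free, $d_{m}d_{m+1}=0$ is checked by direct matrix computation across all the column ranges and indicator-function branches, and the periodic continuation is handled by twisting with $\sigma$ (including the separate gluing check $d_{10}d_{11}=0$ and the resulting identification $\Omega^{11}({}_\Lambda R)\simeq{}_1R_{\sigma}$). The one place where you genuinely diverge --- and where your argument as written has a gap --- is the exactness step. The paper does \emph{not} compute kernels at all. It first pins down the isomorphism type of each $Q_m$ by combining the explicit minimal resolutions of the simple modules (Lemmas \ref{lem_s0}--\ref{lem_s3}) with Happel's lemma (Lemma \ref{lem_Ha}), which gives $Q_m\cong\bigoplus_{i,j}P_{i,j}^{\dim\mathrm{Ext}^m_R(S_j,S_i)}$; it then invokes the criterion of \cite{VGI}, which says that for a complex of projectives whose terms already have the sizes predicted by Happel's lemma (and with radical differentials), the single identity $d_md_{m+1}=0$ already forces exactness at $Q_m$. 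This is precisely the device that makes the proof tractable.

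Your substitute --- ``compute $\dim_K e_i(\Ker d_{r-1})e_j$ and compare with the number of summands $P_{i',j}$ in $Q_r$'' --- is not sound as justified. Matching the dimension vector of $\Ker d_{r-1}$ against the multiplicities in $Q_r$ does not identify $Q_r$ as the projective cover of the kernel: a projective cover is read off from the \emph{top} of the module, not from its dimension vector, and the assertion that indecomposable syzygies are determined by their graded $K$-dimensions at each idempotent pair is false in general (non-isomorphic modules can share a dimension vector). A correct direct argument would instead chain the rank computations: one needs $\dim\Im d_r=\dim Q_r-\dim\Ker d_r$ to equal $\dim\Ker d_{r-1}$ for every $r$, which amounts to computing the rank of each differential matrix --- feasible, but far heavier than, and logically independent of, the count you describe. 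Either repair the exactness step along these lines or adopt the paper's reduction via Lemmas \ref{lem_s0}--\ref{lem_s3}, Happel's lemma and \cite{VGI}.
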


To prove that the terms $Q_i$ are of this form we introduce $P_i=Re_i$ is the projective cover of
the simple $R$-modules $S_i$, corresponding to the vertices of the quiver $\mathcal Q_s$. We will
find projective resolutions of the simple $R$-modules $S_i$.

\begin{obozn}
For $R$-module $M$ its $m$th syzygy is denoted by $\Omega^m(M)$.
\end{obozn}

\begin{zam}\label{note_brev}
From here we denote the multiplication homomorphism from the right by an element $w$ by $w$.
\end{zam}

\begin{lem}\label{lem_s0}
The begin of the minimal projective resolution of $S_{4r}$ is of the form

\begin{multline*}
\dots\longrightarrow P_{4(r+3)+3} \stackrel{\binom{\a}{-\a}}\longrightarrow P_{4(r+3)+2}\oplus
P_{4(r+s+3)+2}\stackrel{(\a^2\text{ }\a^2)}\longrightarrow\\
\longrightarrow P_{4(r+3)} \stackrel{\binom{\g\a^2}{\g\a^2}}\longrightarrow P_{4(r+2)+1}\oplus
P_{4(r+s+2)+1} \stackrel{\binom{\a\g\phantom{-}0}{-\a\phantom{-}\a}}
\longrightarrow\\\longrightarrow P_{4(r+1)+3}\oplus P_{4(r+2)}
\stackrel{\binom{\phantom{-}\a\phantom{-}\g\a}{-\a\phantom{-}0}}\longrightarrow P_{4(r+1)+2}\oplus
P_{4(r+s+1)+2} \stackrel{(\a^2\g\text{ }\a^2\g)}\longrightarrow\\\longrightarrow
P_{4r+3}\stackrel{\binom{\a^2}{-\a^2}}\longrightarrow P_{4r+1}\oplus P_{4(r+s)+1}
\stackrel{(\a\text{ }\a)}\longrightarrow P_{4r}\longrightarrow S_{4r}\longrightarrow 0.
\end{multline*}
At that $\Omega^{9}(S_{4r})\simeq S_{4(r+4)+3}$.
\end{lem}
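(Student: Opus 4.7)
The plan is to verify the given portion of the minimal projective resolution step by step, building up from $S_{4r}$ and computing each successive syzygy by combining the explicit Loewy structure of the indecomposable projectives with the three families of defining relations of $R_s^\prime$: the length-$5$ vanishing, the commutation $\alpha_{3t+2}\alpha_{3t+1}\alpha_{3t}=\alpha_{3(t+s)+2}\alpha_{3(t+s)+1}\alpha_{3(t+s)}$, and the zero relations $\alpha_{3t}\gamma_{t-1}\alpha_{3(t+s)-1}=0$. Each indecomposable projective $P_i=Re_i$ has an explicit basis consisting of the nonzero paths starting at vertex $i$, and its Loewy length is at most $4$.

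The first step identifies $\Omega(S_{4r})=\mathrm{rad}\,P_{4r}$ as generated by the two arrows out of vertex $4r$, giving the map $(\alpha,\alpha)\colon P_{4r+1}\oplus P_{4(r+s)+1}\to P_{4r}$. Its kernel, by the first commutation relation, is generated by $(\alpha^2,-\alpha^2)$ regarded as an element of $P_{4r+1}\oplus P_{4(r+s)+1}$, yielding $\binom{\alpha^2}{-\alpha^2}\colon P_{4r+3}\to P_{4r+1}\oplus P_{4(r+s)+1}$. I would continue in this fashion, always computing the top of the current syzygy (which pins down the next projective cover) and then using the relations to extract the next kernel. At each step, three things must be checked: that the composition with the previous map is zero (immediate from the relations); that the image lies in the radical of the target (minimality); and that the kernel of the previous map is exactly covered by the new map (surjectivity onto the cycles).

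The main technical obstacle is the surjectivity check, which requires a careful dimension count layer by layer in the radical filtration of each $Re_j$. For each projective, I would list the basis paths sorted by radical degree and verify that the proposed generators span the kernel of the next map; this is where the symmetry between the two branches indexed by $r$ and $r+s$ is crucial and remains coherent throughout, reflecting the $\sigma$-structure on $R_s^\prime$. The appearances of $\gamma$ in the middle of the resolution (as in $\binom{\gamma\alpha^2}{\gamma\alpha^2}$ and $\binom{\alpha\gamma,\,0}{-\alpha,\,\alpha}$) are forced by the zero relations $\alpha\gamma\alpha=0$, which prevent further $\alpha$-iteration at those stages and require switching to a $\gamma$-edge to produce the next nonzero path.

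For the final identification $\Omega^9(S_{4r})\simeq S_{4(r+4)+3}$: by the preceding construction, this syzygy is the kernel of $\binom{\alpha}{-\alpha}\colon P_{4(r+3)+3}\to P_{4(r+3)+2}\oplus P_{4(r+s+3)+2}$. An element $x\in P_{4(r+3)+3}$ lies in this kernel iff $x$ is annihilated on the right by both outgoing arrows from vertex $4(r+3)+3$, which (combined with length-$5$ vanishing) forces $x$ to lie in the socle of $P_{4(r+3)+3}$. Since $R_s^\prime$ is self-injective, this socle is simple; identifying it reduces to determining which vertex sits at the end of the longest nonzero path starting from $4(r+3)+3$, and a direct check using the quiver structure and the imposed relations shows this vertex to be $4(r+4)+3$, yielding $\Omega^9(S_{4r})\simeq S_{4(r+4)+3}$.
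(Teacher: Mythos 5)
Your proposal is correct and follows essentially the same route as the paper: the paper's own proof of this lemma (and of Lemmas \ref{lem_s0}--\ref{lem_s3} collectively) is the single sentence that the exactness of the displayed sequence is verified by a direct, immediate check, and your syzygy-by-syzygy computation --- projective cover from the top of each kernel, kernel extracted from the commutation and zero relations, exactness by a dimension count, and the final socle/Nakayama-permutation argument for $\Omega^9(S_{4r})\simeq S_{4(r+4)+3}$ --- is exactly that check spelled out. No gap; your write-up is in fact more detailed than the paper's.
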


\begin{lem}
The begin of the minimal projective resolution of $S_{4r+1}$ is of the form $$\dots\longrightarrow
P_{4r+2} \stackrel{\a}\longrightarrow P_{4r+1}\longrightarrow S_{4r+1}\longrightarrow 0.$$ At that
$\Omega^{2}(S_{4r+1})\simeq S_{4(r+1)+2}$.
\end{lem}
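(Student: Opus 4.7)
The plan is to determine the first two terms of the minimal projective resolution of $S_{4r+1}$ by a direct analysis of the local structure of $P_{4r+1}$ and $P_{4r+2}$ as $R$-modules. First I inspect the quiver $\mathcal{Q}_s$ at vertex $4r+1$: this vertex is an interior vertex of the $\a$-chain with a single outgoing arrow $\a_{3r+1}\colon 4r+1\to 4r+2$, so the top of $\mathrm{rad}\,P_{4r+1}$ is the simple $S_{4r+2}$, and $\mathrm{rad}\,P_{4r+1}$ is cyclic. This immediately gives the projective cover of $\Omega^{1}(S_{4r+1})=\mathrm{rad}\,P_{4r+1}$ as $P_{4r+2}\to P_{4r+1}$ realized by multiplication by $\a$, which is the first map of the resolution.

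Next I compute $\Omega^{2}(S_{4r+1})=\Ker(\a\colon P_{4r+2}\to P_{4r+1})$ by working with the $K$-basis of $P_{4r+2}$ given by cosets of paths starting at $4r+2$ modulo $I^\prime$. A basis element represented by a path $p$ lies in the kernel precisely when $\a p\in I^\prime$. Among the generators of $I^\prime$, the commutativity relations $\a_{3t+2}\a_{3t+1}\a_{3t}-\a_{3(t+s)+2}\a_{3(t+s)+1}\a_{3(t+s)}$ and the relations $\a_{3t}\g_{t-1}\a_{3(t+s)-1}$ do not prolong nontrivially on the left by $\a_{3r+1}$ acting on a path issuing from $4r+2$, so only the length-$5$ relation can force $\a p=0$. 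The kernel therefore consists exactly of (classes of) length-$4$ paths $p$ from $4r+2$ for which $\a p$ has length $5$. Tracing along the unbranched segment $4r+2\to 4r+3\to 4(r+1)\to 4(r+1)+1\to 4(r+1)+2$ identifies a unique such $p$ up to scalar, and the element it generates is a simple submodule isomorphic to the socle $S_{4(r+1)+2}$ of $P_{4r+2}$.

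The main obstacle is the bookkeeping at vertex $4r+3$, where a $\g$-arrow introduces a potential second length-$4$ path from $4r+2$ that could enlarge $\Ker(\a)$. One must verify that any such competing path either already vanishes in $P_{4r+2}$ by an $\a\g\a$-type relation, or gives a product $\a p$ that is not a pure length-$5$ relation and hence is nonzero in $R$. Once this verification is carried out, both the stated resolution and the isomorphism $\Omega^{2}(S_{4r+1})\simeq S_{4(r+1)+2}$ follow, with the latter moreover matching the Nakayama shift expected for this self-injective algebra.
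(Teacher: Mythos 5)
Your argument is the same direct check the paper invokes (its stated proof of these lemmas is just "direct check ... immediate"), and it reaches the right conclusion: $P_{4r+2}$ is uniserial of length $5$, the map $p\mapsto p\a_{3r+1}$ kills exactly the socle, and the socle is $S_{4(r+1)+2}$. One point needs tightening. The branching is not at $4r+3$ but at $4(r+1)$, where the two arrows $4(r+1)\to 4(r+1)+1$ and $4(r+1)\to 4(r+1+s)+1$ emanate; the competing length-$4$ path $4r+2\to 4r+3\to 4(r+1)\to 4(r+1+s)+1\to 4(r+1+s)+2$ must be shown to vanish already in $P_{4r+2}$, and it does, being a prolongation of the relation $\a_{3(r+s+1)}\g_{r}\a_{3r+2}$ (the generator $\a_{3t}\g_{t-1}\a_{3(t+s)-1}$ with $t=r+s+1$). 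Note that the second branch of your "either/or" cannot occur and would in fact refute the lemma: if that competing path were a nonzero length-$4$ element of $P_{4r+2}$, its product with $\a_{3r+1}$ would have length $5$ and hence lie in $I^\prime$, putting a second basis vector into $\Ker$ and making $\Omega^{2}(S_{4r+1})$ two-dimensional. So the verification must go through the vanishing alternative, and it does.
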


\begin{lem}
The begin of the minimal projective resolution of $S_{4r+2}$ is of the form
\begin{multline*}
\dots\longrightarrow P_{4(r+s+4)+1} \stackrel{\a}\longrightarrow
P_{4(r+4)}\stackrel{\g\a}\longrightarrow P_{4(r+3)+2}\stackrel{\a^2\g}\longrightarrow
P_{4(r+2)+3}\stackrel{\binom{\a^2}{-\a}}\longrightarrow\\\longrightarrow P_{4(r+2)+1}\oplus
P_{4(r+s+2)+2}\stackrel{(\a\text{ }\a^2)}\longrightarrow
P_{4(r+2)}\stackrel{\g\a^2}\longrightarrow\\\longrightarrow
P_{4(r+s+1)+1}\stackrel{\a\g}\longrightarrow P_{4r+3}\stackrel{\a}\longrightarrow
P_{4r+2}\longrightarrow S_{4r+2}\longrightarrow 0.
\end{multline*}
At that $\Omega^{9}(S_{4r+2})\simeq S_{4(r+s+5)+1}$.
\end{lem}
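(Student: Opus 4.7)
The plan is to compute the minimal projective resolution of $S_{4r+2}$ one step at a time by iteratively identifying the top of each syzygy $\Omega^k(S_{4r+2})$ and lifting that top to a direct sum of the corresponding indecomposable projectives $P_i=Re_i$. For an algebra defined by a quiver with monomial and commutativity relations the radical $\mathrm{rad}(P_i)$ is explicitly described by the arrows starting at vertex $i$ modulo the defining relations, so each syzygy can be computed directly. In particular, $P_{4r+2}$ is the projective cover of $S_{4r+2}$, and since the only arrow starting at vertex $4r+2$ is the $\a$ going to vertex $4r+3$, the first syzygy is generated by $\a\cdot e_{4r+2}$, giving the rightmost differential $P_{4r+3}\xrightarrow{\a}P_{4r+2}$.

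Proceeding inductively, at each stage $k$ I would list the arrows leaving the vertices that appear in the top of $\Omega^k(S_{4r+2})$, then use the three kinds of defining relations of $R_s^\prime$ to determine which paths are minimal generators. The length-$5$ zero relations truncate all compositions of arrows at length four and therefore force the socles of the relevant projectives. The zero relations of the form $\a\g\a=0$ eliminate certain longer compositions that would otherwise contribute extra generators, and this is what keeps the resolution from branching further in some places. The commutativity relations $\a_{3t+2}\a_{3t+1}\a_{3t}=\a_{3(t+s)+2}\a_{3(t+s)+1}\a_{3(t+s)}$ produce coincidences between two different paths and are exactly what makes certain syzygies appear as diagonal submodules of a sum $P_i\oplus P_j$, presented by a column like $\binom{\a^2}{-\a}$ or a row like $(\a\ \a^2)$. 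Reading off the displayed differentials one by one amounts to verifying that at each position the indices of the projectives, the arrow types in the matrix entries, and the signs forcing $d_{k-1}d_k=0$ agree with the ones listed.

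For the final assertion $\Omega^9(S_{4r+2})\simeq S_{4(r+s+5)+1}$, I would observe that the ninth syzygy is the image of the map $P_{4(r+s+4)+1}\xrightarrow{\a} P_{4(r+4)}$, and that the arrow $\a$ starting at vertex $4(r+s+4)+1$ goes to vertex $4(r+s+5)+1$; checking that every further composition out of vertex $4(r+s+5)+1$ is killed by a length-$5$ zero relation shows that this ninth syzygy is in fact the simple $S_{4(r+s+5)+1}$. The main obstacle is purely combinatorial bookkeeping at the commutativity-relation stages: one has to verify, using the shape of $\mathcal Q_s$ and the relations $\a\g\a=0$, that the direct-sum decomposition of each such syzygy is exactly the displayed one and that no spurious generators survive. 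Once those index computations are carried out carefully the exactness of the resolution at every stage follows from a short rank count in each $(Re_i, Re_j)$ component.
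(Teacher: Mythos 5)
Your overall strategy --- computing the syzygies of $S_{4r+2}$ step by step from the quiver relations, identifying the top of each syzygy and lifting it to its projective cover --- is exactly the ``direct check'' that the paper invokes for this family of lemmas, and the first eight steps of your plan are sound. However, the justification you give for the final isomorphism $\Omega^{9}(S_{4r+2})\simeq S_{4(r+s+5)+1}$ contains a genuine error. Counting the displayed terms (with $Q_0=P_{4r+2}$), the module $P_{4(r+s+4)+1}$ is the term $Q_8$ of the resolution, so $\Omega^{9}(S_{4r+2})=\Ker\bigl(P_{4(r+s+4)+1}\stackrel{\a}{\longrightarrow} P_{4(r+4)}\bigr)$, \emph{not} the image of that map; the image is $\Omega^{8}(S_{4r+2})$, namely the submodule of $P_{4(r+4)}$ generated by the arrow $4(r+4)\to 4(r+s+4)+1$, which is not simple. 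Moreover, the arrow you appeal to does not exist: by the companion lemma for $S_{4r+1}$, the unique arrow leaving a vertex of the form $4k+1$ ends at $4k+2$, so there is no arrow from $4(r+s+4)+1$ to $4(r+s+5)+1$, and the top of the module you describe is $S_{4(r+s+4)+1}$ in any case.

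The step can be repaired as follows. The ninth syzygy is the kernel of right multiplication by the arrow $\a\colon 4(r+4)\to 4(r+s+4)+1$ on $P_{4(r+s+4)+1}$; by the length-$5$ relations this kernel is spanned by the paths of maximal length out of $4(r+s+4)+1$, i.e.\ it is the socle of $P_{4(r+s+4)+1}$, which one checks to be $S_{4(r+s+5)+1}$ --- consistent with the pattern $\Omega^{2}(S_{4r+1})\simeq S_{4(r+1)+2}=\mathrm{soc}(P_{4r+2})$ and $\Omega^{2}(S_{4r+3})\simeq S_{4(r+2)}=\mathrm{soc}(P_{4(r+1)})$ visible in the neighbouring lemmas. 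With this correction the rest of your bookkeeping goes through and reproduces the verification that the paper leaves to the reader.
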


\begin{lem}\label{lem_s3}
The begin of the minimal projective resolution of $S_{4r+3}$ is of the form $$\dots\longrightarrow
P_{4(r+1)} \stackrel{\g}\longrightarrow P_{4r+3}\longrightarrow S_{4r+3}\longrightarrow 0.$$ At that
$\Omega^{2}(S_{4r+3})\simeq S_{4(r+2)}$.
\end{lem}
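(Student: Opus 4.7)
The plan is to read off the first two terms of the minimal projective resolution of $S_{4r+3}$ directly from the quiver $\mathcal Q_s$ and the ideal $I^\prime$ defining $R_s^\prime$, and then to identify $\Omega^2$ by computing the kernel of the first differential.

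First, $S_{4r+3}$ is simple at vertex $4r+3$ with projective cover $P_{4r+3}$, so $\Omega^1(S_{4r+3})=\mathrm{rad}\,P_{4r+3}$. From the shape of $\mathcal Q_s$ the only arrow with source $4r+3$ is $\g=\g_r$, with target $4(r+1)$. Hence $\mathrm{rad}\,P_{4r+3}=R\g_r$ is a cyclic left module, its projective cover is $P_{4(r+1)}$, and the cover map is right-multiplication by $\g_r$ (denoted simply $\g$ by Remark \ref{note_brev}). This yields the first differential $P_{4(r+1)}\stackrel{\g}\to P_{4r+3}$ as stated.

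Next I would identify $K=\ker\bigl(\g\colon P_{4(r+1)}\to P_{4r+3}\bigr)=\{p\in Re_{4(r+1)}:p\g_r=0\text{ in }R_s^\prime\}$. Running through a basis of $P_{4(r+1)}$ consisting of paths $p$ starting at $4(r+1)$, the only mechanisms in $I^\prime$ that can annihilate $p\g_r$ are the length-$5$ relation (a) and the zero relation (c) $\a_{3t}\g_{t-1}\a_{3(t+s)-1}=0$. A direct inspection shows that (c) does not kill any $p\g_r$ for $p$ of small length, because the arrow $\g_r$ occupies the rightmost position of $p\g_r$, whereas in the (c)-generator $\g$ is sandwiched between two $\a$'s. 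Consequently $p\g_r=0$ in $R_s^\prime$ exactly when the product has length $\ge 5$, i.e., when $p$ has length $\ge 4$ in the path algebra.

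Finally, the commutativity relation (b) with $t=r+1$ identifies the two length-$3$ $\a$-chains emanating from $4(r+1)$, and appending $\g_{r+1}$ produces in $R_s^\prime$ a single length-$4$ path $w_{4(r+1)\to 4(r+2)}$. Any further prolongation has length $5$ and therefore vanishes, so $w_{4(r+1)\to 4(r+2)}$ lies in the socle of $P_{4(r+1)}$ and is killed by every arrow acting on the left; it thus spans a copy of $S_{4(r+2)}$, giving $K\simeq S_{4(r+2)}$. The main obstacle is the bookkeeping in this last step: one must carefully verify, using (b) and (c), that all length-$4$ paths out of $4(r+1)$ collapse in $R_s^\prime$ to scalar multiples of the single socle generator, and that this generator indeed spans a simple module isomorphic to $S_{4(r+2)}$.
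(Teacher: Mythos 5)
Your argument is correct and is exactly the ``direct check'' that the paper itself invokes (its proof of Lemmas \ref{lem_s0}--\ref{lem_s3} is the one-line remark that exactness is verified immediately): you take the projective cover of $\mathrm{rad}\,P_{4r+3}=R\g_r$, observe that relation (c) cannot act on $p\g_r$ because $\g_r$ sits in the rightmost position, and conclude that the kernel of right multiplication by $\g_r$ is the one-dimensional socle $Kw_{4(r+1)\to 4(r+2)}\simeq S_{4(r+2)}$. The only point you leave implicit is that this length-$4$ socle path is actually nonzero in $R_s^\prime$ (equivalently, that the stated paths form a basis of $P_{4(r+1)}$), but this is assumed throughout the paper as well.
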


\begin{proof}
Proofs of the lemmas consist of direct check that given sequences are exact, and it is immediate.
\end{proof}
We shall need the Happel's lemma (see \cite{Ha}), as revised in \cite{Gen&Ka}:

\begin{lem}[Happel]\label{lem_Ha}
Let
$$\dots\rightarrow Q_m\rightarrow Q_{m-1}\rightarrow\dots\rightarrow
Q_1\rightarrow Q_0\rightarrow R\rightarrow 0$$ be the minimal projective resolution of $R$. Then
$$Q_m\cong\bigoplus_{i,j}P_{i,j}^{\dim\mathrm{Ext}^m_R(S_j,S_i)}.$$
\end{lem}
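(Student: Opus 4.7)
The plan is to prove Happel's lemma via the standard identification of multiplicities of indecomposable summands in a minimal projective $\Lambda$-resolution with dimensions of simple-to-simple Ext groups.

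First, I would observe that the indecomposable projective $\Lambda$-modules $P_{i,j}=\Lambda(e_i\otimes e_j)$ have simple tops $T_{i,j}:=P_{i,j}/\mathrm{rad}\,P_{i,j}$, and that these simple bimodules $T_{i,j}$ (indexed by pairs of vertices of $\mathcal Q_s$) form a complete set of pairwise non-isomorphic simple $\Lambda$-modules. Consequently, for any projective $\Lambda$-module $Q$, the multiplicity of $P_{i,j}$ in a decomposition of $Q$ into indecomposables equals $\dim_K \Hom_\Lambda(Q,T_{i,j})$. Applying this to $Q_m$ and using the minimality of the resolution (so that every induced map $d^*\colon \Hom_\Lambda(Q_{m-1},T_{i,j})\to \Hom_\Lambda(Q_m,T_{i,j})$ vanishes), one obtains
\[
\mathrm{mult}(P_{i,j},Q_m)=\dim_K \Hom_\Lambda(Q_m,T_{i,j})=\dim_K \mathrm{Ext}^m_\Lambda(R,T_{i,j}).
\]

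The second step is to identify $\mathrm{Ext}^m_\Lambda(R,T_{i,j})$ with $\mathrm{Ext}^m_R(S_j,S_i)$. The simple bimodule $T_{i,j}$ is one-dimensional over $K$ and can be identified naturally with $\Hom_K(S_j,S_i)$ as an $R$-$R$-bimodule. A standard $\otimes$--$\Hom$ adjunction argument (applied to the bar resolution of $R$ as a $\Lambda$-module), or equivalently the general identity $\HH^m(R,\Hom_K(N,L))\cong \mathrm{Ext}^m_R(N,L)$ for $R$-modules $N,L$, then yields
\[
\mathrm{Ext}^m_\Lambda(R,\Hom_K(S_j,S_i))\cong \mathrm{Ext}^m_R(S_j,S_i),
\]
which combined with the previous display gives the claimed formula.

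The main conceptual obstacle is this last identification, which requires verifying that Hochschild cohomology with coefficients in $\Hom_K(N,L)$ coincides with the one-sided group $\mathrm{Ext}^*_R(N,L)$. This is a classical fact that follows from the flatness of $R$ over $K$ (so that $\Hom_K(N,-)$ transforms a projective bimodule resolution of $R$ into an acyclic complex computing $\mathrm{Ext}^*_R(N,-)$), but some care is needed to keep track of the bimodule structure in both arguments. Once this identification is in hand, the statement is an immediate consequence of the definition of $\mathrm{Ext}$ via a minimal projective resolution, and in fact is the only ingredient specific to self-injective finite-dimensional algebras used later to pin down the dimensions of the $Q_r$ appearing in Theorem~\ref{resol_thm}.
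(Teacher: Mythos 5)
Your argument is correct and is the standard proof of Happel's lemma: identify the multiplicity of $P_{i,j}$ in $Q_m$ with $\dim_K\Hom_\Lambda(Q_m,T_{i,j})$, use minimality to replace this by $\mathrm{Ext}^m_\Lambda(R,T_{i,j})$, and then invoke the adjunction isomorphism $\mathrm{Ext}^m_\Lambda(R,\Hom_K(S_j,S_i))\cong\mathrm{Ext}^m_R(S_j,S_i)$. The paper itself supplies no proof of this lemma --- it is quoted from Happel's paper and its revision in the cited reference on M\"obius algebras --- so there is no argument in the text to compare yours against; your write-up fills that gap correctly. One minor remark: the lemma needs only that $R$ is a finite-dimensional algebra over an algebraically closed field (so that the simple $\Lambda$-modules are the $\Hom_K(S_j,S_i)$ and have trivial endomorphism rings), not that $R$ is self-injective, so your closing sentence slightly overstates what is specific to the class of algebras considered here.
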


\begin{proof}[Proof of the theorem \ref{resol_thm}]
Descriptions for $Q_i$  immediately follows from lemmas \ref{lem_s0} -- \ref{lem_s3} and Happel's
lemma.

As proved in \cite{VGI}, to prove that sequence \eqref{resolv} is exact in $Q_m$ ($m\le 11$)
it will be sufficient to show that $d_md_{m+1}=0$. It is easy to verify this relation by
a straightforward calculation of matrixes products.

Since the sequence is exact in $Q_{11}$, it follows that
$\Omega^{11}({}_\Lambda R)\simeq {}_1R_{\sigma}$, where
$\Omega^{11}({}_\Lambda R)=\Im d_{10}$ is the 11th syzygy of the
module $R$, and ${}_1R_{\sigma}$ is a twisted bimodule. Hence, an
exactness in $Q_t$ ($t>11$) holds.

\end{proof}

We recall that for $R$-bimodule $M$ the {\it twisted bimodule} is a linear
space $M$, on which left act right acts of the algebra $R$ (denoted by
asterisk) are assigned by the following way:
$$r*m*s = \lambda(r)\cdot m\cdot\mu(s) \text{ for } r,s\in R
\text{ and } m\in M,$$ where $\lambda,\mu$ are some automorphisms of algebra $R$.
Such twisted bimodule we shall denote by ${}_\lambda M_\mu$.

\begin{s}
We have isomorphism $\Omega^{11}({}_\Lambda R)\simeq {}_1R_{\sigma}$.
\end{s}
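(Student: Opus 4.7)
The statement is essentially a repackaging of Theorem \ref{resol_thm}, so the plan is to make explicit the twisted augmentation that was implicit there. By construction, $\Omega^{11}({}_\Lambda R) = \Im d_{10}$ is a submodule of $Q_{10}$, and it fits into the exact sequence
$$
Q_{12} \xrightarrow{d_{11}} Q_{11} \longrightarrow \Omega^{11}({}_\Lambda R) \longrightarrow 0.
$$
By Theorem \ref{resol_thm}, $Q_{11}$ is obtained from $Q_0$ by replacing every summand $P_{i,i}$ with $P_{\sigma(i),i}$, and $d_{11}$ is obtained from $d_0$ by applying $\sigma$ to each left tensor component. So the plan is to construct an isomorphism between $\Omega^{11}({}_\Lambda R)$ and ${}_1 R_\sigma$ by twisting the original augmentation $\varepsilon\colon Q_0\to R$.

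Concretely, I would define a $\Lambda$-homomorphism $\tilde\varepsilon\colon Q_{11}\to {}_1 R_\sigma$ summand-by-summand: on the summand $P_{\sigma(i),i}=\Lambda(e_{\sigma(i)}\otimes e_i)$ of $Q_{11}$, send the generator $e_{\sigma(i)}\otimes e_i$ to $e_{\sigma(i)}\in {}_1 R_\sigma$. The element $e_{\sigma(i)}$ is fixed by the two-sided action of $e_{\sigma(i)}\otimes e_i$ on ${}_1 R_\sigma$ (note the right action is twisted: $x\cdot e_i=x\,\sigma(e_i)=xe_{\sigma(i)}$), so the map is well-defined. On arbitrary elements it sends $a\otimes b\mapsto a\,\sigma(b)$ for $a\in e_{\sigma(i)}R$, $b\in Re_i$.

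Next I would verify the two compatibility statements. (i) $\tilde\varepsilon$ is surjective: its image contains every $e_{\sigma(i)}$ and hence all of $R$ since $\sigma$ permutes the primitive idempotents. (ii) $\tilde\varepsilon\circ d_{11}=0$, and more generally $\ker\tilde\varepsilon=\Im d_{11}$. This reduces formally to the corresponding statements for $\varepsilon\circ d_0$ and $\ker\varepsilon=\Im d_0$, because applying $\sigma$ to the left tensor component of every generator $e_i\otimes e_j$ and replacing $R$ by ${}_1R_\sigma$ is an invertible operation on the category of bimodule maps that carries the truncated sequence $Q_1\xrightarrow{d_0} Q_0\xrightarrow{\varepsilon} R\to 0$ to $Q_{12}\xrightarrow{d_{11}} Q_{11}\xrightarrow{\tilde\varepsilon} {}_1R_\sigma\to 0$; exactness of the former (Theorem \ref{resol_thm}) gives exactness of the latter.

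Combining (i) and (ii) yields ${}_1R_\sigma\simeq\operatorname{coker} d_{11}\simeq \Im d_{10}=\Omega^{11}({}_\Lambda R)$, which is the desired isomorphism. There is no real obstacle here beyond bookkeeping: the only point that needs genuine verification is that the formal ``apply $\sigma$ to left components'' operation carries $\varepsilon$ to $\tilde\varepsilon$ in a way compatible with the differentials, and this is immediate from the explicit definition of $\tilde\varepsilon$ on generators together with the description of $d_{11}$ in Theorem \ref{resol_thm}.
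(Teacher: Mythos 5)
Your proof is correct and takes essentially the same route the paper intends: the paper dispatches this corollary in a single sentence inside the proof of Theorem \ref{resol_thm} (exactness at $Q_{11}$ plus the twisted form of $Q_{11}$ and $d_{11}$), and your twisted augmentation $\tilde\varepsilon(a\otimes b)=a\,\sigma(b)$ --- which is just the image of $\varepsilon$ under the exact equivalence ${}_1R_\sigma\otimes_R-$ --- is the standard way to make that deduction precise. Only a cosmetic slip: in $P_{\sigma(i),i}=Re_{\sigma(i)}\otimes e_iR$ the left tensor factor lies in $Re_{\sigma(i)}$ and the right one in $e_iR$, not the other way around as written.
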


\begin{pr}
Automorphism $\sigma$ has a finite order, and

$(1)$ if $\myChar=2$, then order of $\sigma$ is equal to $\frac{2s}{\myNod(n+s,2s)}$;

$(2)$ if $\myChar\ne 2$, then order of $\sigma$ is equal to $\frac{2s}{\myNod(n+s,2s)}$, if
$\frac{2s}{\myNod(n+s,2s)}$ is divisible by $4$, and to $\frac{4s}{\myNod(n+s,2s)}$ otherwise.
\end{pr}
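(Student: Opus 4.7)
The plan is to separate the action of $\sigma$ on idempotents from its action on arrows. First, on the idempotents $\{e_i\}_{i\in\Z_{(n+2)s}}=\{e_i\}_{i\in\Z_{8s}}$ the automorphism $\sigma$ acts as the cyclic shift $i\mapsto i+4(n+s)$, whose order is
\[
\frac{8s}{\myNod(4(n+s),8s)}=\frac{2s}{\myNod(n+s,2s)}=M_0.
\]
Thus the order of $\sigma$ on the whole algebra is automatically a positive multiple of $M_0$, and it remains to decide whether $\sigma^{M_0}=\mathrm{id}$ or whether one must pass to $\sigma^{2M_0}$.

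Second, for every arrow $x\in\{\a_i,\g_i\}$ the definition of $\sigma$ gives $\sigma(x)=\varepsilon(x)\,y$ with $\varepsilon(x)\in\{\pm 1\}$ and $y$ itself an arrow, so by iteration
\[
\sigma^{M_0}(x)=\left(\prod_{j=0}^{M_0-1}\varepsilon(\sigma^{j}(x))\right)x,
\]
the vertex index having returned to its start. If $\myChar=2$, all the $\varepsilon$-factors are trivial, hence $\sigma^{M_0}=\mathrm{id}$ and the order equals $M_0$, which is exactly statement $(1)$.

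For $(2)$, with $\myChar\ne 2$, I would evaluate the sign product along the $\sigma$-orbit of each arrow separately for $\g$- and $\a$-arrows, using the explicit rules in the definition of $\sigma$: for $\g_i$ the factor at step $j$ is $-1$ unless $(i+jn)_s=s-1$, while for $\a_i$ it depends on $(i+3j(n+s))_3$ and on whether $(i+3j(n+s))_{6s}$ lies below or above $3s$. A direct count along the orbit shows that the resulting sign depends only on the parity of $M_0/2$: when $4\mid M_0$ the product is $+1$, so $\sigma^{M_0}=\mathrm{id}$ and the order is $M_0$; when $4\nmid M_0$ one obtains $-1$, so $\sigma^{M_0}\ne\mathrm{id}$ but $\sigma^{2M_0}=\mathrm{id}$, yielding order $2M_0=\frac{4s}{\myNod(n+s,2s)}$.

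The main obstacle is this combinatorial bookkeeping of signs. The computation simplifies once one observes that the sequences $(i+jn)_s$ and $(i+3j(n+s))_{6s}$, for $j=0,\dots,M_0-1$, form periodic sub-orbits in $\Z_s$ and $\Z_{6s}$ respectively; the number of indices $j$ attaining each ``exceptional'' residue can be read off directly from $M_0$ and $\myNod(n+s,2s)$, and one verifies that the net sign flip is controlled precisely by the $2$-adic valuation of $M_0$. This reduces the proposition to a parity check, from which the two cases follow.
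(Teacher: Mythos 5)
The paper states this proposition without proof, so there is nothing to compare your argument against; judged on its own, your overall strategy is the right one: the order of the vertex permutation is $M_0=\frac{2s}{\myNod(n+s,2s)}$, then $\sigma^{M_0}$ is diagonal with entries $\pm 1$ on the arrows, so the order is $M_0$ or $2M_0$, and everything reduces to a sign count. Your part $(1)$ is essentially complete. But two points that carry real content are only asserted. First, the dichotomy ``$\sigma^{M_0}(x)=\pm x$'' needs the underlying permutation of \emph{arrows} to have order dividing $M_0$. For the $\a_i$ this is clear ($M_0\cdot 3(n+s)\equiv 0 \pmod{6s}$), but for the $\g_i$ the shift is by $n$, and $M_0 n$ is \emph{not} $\equiv 0 \pmod{2s}$ in general (e.g.\ $s=2$, where $M_0=1$ but $n=6\equiv 2\pmod 4$); the statement only survives because a $\g$-arrow is determined by its endpoints, so the $\g$-indices are effectively taken modulo $s$, and $\myNod(n+s,2s)\mid 2n$ gives $s\mid M_0 n$. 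You gloss over this with ``the vertex index having returned to its start.''

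Second, the sign count itself --- the entire substance of part $(2)$ --- is replaced by the claim that ``the resulting sign depends only on the parity of $M_0/2$,'' which is not an accurate description of what the three families of arrows actually produce. Along its orbit, an $\a_i$ with $(i)_3=1$ picks up $(-1)^{M_0}$ (this is what forces order $2M_0$ when $M_0$ is odd, a case your phrase does not even cover); an $\a_i$ with $(i)_3\in\{0,2\}$ picks up $(-1)^{M_0/2}$ when $M_0$ is even, because $\myNod(n+s,2s)\mid s$ in that case and translation by $3s$ then halves the orbit between $[0,3s)$ and $[3s,6s)$ (this rules out $M_0\equiv 2\pmod 4$); and a $\g_i$ picks up $(-1)^{c_i}$ with $c_i\in\bigl\{0,\,2\myNod(6,s)/\myNod(6+s,2s)\bigr\}$, whose evenness when $4\mid M_0$ requires the extra observation that $\myNod(6+s,2s)$ divides both $s$ and $6+s$, hence divides $\myNod(6,s)$. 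Only after verifying that all three families simultaneously return $+1$ exactly when $4\mid M_0$ is the proposition proved; as written, your argument stops just short of the computation that distinguishes the two cases of $(2)$.
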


\begin{pr}
The minimal period of bimodule resolution of $R$ is $11\deg\sigma$.
\end{pr}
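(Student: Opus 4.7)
The plan is to reduce the statement to the preceding corollary $\Omega^{11}({}_\Lambda R)\simeq {}_1R_\sigma$ combined with the computation of $\deg\sigma$ in the previous proposition, and then to handle the minimality of the period separately.

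First, to see that $11\deg\sigma$ is a period, I would argue by induction. Since tensoring with ${}_1R_\sigma$ is an exact autoequivalence of $\Lambda$-$\mathrm{mod}$ that commutes with syzygies,
$$\Omega^{11k}({}_\Lambda R)\simeq \Omega^{11}(R)\otimes_R\cdots\otimes_R\Omega^{11}(R)\simeq {}_1R_{\sigma^k}$$
for every $k\ge 1$. Taking $k=\deg\sigma$ we get $\sigma^k=\mathrm{id}$, so $\Omega^{11\deg\sigma}(R)\simeq R$ and by uniqueness of the minimal projective resolution, the minimal period divides $11\deg\sigma$.

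For minimality, let $p$ be the minimal period. The first step is to show $11\div p$. I would do this by inspecting the explicit terms $Q_0,Q_1,\dots,Q_{10}$ constructed in Section \ref{sect_res}: a direct comparison of the multisets of indecomposable summands $P_{i,j}$ (together with the multiplicities coming from the direct sum over $r$) shows these bimodules are pairwise non-isomorphic, so the resolution cannot start repeating before step $11$. Writing $p=11k$, the isomorphism $\Omega^p(R)\simeq R$ becomes ${}_1R_{\sigma^k}\simeq R$ as $\Lambda$-modules, which is equivalent to $\sigma^k$ being an inner automorphism of $R$.

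Next, since $R$ is basic with pairwise non-isomorphic simple modules, every inner automorphism fixes each primitive idempotent: if $ue_iu^{-1}=e_j$, then $Re_j\cong Re_iu^{-1}\cong Re_i$, forcing $i=j$. Applied to $\sigma^k$, this yields $4k(n+s)\equiv 0\pmod{(n+2)s}$, i.e.\ $k\div M_0$, where $M_0=\frac{2s}{\myNod(n+s,2s)}$. In the case $\myChar=2$ or $M_0\div 4$, the preceding proposition gives $\deg\sigma=M_0$, so $k\ge M_0=\deg\sigma$ and we are done. In the remaining case $\deg\sigma=2M_0$ and we must exclude $k=M_0$: here $\sigma^{M_0}$ fixes every $e_i$ but is not the identity, and we must show it fails to be inner. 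Any inner automorphism fixing all $e_i$ is conjugation by an element of $\bigoplus_i e_iRe_i$, and its action modulo the radical rescales each arrow $\alpha\colon i\to j$ by a scalar $c_jc_i^{-1}$ with $c_i\in K^*$; the sign pattern it produces on the arrows is therefore a coboundary on the quiver. I would exhibit an oriented cycle on $\mathcal Q_s$ along which the total sign introduced by $\sigma^{M_0}$ equals $-1$, giving a non-trivial element in $H^1(\mathcal Q_s,\{\pm 1\})$ and contradicting innerness. Combined with the previous step, this forces $k\ge 2M_0=\deg\sigma$, so $p\ge 11\deg\sigma$, and equality holds.

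The main obstacle is this last step: carefully tracking the signs in the definition of $\sigma$ on each family of arrows $\a_i$, $\g_i$ through $M_0$ iterations, and pinpointing a cycle on which the total sign is genuinely $-1$ rather than a rescaling coboundary. This amounts to separating $\mathrm{Aut}(R)/\mathrm{Inn}(R)$ from $\mathrm{Aut}(R)$ precisely at the order-$2$ ambiguity, and it is exactly the subtlety reflected in the case split of the previous proposition between $M_0\div 4$ and the other cases.
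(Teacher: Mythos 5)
The paper states this proposition with no proof at all (it follows the proposition on the order of $\sigma$ without a \verb|proof| environment), so there is nothing of the author's to compare your argument against; I can only judge the proposal on its own terms. Your reduction is the right one and most of it is sound: $\Omega^{11}({}_\Lambda R)\simeq{}_1R_{\sigma}$ gives $\Omega^{11k}({}_\Lambda R)\simeq{}_1R_{\sigma^k}$; a twisted bimodule ${}_1R_{\mu}$ is isomorphic to $R$ exactly when $\mu$ is inner; periods not divisible by $11$ are excluded; and ``$\sigma^k$ inner forces $M_0\mid k$'' via the action on primitive idempotents of a basic algebra is correct. (Two small remarks on the $11\mid p$ step: pairwise non-isomorphism of $Q_0,\dots,Q_{10}$ is both more than you need and not obviously true --- for instance the invariant $\dim_K\Hom_\Lambda(Q_t,R)$ is literally the same for $r\in\{3,5,7\}$ --- and by itself it only rules out periods $p\le 10$. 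The clean statement is that the number of indecomposable summands of $Q_t$ depends only on $t$ modulo $11$ and is non-constant, so any period $p$ with $\gcd(p,11)=1$ would force it to be constant, a contradiction.)

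The genuine gap is the final step. Everything you have actually proved shows only that the minimal period is $11k$ for some $k$ with $M_0\mid k$ and $k\mid\deg\sigma$; this settles the case $\myChar=2$ or $M_0\div 4$, where $\deg\sigma=M_0$, but in the remaining case the entire content of the proposition --- the factor of $2$ separating $11M_0$ from $22M_0=11\deg\sigma$ --- rests on showing that $\sigma^{M_0}$ is \emph{not} inner, and you have only described how you would attack this, not done it. Your cocycle framework is correct: an inner automorphism fixing every $e_i$ is conjugation by a unit $u\in\bigoplus_ie_iRe_i$, hence acts on each arrow $i\to j$ modulo $\mathrm{rad}^2$ by $c_jc_i^{-1}$, so a closed walk on which the signs of $\sigma^{M_0}$ multiply to $-1$ would rule out innerness (here $\myChar\ne 2$, so $-1\ne 1$). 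But you must actually produce such a walk from the explicit sign table defining $\sigma$ (the case split on $(i)_3$, $(i)_{6s}$ and $(i)_s$) and verify that after $M_0$ iterations the accumulated signs along it do not telescope to $+1$. Until that computation is exhibited, the hard half of the proposition remains unproved.
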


\section{The additive structure of $\HH^*(R)$}

\begin{pr}[Dimensions of homomorphism groups, $s>1$]\label{dim_hom}
Let $s>1$ and $R=R_s^\prime$ is algebra of the type $E_6$. Next, $t\in\N\cup\{0\}$,
$\ell$ be the aliquot, and $r$ be the residue of division of $t$ by $11$.

$(1)$ If $r=0$, then $$\dim_K\Hom_\Lambda(Q_{deg}, R)=\begin{cases}
 6s,\quad \ell (n+s)+m\equiv 0(2s)\text{ or }\ell (n+s)+m\equiv 1(2s);\\
 2s,\quad \ell (n+s)+m\equiv s(2s)\text{ or }\ell (n+s)+m\equiv s+1(2s);\\
 0,\quad\text{otherwise.}
 \end{cases}$$

$(2)$ If $r=1$, then $$\dim_K\Hom_\Lambda(Q_{deg}, R)=\begin{cases}
 7s,\quad \ell (n+s)+m\equiv 0(2s);\\
 5s,\quad \ell (n+s)+m\equiv s(2s);\\
 0,\quad\text{otherwise.}
 \end{cases}$$

$(3)$ If $r\in\{2,8\}$, then $$\dim_K\Hom_\Lambda(Q_{deg}, R)=\begin{cases}
 3s,\quad \ell (n+s)+m\equiv 0(2s)\text{ or }\ell (n+s)+m\equiv s+1(2s);\\
 s,\quad \ell (n+s)+m\equiv s(2s)\text{ or }\ell (n+s)+m\equiv 1(2s);\\
 0,\quad\text{otherwise.}
 \end{cases}$$

$(4)$ If $r\in\{3,5,7\}$, then $$\dim_K\Hom_\Lambda(Q_{deg}, R)=\begin{cases}
 8s,\quad \ell n+m\equiv 0(s);\\
 0,\quad\text{otherwise.}
 \end{cases}$$

$(5)$ If $r=4$, then $$\dim_K\Hom_\Lambda(Q_{deg}, R)=\begin{cases}
 2s,\quad \ell (n+s)+m\equiv 0(2s);\\
 6s,\quad \ell (n+s)+m\equiv s(2s);\\
 5s,\quad \ell (n+s)+m\equiv 1(2s);\\
 7s,\quad \ell (n+s)+m\equiv s+1(2s);\\
 0,\quad\text{otherwise.}
 \end{cases}$$

$(6)$ If $r=6$, then $$\dim_K\Hom_\Lambda(Q_{deg}, R)=\begin{cases}
 7s,\quad \ell (n+s)+m\equiv 0(2s);\\
 5s,\quad \ell (n+s)+m\equiv s(2s);\\
 6s,\quad \ell (n+s)+m\equiv 1(2s);\\
 2s,\quad \ell (n+s)+m\equiv s+1(2s);\\
 0,\quad\text{otherwise.}
 \end{cases}$$

$(7)$ If $r=9$, then $$\dim_K\Hom_\Lambda(Q_{deg}, R)=\begin{cases}
 5s,\quad \ell (n+s)+m\equiv 0(2s);\\
 7s,\quad \ell (n+s)+m\equiv s(2s);\\
 0,\quad\text{otherwise.}
 \end{cases}$$

$(8)$ If $r=10$, then $$\dim_K\Hom_\Lambda(Q_{deg}, R)=\begin{cases}
 2s,\quad \ell (n+s)+m\equiv 0(2s)\text{ or }\ell (n+s)+m\equiv 1(2s);\\
 6s,\quad \ell (n+s)+m\equiv s(2s)\text{ or }\ell (n+s)+m\equiv s+1(2s);\\
 0,\quad\text{otherwise.}
 \end{cases}$$

\end{pr}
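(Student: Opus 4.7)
The plan is to reduce the computation of $\dim_K\Hom_\Lambda(Q_{t},R)$ to a combinatorial count of paths in the quiver $\mathcal Q_s$, using the explicit description of $Q_r$ ($r\le 10$) given before Theorem~\ref{resol_thm} together with the twisting rule $Q_{11\ell+r}=\sigma^{\ell}(Q_r)$. Since each direct summand of $Q_r$ has the form $P_{i,j}=\Lambda(e_i\otimes e_j)$, adjunction gives
\[
\Hom_\Lambda(P_{i,j},R)\cong e_iRe_j,
\]
and therefore $\Hom_\Lambda(P_{\sigma^\ell(i),j},R)\cong e_{\sigma^\ell(i)}Re_j$. Summing over the summands listed in the formulas for $Q^{\prime}_{2m,r}$ and $Q^{\prime}_{2m+1,r}$, the proposition reduces to computing $\dim_K e_{\sigma^\ell(i)}Re_j$ for each pair of indices appearing in $Q_r$, and then collecting terms by the residue of $\ell(n+s)+m$ modulo $2s$.

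First I would record, once and for all, a table of $\dim_K e_kRe_j$ for all relevant pairs $(k,j)$ in the quiver of $R=R_s^\prime$. Because $I^\prime$ kills all paths of length $5$ and identifies parallel compositions $\a^3$ on the two copies of each cycle via the relations of item~(b), the nonzero values of $\dim e_k Re_j$ are small integers (typically $0$ or $1$, rising to $2$ only when the two $\a^3$-paths merge at a gluing vertex). This dictionary is then applied mechanically. Next, I would observe that $\sigma^\ell$ acts on the vertex labels by $i\mapsto i+4\ell(n+s)\pmod{(n+2)s}$, so whether a given summand $P_{\sigma^\ell(i),j}$ contributes to $\Hom_\Lambda(Q_t,R)$ is governed entirely by the residue class of $4\ell(n+s)+i-j$ modulo $(n+2)s=8s$; after reducing modulo the quiver's inner periodicity this amounts to a condition on $\ell(n+s)+m\pmod{2s}$, which is exactly the type of condition appearing in the statement.

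I would then carry out the case analysis $r=0,1,\dots,10$ in the order listed. For each $r$ the formula for $Q_r$ gives a fixed number of summands, each parametrized by $r\in[0,s-1]$ and by auxiliary indices $i,j$ in small ranges. For each summand I substitute the left index $i\mapsto \sigma^\ell(i)$, read off $\dim_K e_{\sigma^\ell(i)}Re_j$ from the dictionary, and sum over $r$. The sum splits according to which of the four residues $0,\,s,\,1,\,s+1\pmod{2s}$ the quantity $\ell(n+s)+m$ falls into, because these are the only residues for which any of the displacements $\sigma^\ell(i)-j$ arising in $Q_r$ produce a nonzero path. The four values $2s,3s,5s,6s,7s,8s$ appearing in the right-hand sides of (1)--(8) arise as the number of summands of $Q_r$ (times $s$) that simultaneously satisfy the reachability condition under the shift. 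The parity split between $m=0$ and $m=1$ (odd versus even degree within a period) is handled uniformly, since the formulas for $Q_{2m}$ and $Q_{2m+1}$ are analogous modulo a reindexing that corresponds to $m\mapsto m+1$.

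The main obstacle is purely bookkeeping: there are eight values of $r$ (grouped as $\{0\},\{1\},\{2,8\},\{3,5,7\},\{4\},\{6\},\{9\},\{10\}$ in the statement, reflecting the symmetry of $Q_r$), and for each of them typically four to six summands whose contribution must be tallied over the residue classes of $\ell(n+s)+m\pmod{2s}$. The groupings in (3) and (4) are explained by the fact that the underlying sets of left indices of $Q_r$ coincide up to a uniform shift by $\sigma$ within each group, so that a single computation covers all members of the group. Once the dictionary of path dimensions and the single identity $\sigma(e_i)=e_{4(n+s)+i}$ are in hand, verifying each of items (1)--(8) is a direct arithmetic check, and I would present only one representative case (say $r=4$) in detail, remarking that all remaining cases follow by an entirely analogous calculation.
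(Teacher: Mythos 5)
Your proposal is correct and follows essentially the same route as the paper: the paper's proof consists precisely of the observation that $\dim_K\Hom_\Lambda(P_{i,j},R)$ equals the number of linearly independent nonzero paths from the $j$th vertex to the $i$th, followed by the case-by-case tally over $r=0,1,\dots,10$. Your write-up merely makes explicit the bookkeeping (the adjunction $\Hom_\Lambda(P_{i,j},R)\cong e_iRe_j$, the shift action of $\sigma^\ell$ on left indices, and the grouping of residues of $\ell(n+s)+m$ modulo $2s$) that the paper leaves implicit.
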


\begin{proof}
The dimension $\dim_K\Hom_\Lambda(P_{i,j}, R)$ is equal to the number of linear independent nonzero
paths of the quiver $\mathcal{Q}_s$, leading from $j$th vertex to $i$th, and the proof is to
consider cases $r=0$, $r=1$ etc.
\end{proof}

\begin{pr}[Dimensions of homomorphism groups, $s=1$]

Let $R=R_1^\prime$ is algebra of the type $E_6$. Next, $t\in\N\cup\{0\}$,
$\ell$ be the aliquot, and $r$ be the residue of division of $t$ by $11$.

$(1)$ If $r\in\{0,3,5,7,10\}$, then $\dim_K\Hom_\Lambda(Q_{deg}, R)=8$.

$(2)$ If $r=1$, then $$\dim_K\Hom_\Lambda(Q_{deg}, R)=\begin{cases}
 7,\quad\ell\div 2;\\5\quad\ell\ndiv 2.\end{cases}$$

$(3)$ If $r=2$, then $$\dim_K\Hom_\Lambda(Q_{deg}, R)=\begin{cases}
 2,\quad\ell\div 2;\\6\quad\ell\ndiv 2.\end{cases}$$

$(4)$ If $r=4$, then $$\dim_K\Hom_\Lambda(Q_{deg}, R)=\begin{cases}
 9,\quad\ell\div 2;\\11\quad\ell\ndiv 2.\end{cases}$$

$(5)$ If $r=6$, then $$\dim_K\Hom_\Lambda(Q_{deg}, R)=\begin{cases}
 11,\quad\ell\div 2;\\9\quad\ell\ndiv 2.\end{cases}$$

$(6)$ If $r=8$, then $$\dim_K\Hom_\Lambda(Q_{deg}, R)=\begin{cases}
 6,\quad\ell\div 2;\\2\quad\ell\ndiv 2.\end{cases}$$

$(7)$ If $r=9$, then $$\dim_K\Hom_\Lambda(Q_{deg}, R)=\begin{cases}
 5,\quad\ell\div 2;\\7\quad\ell\ndiv 2.\end{cases}$$

\end{pr}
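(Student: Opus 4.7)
The plan is to reproduce, case by case, the strategy used in the proof of the previous proposition: namely, the dimension
$$\dim_K\Hom_\Lambda(P_{i,j},R) \;=\; \dim_K e_i R e_j$$
equals the number of linearly independent nonzero paths in $\mathcal{Q}_1$ from the $j$th vertex to the $i$th, computed modulo the ideal $I'$. Since $\Hom_\Lambda$ is additive on direct sums, it suffices to decompose each $Q_{11\ell+r}$ into its summands $P_{i,j}$ and count paths for every pair.

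First, for each residue $r\in\{0,1,\dots,10\}$ I would write down the indecomposable summands of $Q_r$ by specializing the formulas of Section~\ref{sect_res} to $s=1$. When $s=1$ many of the direct sums indexed by $r=0,\dots,s-1$ or $j=0,1$ collapse or identify, and one obtains an explicit short list of pairs $(i,j)$ for each~$r$. Recall that $Q_{11\ell+r}$ is obtained from $Q_r$ by replacing every summand $P_{i,j}$ by $P_{\sigma^\ell(i),j}$; in particular, only the first index is twisted by $\sigma^\ell$, so the combinatorics reduces to understanding the orbits of $\sigma$ on the vertex set $\Z_8$.

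Second, for each pair $(i,j)$ arising in a summand I would enumerate the paths from $j$ to $i$ subject to the defining relations of $R_1'$: all paths of length $5$ vanish, the commutativity relation $\a_{3t+2}\a_{3t+1}\a_{3t}=\a_{3(t+1)+2}\a_{3(t+1)+1}\a_{3(t+1)}$ identifies certain length-$3$ paths, and $\a_{3t}\g_{t-1}\a_{3(t+1)-1}=0$ kills certain length-$3$ compositions. For each source/target pair the resulting count is a small integer, which I tabulate. Summing over the summands in $Q_r$ produces $\dim_K\Hom_\Lambda(Q_r,R)$.

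Third, I would combine with the action of $\sigma^\ell$. For $s=1$ the order of $\sigma$ (and hence the relevant periodicity on vertices) splits the analysis into two cases according to $\ell\div 2$ or $\ell\ndiv 2$; these are precisely the two congruence classes of $\ell(n+s)+m\pmod{2s}=\pmod{2}$ that appeared in the $s>1$ statement, collapsed because $2s=2$. Cases $(1)$ and $(4)$ (where the answer does not depend on $\ell$) correspond to those $r$ whose summands form a $\sigma$-invariant multiset of isomorphism types; cases $(2),(3),(5)$--$(7)$ correspond to those $r$ where $\sigma$ genuinely exchanges two types of summand and so switches the count when $\ell$ changes parity.

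The main obstacle is purely bookkeeping: carefully writing out which pairs $(i,j)$ occur in each $Q_r$ for $s=1$ (several indices coalesce modulo~$8$) and verifying the path counts in $\mathcal{Q}_1$ modulo~$I'$, in particular checking that the commutativity relations and the vanishing relations of class (b) are applied correctly at the boundary indices where the collapse at $s=1$ creates coincidences between arrows. Once these tabulations are done, the eight dimensions listed in the statement follow by direct summation and the $\sigma^\ell$-twist.
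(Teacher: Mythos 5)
Your plan is exactly the paper's own argument: the paper proves this proposition by noting that $\dim_K\Hom_\Lambda(P_{i,j},R)$ equals the number of linearly independent nonzero paths from vertex $j$ to vertex $i$ in $\mathcal{Q}_s$ modulo the relations, and then tabulating case by case over $r$ (the $s=1$ statement is proved "basically the same" as the $s>1$ one, with the $\sigma^\ell$-twist accounting for the parity dependence on $\ell$). No gaps; this is the intended bookkeeping proof.
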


\begin{proof}
The proof is basically the same as proof of proposition \ref{dim_hom}.
\end{proof}

\begin{pr}[Dimensions of coboundaries groups]\label{dim_im}
Let $R=R_s^\prime$ is algebra of the type $E_6$, and let
\begin{equation}\tag{$\times$}\label{ind_resolv} 0\longrightarrow
\Hom_\Lambda(Q_0, R)\stackrel{\delta^0}\longrightarrow \Hom_\Lambda(Q_1,
R)\stackrel{\delta^1}\longrightarrow \Hom_\Lambda(Q_2, R)\stackrel{\delta^2}\longrightarrow\dots
\end{equation} be a complex, obtained from minimal projective
resolution \eqref{resolv} of algebra $R$, by applying functor
$\Hom_\Lambda(-,R)$.

Consider coboundaries groups $\Im\delta^s$ of the complex
\eqref{ind_resolv}. Let $\ell$ be the aliquot, and $r$ be the
residue of division of $t$ by $11$, $m$ be the aliquot of
division of $r$ by $2$. Then$:$

$(1)$ If $r=0$, то $$\dim_K\Im\delta^s=\begin{cases}
 6s-1,\quad\ell(n+s)+m\equiv 0(2s),\text{ }\ell\div 2\text{ or }\myChar=2;\\
 6s,\quad\ell(n+s)+m\equiv 0(2s),\text{ }\ell\ndiv 2,\text{ }\myChar\ne 2;\\
 2s,\quad\ell(n+s)+m\equiv s(2s);\\
 0,\quad\text{otherwise.}
 \end{cases}$$

$(2)$ If $r=1$, то $$\dim_K\Im\delta^s=\begin{cases}
 s,\quad\ell(n+s)+m\equiv 0(2s);\\
 3s-1,\quad\ell(n+s)+m\equiv s(2s),\text{ }\ell\ndiv 2\text{ or }\myChar=2;\\
 3s,\quad\ell(n+s)+m\equiv s(2s),\text{ }\ell\div 2,\text{ }\myChar\ne 2;\\
 0,\quad\text{otherwise.}
 \end{cases}$$

$(3)$ If $r=2$, то $$\dim_K\Im\delta^s=\begin{cases}
 3s,\quad\ell(n+s)+m\equiv 0(2s);\\
 s,\quad\ell(n+s)+m\equiv s(2s);\\
 0,\quad\text{otherwise.}
 \end{cases}$$

$(4)$ If $r=3$, то $$\dim_K\Im\delta^s=\begin{cases}
 5s-1,\quad\ell(n+s)+m\equiv 0(2s);\\
 7s-1,\quad\ell(n+s)+m\equiv s(2s),\text{ }\myChar=2;\\
 7s,\quad\ell(n+s)+m\equiv s(2s),\text{ }\myChar\ne 2;\\
 0,\quad\text{otherwise.}
 \end{cases}$$

$(5)$ If $r=4$, то $$\dim_K\Im\delta^s=\begin{cases}
 2s,\quad\ell(n+s)+m\equiv 0(2s);\\
 6s-1,\quad\ell(n+s)+m\equiv s(2s),\text{ }\ell\ndiv 2,\text{ }\myChar=3;\\
 6s,\quad\ell(n+s)+m\equiv s(2s),\text{ }\ell\div 2\text{ or }\myChar\ne 3;\\
 0,\quad\text{otherwise.}
 \end{cases}$$

$(6)$ If $r=5$, то $$\dim_K\Im\delta^s=\begin{cases}
 6s-1,\quad\ell(n+s)+m\equiv 0(2s),\text{ }\ell\div 2,\text{ }\myChar=3;\\
 6s,\quad\ell(n+s)+m\equiv 0(2s),\text{ }\ell\ndiv 2\text{ or }\myChar\ne 3;\\
 2s,\quad\ell(n+s)+m\equiv s(2s);\\
  0,\quad\text{otherwise.}
 \end{cases}$$

$(7)$ If $r=6$, то $$\dim_K\Im\delta^s=\begin{cases}
 7s-1,\quad\ell(n+s)+m\equiv 0(2s),\text{ }\myChar=2;\\
 7s,\quad\ell(n+s)+m\equiv 0(2s),\text{ }\myChar\ne 2;\\
 5s-1,\quad\ell(n+s)+m\equiv s(2s);\\
 0,\quad\text{otherwise.}
 \end{cases}$$

$(8)$ If $r=7$, то $$\dim_K\Im\delta^s=\begin{cases}
 s,\quad\ell(n+s)+m\equiv 0(2s);\\
 3s,\quad\ell(n+s)+m\equiv s(2s);\\
 0,\quad\text{otherwise.}
 \end{cases}$$

$(9)$ If $r=8$, то $$\dim_K\Im\delta^s=\begin{cases}
 3s-1,\quad\ell(n+s)+m\equiv 0(2s),\text{ }\ell\div 2\text{ or }\myChar=2;\\
 3s,\quad\ell(n+s)+m\equiv 0(2s),\text{ }\ell\ndiv 2,\text{ }\myChar\ne 2;\\
 s,\quad\ell(n+s)+m\equiv s(2s);\\
 0,\quad\text{otherwise.}
 \end{cases}$$

$(10)$ If $r=9$, то $$\dim_K\Im\delta^s=\begin{cases}
 2s,\quad\ell(n+s)+m\equiv 0(2s);\\
 6s-1,\quad\ell(n+s)+m\equiv s(2s),\text{ }\ell\ndiv 2\text{ or }\myChar=2;\\
 6s,\quad\ell(n+s)+m\equiv s(2s),\text{ }\ell\div 2,\text{ }\myChar\ne 2;\\
 0,\quad\text{otherwise.}
 \end{cases}$$

$(11)$ If $r=10$, то $$\dim_K\Im\delta^s=\begin{cases}
 2s-1,\quad\ell(n+s)+m\equiv 0(2s),\text{ }\ell\ndiv 2,\text{ }\myChar=3;\\
 2s,\quad\ell(n+s)+m\equiv 0(2s),\text{ }\ell\div 2\text{ or }\myChar\ne 3;\\
 6s,\quad\ell(n+s)+m\equiv s(2s);\\
 0,\quad\text{otherwise.}
 \end{cases}$$

\end{pr}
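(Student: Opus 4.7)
The plan is to compute $\dim_K\Im\delta^s$ directly via rank--nullity applied to the cochain differential $\delta^s=\Hom_\Lambda(d_s,R)$, using the fact that $\dim_K\Hom_\Lambda(Q_s,R)$ has already been determined in Proposition \ref{dim_hom}. Since the resolution \eqref{resolv} is $11$-periodic up to twist by $\sigma$ (Theorem \ref{resol_thm}), it suffices to carry out the computation for each residue $r\in\{0,1,\dots,10\}$, keeping track of how the twist $\sigma^\ell$ affects signs in the matrices. Thus we reduce the problem to evaluating, for each $r$, the rank of an explicit matrix whose entries are given by the formulas for $d_r$ stated in Section \ref{sect_res}.

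First I would fix $r$ and write down a $K$-basis of $\Hom_\Lambda(Q_r,R)$: by the standard identification, a homomorphism $P_{i,j}\to R$ corresponds to a nonzero path in $R$ from $j$ to $i$, so a basis is furnished by the finitely many such paths permitted by the ideal $I'$. Then I would compute $\delta^s=d_s^\ast$ on each basis element $\varphi\colon Q_s\to R$ as $\varphi\circ d_s$, using the explicit matrix entries $w\otimes e_\ast$ and $e_\ast\otimes w$ of $d_s$ and the multiplication in $R$. The dimension $\dim_K\Hom_\Lambda(Q_{deg},R)$ vanishes unless $\ell(n+s)+m$ lies in the residue classes $0,s,1,s+1\pmod{2s}$ listed in Proposition \ref{dim_hom}, so in each case only a small number of basis paths is available, and the rank is bounded above by the minimum of source and target dimensions.

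Next, for each residue class of $\ell(n+s)+m$ modulo $2s$, I would organise the basis paths according to the idempotent pair $(i,j)$ and read off the image vectors from the formulas for $d_r$. The matrix $\delta^s$ decomposes into blocks indexed by these idempotent pairs, and within each block the rank is determined by obvious linear relations: namely, the commutation relations $\a_{3t+2}\a_{3t+1}\a_{3t}=\a_{3(t+s)+2}\a_{3(t+s)+1}\a_{3(t+s)}$ of $R_s'$ and the vanishing relations $\a_{3t}\g_{t-1}\a_{3(t+s)-1}=0$ and all paths of length $5$. The twist by $\sigma^\ell$ contributes signs of the form $(-1)^\ell$ on certain summands; these signs decide whether two otherwise equal columns of the matrix add or cancel. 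This is precisely where the conditions ``$\ell\div 2$'' vs. ``$\ell\ndiv 2$'' enter, and where $\myChar=2$ or $\myChar=3$ appears (a sum of two equal generators vanishes in $\myChar=2$, a sum of three in $\myChar=3$, as in the coefficient $-s$ or $s$ appearing in relations (r2), (r4), (r6), (r7)).

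The main obstacle will be the bookkeeping needed to verify, in each of the residue classes and for each combination of parity of $\ell$ and characteristic, that the rank drops by exactly $1$ (producing the ``$-1$'' corrections $6s-1$, $3s-1$, $5s-1$, $7s-1$, $2s-1$ in the statement) when one nontrivial linear relation among the columns of $\delta^s$ appears, and otherwise stays at its maximal value. I would organise this by tabulating the image of each standard basis element under $\delta^s$ for each $r$ and each case, then isolate the exceptional coincidences of images in closed form. Once the rank is established in every case, the statement follows directly; no further input from cohomology is required beyond the explicit resolution of Theorem \ref{resol_thm} and the path combinatorics of $R_s'$.
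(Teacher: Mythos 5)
Your plan coincides with the paper's own proof, which likewise consists of assembling the matrix of $\delta^s$ from the explicit formulas for the differentials $d_r$ and computing its rank case by case over the residues $r$ and the congruence classes of $\ell(n+s)+m$ modulo $2s$. The extra detail you give about the path basis, the block decomposition by idempotent pairs, and the role of the sign twist $\sigma^{\ell}$ in producing the characteristic-dependent rank drops is exactly the bookkeeping the paper leaves implicit.
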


\begin{proof}
The proof is technical and consists in constructing the image matrixes from the description of
differential matrixes and the subsequent computations of the ranks of image matrixes.
\end{proof}

\begin{thm}[Additive structure, $s>1$]
Let $s>1$ and $R=R_s^\prime$ is algebra of the type $E_6$. Next, $t\in\N\cup\{0\}$,
$\ell$ be the aliquot, and $r$ be the residue of division of $t$ by $11$,
$m$ be the aliquot of division of $r$ by $2$. Then
$\dim_K\HH^t(R)=1$, if one of the following conditions takes place$:$

$(1)$ $r\in \{0,1,8,9\}$, $\ell(n+s)+m\equiv 0(2s),\text{ }\ell\div 2\text{ or }\myChar=2$;

$(2)$ $r=0$, $\ell(n+s)+m\equiv s+1(2s),\text{ }\ell\div 2,\text{ }\myChar=3$;

$(3)$ $r\in \{1,9\}$, $\ell(n+s)+m\equiv s(2s),\text{ }\ell\ndiv 2\text{ or }\myChar=2$;

$(4)$ $r\in \{2,10\}$, $\ell(n+s)+m\equiv s+1(2s),\text{ }\ell\ndiv 2\text{ or }\myChar=2$;

$(5)$ $r=3$, $\ell(n+s)+m\equiv 0(2s)$;

$(6)$ $r=3$, $\ell(n+s)+m\equiv s(2s),\text{ }\myChar=2$;

$(7)$ $r\in \{4,5\}$, $\ell(n+s)+m\equiv s(2s),\text{ }\ell\ndiv 2,\text{ }\myChar=3$;

$(8)$ $r=4$, $\ell(n+s)+m\equiv 1(2s)$;

$(9)$ $r=4$, $\ell(n+s)+m\equiv s+1(2s),\text{ }\myChar=2$;

$(10)$ $r=5$, $\ell(n+s)+m\equiv 0(2s),\text{ }\ell\div 2,\text{ }\myChar=3$;

$(11)$ $r\in \{6,7\}$, $\ell(n+s)+m\equiv 0(2s),\text{ }\myChar=2$;

$(12)$ $r\in \{6,7\}$, $\ell(n+s)+m\equiv s(2s)$;

$(13)$ $r=6$, $\ell(n+s)+m\equiv 1(2s),\text{ }\ell\div 2,\text{ }\myChar=3$;

$(14)$ $r=10$, $\ell(n+s)+m\equiv 0(2s),\text{ }\ell\ndiv 2,\text{ }\myChar=3$.

In other cases $\dim_K\HH^t(R)=0$.
\end{thm}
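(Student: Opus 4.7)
The plan is to derive the dimensions of $\HH^t(R)$ directly from the two preceding propositions by the standard identity
$$\dim_K\HH^t(R)=\dim_K\Hom_\Lambda(Q_t,R)-\dim_K\Im\delta^t-\dim_K\Im\delta^{t-1},$$
which follows from applying $\Hom_\Lambda(-,R)$ to the minimal projective resolution \eqref{resolv} produced in Theorem~\ref{resol_thm} and using that $\HH^t(R)=\Ker\delta^t/\Im\delta^{t-1}$ together with $\dim_K\Ker\delta^t=\dim_K\Hom_\Lambda(Q_t,R)-\dim_K\Im\delta^t$. Thus the theorem is a purely numerical consequence of Propositions~\ref{dim_hom} and~\ref{dim_im}, once one is careful to track the periodicity of the resolution.

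First, I would fix notation and organize the cases by the residue $r$ of $t$ modulo $11$. For each $r\in\{0,1,\ldots,10\}$ Proposition~\ref{dim_hom} gives $\dim_K\Hom_\Lambda(Q_t,R)$ and Proposition~\ref{dim_im} gives $\dim_K\Im\delta^t$ as functions of the congruence class of $\ell(n+s)+m$ modulo $2s$ and of auxiliary conditions on $\myChar K$ and on the parity of $\ell$. To compute $\dim_K\Im\delta^{t-1}$, I would use that if $t=11\ell+r$ with $r>0$ then $t-1=11\ell+(r-1)$, so the data for $\Im\delta^{t-1}$ is read from part $(r)$ of Proposition~\ref{dim_im} with the same $\ell$ and $m$; if $r=0$ then $t-1=11(\ell-1)+10$, and one reads part $(11)$ of Proposition~\ref{dim_im} but now with $\ell$ replaced by $\ell-1$ (and, accordingly, $m$ and the congruence class adjusted).

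Next I would walk through the eleven residue classes and, within each, the few possible congruence classes of $\ell(n+s)+m$ modulo $2s$ that give nonzero contributions (outside of these, both Hom and both images vanish by the propositions, so $\HH^t(R)=0$ automatically, covering the ``other cases'' clause). Item by item: for $r=0$, comparing the $6s$ (or $6s-1$) from $\Hom$ with the relevant entries of parts $(1)$ and $(11)$ of Proposition~\ref{dim_im} recovers cases~$(1)$ and~$(2)$ of the theorem; for $r=1$, comparing the $7s$ or $5s$ with parts $(1)$ and $(2)$ of Proposition~\ref{dim_im} yields cases~$(1)$ and~$(3)$; and so on through $r=10$, which draws on parts $(10)$ and~$(11)$ of Proposition~\ref{dim_im} to produce the conditions in cases~$(4)$ and~$(14)$. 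In every instance the two subtracted image-dimensions either agree exactly with $\dim_K\Hom_\Lambda(Q_t,R)$ (giving $0$) or differ from it by $1$ (giving the $1$-dimensional cohomology class listed), and the $\myChar K$-conditions in the theorem arise precisely as those distinguishing the $-1$ corrections in Proposition~\ref{dim_im}.

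The main obstacle is the bookkeeping: the same pair of congruence classes modulo $2s$ can contribute to $\HH^t(R)$ via different combinations of the two image maps, so I need to be careful when $r=0$ with the shift $\ell\mapsto\ell-1$, and in particular to verify that the conditions $\ell\div 2$ versus $\ell\ndiv 2$ in cases~$(1)$, $(3)$, $(4)$ and the $\myChar K$ dichotomies match after that shift. This can be organized as a small table with eleven rows (one per $r$), and in each row two columns corresponding to the two possible images; the only real work is to check that the $-1$ corrections cancel or survive exactly as predicted by the theorem, and that the complementary congruence classes all yield $\dim_K\HH^t(R)=0$. No new conceptual input is needed beyond the two dimension propositions and the exactness of \eqref{ind_resolv}.
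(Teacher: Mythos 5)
Your proposal is correct and is essentially the paper's own proof: the paper likewise computes $\dim_K\HH^t(R)=\dim_K\Ker\delta^t-\dim_K\Im\delta^{t-1}$ with $\dim_K\Ker\delta^t=\dim_K\Hom_\Lambda(Q_t,R)-\dim_K\Im\delta^t$ and reduces everything to the case-by-case bookkeeping from Propositions~\ref{dim_hom} and~\ref{dim_im}. Your extra care with the shift $t-1=11(\ell-1)+10$ when $r=0$ is exactly the point the paper leaves implicit.
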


\begin{proof}
As $\dim_K\HH^t(R)=\dim_K\Ker\delta^t-\dim_K\Im\delta^{t-1}$, and
$\dim_K\Ker\delta^t=\dim_K\Hom_\Lambda(Q_t,R)-\dim_K\Im\delta^t$,
the assertions of theorem easily follows from propositions
\ref{dim_hom} -- \ref{dim_im}.
\end{proof}

\begin{thm}[Additive structure, $s=1$]
Let $R=R_1^\prime$ is algebra of the type $E_6$. Next, $t\in\N\cup\{0\}$,
$\ell$ be the aliquot, and $r$ be the residue of division of $t$ by $11$.\\
$($a$)$ $\dim_K\HH^t(R)=3$, if $t=0$.\\
$($b$)$ $\dim_K\HH^t(R)=2$, if one of the following conditions takes place$:$

$(1)$ $r\in \{0,10\}$, $t>0$, $\ell+m\div 2$, $\myChar=3$;

$(2)$ $r\in \{4,6\}$, $\ell+m\ndiv 2$, $\myChar=3$.\\
$($c$)$ $\dim_K\HH^t(R)=1$, if one of the following conditions takes
place$:$

$(1)$ $r\in \{0,10\}$, $t>0$, $\ell+m\div 2$, $\myChar\ne 3$;

$(2)$ $r\in \{1,9\}$;

$(3)$ $r\in \{2,8\}$, $\ell+m\div 2$;

$(4)$ $r=3$, $\ell+m\div 2$ or $\myChar=2$;

$(5)$ $r\in \{4,6\}$, $\ell+m\div 2$, $\myChar=2$;

$(6)$ $r\in \{4,6\}$, $\ell+m\ndiv 2$, $\myChar\ne 3$;

$(7)$ $r=5$, $\myChar=3$;

$(8)$ $r=7$, $\ell+m\ndiv 2$ or $\myChar=2$.\\ $($d$)$ In other cases
$\dim_K\HH^t(R)=0$.

\end{thm}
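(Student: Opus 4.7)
The proof follows the same strategy as the preceding additive-structure theorem for $s>1$: from the cohomology complex $(\Hom_\Lambda(Q_\bullet, R), \delta^\bullet)$ we have
$$\dim_K \HH^t(R) = \dim_K \Hom_\Lambda(Q_t, R) - \dim_K \Im \delta^t - \dim_K \Im \delta^{t-1}$$
for $t \geq 1$ (with $\dim_K \Im \delta^{-1} := 0$ in the $t=0$ case). The first term is read off from the $s=1$ homomorphism-dimensions proposition, while the latter two come from the coboundary-dimensions proposition specialized to $s=1$.

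The essential simplification specific to $s=1$ is that $2s = 2$, so the four residue classes $\{0, 1, s, s+1\}$ modulo $2s$ appearing in the statements collapse to just $\{0, 1\}$: since $n + s = 7$ is odd, the condition $\ell(n+s) + m \equiv k \pmod{2s}$ reduces to $\ell + m \equiv k \pmod 2$, and the pairs $\{0, s+1\}$ and $\{1, s\}$ each merge into one class. This collapse is exactly what produces the higher-dimensional cohomology groups (dimension $2$ in part (b), and dimension $3$ for $t=0$ in part (a)) that do not occur for $s > 1$: two conditions from the general propositions now fire simultaneously at the same $t$, and their contributions to $\dim_K \Hom$ and $\dim_K \Im \delta$ simply add up.

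The body of the proof is then a routine but lengthy bookkeeping exercise. For each $r \in \{0, 1, \ldots, 10\}$ and each parity of $\ell + m$, one tabulates the three dimensions, further subdivided by $\myChar = 2$, $\myChar = 3$, or other, and performs the subtraction. The special value $\dim_K \HH^0(R) = 3$ in part (a) arises at $t = 0$ as $\dim_K \Hom_\Lambda(Q_0, R) - \dim_K \Im \delta^0 = 8 - 5 = 3$, consistent with the general pattern under the convention that the aliquot and residue of $0/11$ are both $0$.

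The main obstacle is not any single step but the organization of the case analysis: the characteristic-dependent formulas in the coboundary proposition combined with the collapse of congruence classes for $s=1$ generate many subcases, and one must verify that pairs of conditions which were disjoint for $s > 1$ but now overlap are being added rather than accidentally double-counted. Once the tabulation is complete and cross-checked against the $s=1$ homomorphism-dimension values, the assertions of parts (a)--(d) follow directly.
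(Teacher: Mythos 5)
Your proposal matches the paper's own (very terse) argument: the paper proves the $s>1$ additive-structure theorem via $\dim_K\HH^t(R)=\dim_K\Hom_\Lambda(Q_t,R)-\dim_K\Im\delta^t-\dim_K\Im\delta^{t-1}$ together with the dimension propositions, and the $s=1$ case is handled identically using the $s=1$ homomorphism-dimension proposition. Your added observation that the residue classes $\{0,s+1\}$ and $\{1,s\}$ modulo $2s$ collapse when $s=1$, which is what produces the dimensions $2$ and $3$, is correct and is the right way to organize the bookkeeping.
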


\section{Generators of $\HH^*(R)$}

For $s>1$ introduce the set of generators $Y^{(1)}_t$, $Y^{(2)}_t$,
\dots $Y^{(22)}_t$, such that $\deg Y_t^{(i)}=t$, $0\le t <
11\deg\sigma$ and $t$ satisfies conditions of (i)th item from the
list on page \pageref{degs}. For $s=1$ introduce the set of
generators $Y^{(1)}_t$, $Y^{(2)}_t$, \dots $Y^{(24)}_t$, such that
$\deg Y_t^{(i)}=t$, $0\le t < 11\deg\sigma$ and $t$ satisfies
conditions of (i)th item from the list on page \pageref{degs} for
$i\le 22$ and $t=0$ if $i>22$. Now let us describe the matrixes of
$Y^{(i)}_t$ componentwisely.

\begin{obozn}
Let us represent the degree $t$ of the generator element in the form
$t=11\ell+r$ ($0\le r\le 10$). Define the function $\kappa\text{: }\{w\in K\left[\mathcal Q_s\right]\}\rightarrow\Z$ 
which returns a coefficient of $\sigma(w)$; $\kappa^\ell(w)$ returns a coefficient of $\sigma^\ell(w)$.
\end{obozn}

(1) $Y^{(1)}_t$ is an $(6s\times 6s)$-matrix, whose elements $y_{ij}$ have the following form:

If $0\le j<s$, then $$y_{ij}=
\begin{cases}
\kappa^\ell(\a_{3(j+m)})e_{4j}\otimes e_{4j},\quad i=j;\\
0,\quad\text{otherwise.}\end{cases}$$

If $s\le j<3s$, then $$y_{ij}=
\begin{cases}
e_{4(j+s)+1}\otimes e_{4(j+s)+1},\quad i=j;\\
0,\quad\text{otherwise.}\end{cases}$$

If $3s\le j<5s$, then $$y_{ij}=
\begin{cases}
e_{4(j+s)+2}\otimes e_{4(j+s)+2},\quad i=j;\\
0,\quad\text{otherwise.}\end{cases}$$

If $5s\le j<6s$, then $$y_{ij}=
\begin{cases}
\kappa^\ell(\a_{3(j+m)})e_{4j+3}\otimes e_{4j+3},\quad i=j;\\
0,\quad\text{otherwise.}\end{cases}$$

(2) $Y^{(2)}_t$ is an $(6s\times 6s)$-matrix with a single nonzero
element:
$$y_{0,0}=\kappa^\ell(\a_0)w_{0\ra 4}\otimes e_{0}.$$

(3) $Y^{(3)}_t$ is an $(7s\times 6s)$-matrix with two nonzero
elements:
$$y_{0,0}=w_{0\ra 1}\otimes e_{0}\text{ and } y_{0,s}=w_{0\ra 4s+1}\otimes e_{0}.$$

(4) $Y^{(4)}_t$ is an $(7s\times 6s)$-matrix, whose elements $y_{ij}$ have the following form:

If $0\le j<s$, then $$y_{ij}=
\begin{cases}w_{4j\ra 4(j+s)+1}\otimes e_{4j},\quad i=j;\\
0,\quad\text{otherwise.}\end{cases}$$

If $s\le j<5s-1$, then $y_{ij}=0.$

If $5s-1\le j<6s-1$, then $$y_{ij}=
\begin{cases}-\kappa^\ell(\a_{3(j+m)})w_{4j+2\ra 4j+3}\otimes e_{4j+2},\quad i=j-s;\\
0,\quad\text{otherwise.}\end{cases}$$

If $6s-1\le j<7s$, then $y_{ij}=0.$

(5) $Y^{(5)}_t$ is an $(6s\times 6s)$-matrix with a single nonzero element:
$$y_{0,0}=\kappa^\ell(\a_0)w_{0\ra 3}\otimes e_{0}.$$

(6) $Y^{(6)}_t$ is an $(8s\times 6s)$-matrix, whose elements $y_{ij}$ have the following form:

If $0\le j<s$, then $$y_{ij}=
\begin{cases}w_{4j\ra 4j+2}\otimes e_{4j},\quad i=j;\\
0,\quad\text{otherwise.}\end{cases}$$

If $s\le j<3s$, then $y_{ij}=0.$

If $3s\le j<4s$, then $$y_{ij}=
\begin{cases}-\kappa^\ell(\a_{3(j+m)})w_{4j+1\ra 4j+3}\otimes e_{4j+1},\quad i=j-s;\\
0,\quad\text{otherwise.}\end{cases}$$

If $4s\le j<6s$, then $$y_{ij}=
\begin{cases}-\kappa^\ell(\a_{3(j+m+1)})w_{4j+2\ra 4(j+1)}\otimes e_{4j+2},\quad i=j-s,\text{ }j<5s-1\text{ or }j=6s-1;\\
0,\quad\text{otherwise.}\end{cases}$$

If $6s\le j<8s$, then $$y_{ij}=
\begin{cases}w_{4j+3\ra 4(j+1)+1}\otimes e_{4j+3},\quad i=5s+(j)_s,\text{ }j<7s-1\text{ or }j=8s-1;\\
0,\quad\text{otherwise.}\end{cases}$$

(7) $Y^{(7)}_t$ is an $(8s\times 6s)$-matrix, whose elements $y_{ij}$ have the following form:

If $0\le j<2s$, then $$y_{ij}=
\begin{cases}w_{4j\ra 4(j+s)+2}\otimes e_{4j},\quad i=(j)_s;\\
0,\quad\text{otherwise.}\end{cases}$$

If $2s\le j<4s$, then $$y_{ij}=
\begin{cases}w_{4j+1\ra 4j+3}\otimes e_{4j+1},\quad i=j-s;\\
0,\quad\text{otherwise.}\end{cases}$$

If $4s\le j<6s$, then $$y_{ij}=
\begin{cases}w_{4j+2\ra 4(j+1)}\otimes e_{4j+2},\quad i=j-s;\\
0,\quad\text{otherwise.}\end{cases}$$

If $6s\le j<8s$, then $y_{ij}=0.$

(8) $Y^{(8)}_t$ is an $(9s\times 6s)$-matrix with a single nonzero element:
$$y_{0,0}=w_{0\ra 3}\otimes e_{0}.$$

(9) $Y^{(9)}_t$ is an $(9s\times 6s)$-matrix, whose elements $y_{ij}$ have the following form:

If $0\le j<s$, then $y_{ij}=0.$

If $s\le j<2s$, then $$y_{ij}=
\begin{cases}\kappa^\ell(\a_{3(j+m)})e_{4j}\otimes e_{4j},\quad i=j-s;\\
0,\quad\text{otherwise.}\end{cases}$$

If $2s\le j<4s$, then $$y_{ij}=
\begin{cases}e_{4j+1}\otimes e_{4j+1},\quad i=j-s;\\
0,\quad\text{otherwise.}\end{cases}$$

If $4s\le j<5s$, then $y_{ij}=0.$

If $5s\le j<6s$, then $$y_{ij}=
\begin{cases}e_{4(j+s)+2}\otimes e_{4(j+s)+2},\quad i=j-2s;\\
0,\quad\text{otherwise.}\end{cases}$$

If $6s\le j<7s$, then $y_{ij}=0.$

If $7s\le j<8s$, then $$y_{ij}=
\begin{cases}e_{4j+2}\otimes e_{4j+2},\quad i=j-3s;\\
0,\quad\text{otherwise.}\end{cases}$$

If $8s\le j<9s$, then $$y_{ij}=
\begin{cases}-\kappa^\ell(\a_{3(j+m)})e_{4j+3}\otimes e_{4j+3},\quad i=j-3s;\\
0,\quad\text{otherwise.}\end{cases}$$

(10) $Y^{(10)}_t$ is an $(9s\times 6s)$-matrix with a single nonzero element:
$$y_{0,s}=\kappa^\ell(\a_6)w_{0\ra 4}\otimes e_{0}.$$

(11) $Y^{(11)}_t$ is an $(8s\times 6s)$-matrix, whose elements $y_{ij}$ have the following form:

If $0\le j<2s$, then $$y_{ij}=
\begin{cases}w_{4j\ra 4j+1}\otimes e_{4j},\quad i=(j)_s;\\
0,\quad\text{otherwise.}\end{cases}$$

If $2s\le j<4s$, then $$y_{ij}=
\begin{cases}\kappa^\ell(\a_{3(j+m+1)})w_{4j+1\ra 4(j+1)}\otimes e_{4j+1},\quad i=j-s;\\
0,\quad\text{otherwise.}\end{cases}$$

If $4s\le j<6s$, then $$y_{ij}=
\begin{cases}-\kappa^\ell(\a_{3(j+m)})w_{4j+2\ra 4j+3}\otimes e_{4j+2},\quad i=j-s;\\
0,\quad\text{otherwise.}\end{cases}$$

If $6s\le j<8s$, then $$y_{ij}=
\begin{cases}f_1(j,7s)w_{4j+3\ra 4(j+1)+2}\otimes e_{4j+3},\quad i=5s+(j)_s;\\
0,\quad\text{otherwise.}\end{cases}$$

(12) $Y^{(12)}_t$ is an $(8s\times 6s)$-matrix with two nonzero elements:
$$y_{3s,4s}=\kappa^\ell(\a_{3(j+m)}) w_{4j+2\ra 4j+3}\otimes e_{4j+2}\text{ and }
 y_{4s,5s}=\kappa^\ell(\a_{3(j+m)}) w_{4j+2\ra 4j+3}\otimes e_{4j+2}.$$

(13) $Y^{(13)}_t$ is an $(9s\times 6s)$-matrix, whose elements $y_{ij}$ have the following form:

If $0\le j<s$, then $y_{ij}=0.$

If $s\le j<2s$, then $$y_{ij}=
\begin{cases}e_{4(j+s)+1}\otimes e_{4(j+s)+1},\quad i=j;\\
0,\quad\text{otherwise.}\end{cases}$$

If $2s\le j<3s$, then $y_{ij}=0.$

If $3s\le j<4s$, then $$y_{ij}=
\begin{cases}e_{4j+1}\otimes e_{4j+1},\quad i=j-s;\\
0,\quad\text{otherwise.}\end{cases}$$

If $4s\le j<5s$, then $y_{ij}=0.$

If $5s\le j<7s$, then $$y_{ij}=
\begin{cases}e_{4(j+s)+2}\otimes e_{4(j+s)+2},\quad i=j-2s;\\
0,\quad\text{otherwise.}\end{cases}$$

If $7s\le j<8s$, then $y_{ij}=0.$

If $8s\le j<9s$, then $$y_{ij}=
\begin{cases}w_{4j+3\ra 4(j+1)}\otimes e_{4j+3},\quad i=j-3s;\\
0,\quad\text{otherwise.}\end{cases}$$

(14) $Y^{(14)}_t$ is an $(9s\times 6s)$-matrix with a single nonzero element:
$$y_{5s,7s}=\kappa^\ell(\a_{3(j+m)})w_{4j+3\ra 4(j+1)+3}\otimes e_{4j+3}.$$

(15) $Y^{(15)}_t$ is an $(9s\times 6s)$-matrix, whose elements $y_{ij}$ have the following form:

If $0\le j<s$, then $$y_{ij}=
\begin{cases}\kappa^\ell(\a_{3(j+m)})e_{4j}\otimes e_{4j},\quad i=j;\\
0,\quad\text{otherwise.}\end{cases}$$

If $s\le j<2s$, then $y_{ij}=0.$

If $2s\le j<3s$, then $$y_{ij}=
\begin{cases}w_{4j+1\ra 4j+2}\otimes e_{4j+1},\quad i=j-s;\\
0,\quad\text{otherwise.}\end{cases}$$

If $3s\le j<4s$, then $y_{ij}=0.$

If $4s\le j<5s$, then $$y_{ij}=
\begin{cases}w_{4(j+s)+1\ra 4(j+s)+2}\otimes e_{4(j+s)+1},\quad i=j-2s;\\
0,\quad\text{otherwise.}\end{cases}$$

If $5s\le j<7s$, then $y_{ij}=0.$

If $7s\le j<8s$, then $$y_{ij}=
\begin{cases}\kappa^\ell(\a_{3(j+m)})e_{4j+3}\otimes e_{4j+3},\quad i=j-2s;\\
0,\quad\text{otherwise.}\end{cases}$$

(16) $Y^{(16)}_t$ is an $(8s\times 6s)$-matrix with two nonzero elements:
$$y_{0,0}=w_{0\ra 2}\otimes e_0\text{ and } y_{s,2s}=w_{1\ra 3}\otimes e_1.$$

(17) $Y^{(17)}_t$ is an $(8s\times 6s)$-matrix with two nonzero elements:
$$y_{0,0}=w_{0\ra 4s+2}\otimes e_0\text{ and } y_{0,s}=w_{0\ra 2}\otimes e_0.$$

(18) $Y^{(18)}_t$ is an $(6s\times 6s)$-matrix, whose elements $y_{ij}$ have the following form:

If $0\le j<s$, then $y_{ij}=0.$

If $s\le j<3s$, then $$y_{ij}=
\begin{cases}w_{4(j+s)+1\ra 4(j+s)+2}\otimes e_{4(j+s)+1},\quad i=j;\\
0,\quad\text{otherwise.}\end{cases}$$

If $3s\le j<5s$, then $y_{ij}=0.$

If $5s\le j<6s$, then $$y_{ij}=
\begin{cases}-\kappa^\ell(\a_{3(j+m+1)})w_{4j+3\ra 4(j+1)}\otimes e_{4j+3},\quad i=j;\\
0,\quad\text{otherwise.}\end{cases}$$

(19) $Y^{(19)}_t$ is an $(7s\times 6s)$-matrix with two nonzero elements:
$$y_{s,s}=\kappa^\ell(\a_{3(j+m+1)})w_{4(j+s)+1\ra 4(j+1)}\otimes
 e_{4(j+s)+1}\text{ and } y_{2s,2s}=\kappa^\ell(\a_{3(j+m+1)})w_{4(j+s)+1\ra 4(j+1)}\otimes e_{4(j+s)+1}.$$

(20) $Y^{(20)}_t$ is an $(7s\times 6s)$-matrix, whose elements $y_{ij}$ have the following form:

If $0\le j<s$, then $$y_{ij}=
\begin{cases}\kappa^\ell(\a_{3(j+m)})w_{4j\ra 4j+3}\otimes e_{4j},\quad i=j;\\
0,\quad\text{otherwise.}\end{cases}$$

If $s\le j<2s$, then $$y_{ij}=
\begin{cases}-\kappa^\ell(\a_{3(j+m+1)})w_{4(j+s)+1\ra 4(j+1)}\otimes e_{4(j+s)+1},\quad i=j;\\
0,\quad\text{otherwise.}\end{cases}$$

If $2s\le j<3s$, then $y_{ij}=0.$

If $3s\le j<4s$, then $$y_{ij}=
\begin{cases}w_{4(j+s)+2\ra 4(j+s+1)+1}\otimes e_{4(j+s)+2},\quad i=j;\\
0,\quad\text{otherwise.}\end{cases}$$

If $4s\le j<6s$, then $y_{ij}=0.$

If $6s\le j<7s$, then $$y_{ij}=
\begin{cases}w_{4j+3\ra 4(j+1)+2}\otimes e_{4j+3},\quad i=j-s;\\
0,\quad\text{otherwise.}\end{cases}$$

(21) $Y^{(21)}_t$ is an $(6s\times 6s)$-matrix with a single nonzero element:
$$y_{0,0}=-\kappa^\ell(\a_{15})w_{0\ra 4}\otimes e_0.$$

(22) $Y^{(22)}_t$ is an $(6s\times 6s)$-matrix, whose elements $y_{ij}$ have the following form:

If $0\le j<s$, then $$y_{ij}=
\begin{cases}\kappa^\ell(\a_{3(j+m)})e_{4j}\otimes e_{4j},\quad i=j;\\
0,\quad\text{otherwise.}\end{cases}$$

If $s\le j<5s$, then $y_{ij}=0.$

If $5s\le j<6s$, then $$y_{ij}=
\begin{cases}-\kappa^\ell(\a_{3(j+m)})e_{4j+3}\otimes e_{4j+3},\quad i=j;\\
0,\quad\text{otherwise.}\end{cases}$$

(23) $Y^{(23)}_t$ is an $(6s\times 6s)$-matrix with a single nonzero element:
$$y_{0,0}=w_{0\ra 4}\otimes e_{0}.$$

(24) $Y^{(24)}_t$ is an $(6s\times 6s)$-matrix with a single nonzero element:
$$y_{5,5}=w_{3\ra 7}\otimes e_{3}.$$

\section{$\Omega$-shifts of generators of the algebra $\HH^*(R)$}

Let $Q_\bullet\rightarrow R$ be the minimal projective bimodule
resolution of the algebra $R$, constructed in paragraph
\ref{sect_res}. Any $t$-cocycle $f\in\Ker\delta^t$ is lifted
(uniquely up to homotopy) to a chain map of complexes $\{\varphi_i:
Q_{t+i}\rightarrow Q_i\}_{i\ge 0}$. The homomorphism $\varphi_i$ is
called the {\it $i$th translate} of the cocycle $f$ and will be
denoted by $\Omega^i(f)$. For cocycles $f_1\in\Ker\delta^{t_1}$ and
$f_2\in\Ker\delta^{t_2}$ we have
\begin{equation}\tag{$*$}\label{mult_formula}
\cl f_2\cdot \cl f_1=\cl(\Omega^0(f_2)\Omega^{t_2}(f_1)).
\end{equation}

We shall now describe $\Omega$-translates for generators of the
algebra $\HH^*(R)$ and then find multiplications of the generators
using the formula \eqref{mult_formula}.

\begin{obozns}$\quad$

(1) For generator degree $t$ represent it in the form $t=11\ell+r$
($0\le r\le 10$).

(2) For translate $\Omega^{t_0}$ represent $t_0$ in the form
$t_0=11\ell_0+r_0$ ($0\le r_0\le 10$).
\end{obozns}

%%%%%%%%%%%%%%%%%%%%%%%%%%%%%%%%%%%%%%%%%%%%%%%%%%%%%%%%%%%%%%%%%%%%%%%%%%%%%%%%%%%%%%%%
%                                          1
%%%%%%%%%%%%%%%%%%%%%%%%%%%%%%%%%%%%%%%%%%%%%%%%%%%%%%%%%%%%%%%%%%%%%%%%%%%%%%%%%%%%%%%%
\begin{pr}[Translates for the case 1]
$({\rm I})$ Let $r_0\in\N$, $r_0<11$. $r_0$-translates of the
elements $Y^{(1)}_t$ are described by the following way.

$(1)$ If $r_0=0$, then $\Omega^{0}(Y_t^{(1)})$ is described with
$(6s\times 6s)$-matrix with the following elements $b_{ij}${\rm:}

If $0\le j<s$, then $$b_{ij}=
\begin{cases}
\kappa^\ell(\a_{3(j+m)})e_{4j}\otimes e_{4j},\quad i=j;\\
0,\quad\text{otherwise.}
\end{cases}$$

If $s\le j<3s$, then $$b_{ij}=
\begin{cases}
e_{4(j+s)+1}\otimes e_{4(j+s)+1},\quad i=j;\\
0,\quad\text{otherwise.}
\end{cases}$$

If $3s\le j<5s$, then $$b_{ij}=
\begin{cases}
e_{4(j+s)+2}\otimes e_{4(j+s)+2},\quad i=j;\\
0,\quad\text{otherwise.}
\end{cases}$$

If $5s\le j<6s$, then $$b_{ij}=
\begin{cases}
\kappa^\ell(\a_{3(j+m)})e_{4j+3}\otimes e_{4j+3},\quad i=j;\\
0,\quad\text{otherwise.}
\end{cases}$$

$(2)$ If $r_0=1$, then $\Omega^{1}(Y_t^{(1)})$ is described with
$(7s\times 7s)$-matrix with the following elements $b_{ij}${\rm:}

If $0\le j<2s$, then $$b_{ij}=
\begin{cases}
e_{4(j+m)+1}\otimes e_{4j},\quad i=j;\\
0,\quad\text{otherwise.}
\end{cases}$$

If $2s\le j<4s$, then $$b_{ij}=
\begin{cases}
e_{4(j+m)+2}\otimes e_{4j+1},\quad i=j;\\
0,\quad\text{otherwise.}
\end{cases}$$

If $4s\le j<6s$, then $$b_{ij}=
\begin{cases}
\kappa^\ell(\a_{3(j+m)})e_{4(j+m)+3}\otimes e_{4j+2},\quad i=j;\\
0,\quad\text{otherwise.}
\end{cases}$$

If $6s\le j<7s$, then $$b_{ij}=
\begin{cases}
\kappa^\ell(\a_{3(j+m)})e_{4(j+m+1)}\otimes e_{4j+3},\quad i=j;\\
0,\quad\text{otherwise.}
\end{cases}$$

$(3)$ If $r_0=2$, then $\Omega^{2}(Y_t^{(1)})$ is described with
$(6s\times 6s)$-matrix with the following elements $b_{ij}${\rm:}

If $0\le j<s$, then $$b_{ij}=
\begin{cases}
\kappa^\ell(\a_{3(j+m)})e_{4(j+m)-1}\otimes e_{4j},\quad i=j;\\
0,\quad\text{otherwise.}
\end{cases}$$

If $s\le j<3s$, then $$b_{ij}=
\begin{cases}
e_{4(j+m+s)+2}\otimes e_{4(j+s)+1},\quad i=j;\\
0,\quad\text{otherwise.}
\end{cases}$$

If $3s\le j<5s$, then $$b_{ij}=
\begin{cases}
e_{4(j+m)+1}\otimes e_{4(j+s)+2},\quad i=j;\\
0,\quad\text{otherwise.}
\end{cases}$$

If $5s\le j<6s$, then $$b_{ij}=
\begin{cases}
\kappa^\ell(\a_{3(j+m)})e_{4(j+m+1)}\otimes e_{4j+3},\quad i=j;\\
0,\quad\text{otherwise.}
\end{cases}$$

$(4)$ If $r_0=3$, then $\Omega^{3}(Y_t^{(1)})$ is described with
$(8s\times 8s)$-matrix with the following elements $b_{ij}${\rm:}

If $0\le j<2s$, then $$b_{ij}=
\begin{cases}
e_{4(j+m)+2}\otimes e_{4j},\quad i=j;\\
0,\quad\text{otherwise.}
\end{cases}$$

If $2s\le j<4s$, then $$b_{ij}=
\begin{cases}
\kappa^\ell(\a_{3(j+m)})e_{4(j+m)+3}\otimes e_{4j+1},\quad i=j;\\
0,\quad\text{otherwise.}
\end{cases}$$

If $4s\le j<6s$, then $$b_{ij}=
\begin{cases}
\kappa^\ell(\a_{3(j+m)})e_{4(j+m+1)}\otimes e_{4j+2},\quad i=j;\\
0,\quad\text{otherwise.}
\end{cases}$$

If $6s\le j<8s$, then $$b_{ij}=
\begin{cases}
e_{4(j+m+1)+1}\otimes e_{4j+3},\quad i=j;\\
0,\quad\text{otherwise.}
\end{cases}$$

$(5)$ If $r_0=4$, then $\Omega^{4}(Y_t^{(1)})$ is described with
$(9s\times 9s)$-matrix with the following elements $b_{ij}${\rm:}

If $0\le j<s$, then $$b_{ij}=
\begin{cases}
\kappa^\ell(\a_{3(j+m)})e_{4(j+m)-1}\otimes e_{4j},\quad i=j;\\
0,\quad\text{otherwise.}
\end{cases}$$

If $s\le j<2s$, then $$b_{ij}=
\begin{cases}
\kappa^\ell(\a_{3(j+m)})e_{4(j+m)}\otimes e_{4j},\quad i=j;\\
0,\quad\text{otherwise.}
\end{cases}$$

If $2s\le j<4s$, then $$b_{ij}=
\begin{cases}
e_{4(j+m+s)+1}\otimes e_{4j+1},\quad i=j;\\
0,\quad\text{otherwise.}
\end{cases}$$

If $4s\le j<5s$, then $$b_{ij}=
\begin{cases}
e_{4(j+m)+1}\otimes e_{4j+2},\quad i=j;\\
0,\quad\text{otherwise.}
\end{cases}$$

If $5s\le j<6s$, then $$b_{ij}=
\begin{cases}
e_{4(j+m)+2}\otimes e_{4(j+s)+2},\quad i=j;\\
0,\quad\text{otherwise.}
\end{cases}$$

If $6s\le j<7s$, then $$b_{ij}=
\begin{cases}
e_{4(j+m+s)+1}\otimes e_{4(j+s)+2},\quad i=j;\\
0,\quad\text{otherwise.}
\end{cases}$$

If $7s\le j<8s$, then $$b_{ij}=
\begin{cases}
e_{4(j+m+s)+2}\otimes e_{4j+2},\quad i=j;\\
0,\quad\text{otherwise.}
\end{cases}$$

If $8s\le j<9s$, then $$b_{ij}=
\begin{cases}
\kappa^\ell(\a_{3(j+m)})e_{4(j+m+1)-1}\otimes e_{4j+3},\quad i=j;\\
0,\quad\text{otherwise.}
\end{cases}$$

$(6)$ If $r_0=5$, then $\Omega^{5}(Y_t^{(1)})$ is described with
$(8s\times 8s)$-matrix with the following elements $b_{ij}${\rm:}

If $0\le j<2s$, then $$b_{ij}=
\begin{cases}
e_{4(j+m)+1}\otimes e_{4j},\quad i=j;\\
0,\quad\text{otherwise.}
\end{cases}$$

If $2s\le j<4s$, then $$b_{ij}=
\begin{cases}
\kappa^\ell(\a_{3(j+m)})e_{4(j+m+1)}\otimes e_{4j+1},\quad i=j;\\
0,\quad\text{otherwise.}
\end{cases}$$

If $4s\le j<6s$, then $$b_{ij}=
\begin{cases}
\kappa^\ell(\a_{3(j+m)})e_{4(j+m+s)+3}\otimes e_{4j+2},\quad i=j;\\
0,\quad\text{otherwise.}
\end{cases}$$

If $6s\le j<8s$, then $$b_{ij}=
\begin{cases}
e_{4(j+m+1)+2}\otimes e_{4j+3},\quad i=j;\\
0,\quad\text{otherwise.}
\end{cases}$$

$(7)$ If $r_0=6$, then $\Omega^{6}(Y_t^{(1)})$ is described with
$(9s\times 9s)$-matrix with the following elements $b_{ij}${\rm:}

If $0\le j<s$, then $$b_{ij}=
\begin{cases}
\kappa^\ell(\a_{3(j+m)})e_{4(j+m)}\otimes e_{4j},\quad i=j;\\
0,\quad\text{otherwise.}
\end{cases}$$

If $s\le j<2s$, then $$b_{ij}=
\begin{cases}
e_{4(j+m+s)+1}\otimes e_{4(j+s)+1},\quad i=j;\\
0,\quad\text{otherwise.}
\end{cases}$$

If $2s\le j<3s$, then $$b_{ij}=
\begin{cases}
e_{4(j+m+s)+2}\otimes e_{4j+1},\quad i=j;\\
0,\quad\text{otherwise.}
\end{cases}$$

If $3s\le j<4s$, then $$b_{ij}=
\begin{cases}
e_{4(j+m)+1}\otimes e_{4j+1},\quad i=j;\\
0,\quad\text{otherwise.}
\end{cases}$$

If $4s\le j<5s$, then $$b_{ij}=
\begin{cases}
e_{4(j+m)+2}\otimes e_{4(j+s)+1},\quad i=j;\\
0,\quad\text{otherwise.}
\end{cases}$$

If $5s\le j<7s$, then $$b_{ij}=
\begin{cases}
e_{4(j+m+s)+2}\otimes e_{4(j+s)+2},\quad i=j;\\
0,\quad\text{otherwise.}
\end{cases}$$

If $7s\le j<8s$, then $$b_{ij}=
\begin{cases}
\kappa^\ell(\a_{3(j+m)})e_{4(j+m+1)-1}\otimes e_{4j+3},\quad i=j;\\
0,\quad\text{otherwise.}
\end{cases}$$

If $8s\le j<9s$, then $$b_{ij}=
\begin{cases}
\kappa^\ell(\a_{3(j+m)})e_{4(j+m+1)}\otimes e_{4j+3},\quad i=j;\\
0,\quad\text{otherwise.}
\end{cases}$$

$(8)$ If $r_0=7$, then $\Omega^{7}(Y_t^{(1)})$ is described with
$(8s\times 8s)$-matrix with the following elements $b_{ij}${\rm:}

If $0\le j<2s$, then $$b_{ij}=
\begin{cases}
e_{4(j+m)+2}\otimes e_{4j},\quad i=j;\\
0,\quad\text{otherwise.}
\end{cases}$$

If $2s\le j<4s$, then $$b_{ij}=
\begin{cases}
\kappa^\ell(\a_{3(j+m)})e_{4(j+m)+3}\otimes e_{4j+1},\quad i=j;\\
0,\quad\text{otherwise.}
\end{cases}$$

If $4s\le j<6s$, then $$b_{ij}=
\begin{cases}
\kappa^\ell(\a_{3(j+m)})e_{4(j+m+1)}\otimes e_{4j+2},\quad i=j;\\
0,\quad\text{otherwise.}
\end{cases}$$

If $6s\le j<8s$, then $$b_{ij}=
\begin{cases}
e_{4(j+m+1)+1}\otimes e_{4j+3},\quad i=j;\\
0,\quad\text{otherwise.}
\end{cases}$$

$(9)$ If $r_0=8$, then $\Omega^{8}(Y_t^{(1)})$ is described with
$(6s\times 6s)$-matrix with the following elements $b_{ij}${\rm:}

If $0\le j<s$, then $$b_{ij}=
\begin{cases}
\kappa^\ell(\a_{3(j+m)})e_{4(j+m)-1}\otimes e_{4j},\quad i=j;\\
0,\quad\text{otherwise.}
\end{cases}$$

If $s\le j<3s$, then $$b_{ij}=
\begin{cases}
e_{4(j+m+s)+2}\otimes e_{4(j+s)+1},\quad i=j;\\
0,\quad\text{otherwise.}
\end{cases}$$

If $3s\le j<5s$, then $$b_{ij}=
\begin{cases}
e_{4(j+m)+1}\otimes e_{4(j+s)+2},\quad i=j;\\
0,\quad\text{otherwise.}
\end{cases}$$

If $5s\le j<6s$, then $$b_{ij}=
\begin{cases}
\kappa^\ell(\a_{3(j+m)})e_{4(j+m+1)}\otimes e_{4j+3},\quad i=j;\\
0,\quad\text{otherwise.}
\end{cases}$$

$(10)$ If $r_0=9$, then $\Omega^{9}(Y_t^{(1)})$ is described with
$(7s\times 7s)$-matrix with the following elements $b_{ij}${\rm:}

If $0\le j<s$, then $$b_{ij}=
\begin{cases}
\kappa^\ell(\a_{3(j+m)})e_{4(j+m)+3}\otimes e_{4j},\quad i=j;\\
0,\quad\text{otherwise.}
\end{cases}$$

If $s\le j<3s$, then $$b_{ij}=
\begin{cases}
\kappa^\ell(\a_{3(j+m)})e_{4(j+m+1)}\otimes e_{4(j+s)+1},\quad i=j;\\
0,\quad\text{otherwise.}
\end{cases}$$

If $3s\le j<5s$, then $$b_{ij}=
\begin{cases}
e_{4(j+m+1)+1}\otimes e_{4(j+s)+2},\quad i=j;\\
0,\quad\text{otherwise.}
\end{cases}$$

If $5s\le j<7s$, then $$b_{ij}=
\begin{cases}
e_{4(j+m+1+s)+2}\otimes e_{4j+3},\quad i=j;\\
0,\quad\text{otherwise.}
\end{cases}$$

$(11)$ If $r_0=10$, then $\Omega^{10}(Y_t^{(1)})$ is described with
$(6s\times 6s)$-matrix with the following elements $b_{ij}${\rm:}

If $0\le j<s$, then $$b_{ij}=
\begin{cases}
\kappa^\ell(\a_{3(j+m)})e_{4(j+m)}\otimes e_{4j},\quad i=j;\\
0,\quad\text{otherwise.}
\end{cases}$$

If $s\le j<3s$, then $$b_{ij}=
\begin{cases}
e_{4(j+m)+1}\otimes e_{4(j+s)+1},\quad i=j;\\
0,\quad\text{otherwise.}
\end{cases}$$

If $3s\le j<5s$, then $$b_{ij}=
\begin{cases}
e_{4(j+m)+2}\otimes e_{4(j+s)+2},\quad i=j;\\
0,\quad\text{otherwise.}
\end{cases}$$

If $5s\le j<6s$, then $$b_{ij}=
\begin{cases}
\kappa^\ell(\a_{3(j+m)})e_{4(j+m+s)+3}\otimes e_{4j+3},\quad i=j;\\
0,\quad\text{otherwise.}
\end{cases}$$

\medskip
$({\rm II})$ Represent an arbitrary $t_0\in\N$ in the form
$t_0=11\ell_0+r_0$, where $0\le r_0\le 10.$ Then
$\Omega^{t_0}(Y_t^{(1)})$ is a $\Omega^{r_0}(Y_t^{(1)})$, whose left
components twisted by $\sigma^{\ell_0}$.
\end{pr}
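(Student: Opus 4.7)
The plan is to verify that the eleven matrices described in part (I) form a lift of the cocycle $Y^{(1)}_t$ through the resolution constructed in Section \ref{sect_res}, and then to extend to arbitrary $t_0$ using the periodicity built into Theorem \ref{resol_thm}. Recall that a family of maps $\varphi_{r_0} : Q_{t+r_0} \to Q_{r_0}$ is a lift of the cocycle $Y^{(1)}_t \in \Hom_\Lambda(Q_t, R)$ if $\varepsilon\circ\varphi_0 = Y^{(1)}_t$ (as an element of $\Hom_\Lambda(Q_t, R)$) and if
$$d_{r_0 - 1}\circ\varphi_{r_0} = \varphi_{r_0-1}\circ d_{t+r_0-1} \qquad \text{for every } r_0 \ge 1.$$
This is exactly the condition I have to verify for the matrices $b_{ij}$ listed above.

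The case $r_0 = 0$ is immediate: the matrix in (1) coincides verbatim with the description of $Y^{(1)}_t$ given in Section 4, so that $\varepsilon \circ \Omega^0(Y^{(1)}_t) = Y^{(1)}_t$ holds trivially because $\varepsilon$ merely collapses each $e_i \otimes e_i$ summand of $Q_0$ to $e_i \in R$. For each $r_0 \in \{1, 2, \dots, 10\}$, I would compute the two composite $(\dim Q_{r_0})\times(\dim Q_{t+r_0})$ matrices $d_{r_0-1}\Omega^{r_0}(Y^{(1)}_t)$ and $\Omega^{r_0-1}(Y^{(1)}_t)d_{t+r_0-1}$, breaking the column index $j$ into the same four ranges $0\le j<s$, $s\le j<3s$, $3s\le j<5s$, $5s\le j<6s$ that already appear in the descriptions of $d_{r_0}$ in Section \ref{sect_res}. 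Since every nonzero entry of every $\Omega^{r_0}(Y^{(1)}_t)$ is a pure idempotent of the form $e_u\otimes e_v$ (possibly carrying the scalar $\kappa^\ell(\a_{3(j+m)})$), these matrix products collapse to simply substituting $u$ in place of $4j$ (or $4j+1$, etc.) in each nonzero entry of the differential, so the required equality reduces to checking that the shifted idempotent indices on the left component agree with the ones prescribed by $\Omega^{r_0-1}(Y^{(1)}_t)$, and that the scalar $\kappa^\ell(\a_{3(j+m)})$ appears on both sides with the same sign.

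For part (II), I would invoke Theorem \ref{resol_thm}: by construction, $Q_{11\ell_0+r_0}$ is obtained from $Q_{r_0}$ by applying $\sigma^{\ell_0}$ to all left tensor components, and $d_{11\ell_0+r_0}$ is obtained from $d_{r_0}$ by the same procedure. Since $\sigma$ is an algebra automorphism of $R$, it commutes with the matrix products that express the chain map condition, so applying $\sigma^{\ell_0}$ to the left components of each matrix in part (I) still yields a valid lift, which is precisely $\Omega^{t_0}(Y^{(1)}_t)$. The value of the scalar $\kappa^\ell$ transforms correctly because $\kappa^{\ell+\ell_0}(w) = \kappa^{\ell_0}(\sigma^\ell(w))$ by definition.

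The main obstacle is the sheer bookkeeping: there are eleven squares to check, each with four to six column-ranges in $j$, and in many of them $\Omega^{r_0-1}(Y^{(1)}_t)\circ d_{t+r_0-1}$ picks up contributions from several rows of the differential at once (for instance when $j$ falls in the ``boundary'' subranges like $j = 5s-1$ or $j = 7s-1$). The scalar $\kappa^\ell(\a_{3(j+m)})$, together with the $f_1$-signs embedded in the differentials, must be tracked very carefully in these boundary cases to make sure no sign flips are dropped; this is where the verification is prone to error, but each individual check is purely computational once the descriptions in Section \ref{sect_res} are applied mechanically.
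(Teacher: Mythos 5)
Your proposal is correct and follows essentially the same route the paper (implicitly) takes: these translate propositions are established by directly verifying the chain-map squares $d_{r_0-1}\circ\Omega^{r_0}(Y^{(1)}_t)=\Omega^{r_0-1}(Y^{(1)}_t)\circ d_{t+r_0-1}$ against the explicit differentials of Section \ref{sect_res}, with part (II) following from the $\sigma$-periodic structure of the resolution given in Theorem \ref{resol_thm}. Your handling of the base case $r_0=0$ and of the twisting of left components and coefficients under $\sigma^{\ell_0}$ matches the paper's conventions.
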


%%%%%%%%%%%%%%%%%%%%%%%%%%%%%%%%%%%%%%%%%%%%%%%%%%%%%%%%%%%%%%%%%%%%%%%%%%%%%%%%%%%%%%%%
%                                          2
%%%%%%%%%%%%%%%%%%%%%%%%%%%%%%%%%%%%%%%%%%%%%%%%%%%%%%%%%%%%%%%%%%%%%%%%%%%%%%%%%%%%%%%%
\begin{pr}[Translates for the case 2]
$({\rm I})$ Let $r_0\in\N$, $r_0<11$. Denote by $\kappa_0=f_2(s,1)\kappa^\ell(\a_0)$. 
Then $r_0$-translates of the
elements $Y^{(2)}_t$ are described by the following way.

$(1)$ If $r_0=0$, then $\Omega^{0}(Y_t^{(2)})$ is described with
$(6s\times 6s)$-matrix with one nonzero element that is of the following form{\rm:}
$$b_{0,0}=\kappa^\ell(\a_0)w_{4(j+m)\ra 4(j+m+1)}\otimes e_{4j}.$$

$(2)$ If $r_0=1$, then $\Omega^{1}(Y_t^{(2)})$ is described with
$(7s\times 7s)$-matrix with one nonzero element that is of the following form{\rm:}
$$b_{j,6s+(5)_s}=\kappa_0w_{4(j+m+1)\ra 4(j+m+2)}\otimes e_{4j+3}.$$

$(3)$ If $r_0=2$, then $\Omega^{2}(Y_t^{(2)})$ is described with
$(6s\times 6s)$-matrix with one nonzero element that is of the following form{\rm:}
$$b_{j,5s+(4)_s}=\kappa_0w_{4(j+m+1)\ra 4(j+m+2)}\otimes e_{4j+3}.$$

$(4)$ If $r_0=3$, then $\Omega^{3}(Y_t^{(2)})$ is described with
$(8s\times 8s)$-matrix with the following two nonzero elements{\rm:}
$$b_{j,4s+(4)_s}=\kappa_0w_{4(j+m+1)\ra 4(j+m+2)}\otimes e_{4j+2};$$
$$b_{j,5s+(4)_s}=\kappa_0w_{4(j+m+1)\ra 4(j+m+2)}\otimes e_{4j+2}.$$

$(5)$ If $r_0=4$, then $\Omega^{4}(Y_t^{(2)})$ is described with
$(9s\times 9s)$-matrix with one nonzero element that is of the following form{\rm:}
$$b_{j,s+(4)_s}=\kappa_0w_{4(j+m)\ra 4(j+m+1)}\otimes e_{4j}.$$

$(6)$ If $r_0=5$, then $\Omega^{5}(Y_t^{(2)})$ is described with
$(8s\times 8s)$-matrix with the following two nonzero elements{\rm:}
$$b_{j,2s+(3)_s}=\kappa_0w_{4(j+m+1)\ra 4(j+m+2)}\otimes e_{4j+1};$$
$$b_{j,3s+(3)_s}=\kappa_0w_{4(j+m+1)\ra 4(j+m+2)}\otimes e_{4j+1}.$$

$(7)$ If $r_0=6$, then $\Omega^{6}(Y_t^{(2)})$ is described with
$(9s\times 9s)$-matrix with the following two nonzero elements{\rm:}
$$b_{j,(3)_s}=\kappa_0w_{4(j+m)\ra 4(j+m+1)}\otimes e_{4j};$$
$$b_{j,8s+(2)_s}=\kappa_0w_{4(j+m+1)\ra 4(j+m+2)}\otimes e_{4j+3}.$$

$(8)$ If $r_0=7$, then $\Omega^{7}(Y_t^{(2)})$ is described with
$(8s\times 8s)$-matrix with the following two nonzero elements{\rm:}
$$b_{j,4s+(2)_s}=\kappa_0w_{4(j+m+1)\ra 4(j+m+2)}\otimes e_{4j+2};$$
$$b_{j,5s+(2)_s}=\kappa_0w_{4(j+m+1)\ra 4(j+m+2)}\otimes e_{4j+2}.$$

$(9)$ If $r_0=8$, then $\Omega^{8}(Y_t^{(2)})$ is described with
$(6s\times 6s)$-matrix with one nonzero element that is of the following form{\rm:}
$$b_{j,5s+(1)_s}=\kappa_0w_{4(j+m+1)\ra 4(j+m+2)}\otimes e_{4j+3}.$$

$(10)$ If $r_0=9$, then $\Omega^{9}(Y_t^{(2)})$ is described with
$(7s\times 7s)$-matrix with the following two nonzero elements{\rm:}
$$b_{j,s+(1)_s}=\kappa_0w_{4(j+m+1)\ra 4(j+m+2)}\otimes e_{4(j+s)+1};$$
$$b_{j,2s+(1)_s}=\kappa_0w_{4(j+m+1)\ra 4(j+m+2)}\otimes e_{4(j+s)+1}.$$

$(11)$ If $r_0=10$, then $\Omega^{10}(Y_t^{(2)})$ is described with
$(6s\times 6s)$-matrix with one nonzero element that is of the following form{\rm:}
$$b_{j,(1)_s}=\kappa_0w_{4(j+m)\ra 4(j+m+1)}\otimes e_{4j}.$$

\medskip
$({\rm II})$ Represent an arbitrary $t_0\in\N$ in the form
$t_0=11\ell_0+r_0$, where $0\le r_0\le 10.$ Then
$\Omega^{t_0}(Y_t^{(2)})$ is a $\Omega^{r_0}(Y_t^{(2)})$, whose left
components twisted by $\sigma^{\ell_0}$.
\end{pr}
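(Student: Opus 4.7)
The plan is to prove the statement by induction on $r_0$, exhibiting at each stage an explicit chain map $\varphi_{r_0} = \Omega^{r_0}(Y^{(2)}_t)\colon Q_{t + r_0} \to Q_{r_0}$ that lifts the cocycle $Y^{(2)}_t$ through the minimal projective bimodule resolution $Q_\bullet$ constructed in section \ref{sect_res}. The defining property of such a lift is the matrix identity $d_{r_0 - 1}\cdot \varphi_{r_0} = \varphi_{r_0 - 1}\cdot d_{t + r_0 - 1}$, with $\varphi_0$ any lift of $Y^{(2)}_t$ through the augmentation $\varepsilon\colon Q_0 \to R$. Since such a lift is unique up to homotopy, it suffices to verify that the matrices prescribed in items $(1)$--$(11)$ satisfy these commutativity squares when paired with the differentials whose componentwise descriptions were recorded in section \ref{sect_res}.

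For the base case $r_0 = 0$, substituting $j = 0$ and $m = 0$ into the formula of item $(1)$ reproduces the single nonzero entry of $Y^{(2)}_t$ itself, so no further check is needed. For the inductive step $r_0 \mapsto r_0 + 1$, one must verify the matrix identity $d_{r_0}\cdot \varphi_{r_0+1} = \varphi_{r_0}\cdot d_{t + r_0}$. Each $\varphi_{r_0}$ described in items $(1)$--$(11)$ has at most two nonzero entries per row, so for each column $j$ only one of the four column-regimes $0\le j<2s$, $2s\le j<4s$, $4s\le j<6s$, $6s\le j$ in the description of $d_{t + r_0}$ can contribute to the right-hand side. Equality then reduces to a comparison of path concatenations of the form $\a_i\cdot w_{4(j + m)\to 4(j + m + 1)}$ versus $w_{4(j + m + 1)\to 4(j + m + 2)}\cdot \a_i$, which is settled by the commutation relation $\a_{3t+2}\a_{3t+1}\a_{3t} = \a_{3(t+s)+2}\a_{3(t+s)+1}\a_{3(t+s)}$ from the defining ideal $I^\prime$ together with the zero-relations $\a_{3t}\g_{t - 1}\a_{3(t + s) - 1} = 0$.

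A delicate point in this bookkeeping is the scalar $\kappa_0 = f_2(s,1)\kappa^\ell(\a_0)$ appearing in items $(2)$--$(11)$: when $s = 1$ two of the four column-regimes of $d_r$ collapse into a single block, and the lift must be chosen with opposite sign to cancel the duplicate contribution, whereas for $s > 1$ these regimes remain separate and no sign correction is needed. Apart from this, the inductive step is essentially routine but requires careful tracking of the idempotent shifts $4(j + m + k)$ arising from successive compositions with $d_{t + r_0 - 1}, \dots, d_t$, and of the coefficient $\kappa^\ell(\a_0)$, which propagates unchanged through every lift.

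Part $(\mathrm{II})$ follows immediately from theorem \ref{resol_thm}: the resolution is quasi-periodic, with $Q_{11\ell_0 + r}$ obtained from $Q_r$ by replacing each $P_{i,j}$ by $P_{\sigma^{\ell_0}(i), j}$ and $d_{11\ell_0 + r}$ obtained by applying $\sigma^{\ell_0}$ to the left tensor components of $d_r$; the $t_0$-translate is therefore obtained from the $r_0$-translate by the same twisting of its left tensor components. The main obstacle throughout is purely combinatorial: the description of each $d_r$ splits into four column-regimes with further case-distinctions governed by the indicator functions $f, f_0, f_1, f_2, h$, so that each of the eleven inductive steps demands a small but non-trivial case analysis in tracking how the (at most two) nonzero entries of $\varphi_{r_0}$ interact with the various blocks of $d_{t + r_0}$.
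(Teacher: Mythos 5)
Your approach---constructing the translates as a lifting of the cocycle through the bimodule resolution, verifying at each stage the square $d_{r_0}\varphi_{r_0+1}=\varphi_{r_0}d_{t+r_0}$ against the componentwise descriptions of the differentials, and deducing part $({\rm II})$ from the $\sigma^{\ell_0}$-periodicity of $Q_\bullet$ established in Theorem \ref{resol_thm}---is exactly the (implicit) verification underlying the paper's statement, which offers no separate proof beyond the asserted matrices. Two caveats: first, your explanation of the factor $f_2(s,1)$ in $\kappa_0$ runs backwards---since $f_2(s,1)=+1$ precisely when $s=1$ and $-1$ when $s>1$, the sign flip occurs for $s>1$, not for $s=1$ as you claim---which suggests the eleven commutativity checks, which are the entire content of the proposition, were described but not actually carried out; second, those checks are where all the work lies, so the proposal as written is a correct plan rather than a completed proof.
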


%%%%%%%%%%%%%%%%%%%%%%%%%%%%%%%%%%%%%%%%%%%%%%%%%%%%%%%%%%%%%%%%%%%%%%%%%%%%%%%%%%%%%%%%
%                                          3
%%%%%%%%%%%%%%%%%%%%%%%%%%%%%%%%%%%%%%%%%%%%%%%%%%%%%%%%%%%%%%%%%%%%%%%%%%%%%%%%%%%%%%%%
\begin{pr}[Translates for the case 3]
$({\rm I})$ Let $r_0\in\N$, $r_0<11$. $r_0$-translates of the
elements $Y^{(3)}_t$ are described by the following way.

$(1)$ If $r_0=0$, then $\Omega^{0}(Y_t^{(3)})$ is described with
$(7s\times 6s)$-matrix with the following two nonzero elements{\rm:}
$$b_{0,0}=w_{4j\ra 4j+1}\otimes e_{4j};$$
$$b_{0,s}=w_{4j\ra 4j+1}\otimes e_{4j}.$$

$(2)$ If $r_0=1$, then $\Omega^{1}(Y_t^{(3)})$ is described with
$(6s\times 7s)$-matrix with the following nonzero elements{\rm:}
$$b_{(s+j+1)_{2s},s+(1)_s}=w_{4(j+m+s+1)+1\ra 4(j+m+s+1)+2}\otimes w_{4(j+s)+1\ra 4(j+1)};$$
$$b_{2s+(s+j+1)_{2s},s+(1)_s}=e_{4(j+m+s+1)+2}\otimes w_{4(j+s)+1\ra 4(j+s+1)+1};$$
$$b_{(s+j+1)_{2s},2s+(1)_s}=w_{4(j+m+s+1)+1\ra 4(j+m+s+1)+2}\otimes w_{4(j+s)+1\ra 4(j+1)};$$
$$b_{2s+(s+j+1)_{2s},2s+(1)_s}=e_{4(j+m+s+1)+2}\otimes w_{4(j+s)+1\ra 4(j+s+1)+1};$$
$$b_{(j+1)_{2s},3s+(1)_s}=e_{4(j+m+1)+1}\otimes w_{4(j+s)+2\ra 4(j+1)};$$
$$b_{(j+1)_{2s},4s+(1)_s}=e_{4(j+m+1)+1}\otimes w_{4(j+s)+2\ra 4(j+1)};$$
$$b_{j+s,5s+(1)_s}=-\kappa^\ell(\a_{3(j+m)})w_{4(j+m+1)\ra 4(j+m+2)}\otimes e_{4j+3}.$$

$(3)$ If $r_0=2$, then $\Omega^{2}(Y_t^{(3)})$ is described with
$(8s\times 6s)$-matrix with the following nonzero elements{\rm:}
$$b_{(j+1)_s,2s+(1)_s}=f_1((j)_s,s-1)\kappa^\ell(\a_{3(j+m)})e_{4(j+m)+3}\otimes w_{4j+1\ra 4(j+1)};$$
$$b_{(j+1)_s,3s+(1)_s}=-f_1((j)_s,s-1)\kappa^\ell(\a_{3(j+m)})e_{4(j+m)+3}\otimes w_{4j+1\ra 4(j+1)};$$
$$b_{j-s,4s+(1)_s}=\kappa^\ell(\a_{3(j+m)})w_{4(j+m+s)+1\ra 4(j+m+1)}\otimes e_{4j+2};$$
$$b_{j-s,5s+(1)_s}=\kappa^\ell(\a_{3(j+m)})w_{4(j+m+s)+1\ra 4(j+m+1)}\otimes e_{4j+2}.$$

$(4)$ If $r_0=3$, then $\Omega^{3}(Y_t^{(3)})$ is described with
$(9s\times 8s)$-matrix with the following nonzero elements{\rm:}
$$b_{j-s,s+(1)_s}=-\kappa^\ell(\a_{3(j+m)})w_{4(j+m+s)+2\ra 4(j+m+1)}\otimes e_{4j};$$
$$b_{j+s,s+(1)_s}=-\kappa^\ell(\a_{3(j+m)})w_{4(j+m)+3\ra 4(j+m+1)}\otimes w_{4j\ra 4(j+s)+1};$$
$$b_{j,2s+(1)_s}=-w_{4(j+m)+3\ra 4(j+m+s+1)+1}\otimes e_{4j+1};$$
$$b_{j,3s+(1)_s}=-w_{4(j+m)+3\ra 4(j+m+s+1)+1}\otimes e_{4j+1}.$$

$(5)$ If $r_0=4$, then $\Omega^{4}(Y_t^{(3)})$ is described with
$(8s\times 9s)$-matrix with the following nonzero elements{\rm:}
$$b_{j,(1)_s}=w_{4(j+m-1)+3\ra 4(j+m)+1}\otimes e_{4j};$$
$$b_{s+(j+1)_s,2s}=f_1((j)_s,s-1)\kappa^\ell(\a_{3(j+m)})e_{4(j+m+1)}\otimes w_{4j+1\ra 4(j+1)};$$
$$b_{s+(j+1)_s,3s}=-f_1((j)_s,s-1)\kappa^\ell(\a_{3(j+m)})e_{4(j+m+1)}\otimes w_{4j+1\ra 4(j+1)};$$
$$b_{2s+(s+j+1)_{2s},6s}=-w_{4(j+m+1)+1\ra 4(j+m+1)+2}\otimes w_{4j+3\ra 4(j+s+1)+1};$$
$$b_{6s+(s+j+1)_{2s}-f(s,1)s,6s}=e_{4(j+m+1)+2}\otimes w_{4j+3\ra 4(j+s+1)+2};$$
$$b_{2s+(s+j+1)_{2s},7s}=w_{4(j+m+1)+1\ra 4(j+m+1)+2}\otimes w_{4j+3\ra 4(j+s+1)+1};$$
$$b_{5s+(s+j+1)_{2s}+f(s,1)s,7s}=-e_{4(j+m+1)+2}\otimes w_{4j+3\ra 4(j+s+1)+2}.$$

$(6)$ If $r_0=5$, then $\Omega^{5}(Y_t^{(3)})$ is described with
$(9s\times 8s)$-matrix with the following nonzero elements{\rm:}
$$b_{(s+j+1)_{2s},s}=-e_{4(j+m+s+1)+1}\otimes w_{4(j+s)+1\ra 4(j+1)};$$
$$b_{(s+j+1)_{2s},2s}=w_{4(j+m+s+1)+1\ra 4(j+m+s+1)+2}\otimes w_{4j+1\ra 4(j+1)};$$
$$b_{(j+1)_{2s},3s}=-e_{4(j+m+1)+1}\otimes w_{4j+1\ra 4(j+1)};$$
$$b_{(j+1)_{2s},4s}=w_{4(j+m+1)+1\ra 4(j+m+1)+2}\otimes w_{4(j+s)+1\ra 4(j+1)};$$
$$b_{(s+j+1)_{2s},7s}=\kappa^\ell(\a_{3(j+m)})w_{4(j+m+s+1)+1\ra 4(j+m+1)+3}\otimes w_{4j+3\ra 4(j+1)};$$
$$b_{(j+1)_{2s},7s}=\kappa^\ell(\a_{3(j+m)})w_{4(j+m+1)+1\ra 4(j+m+1)+3}\otimes w_{4j+3\ra 4(j+1)};$$
$$b_{4s+(s+j+1)_{2s},7s}=-\kappa^\ell(\a_{3(j+m)})e_{4(j+m+1)+3}\otimes w_{4j+3\ra 4(j+s+1)+2};$$
$$b_{4s+(j+1)_{2s},7s}=-\kappa^\ell(\a_{3(j+m)})e_{4(j+m+1)+3}\otimes w_{4j+3\ra 4(j+1)+2};$$
$$b_{2s+(s+j+1)_{2s},8s}=-\kappa^\ell(\a_{3(j+m)})e_{4(j+m+2)}\otimes w_{4j+3\ra 4(j+s+1)+1}.$$

$(7)$ If $r_0=6$, then $\Omega^{6}(Y_t^{(3)})$ is described with
$(8s\times 9s)$-matrix with the following nonzero elements{\rm:}
$$b_{(j+1)_s,5s}=-\kappa^\ell(\a_{3(j+m)})e_{4(j+m+1)}\otimes w_{4j+2\ra 4(j+1)};$$
$$b_{(j+1)_s,6s}=w_{4(j+m+1)\ra 4(j+m+1)+1}\otimes w_{4j+3\ra 4(j+1)};$$
$$b_{s+(j+1)_{2s}+f(s,1)s,6s}=-e_{4(j+m+1)+1}\otimes w_{4j+3\ra 4(j+1)+1};$$
$$b_{2s+(j+1)_{2s}-f(s,1)s,7s}=-e_{4(j+m+1)+1}\otimes w_{4j+3\ra 4(j+1)+1}.$$

$(8)$ If $r_0=7$, then $\Omega^{7}(Y_t^{(3)})$ is described with
$(6s\times 8s)$-matrix with the following nonzero elements{\rm:}
$$b_{(j+s+1)_{2s},s}=-e_{4(j+m+s+1)+2}\otimes w_{4(j+s)+1\ra 4(j+1)};$$
$$b_{(j+s+1)_{2s},2s}=-e_{4(j+m+s+1)+2}\otimes w_{4(j+s)+1\ra 4(j+1)};$$
$$b_{s+(j+1)_s,5s}=\kappa^\ell(\a_{3(j+m)})w_{4(j+m+1+f(s,1)s)+2\ra 4(j+m+2)}\otimes w_{4j+3\ra 4(j+1)};$$
$$b_{2s+(j+1)_s,5s}=-\kappa^\ell(\a_{3(j+m)})w_{4(j+m+1)+3\ra 4(j+m+2)}\otimes w_{4j+3\ra 4(j+s+1+f(s,1)s)+1};$$
$$b_{4s+(j+s+1)_{2s},5s}=-\kappa^\ell(\a_{3(j+m)})e_{4(j+m+2)}\otimes w_{4j+3\ra 4(j+s+1)+2};$$
$$b_{4s+(j+1)_{2s},5s}=-\kappa^\ell(\a_{3(j+m)})e_{4(j+m+2)}\otimes w_{4j+3\ra 4(j+1)+2}.$$

$(9)$ If $r_0=8$, then $\Omega^{8}(Y_t^{(3)})$ is described with
$(7s\times 6s)$-matrix with the following nonzero elements{\rm:}
$$b_{(j+1)_s,0}=f_1((j)_s,s-1)\kappa^\ell(\a_{3(j+m)})e_{4(j+m)+3}\otimes w_{4j\ra 4(j+1)};$$
$$b_{(j+1)_s,s+f(s,1)s}=-\kappa^\ell(\a_{3(j+m)})w_{4(j+m)+3\ra 4(j+m+1)}\otimes w_{4(j+s)+1\ra 4(j+1)};$$
$$b_{3s+(j+s+1)_{2s},3s}=e_{4(j+m+1)+1}\otimes w_{4(j+s)+2\ra 4(j+s+1)+2};$$
$$b_{(j+1)_s,3s+f(s,1)s}=w_{4(j+m)+3\ra 4(j+m+1)+1}\otimes w_{4(j+s)+2\ra 4(j+1)};$$
$$b_{3s+(j+s+1)_{2s},4s}=e_{4(j+m+1)+1}\otimes w_{4(j+s)+2\ra 4(j+s+1)+2};$$
$$b_{j,5s}=w_{4(j+m+1)\ra 4(j+m+s+1)+2}\otimes e_{4j+3};$$
$$b_{j-s,6s}=w_{4(j+m+1)\ra 4(j+m+s+1)+2}\otimes e_{4j+3}.$$

$(10)$ If $r_0=9$, then $\Omega^{9}(Y_t^{(3)})$ is described with
$(6s\times 7s)$-matrix with the following two nonzero elements{\rm:}
$$b_{j,3s}=-w_{4(j+m+1)+1\ra 4(j+m+1)+2}\otimes e_{4(j+s)+2};$$
$$b_{j,4s}=-w_{4(j+m+1)+1\ra 4(j+m+1)+2}\otimes e_{4(j+s)+2}.$$

$(11)$ If $r_0=10$, then $\Omega^{10}(Y_t^{(3)})$ is described with
$(6s\times 6s)$-matrix with the following nonzero elements{\rm:}
$$b_{(j+1)_s,0}=f_1((j)_s,s-1)\kappa^\ell(\a_{3(j+m)})e_{4(j+m+1)}\otimes w_{4j\ra 4(j+1)};$$
$$b_{j+3s,0}=-\kappa^\ell(\a_{3(j+m)})w_{4(j+m+s)+2\ra 4(j+m+1)}\otimes w_{4j\ra 4j+2};$$
$$b_{j+4s,0}=\kappa^\ell(\a_{3(j+m)})w_{4(j+m)+2\ra 4(j+m+1)}\otimes w_{4j\ra 4(j+s)+2};$$
$$b_{j+5s,0}=-\kappa^\ell(\a_{3(j+m)})w_{4(j+m)+3\ra 4(j+m+1)}\otimes w_{4j\ra 4j+3};$$
$$b_{j,s}=w_{4(j+m)+1\ra 4(j+m+1)+1}\otimes e_{4(j+s)+1};$$
$$b_{j,2s}=w_{4(j+m)+1\ra 4(j+m+1)+1}\otimes e_{4(j+s)+1};$$
$$b_{3s+(j+s+1)_{2s},3s+(s-1)_s}=e_{4(j+m+1)+2}\otimes w_{4(j+s)+2\ra 4(j+s+1)+2};$$
$$b_{3s+(j+s+1)_{2s},4s+(s-1)_s}=e_{4(j+m+1)+2}\otimes w_{4(j+s)+2\ra 4(j+s+1)+2};$$
$$b_{3s+(j+1)_{2s},5s+(s-1)_s}=-\kappa^\ell(\a_{3(j+m)})w_{4(j+m+s+1)+2\ra 4(j+m+1)+3}\otimes w_{4j+3\ra 4(j+1)+2};$$
$$b_{3s+(j+s+1)_{2s},5s+(s-1)_s}=\kappa^\ell(\a_{3(j+m)})w_{4(j+m+1)+2\ra 4(j+m+1)+3}\otimes w_{4j+3\ra 4(j+s+1)+2};$$
$$b_{5s+(j+1)_s,5s+(s-1)_s}=f_1((j)_s,s-1)\kappa^\ell(\a_{3(j+m)})e_{4(j+m+1)+3}\otimes w_{4j+3\ra 4(j+1)+3}.$$

\medskip
$({\rm II})$ Represent an arbitrary $t_0\in\N$ in the form
$t_0=11\ell_0+r_0$, where $0\le r_0\le 10.$ Then
$\Omega^{t_0}(Y_t^{(3)})$ is a $\Omega^{r_0}(Y_t^{(3)})$, whose left
components twisted by $\sigma^{\ell_0}$,
and coefficients multiplied by $(-1)^{\ell_0}$.
\end{pr}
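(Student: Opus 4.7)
The plan is to verify that the matrices displayed in the statement form a chain map $\{\varphi_i : Q_{t+i}\to Q_i\}_{i=0}^{10}$ lifting the cocycle $Y^{(3)}_t$, and then to extend to arbitrary $t_0\in\mathbb{N}$ using the periodicity of the bimodule resolution established in Theorem~\ref{resol_thm}.

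First I would set $\varphi_0 := Y^{(3)}_t$ (which already agrees with the $r_0=0$ entry of the proposition) and proceed by induction on $r_0\in\{0,1,\ldots,10\}$. For each such $r_0$ I take the candidate matrix prescribed in the proposition as the definition of $\varphi_{r_0}$ and verify the chain-map identity
\begin{equation*}
d_{r_0-1}\circ\varphi_{r_0} \;=\; \varphi_{r_0-1}\circ d_{t+r_0-1},
\end{equation*}
noting that since $t=11\ell+r$ with $r=1$ (the degree of any $Y^{(3)}$), the relevant differential $d_{t+r_0-1}$ on the right-hand side is obtained from $d_{r+r_0-1}$ by the $\sigma^{\ell}$-twist described in Theorem~\ref{resol_thm}. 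Both $\varphi_{r_0}$ and $d_\bullet$ are extremely sparse matrices, so the verification breaks into a finite number of block cases parametrised by the column index $j$ (specifically the regions $0\le j<s$, $s\le j<3s$, $3s\le j<5s$, $5s\le j<?$, etc.\ used in the descriptions of each $d_r$). In each region the check is a direct matrix product performed componentwise in the path algebra modulo $I'$.

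Second, once the cases $r_0=0,\ldots,10$ are established, the extension to general $t_0=11\ell_0+r_0$ is immediate. By Theorem~\ref{resol_thm}, $Q_{11\ell_0+i}$ is obtained from $Q_i$ by replacing every summand $P_{a,b}$ with $P_{\sigma^{\ell_0}(a),b}$, and $d_{11\ell_0+i}$ acts as $d_i$ with $\sigma^{\ell_0}$ applied to each left tensor component. Therefore twisting the left components of $\Omega^{r_0}(Y_t^{(3)})$ by $\sigma^{\ell_0}$ produces a well-defined chain map through the shifted portion of the resolution. The global sign $(-1)^{\ell_0}$ in the twist for $Y^{(3)}_t$ (absent for $Y^{(1)}_t$ and $Y^{(2)}_t$) originates from the $\gamma$-arrows that appear in $Y^{(3)}_t$ (entries like $w_{0\to 1}\otimes e_0$ whose source involves the subquiver controlled by $\g$): by the definition of $\sigma$, each application to a $\g_i$ with $(i)_s<s-1$ flips sign, and one checks that exactly one such flip occurs per $\sigma$-shift of the cocycle, producing the overall factor $(-1)^{\ell_0}$.

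The main obstacle is bookkeeping rather than conceptual difficulty. The cases $r_0=5,7,8,10$ each involve six to eleven nonzero entries in $\Omega^{r_0}(Y_t^{(3)})$ and require careful tracking of the index conditions $(j)_s$, $(j+1)_s$, $(j+s+1)_{2s}$, together with the sign functions $f_1,f,h$; in certain entries one must invoke the ideal relation $\a_{3t+2}\a_{3t+1}\a_{3t}=\a_{3(t+s)+2}\a_{3(t+s)+1}\a_{3(t+s)}$ to identify two a~priori different length-three paths, and elsewhere one uses vanishing of length-$5$ paths to cancel contributions that would otherwise spoil the equation. I would organise the verification into a single table per value of $r_0$, listing for each column $j$ of $Q_{t+r_0}$ the left-hand and right-hand compositions and confirming they agree; the $\sigma$-periodicity argument at the end then requires no further computation beyond the sign tracking already described.
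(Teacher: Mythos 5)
Your proposal is correct and matches the paper's (implicit) argument: the paper states these translates without written proof, the intended justification being exactly the componentwise verification of the chain-map squares $d_{r_0-1}\circ\varphi_{r_0}=\varphi_{r_0-1}\circ d_{t+r_0-1}$ against the differentials of Theorem~\ref{resol_thm}, followed by the $\sigma^{\ell_0}$-periodicity for general $t_0$. Your account of where the factor $(-1)^{\ell_0}$ comes from is a heuristic that still has to be confirmed by the same sign-tracking computation, but that is part of the finite verification you already describe, so there is no gap.
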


%%%%%%%%%%%%%%%%%%%%%%%%%%%%%%%%%%%%%%%%%%%%%%%%%%%%%%%%%%%%%%%%%%%%%%%%%%%%%%%%%%%%%%%%
%                                          4
%%%%%%%%%%%%%%%%%%%%%%%%%%%%%%%%%%%%%%%%%%%%%%%%%%%%%%%%%%%%%%%%%%%%%%%%%%%%%%%%%%%%%%%%
\begin{pr}[Translates for the case 4]\label{type4}
$({\rm I})$ Let $r_0\in\N$, $r_0<11$. $r_0$-translates of the
elements $Y^{(4)}_t$ are described by the following way.

$(1)$ If $r_0=0$, then $\Omega^{0}(Y_t^{(4)})$ is described with
$(7s\times 6s)$-matrix with the following elements $b_{ij}${\rm:}

If $0\le j<s$, then $$b_{ij}=
\begin{cases}
w_{4(j+m)\ra 4(j+m+s)+1}\otimes e_{4j},\quad i=(j)_s;\\
0,\quad\text{otherwise.}
\end{cases}$$

If $s\le j<5s-1$, then $b_{ij}=0$.

If $5s-1\le j<6s-1$, then $$b_{ij}=
\begin{cases}
-\kappa^\ell(\a_{3(j+m)})w_{4(j+m)+2\ra 4(j+m)+3}\otimes e_{4j+2},\quad i=j-s;\\
0,\quad\text{otherwise.}
\end{cases}$$

If $6s-1\le j<7s$, then $b_{ij}=0$.

$(2)$ If $r_0=1$, then $\Omega^{1}(Y_t^{(4)})$ is described with
$(6s\times 7s)$-matrix with the following elements $b_{ij}${\rm:}

If $0\le j<s$, then $$b_{ij}=
\begin{cases}
\kappa_1w_{4(j+m)+1\ra 4(j+m)+3}\otimes e_{4j},\quad i=j;\\
\kappa_1w_{4(j+m)+2\ra 4(j+m)+3}\otimes w_{4j\ra 4j+1},\quad i=j+2s;\\
\kappa_1e_{4(j+m)+3}\otimes w_{4j\ra 4j+2},\quad i=j+4s,\text{ }j<s-1;\\
-\kappa_1e_{4(j+m)+3}\otimes w_{4j\ra 4(j+s)+2},\quad i=j+5s,\text{ }j<s-1;\\
0,\quad\text{otherwise,}
\end{cases}$$
where $\kappa_1=\kappa^\ell(\a_{3(j+m)}).$

If $s\le j<2s-1$, then $$b_{ij}=
\begin{cases}
-w_{4(j+m+1)+1\ra 4(j+m+1)+2}\otimes w_{4(j+s)+1\ra 4(j+1)},\quad i=s+(j+1)_s;\\
0,\quad\text{otherwise.}
\end{cases}$$

If $2s-1\le j<3s-1$, then $b_{ij}=0$.

If $3s-1\le j<3s$, then $$b_{ij}=
\begin{cases}
-w_{4(j+m+1)+1\ra 4(j+m+1)+2}\otimes w_{4(j+s)+1\ra 4(j+1)},\quad i=s+(j+1)_s;\\
0,\quad\text{otherwise.}
\end{cases}$$

If $3s\le j<4s-1$, then $b_{ij}=0$.

If $4s-1\le j<4s$, then $$b_{ij}=
\begin{cases}
-w_{4(j+m)+3\ra 4(j+m+s+1)+1}\otimes e_{4(j+s)+2},\quad i=j+s;\\
-w_{4(j+m+1)\ra 4(j+m+s+1)+1}\otimes w_{4(j+s)+2\ra 4j+3},\quad i=j+3s;\\
0,\quad\text{otherwise.}
\end{cases}$$

If $4s\le j<5s-1$, then $$b_{ij}=
\begin{cases}
-w_{4(j+m)+3\ra 4(j+m+s+1)+1}\otimes e_{4(j+s)+2},\quad i=j+s;\\
-w_{4(j+m+1)\ra 4(j+m+s+1)+1}\otimes w_{4(j+s)+2\ra 4j+3},\quad i=j+2s;\\
0,\quad\text{otherwise.}
\end{cases}$$

If $5s-1\le j<5s$, then $b_{ij}=0$.

If $5s\le j<6s$, then $$b_{ij}=
\begin{cases}
f_2(j,6s-1)\kappa_1w_{4(j+m+1)\ra 4(j+m+2)}\otimes e_{4j+3},\quad i=j+s;\\
f_2(s,1)\kappa_1w_{4(j+m+1)+3\ra 4(j+m+2)}\otimes w_{4j+3\ra 4(j+s+1+sf(s,1))+2},\\\quad\quad\quad i=4s+(j+1)_s,\text{ }j=5s+(s-2)_s;\\
f_2(s,1)\kappa_1e_{4(j+m+2)}\otimes w_{4j+3\ra 4(j+s+1)+3},\quad i=6s+(j+1)_s,\text{ }j=5s+(s-2)_s;\\
0,\quad\text{otherwise,}
\end{cases}$$
where $\kappa_1=\kappa^\ell(\a_{3(j+m+2)})$.

$(3)$ If $r_0=2$, then $\Omega^{2}(Y_t^{(4)})$ is described with
$(8s\times 6s)$-matrix with the following elements $b_{ij}${\rm:}

If $0\le j<s-1$, then $$b_{ij}=
\begin{cases}
w_{4(j+m+s)+1\ra 4(j+m+s)+2}\otimes w_{4j\ra 4j+2},\quad i=j+3s;\\
0,\quad\text{otherwise.}
\end{cases}$$

If $s-1\le j<s$, then $b_{ij}=0$.

If $s\le j<2s-1$, then $$b_{ij}=
\begin{cases}
-w_{4(j+m+s)-1\ra 4(j+m+s)+2}\otimes e_{4j},\quad i=j-s;\\
0,\quad\text{otherwise.}
\end{cases}$$

If $2s-1\le j<2s$, then $$b_{ij}=
\begin{cases}
-e_{4(j+m+s)+2}\otimes w_{4j\ra 4(j+s)+1},\quad i=j;\\
0,\quad\text{otherwise.}
\end{cases}$$

If $2s\le j<3s$, then $$b_{ij}=
\begin{cases}
-\kappa^\ell(\a_{3(j+m)})e_{4(j+m)+3}\otimes w_{4j+1\ra 4(j+1)},\quad i=(j+1)_s;\\
-\kappa^\ell(\a_{3(j+m)})w_{4(j+m)+2\ra 4(j+m)+3}\otimes e_{4j+1},\quad i=j-s,\text{ }j=3s-1;\\
0,\quad\text{otherwise.}
\end{cases}$$

If $3s\le j<4s$, then $$b_{ij}=
\begin{cases}
-\kappa^\ell(\a_{3(j+m)})e_{4(j+m)+3}\otimes w_{4j+1\ra 4(j+1)},\quad i=(j+1)_s;\\
-\kappa^\ell(\a_{3(j+m)})w_{4(j+m)+2\ra 4(j+m)+3}\otimes e_{4j+1},\quad i=j-s,\text{ }j<4s-1;\\
0,\quad\text{otherwise.}
\end{cases}$$

If $4s\le j<5s-1$, then $$b_{ij}=
\begin{cases}
-\kappa_1w_{4(j+m)+3\ra 4(j+m+1)}\otimes w_{4j+2\ra 4(j+1)},\quad i=(j+1)_s;\\
-\kappa_1w_{4(j+m+s)+1\ra 4(j+m+1)}\otimes e_{4j+2},\quad i=j-s;\\
0,\quad\text{otherwise,}
\end{cases}$$
where $\kappa_1=\kappa^\ell(\a_{3(j+m+1)})$.

If $5s-1\le j<6s-1$, then $b_{ij}=0$.

If $6s-1\le j<6s$, then $$b_{ij}=
\begin{cases}
\kappa_1w_{4(j+m)+3\ra 4(j+m+1)}\otimes w_{4j+2\ra 4(j+1)},\quad i=(j+1)_s;\\
\kappa_1w_{4(j+m+s)+1\ra 4(j+m+1)}\otimes e_{4j+2},\quad i=j-s;\\
0,\quad\text{otherwise,}
\end{cases}$$
where $\kappa_1=-\kappa^\ell(\a_{3(j+m+1)})$.

If $6s\le j<7s-1$, then $$b_{ij}=
\begin{cases}
e_{4(j+m+s+1)+1}\otimes w_{4j+3\ra 4(j+1)+2},\quad i=3s+(j+1)_s;\\
0,\quad\text{otherwise.}
\end{cases}$$

If $7s-1\le j<8s-1$, then $b_{ij}=0$.

If $8s-1\le j<8s$, then $$b_{ij}=
\begin{cases}
e_{4(j+m+s+1)+1}\otimes w_{4j+3\ra 4(j+1)+2},\quad i=3s+(j+1)_s;\\
0,\quad\text{otherwise.}
\end{cases}$$

$(4)$ If $r_0=3$, then $\Omega^{3}(Y_t^{(4)})$ is described with
$(9s\times 8s)$-matrix with the following elements $b_{ij}${\rm:}

If $0\le j<s-1$, then $$b_{ij}=
\begin{cases}
\kappa^\ell(\a_{3(j+m)})w_{4(j+m)+2\ra 4(j+m)+3}\otimes e_{4j},\quad i=j;\\
\kappa^\ell(\a_{3(j+m)})e_{4(j+m)+3}\otimes w_{4j\ra 4j+1},\quad i=j+2s;\\
\kappa^\ell(\a_{3(j+m)})e_{4(j+m)+3}\otimes w_{4j\ra 4(j+s)+1},\quad i=j+3s;\\
0,\quad\text{otherwise.}
\end{cases}$$

If $s-1\le j<2s-1$, then $b_{ij}=0$.

If $2s-1\le j<2s$, then $$b_{ij}=
\begin{cases}
\kappa^\ell(\a_{3(j+m+1)})w_{4(j+m+s)+2\ra 4(j+m+1)}\otimes e_{4j},\quad i=j-s;\\
0,\quad\text{otherwise.}
\end{cases}$$

If $2s\le j<3s-1$, then $b_{ij}=0$.

If $3s-1\le j<3s$, then $$b_{ij}=
\begin{cases}
-w_{4(j+m)+3\ra 4(j+m+1)+1}\otimes e_{4j+1},\quad i=j;\\
0,\quad\text{otherwise.}
\end{cases}$$

If $3s\le j<4s-1$, then $$b_{ij}=
\begin{cases}
-w_{4(j+m)+3\ra 4(j+m+1)+1}\otimes e_{4j+1},\quad i=j;\\
0,\quad\text{otherwise.}
\end{cases}$$

If $4s-1\le j<4s$, then $b_{ij}=0$.

If $4s\le j<5s-1$, then $$b_{ij}=
\begin{cases}
-w_{4(j+m+1)\ra 4(j+m+s+1)+1}\otimes e_{4j+2},\quad i=j;\\
-e_{4(j+m+s+1)+1}\otimes w_{4j+2\ra 4j+3},\quad i=j+3s;\\
0,\quad\text{otherwise.}
\end{cases}$$

If $7\le j<8$, then $$b_{ij}=
\begin{cases}
-e_{4(j+m+1)+2}\otimes w_{4j+2\ra 4(j+1)},\quad i=(j+1)_s,\text{ }s=1;\\
0,\quad\text{otherwise.}
\end{cases}$$

If $6s-2\le j<6s-1$, then $$b_{ij}=
\begin{cases}
-e_{4(j+m+s+1)+2}\otimes w_{4(j+s)+2\ra 4(j+1)},\quad i=(j+1)_s,\text{ }s>1;\\
0,\quad\text{otherwise.}
\end{cases}$$

If $7s-1\le j<7s$, then $$b_{ij}=
\begin{cases}
-w_{4(j+m+1)\ra 4(j+m+1)+1}\otimes e_{4(j+s)+2},\quad i=j-s;\\
-e_{4(j+m+1)+1}\otimes w_{4(j+s)+2\ra 4j+3},\quad i=j;\\
0,\quad\text{otherwise.}
\end{cases}$$

If $7s\le j<8s+(s-2)_s$, then $b_{ij}=0$.

If $8s+(s-2)_s\le j<8s+(s-2)_s+1$, then $$b_{ij}=
\begin{cases}
\kappa^\ell(\a_{3(j+m)})e_{4(j+m+1)+3}\otimes w_{4j+3\ra 4(j+1+sf(s,1))+1},\quad i=2s+(j+1)_s;\\
0,\quad\text{otherwise.}
\end{cases}$$

If $8s+(s-2)_s+1\le j<9s$, then $b_{ij}=0$.

$(5)$ If $r_0=4$, then $\Omega^{4}(Y_t^{(4)})$ is described with
$(8s\times 9s)$-matrix with the following elements $b_{ij}${\rm:}

If $0\le j<s-1$, then $$b_{ij}=
\begin{cases}
-e_{4(j+m+s)+1}\otimes w_{4j\ra 4j+1},\quad i=j+2s;\\
0,\quad\text{otherwise.}
\end{cases}$$

If $s-1\le j<2s-1$, then $b_{ij}=0$.

If $2s-1\le j<2s$, then $$b_{ij}=
\begin{cases}
w_{4(j+m+s)-1\ra 4(j+m+s)+1}\otimes e_{4j},\quad i=j-s;\\
-e_{4(j+m+s)+1}\otimes w_{4j\ra 4j+1},\quad i=j+2s;\\
0,\quad\text{otherwise.}
\end{cases}$$

If $2s\le j<3s-(2)_s$, then $$b_{ij}=
\begin{cases}
f_2(s,1)\kappa^\ell(\a_{3(j+m+1)})e_{4(j+m+1)}\otimes w_{4j+1\ra 4(j+1)},\quad i=s+(j+1)_s;\\
0,\quad\text{otherwise.}
\end{cases}$$

If $3s-(2)_s\le j<4s-(2)_s$, then $b_{ij}=0$.

If $4s-(2)_s\le j<4s$, then $$b_{ij}=
\begin{cases}
f_2(j,4s-2)\kappa^\ell(\a_{3(j+m+1)})e_{4(j+m+1)}\otimes w_{4j+1\ra 4(j+1)},\quad i=s+(j+1)_s;\\
0,\quad\text{otherwise.}
\end{cases}$$

If $4s\le j<5s-1$, then $$b_{ij}=
\begin{cases}
\kappa^\ell(\a_{3(j+m)})e_{4(j+m)+3}\otimes w_{4j+2\ra 4(j+1)},\quad i=(j+1)_s;\\
-\kappa^\ell(\a_{3(j+m)})w_{4(j+m+s)+2\ra 4(j+m)+3}\otimes e_{4j+2},\quad i=j+s;\\
0,\quad\text{otherwise.}
\end{cases}$$

If $5s-1\le j<6s-1$, then $b_{ij}=0$.

If $6s-1\le j<6s$, then $$b_{ij}=
\begin{cases}
\kappa^\ell(\a_{3(j+m)})e_{4(j+m)+3}\otimes w_{4j+2\ra 4(j+1)},\quad i=(j+1)_s;\\
-\kappa^\ell(\a_{3(j+m)})w_{4(j+m+s)+2\ra 4(j+m)+3}\otimes e_{4j+2},\quad i=j+2s;\\
0,\quad\text{otherwise.}
\end{cases}$$

If $6s\le j<7s-1$, then $$b_{ij}=
\begin{cases}
-w_{4(j+m+s+1)+1\ra 4(j+m+s+1)+2}\otimes w_{4j+3\ra 4(j+1)+1},\quad i=2s+(j+1)_s;\\
e_{4(j+m+s+1)+2}\otimes w_{4j+3\ra 4(j+1)+2},\quad i=5s+(j+1)_s;\\
0,\quad\text{otherwise.}
\end{cases}$$

If $7s-1\le j<8s-1$, then $b_{ij}=0$.

If $8s-1\le j<8s$, then $$b_{ij}=
\begin{cases}
w_{4(j+m+s+1)+1\ra 4(j+m+s+1)+2}\otimes w_{4j+3\ra 4(j+1)+1},\quad i=2s+(j+1)_s;\\
-e_{4(j+m+s+1)+2}\otimes w_{4j+3\ra 4(j+1)+2},\quad i=5s+(j+1)_s;\\
0,\quad\text{otherwise.}
\end{cases}$$

$(6)$ If $r_0=5$ and $s=1$, then $\Omega^{5}(Y_t^{(4)})$ is described with
$(9s\times 8s)$-matrix with the following nonzero elements{\rm:}
$$b_{j+3s,0}=-\kappa^\ell(\a_{3(j+m+1)})e_{4(j+m)}\otimes w_{4j\ra 4(j+1)+1};$$
$$b_{j-s,s}=-e_{4(j+m+1)+1}\otimes w_{4(j+1)+1\ra 4j};$$
$$b_{j-s,2s}=w_{4(j+m+1)+1\ra 4(j+m+1)+2}\otimes w_{4j+1\ra 4j};$$
$$b_{j-s,6s}=w_{4(j+m)+3\ra 4(j+m+1)+2}\otimes e_{4(j+1)+2};$$
$$b_{j,6s}=-e_{4(j+m+1)+2}\otimes w_{4(j+1)+2\ra 4j+3};$$
$$b_{j-7s,7s}=\kappa_1w_{4(j+m+1)+1\ra 4(j+m)+3}\otimes w_{4j+3\ra 4j};$$
$$b_{j-6s,7s}=\kappa_1w_{4(j+m)+1\ra 4(j+m)+3}\otimes w_{4j+3\ra 4j};$$
$$b_{j-3s,7s}=-\kappa_1e_{4(j+m)+3}\otimes w_{4j+3\ra 4(j+1)+2};$$
$$b_{j-2s,7s}=-\kappa_1e_{4(j+m)+3}\otimes w_{4j+3\ra 4j+2},$$
where $\kappa_1=-\kappa^\ell(\a_{3(7s+m+1)})$.

$(7)$ If $r_0=5$ and $s>1$, then $\Omega^{5}(Y_t^{(4)})$ is described with
$(9s\times 8s)$-matrix with the following elements $b_{ij}${\rm:}

If $0\le j<s$, then $$b_{ij}=
\begin{cases}
\kappa_1e_{4(j+m+1)}\otimes w_{4j\ra 4j+1},\quad i=j+2s;\\
\kappa_1w_{4(j+m)+1\ra 4(j+m+1)}\otimes e_{4j},\quad i=j,\text{ }j=s-1;\\
\kappa_1w_{4(j+m+s)+1\ra 4(j+m+1)}\otimes e_{4j},\quad i=j+s,\text{ }j=s-1;\\
-\kappa_1w_{4(j+m)+3\ra 4(j+m+1)}\otimes w_{4j\ra 4j+2},\quad i=j+4s,\text{ }j=s-1;\\
-\kappa_1w_{4(j+m)+3\ra 4(j+m+1)}\otimes w_{4j\ra 4(j+s)+2},\quad i=j+5s,\text{ }j=s-1;\\
0,\quad\text{otherwise,}
\end{cases}$$
where $\kappa_1=\kappa^\ell(\a_{3(j+m+1)})$.

If $s\le j<2s-2$, then $$b_{ij}=
\begin{cases}
-e_{4(j+m+1)+1}\otimes w_{4(j+s)+1\ra 4(j+1)},\quad i=s+(j+1)_s;\\
0,\quad\text{otherwise.}
\end{cases}$$

If $2s-2\le j<2s$, then $b_{ij}=0$.

If $2s\le j<3s-2$, then $$b_{ij}=
\begin{cases}
w_{4(j+m+1)+1\ra 4(j+m+1)+2}\otimes w_{4j+1\ra 4(j+1)},\quad i=(j+1)_s;\\
0,\quad\text{otherwise.}
\end{cases}$$

If $3s-2\le j<4s-2$, then $b_{ij}=0$.

If $4s-2\le j<4s-1$, then $$b_{ij}=
\begin{cases}
-e_{4(j+m+s+1)+1}\otimes w_{4j+1\ra 4(j+1)},\quad i=(j+1)_s;\\
0,\quad\text{otherwise.}
\end{cases}$$

If $4s-1\le j<4s$, then $$b_{ij}=
\begin{cases}
-e_{4(j+m+s+1)+1}\otimes w_{4j+1\ra 4(j+1)},\quad i=s+(j+1)_s;\\
0,\quad\text{otherwise.}
\end{cases}$$

If $4s\le j<5s-1$, then $b_{ij}=0$.

If $5s-1\le j<5s$, then $$b_{ij}=
\begin{cases}
w_{4(j+m+s+1)+1\ra 4(j+m+s+1)+2}\otimes w_{4(j+s)+1\ra 4(j+1)},\quad i=(j+1)_s;\\
0,\quad\text{otherwise.}
\end{cases}$$

If $5s\le j<5s-2$, then $b_{ij}=0$.

If $5s-2\le j<5s-1$, then $$b_{ij}=
\begin{cases}
w_{4(j+m+s+1)+1\ra 4(j+m+s+1)+2}\otimes w_{4(j+s)+1\ra 4(j+1)},\quad i=s+(j+1)_s;\\
0,\quad\text{otherwise.}
\end{cases}$$

If $5s-1\le j<5s$, then $b_{ij}=0$.

If $5s\le j<6s-1$, then $$b_{ij}=
\begin{cases}
w_{4(j+m)+3\ra 4(j+m+1)+2}\otimes e_{4(j+s)+2},\quad i=j-s;\\
e_{4(j+m+1)+2}\otimes w_{4(j+s)+2\ra 4j+3},\quad i=j+2s;\\
0,\quad\text{otherwise.}
\end{cases}$$

If $6s-1\le j<7s-1$, then $b_{ij}=0$.

If $7s-1\le j<7s$, then $$b_{ij}=
\begin{cases}
w_{4(j+m)+3\ra 4(j+m+1)+2}\otimes e_{4(j+s)+2},\quad i=j-s;\\
-e_{4(j+m+1)+2}\otimes w_{4(j+s)+2\ra 4j+3},\quad i=j;\\
0,\quad\text{otherwise.}
\end{cases}$$

If $7s\le j<8s-2$, then $b_{ij}=0$.

If $8s-2\le j<8s-1$, then $$b_{ij}=
\begin{cases}
\kappa_1w_{4(j+m+s+1)+1\ra 4(j+m+1)+3}\otimes w_{4j+3\ra 4(j+1)},\quad i=(j+1)_s;\\
\kappa_1w_{4(j+m+1)+1\ra 4(j+m+1)+3}\otimes w_{4j+3\ra 4(j+1)},\quad i=s+(j+1)_s;\\
-\kappa_1e_{4(j+m+1)+3}\otimes w_{4j+3\ra 4(j+s+1)+2},\quad i=4s+(j+1)_s;\\
-\kappa_1e_{4(j+m+1)+3}\otimes w_{4j+3\ra 4(j+1)+2},\quad i=5s+(j+1)_s;\\
0,\quad\text{otherwise,}
\end{cases}$$
where $\kappa_1=\kappa^\ell(\a_{3(j+m+1)})$.

If $8s-1\le j<8s$, then $b_{ij}=0$.

If $8s\le j<9s-1$, then $$b_{ij}=
\begin{cases}
-\kappa^\ell(\a_{3(j+m+2)})e_{4(j+m+2)}\otimes w_{4j+3\ra 4(j+1)+1},\quad i=2s+(j+1)_s;\\
0,\quad\text{otherwise.}
\end{cases}$$

If $9s-1\le j<9s$, then $b_{ij}=0$.

$(8)$ If $r_0=6$ and $s=1$, then $\Omega^{6}(Y_t^{(4)})$ is described with
$(8s\times 9s)$-matrix with the following nonzero elements{\rm:}
$$b_{j+3s,0}=w_{4(j+m+1)+1\ra 4(j+m+1)+2}\otimes w_{4j\ra 4(j+1)+1};$$
$$b_{j+3s,s}=-e_{4(j+m+1)+2}\otimes w_{4j\ra 4j+1};$$
$$b_{j,3s}=\kappa^\ell(\a_{3(j+m)})w_{4(j+m)+1\ra 4(j+m)+3}\otimes e_{4j+1};$$
$$b_{j+s,3s}=-\kappa^\ell(\a_{3(j+m)})w_{4(j+m+1)+2\ra 4(j+m)+3}\otimes e_{4j+1};$$
$$b_{j+s,5s}=-\kappa^\ell(\a_{3(j+m)})w_{4(j+m)+2\ra 4(j+m)}\otimes e_{4j+2};$$
$$b_{j+2s,5s}=\kappa^\ell(\a_{3(j+m)})w_{4(j+m)+3\ra 4(j+m)}\otimes w_{4j+2\ra 4j+3};$$
$$b_{j-6s,6s}=-w_{4(j+m)\ra 4(j+m)+1}\otimes w_{4j+3\ra 4j};$$
$$b_{j-5s,6s}=e_{4(j+m)+1}\otimes w_{4j+3\ra 4j+1}.$$

$(9)$ If $r_0=6$ and $s>1$, then $\Omega^{6}(Y_t^{(4)})$ is described with
$(8s\times 9s)$-matrix with the following elements $b_{ij}${\rm:}

If $0\le j<s$, then $$b_{ij}=
\begin{cases}
-w_{4(j+m)\ra 4(j+m+s)+2}\otimes e_{4j},\quad i=j;\\
w_{4(j+m+s)+1\ra 4(j+m+s)+2}\otimes w_{4j\ra 4(j+s)+1},\quad i=j+3s;\\
-e_{4(j+m+s)+2}\otimes w_{4j\ra 4(j+s)+2},\quad i=j+6s,\text{ }j<s-1;\\
0,\quad\text{otherwise.}
\end{cases}$$

If $s\le j<2s$, then $$b_{ij}=
\begin{cases}
w_{4(j+m+s)+1\ra 4(j+m+s)+2}\otimes w_{4j\ra 4(j+s)+1},\quad i=j;\\
0,\quad\text{otherwise.}
\end{cases}$$

If $2s\le j<2s-1$, then $b_{ij}=0$.

If $2s-1\le j<2s$, then $$b_{ij}=
\begin{cases}
-e_{4(j+m+s)+2}\otimes w_{4j\ra 4(j+s)+2},\quad i=j+4s;\\
0,\quad\text{otherwise.}
\end{cases}$$

If $2s\le j<3s-1$, then $$b_{ij}=
\begin{cases}
\kappa^\ell(\a_{3(j+m)})w_{4(j+m)+2\ra 4(j+m)+3}\otimes w_{4j+1\ra 4j+2},\quad i=j+3s;\\
-\kappa^\ell(\a_{3(j+m)})e_{4(j+m)+3}\otimes w_{4j+1\ra 4j+3},\quad i=j+5s;\\
0,\quad\text{otherwise.}
\end{cases}$$

If $3s-1\le j<4s-1$, then $b_{ij}=0$.

If $4s-1\le j<4s$, then $$b_{ij}=
\begin{cases}
\kappa^\ell(\a_{3(j+m)})w_{4(j+m)+2\ra 4(j+m)+3}\otimes w_{4j+1\ra 4j+2},\quad i=j+3s;\\
-\kappa^\ell(\a_{3(j+m)})e_{4(j+m)+3}\otimes w_{4j+1\ra 4j+3},\quad i=j+4s;\\
0,\quad\text{otherwise.}
\end{cases}$$

If $4s\le j<5s-1$, then $$b_{ij}=
\begin{cases}
\kappa_1e_{4(j+m+1)}\otimes w_{4j+2\ra 4(j+1)},\quad i=(j+1)_s;\\
\kappa_1w_{4(j+m)+2\ra 4(j+m+1)}\otimes e_{4j+2},\quad i=j+s;\\
-\kappa_1w_{4(j+m)+3\ra 4(j+m+1)}\otimes w_{4j+2\ra 4j+3},\quad i=j+3s;\\
0,\quad\text{otherwise,}
\end{cases}$$
where $\kappa_1=\kappa^\ell(\a_{3(j+m+1)})$.

If $5s-1\le j<6s-1$, then $b_{ij}=0$.

If $6s-1\le j<6s$, then $$b_{ij}=
\begin{cases}
-\kappa^\ell(\a_{3(j+m)})e_{4(j+m+1)}\otimes w_{4j+2\ra 4(j+1)},\quad i=(j+1)_s;\\
-\kappa^\ell(\a_{3(j+m)})w_{4(j+m)+2\ra 4(j+m+1)}\otimes e_{4j+2},\quad i=j+s;\\
\kappa^\ell(\a_{3(j+m)})w_{4(j+m)+3\ra 4(j+m+1)}\otimes w_{4j+2\ra 4j+3},\quad i=j+2s;\\
0,\quad\text{otherwise.}
\end{cases}$$

If $6s\le j<7s-1$, then $b_{ij}=0$.

If $7s-1\le j<7s$, then $$b_{ij}=
\begin{cases}
e_{4(j+m+s+1)+1}\otimes w_{4j+3\ra 4(j+s+1)+1},\quad i=s+(j+1)_s;\\
0,\quad\text{otherwise.}
\end{cases}$$

If $7s\le j<8s-1$, then $$b_{ij}=
\begin{cases}
e_{4(j+m+s+1)+1}\otimes w_{4j+3\ra 4(j+s+1)+1},\quad i=s+(j+1)_s;\\
0,\quad\text{otherwise.}
\end{cases}$$

If $8s-1\le j<8s$, then $b_{ij}=0$.

$(10)$ If $r_0=7$ and $s=1$, then $\Omega^{7}(Y_t^{(4)})$ is described with
$(6s\times 8s)$-matrix with the following nonzero elements{\rm:}
$$b_{j-s,s}=-e_{4(j+m+1)+2}\otimes w_{4(j+1)+1\ra 4j};$$
$$b_{j+s,2s}=w_{4(j+m)+3\ra 4(j+m+1)+2}\otimes e_{4(j+1)+1};$$
$$b_{j+s,4s}=-w_{4(j+m)\ra 4(j+m)+1}\otimes e_{4(j+1)+2};$$
$$b_{j+3s,4s}=e_{4(j+m)+1}\otimes w_{4(j+1)+2\ra 4j+3};$$
$$b_{j-5s,5s}=\kappa_1w_{4(j+m+1)+2\ra 4(j+m)}\otimes w_{4j+3\ra 4j};$$
$$b_{j-s,5s}=-\kappa_1e_{4(j+m)}\otimes w_{4j+3\ra 4(j+1)+2};$$
$$b_{j,5s}=-\kappa_1e_{4(j+m)}\otimes w_{4j+3\ra 4j+2},$$
where $\kappa_1=-\kappa^\ell(\a_{3(5s+m)})$.

$(11)$ If $r_0=7$ and $s>1$, then $\Omega^{7}(Y_t^{(4)})$ is described with
$(6s\times 8s)$-matrix with the following elements $b_{ij}${\rm:}

If $0\le j<s$, then $$b_{ij}=
\begin{cases}
\kappa^\ell(\a_{3(j+m)})w_{4(j+m+s)+2\ra 4(j+m)+3}\otimes e_{4j},\quad i=j+s;\\
-\kappa^\ell(\a_{3(j+m)})e_{4(j+m)+3}\otimes w_{4j\ra 4j+1},\quad i=j+2s;\\
0,\quad\text{otherwise.}
\end{cases}$$

If $s\le j<3s$, then $b_{ij}=0$.

If $3s\le j<4s-1$, then $$b_{ij}=
\begin{cases}
-w_{4(j+m+1)\ra 4(j+m+s+1)+1}\otimes e_{4(j+s)+2},\quad i=j+s;\\
e_{4(j+m+s+1)+1}\otimes w_{4(j+s)+2\ra 4j+3},\quad i=j+3s;\\
0,\quad\text{otherwise.}
\end{cases}$$

If $4s-1\le j<5s-1$, then $b_{ij}=0$.

If $5s-1\le j<5s$, then $$b_{ij}=
\begin{cases}
-w_{4(j+m+1)\ra 4(j+m+s+1)+1}\otimes e_{4(j+s)+2},\quad i=j+s;\\
e_{4(j+m+s+1)+1}\otimes w_{4(j+s)+2\ra 4j+3},\quad i=j+3s;\\
0,\quad\text{otherwise.}
\end{cases}$$

If $5s\le j<6s-2$, then $b_{ij}=0$.

If $6s-2\le j<6s-1$, then $$b_{ij}=
\begin{cases}
\kappa_1e_{4(j+m+2)}\otimes w_{4j+3\ra 4(j+s+1)+2},\quad i=4s+(j+1)_s;\\
\kappa_1e_{4(j+m+2)}\otimes w_{4j+3\ra 4(j+1)+2},\quad i=5s+(j+1)_s;\\
0,\quad\text{otherwise,}
\end{cases}$$
where $\kappa_1=\kappa^\ell(\a_{3(j+m+2)})$.

If $6s-1\le j<6s$, then $b_{ij}=0$.

$(12)$ If $r_0=8$ and $s=1$, then $\Omega^{8}(Y_t^{(4)})$ is described with
$(7s\times 6s)$-matrix with the following nonzero elements{\rm:}
$$b_{j+2s,0}=\kappa^\ell(\a_{3(j+m)})w_{4(j+m+1)+2\ra 4(j+m)+3}\otimes w_{4j\ra 4(j+1)+1};$$
$$b_{j,2s}=\kappa^\ell(\a_{3(j+m)})w_{4(j+m+1)+2\ra 4(j+m)}\otimes e_{4(j+1)+1};$$
$$b_{j-4s,4s}=-w_{4(j+m)+3\ra 4(j+m)+1}\otimes w_{4(j+1)+2\ra 4j};$$
$$b_{j+s,4s}=-w_{4(j+m)\ra 4(j+m)+1}\otimes w_{4(j+1)+2\ra 4j+3};$$
$$b_{j-5s,5s}=-w_{4(j+m)+3\ra 4(j+m+1)+2}\otimes w_{4j+3\ra 4j};$$
$$b_{j-4s,5s}=e_{4(j+m+1)+2}\otimes w_{4j+3\ra 4(j+1)+1};$$
$$b_{j-s,5s}=w_{4(j+m+1)+1\ra 4(j+m+1)+2}\otimes w_{4j+3\ra 4j+2};$$
$$b_{j-3s,6s}=-w_{4(j+m+1)+1\ra 4(j+m+1)+2}\otimes w_{4j+3\ra 4j+2}.$$

$(13)$ If $r_0=8$ and $s>1$, then $\Omega^{8}(Y_t^{(4)})$ is described with
$(7s\times 6s)$-matrix with the following elements $b_{ij}${\rm:}

If $0\le j<s$, then $$b_{ij}=
\begin{cases}
\kappa^\ell(\a_{3(j+m)})e_{4(j+m)+3}\otimes w_{4j\ra 4(j+1)},\quad i=(j+1)_s;\\
\kappa^\ell(\a_{3(j+m)})w_{4(j+m)+2\ra 4(j+m)+3}\otimes w_{4j\ra 4j+1},\quad i=j+s,\text{ }j<s-1;\\
\kappa^\ell(\a_{3(j+m)})w_{4(j+m-1)+3\ra 4(j+m)+3}\otimes e_{4j},\quad i=j,\text{ }j=s-1;\\
\kappa^\ell(\a_{3(j+m)})w_{4(j+m+s)+2\ra 4(j+m)+3}\otimes w_{4j\ra 4(j+s)+1},\quad i=j+2s,\text{ }j=s-1;\\
0,\quad\text{otherwise.}
\end{cases}$$

If $s\le j<2s-1$, then $$b_{ij}=
\begin{cases}
\kappa_1w_{4(j+m)+3\ra 4(j+m+1)}\otimes w_{4(j+s)+1\ra 4(j+1)},\quad i=(j+1)_s;\\
\kappa_1w_{4(j+m+s)+2\ra 4(j+m+1)}\otimes e_{4(j+s)+1},\quad i=j;\\
0,\quad\text{otherwise,}
\end{cases}$$
where $\kappa_1=-\kappa^\ell(\a_{3(j+m+1)})$.

If $2s-1\le j<3s-1$, then $b_{ij}=0$.

If $3s-1\le j<3s$, then $$b_{ij}=
\begin{cases}
-\kappa_1w_{4(j+m)+1\ra 4(j+m+1)}\otimes w_{4(j+s)+1\ra 4(j+s)+2},\quad i=j+2s;\\
\kappa_1e_{4(j+m+1)}\otimes w_{4(j+s)+1\ra 4j+3},\quad i=j+3s;\\
0,\quad\text{otherwise,}
\end{cases}$$
where $\kappa_1=\kappa^\ell(\a_{3(j+m+1)})$.

If $3s\le j<4s-1$, then $$b_{ij}=
\begin{cases}
-w_{4(j+m+1)\ra 4(j+m+s+1)+1}\otimes w_{4(j+s)+2\ra 4j+3},\quad i=j+2s;\\
0,\quad\text{otherwise.}
\end{cases}$$

If $4s-1\le j<5s-1$, then $b_{ij}=0$.

If $5s-1\le j<5s$, then $$b_{ij}=
\begin{cases}
-w_{4(j+m+1)\ra 4(j+m+s+1)+1}\otimes w_{4(j+s)+2\ra 4j+3},\quad i=j+s;\\
0,\quad\text{otherwise.}
\end{cases}$$

If $5s\le j<6s-1$, then $b_{ij}=0$.

If $6s-1\le j<6s$, then $$b_{ij}=
\begin{cases}
e_{4(j+m+1)+2}\otimes w_{4j+3\ra 4(j+1)+1},\quad i=s+(j+1)_s;\\
0,\quad\text{otherwise.}
\end{cases}$$

If $6s\le j<6s-2$, then $b_{ij}=0$.

If $6s-2\le j<6s-1$, then $$b_{ij}=
\begin{cases}
-w_{4(j+m+1)+1\ra 4(j+m+1)+2}\otimes w_{4j+3\ra 4(j+s+1)+2},\quad i=3s+(j+1)_s;\\
0,\quad\text{otherwise.}
\end{cases}$$

If $6s-1\le j<6s$, then $b_{ij}=0$.

If $6s\le j<7s-1$, then $$b_{ij}=
\begin{cases}
e_{4(j+m+1)+2}\otimes w_{4j+3\ra 4(j+1)+1},\quad i=s+(j+1)_s;\\
0,\quad\text{otherwise.}
\end{cases}$$

If $7s-1\le j<7s-2$, then $b_{ij}=0$.

If $7s-2\le j<7s-1$, then $$b_{ij}=
\begin{cases}
w_{4(j+m+1)+1\ra 4(j+m+1)+2}\otimes w_{4j+3\ra 4(j+s+1)+2},\quad i=4s+(j+1)_s;\\
0,\quad\text{otherwise.}
\end{cases}$$

If $7s-1\le j<7s$, then $b_{ij}=0$.

$(14)$ If $r_0=9$ and $s=1$, then $\Omega^{9}(Y_t^{(4)})$ is described with
$(6s\times 7s)$-matrix with the following nonzero elements{\rm:}
$$b_{j,2s}=-w_{4(j+m)\ra 4(j+m)+1}\otimes e_{4(j+1)+1};$$
$$b_{j+2s,4s}=e_{4(j+m)+2}\otimes w_{4(j+1)+2\ra 4j+3};$$
$$b_{j,5s}=-\kappa^\ell(\a_{3(j+m+1)})w_{4(j+m)+2\ra 4(j+m)+3}\otimes e_{4j+3};$$
$$b_{j+s,5s}=\kappa^\ell(\a_{3(j+m+1)})w_{4(j+m+1)+2\ra 4(j+m)+3}\otimes e_{4j+3}.$$

$(15)$ If $r_0=9$ and $s>1$, then $\Omega^{9}(Y_t^{(4)})$ is described with
$(6s\times 7s)$-matrix with the following elements $b_{ij}${\rm:}

If $s-1\le j<s$, then $$b_{ij}=
\begin{cases}
-\kappa_1w_{4(j+m)+3\ra 4(j+m+1)}\otimes e_{4j},\quad i=j;\\
-\kappa_1e_{4(j+m+1)}\otimes w_{4j\ra 4j+1},\quad i=j+s;\\
0,\quad\text{otherwise,}
\end{cases}$$
where $\kappa_1=\kappa^\ell(\a_{3(j+m+1)})$.

If $s\le j<2s-1$, then $$b_{ij}=
\begin{cases}
-w_{4(j+m+1)\ra 4(j+m+s+1)+1}\otimes e_{4(j+s)+1},\quad i=j;\\
0,\quad\text{otherwise.}
\end{cases}$$

If $2s-1\le j<3s$, then $b_{ij}=0$.

If $3s\le j<4s-1$, then $$b_{ij}=
\begin{cases}
e_{4(j+m+s+1)+2}\otimes w_{4(j+s)+2\ra 4j+3},\quad i=j+2s;\\
0,\quad\text{otherwise.}
\end{cases}$$

If $4s-1\le j<5s-1$, then $b_{ij}=0$.

If $5s-1\le j<5s$, then $$b_{ij}=
\begin{cases}
e_{4(j+m+s+1)+2}\otimes w_{4(j+s)+2\ra 4j+3},\quad i=j+2s;\\
0,\quad\text{otherwise.}
\end{cases}$$

If $5s\le j<6s-2$, then $b_{ij}=0$.

If $6s-2\le j<6s-1$, then $$b_{ij}=
\begin{cases}
\kappa^\ell(\a_{3(j+m)})e_{4(j+m+1)+3}\otimes w_{4j+3\ra 4(j+1)},\quad i=(j+1)_s;\\
0,\quad\text{otherwise.}
\end{cases}$$

If $6s-1\le j<6s$, then $b_{ij}=0$.

$(16)$ If $r_0=10$ and $s=1$, then $\Omega^{10}(Y_t^{(4)})$ is described with
$(6s\times 6s)$-matrix with the following nonzero elements{\rm:}
$$b_{j+2s,0}=\kappa_1w_{4(j+m)+1\ra 4(j+m)}\otimes w_{4j\ra 4(j+1)+1};$$
$$b_{j+4s,0}=-\kappa_1w_{4(j+m)+2\ra 4(j+m)}\otimes w_{4j\ra 4(j+1)+2};$$
$$b_{j-s,s}=-w_{4(j+m)\ra 4(j+m)+1}\otimes w_{4(j+1)+1\ra 4j};$$
$$b_{j+4s,s}=-w_{4(j+m)+3\ra 4(j+m)+1}\otimes w_{4(j+1)+1\ra 4j+3};$$
$$b_{j-3s,3s}=-w_{4(j+m)\ra 4(j+m)+2}\otimes w_{4(j+1)+2\ra 4j};$$
$$b_{j+2s,3s}=-w_{4(j+m)+3\ra 4(j+m)+2}\otimes w_{4(j+1)+2\ra 4j+3};$$
$$b_{j-3s,5s}=-\kappa_2w_{4(j+m+1)+1\ra 4(j+m)+3}\otimes w_{4j+3\ra 4j+1};$$
$$b_{j-s,5s}=\kappa_2w_{4(j+m+1)+2\ra 4(j+m)+3}\otimes w_{4j+3\ra 4j+2},$$
where $\kappa_1=\kappa^\ell(\a_{3(m+1)})$, $\kappa_2=\kappa^\ell(\a_{3(5s+m+1)})$.

$(17)$ If $r_0=10$ and $s>1$, then $\Omega^{10}(Y_t^{(4)})$ is described with
$(6s\times 6s)$-matrix with the following elements $b_{ij}${\rm:}

If $0\le j<s$, then $$b_{ij}=
\begin{cases}
\kappa_1w_{4(j+m)\ra 4(j+m+1)}\otimes e_{4j},\quad i=j;\\
\kappa_1w_{4(j+m)+1\ra 4(j+m+1)}\otimes w_{4j\ra 4(j+s)+1},\quad i=j+2s;\\
-\kappa_1w_{4(j+m)+2\ra 4(j+m+1)}\otimes w_{4j\ra 4(j+s)+2},\quad i=j+4s;\\
\kappa_1e_{4(j+m+1)}\otimes w_{4j\ra 4(j+1)},\quad i=(j+1)_s,\text{ }j=s-1;\\
\kappa_1w_{4(j+m)+3\ra 4(j+m+1)}\otimes w_{4j\ra 4j+3},\quad i=j+5s,\text{ }j=s-1;\\
0,\quad\text{otherwise,}
\end{cases}$$
where $\kappa_1=\kappa^\ell(\a_{3(j+m+1)})$.

If $s\le j<2s-1$, then $$b_{ij}=
\begin{cases}
w_{4(j+m+1)\ra 4(j+m+s+1)+1}\otimes w_{4(j+s)+1\ra 4(j+1)},\quad i=(j+1)_s;\\
-w_{4(j+m)+3\ra 4(j+m+s+1)+1}\otimes w_{4(j+s)+1\ra 4j+3},\quad i=j+4s;\\
0,\quad\text{otherwise.}
\end{cases}$$

If $2s-1\le j<3s-1$, then $b_{ij}=0$.

If $3s-1\le j<3s$, then $$b_{ij}=
\begin{cases}
w_{4(j+m+1)\ra 4(j+m+s+1)+1}\otimes w_{4(j+s)+1\ra 4(j+1)},\quad i=(j+1)_s;\\
w_{4(j+m)+3\ra 4(j+m+s+1)+1}\otimes w_{4(j+s)+1\ra 4j+3},\quad i=j+3s;\\
0,\quad\text{otherwise.}
\end{cases}$$

If $3s\le j<4s-1$, then $$b_{ij}=
\begin{cases}
w_{4(j+m+1)\ra 4(j+m+s+1)+2}\otimes w_{4(j+s)+2\ra 4(j+1)},\quad i=(j+1)_s;\\
-w_{4(j+m)+3\ra 4(j+m+s+1)+2}\otimes w_{4(j+s)+2\ra 4j+3},\quad i=j+2s;\\
0,\quad\text{otherwise.}
\end{cases}$$

If $4s-1\le j<5s-1$, then $b_{ij}=0$.

If $5s-1\le j<5s$, then $$b_{ij}=
\begin{cases}
w_{4(j+m+1)\ra 4(j+m+s+1)+2}\otimes w_{4(j+s)+2\ra 4(j+1)},\quad i=(j+1)_s;\\
w_{4(j+m)+3\ra 4(j+m+s+1)+2}\otimes w_{4(j+s)+2\ra 4j+3},\quad i=j+s;\\
0,\quad\text{otherwise.}
\end{cases}$$

If $5s\le j<6s$, then $$b_{ij}=
\begin{cases}
-\kappa_1w_{4(j+m+1)\ra 4(j+m+1)+3}\otimes w_{4j+3\ra 4(j+1)},\quad i=(j+1)_s;\\
-\kappa_1w_{4(j+m+s+1+j_0)+1\ra 4(j+m+1)+3}\otimes w_{4j+3\ra 4(j+1+j_0)+1},\quad i=2s+(j+1)_s;\\
\kappa_1w_{4(j+m+s+1+j_0)+2\ra 4(j+m+1)+3}\otimes w_{4j+3\ra 4(j+1+j_0)+2},\quad i=4s+(j+1)_s;\\
-f_2(j,6s-1)\kappa_1w_{4(j+m)+3\ra 4(j+m+1)+3}\otimes e_{4j+3},\quad i=j;\\
-\kappa_1e_{4(j+m+1)+3}\otimes w_{4j+3\ra 4(j+1)+3},\quad i=5s+(j+1)_s,\text{ }j=6s-2;\\
0,\quad\text{otherwise,}
\end{cases}$$
where $\kappa_1=\kappa^\ell(\a_{3(j+m+1)})$, $j_0=sf(j,6s-1)$

\medskip
$({\rm II})$ Represent an arbitrary $t_0\in\N$ in the form
$t_0=11\ell_0+r_0$, where $0\le r_0\le 10.$ Then
$\Omega^{t_0}(Y_t^{(4)})$ is a $\Omega^{r_0}(Y_t^{(4)})$, whose left
components twisted by $\sigma^{\ell_0}$.
\end{pr}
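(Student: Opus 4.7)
The plan is to proceed by induction on $r_0$ for $0 \le r_0 \le 10$, using the chain-map relation
$$d_{r_0 - 1} \circ \Omega^{r_0}(Y_t^{(4)}) = \Omega^{r_0 - 1}(Y_t^{(4)}) \circ d_{t + r_0 - 1},$$
which any lift of a cocycle must satisfy. The base case $r_0 = 0$ is immediate: $\Omega^{0}(Y_t^{(4)})$ coincides with the representative of $Y_t^{(4)}$ constructed in the previous section, which matches item $(1)$ by inspection.

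For each inductive step I would take the candidate matrix $B^{(r_0)} = (b_{ij})$ given in the proposition, form the two matrix products $d_{r_0 - 1} \cdot B^{(r_0)}$ and $B^{(r_0-1)} \cdot d_{t + r_0 - 1}$, and verify entry by entry that they agree in $\Lambda$. This verification reduces to the defining relations of $I^\prime$: paths of length $\ge 5$ vanish, and the two-term relations
$$\a_{3t+2}\a_{3t+1}\a_{3t} = \a_{3(t+s)+2}\a_{3(t+s)+1}\a_{3(t+s)}, \quad \a_{3t}\g_{t-1}\a_{3(t+s)-1} = 0$$
in the path algebra. Existence of a lift is guaranteed abstractly because the $Q_i$ are projective; the specific form stated in the proposition is pinned down by minimality, that is, by requiring each nonzero component of $B^{(r_0)}$ to land in a single indecomposable summand of $Q_{r_0}$ dictated by Lemmas \ref{lem_s0}--\ref{lem_s3}.

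Part (II), the general case $t_0 = 11 \ell_0 + r_0$, then follows from the periodicity of Theorem \ref{resol_thm}: the modules $Q_{11\ell + r}$ are obtained from $Q_r$ by applying $\sigma^\ell$ to all left tensor components, and $d_{11\ell+r}$ is obtained from $d_r$ in the same way. Hence once the chain-map relation is verified in degrees $t + r_0 - 1$ and $r_0 - 1$ for $\ell_0 = 0$, it continues to hold after twisting all left components by $\sigma^{\ell_0}$, which is precisely the recipe in part (II).

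The hard part is purely combinatorial. Each $r_0 \in \{1, \dots, 10\}$ involves roughly a dozen $j$-subranges, with boundary exceptions at $j = s-1$, $2s-1$, $3s-1$, etc., and with signs tracked through $f_1$, $f_2$, and $\kappa^\ell$. In items $(6)$, $(8)$, $(10)$, $(12)$, $(14)$, $(16)$ the case $s = 1$ must be split off separately, because several index ranges collapse there and entries that are distinct for $s > 1$ must be summed into a single matrix cell. Bookkeeping these boundary and degenerate cases consistently is the main source of technical difficulty, but no conceptual obstacle arises beyond the brute-force matrix computation.
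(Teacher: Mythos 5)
Your proposal matches the paper's (implicit) method: the paper states these translate descriptions without proof, relying on exactly the entrywise verification of the commuting squares $d_{r_0-1}\circ\Omega^{r_0}(Y_t^{(4)})=\Omega^{r_0-1}(Y_t^{(4)})\circ d_{t+r_0-1}$ modulo the relations of $I^\prime$ that you describe, together with the $\sigma^{\ell_0}$-twist periodicity of Theorem \ref{resol_thm} for part (II). The one caveat is that lifts are unique only up to homotopy, so the stated matrices are not ``pinned down by minimality''; it is both necessary and sufficient simply to check that they satisfy the chain-map relations, which is what your computation does.
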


%%%%%%%%%%%%%%%%%%%%%%%%%%%%%%%%%%%%%%%%%%%%%%%%%%%%%%%%%%%%%%%%%%%%%%%%%%%%%%%%%%%%%%%%
%                                          5
%%%%%%%%%%%%%%%%%%%%%%%%%%%%%%%%%%%%%%%%%%%%%%%%%%%%%%%%%%%%%%%%%%%%%%%%%%%%%%%%%%%%%%%%
\begin{pr}[Translates for the case 5]
$({\rm I})$ Let $r_0\in\N$, $r_0<11$. Denote by $\kappa_0=\kappa^\ell(\a_{3((3)_s)})$. Then $r_0$-translates of the
elements $Y^{(5)}_t$ are described by the following way.

$(1)$ If $r_0=0$, then $\Omega^{0}(Y_t^{(5)})$ is described with
$(6s\times 6s)$-matrix with one nonzero element that is of the following form{\rm:}
$$b_{0,0}=\kappa^\ell(\a_0) w_{4j\ra 4j+3}\otimes e_{4j}.$$

$(2)$ If $r_0=1$, then $\Omega^{1}(Y_t^{(5)})$ is described with
$(8s\times 7s)$-matrix with the following nonzero elements{\rm:}
$$b_{(j+s+1)_{2s},2s+(2)_s}=f_2((j)_s,s-1)\kappa_0w_{4(j+m+s+1)+1\ra 4(j+m+1)+3}\otimes w_{4j+1\ra 4(j+1)};$$
$$b_{(j+s+1)_{2s},3s+(2)_s}=-f_2((j)_s,s-1)\kappa_0w_{4(j+m+s+1)+1\ra 4(j+m+1)+3}\otimes w_{4j+1\ra 4(j+1)};$$
$$b_{6s+(j)_s,4s+(2)_s+sf(s,1)+sf(s,3)}=-\kappa^\ell(\g_{j+1+m})\kappa_0w_{4(j+m+1)\ra 4(j+m+2)}\otimes w_{4j+2\ra 4j+3}.$$

$(3)$ If $r_0=2$, then $\Omega^{2}(Y_t^{(5)})$ is described with
$(9s\times 6s)$-matrix with the following two nonzero elements{\rm:}
$$b_{(j+1)_s,(2)_s}=\kappa_0e_{4(j+m)+3}\otimes w_{4j\ra 4(j+1)};$$
$$b_{j+s,s+(2)_s}=f_2((j)_s,s-1)\kappa^\ell(\g_{j+m})\kappa_0w_{4(j+m)+2\ra 4(j+m+1)}\otimes w_{4j\ra 4j+1}.$$

$(4)$ If $r_0=3$, then $\Omega^{3}(Y_t^{(5)})$ is described with
$(8s\times 8s)$-matrix with the following two nonzero elements{\rm:}
$$b_{j+3s,(2)_s}=-\kappa^\ell(\a_{3(j+1+m)})\kappa^\ell(\g_{j+m})f_2((j)_s,s-1)\kappa_0w_{4(j+m)+3\ra 4(j+m+s+1)+1}\otimes w_{4j\ra 4(j+s)+1};$$
$$b_{s+(j+1)_s,3s-1+(2)_s}=f_2((j)_s,(s-2)_s)\kappa^\ell(\g_{j+1+m})\kappa_0w_{4(j+m+1)+2\ra 4(j+m+2)}\otimes w_{4j+1\ra 4(j+1)}.$$

$(5)$ If $r_0=4$, then $\Omega^{4}(Y_t^{(5)})$ is described with
$(9s\times 9s)$-matrix with the following nonzero elements{\rm:}
$$b_{j+3s,(1)_s}=\kappa^\ell(\g_{j+m})\kappa_0w_{4(j+m)+1\ra 4(j+m)}\otimes w_{4j\ra 4(j+1)+1},\text{ }s=1;$$
$$b_{j+7s,(1)_s}=-\kappa^\ell(\g_{j+m})\kappa_0w_{4(j+m)+2\ra 4(j+m)}\otimes w_{4j\ra 4(j+1)+2},\text{ }s=1;$$
$$b_{(j+1)_s,(1)_s}=-f_2((j)_s,s-2)\kappa^\ell(\g_{j+m})\kappa_0w_{4(j+m)+3\ra 4(j+m+1)}\otimes w_{4j\ra 4(j+1)},\text{ }s>1;$$
$$b_{s+(j+1)_s,(1)_s}=f_2((j)_s,s-2)\kappa^\ell(\g_{j+m})\kappa_0e_{4(j+m+1)}\otimes w_{4j\ra 4(j+1)},\text{ }s>1;$$
\begin{multline*}
b_{(j+1)_s,4s+1-3f(s,1)}=f_2(s,1)f_1((j)_s,s-2)\kappa^\ell(\g_{j+m})\kappa^\ell(\a_{3(j+1+m)})\kappa_0\\
w_{4(j+m)+3\ra 4(j+m+s+1+sf(s,1))+2}\otimes w_{4(j+s+sf(s,1))+1\ra 4(j+1)};\end{multline*}
\begin{multline*}
b_{3s+(j+1)_s,7s+(1)_s}=-f_2((j)_s,(s-2)_s)\kappa^\ell(\g_{j+m})\kappa_0\\
w_{4(j+m+s+1+sf(s,1))+1\ra 4(j+m+1)+3}\otimes w_{4j+3\ra 4(j+1+sf(s,1))+1};\end{multline*}
\begin{multline*}
b_{7s+(j+1)_s,7s+(1)_s}=f_2((j)_s,(s-2)_s)\kappa^\ell(\g_{j+m})\kappa_0\\
w_{4(j+m+s+1+sf(s,1))+2\ra 4(j+m+1)+3}\otimes w_{4j+3\ra 4(j+1+sf(s,1))+2}.\end{multline*}

$(6)$ If $r_0=5$ and $s=1$, then $\Omega^{5}(Y_t^{(5)})$ is described with
$(8s\times 8s)$-matrix with the following nonzero elements{\rm:}
$$b_{j+s,s}=\kappa^\ell(\a_{3(j+m)})\kappa_0w_{4(j+m)\ra 4(j+m)+2}\otimes w_{4j\ra 4(j+1)+1};$$
$$b_{j+2s,s}=\kappa^\ell(\a_{3(j+m)})\kappa_0w_{4(j+m)\ra 4(j+m)+2}\otimes w_{4j\ra 4j+1};$$
$$b_{j-2s,2s}=\kappa_0w_{4(j+m)+1\ra 4(j+m)+3}\otimes w_{4j+1\ra 4j};$$
$$b_{j-s,2s}=\kappa_0w_{4(j+m+1)+1\ra 4(j+m)+3}\otimes w_{4j+1\ra 4j};$$
$$b_{j-3s,6s}=-2\kappa^\ell(\a_{3(j+m)})\kappa_0w_{4(j+m)\ra 4(j+m+1)+1}\otimes w_{4j+3\ra 4(j+1)+1}.$$

$(7)$ If $r_0=5$ and $s>1$, then $\Omega^{5}(Y_t^{(5)})$ is described with
$(8s\times 8s)$-matrix with the following nonzero elements{\rm:}
$$b_{s+(j+1)_s,1}=-\kappa^\ell(\a_{3(j+m)})f_1((j)_s,s-2)\kappa_0w_{4(j+m+s+1)+1\ra 4(j+m+s+1)+2}\otimes w_{4j\ra 4(j+1)};$$
$$b_{(j+1)_s,s+1}=\kappa^\ell(\a_{3(j+m)})f_1((j)_s,s-2)\kappa_0w_{4(j+m+s+1)+1\ra 4(j+m+s+1)+2}\otimes w_{4j\ra 4(j+1)};$$
$$b_{(j+1)_s,3s+1}=\kappa_1\kappa_0w_{4(j+m+s+1)+1\ra 4(j+m+1)+3}\otimes w_{4j+1\ra 4(j+1)};$$
$$b_{s+(j+1)_s,3s+1}=\kappa_1\kappa_0w_{4(j+m+1)+1\ra 4(j+m+1)+3}\otimes w_{4j+1\ra 4(j+1)};$$
$$b_{3s+(j+1)_s,5s+1}=\kappa_2\kappa_0e_{4(j+m+2)}\otimes w_{4j+2\ra 4(j+1)+1},$$
where $\kappa_1=f_2(1,s-2)\kappa^\ell(\g_{3s+1+m})$, $\kappa_2=\kappa^\ell(\g_{5s+1+m})\kappa^\ell(\g_{5s+2+m})f_2(1,s-2)$.

$(8)$ If $r_0=6$ and $s=1$, then $\Omega^{6}(Y_t^{(5)})$ is described with
$(6s\times 9s)$-matrix with the following nonzero elements{\rm:}
$$b_{j+2s,0}=-\kappa_0w_{4(j+m+1)+2\ra 4(j+m)+3}\otimes w_{4j\ra 4j+1};$$
$$b_{j+3s,0}=-\kappa_0w_{4(j+m+1)+1\ra 4(j+m)+3}\otimes w_{4j\ra 4(j+1)+1};$$
$$b_{j+5s,0}=-\kappa_0w_{4(j+m)+2\ra 4(j+m)+3}\otimes w_{4j\ra 4j+2};$$
$$b_{j+7s,0}=\kappa_0e_{4(j+m)+3}\otimes w_{4j\ra 4j+3};$$
$$b_{j-s,s}=-\kappa^\ell(\a_{3(j+m)})\kappa_0w_{4(j+m)\ra 4(j+m+1)+2}\otimes w_{4(j+1)+1\ra 4j};$$
$$b_{j-4s,5s}=\kappa_0w_{4(j+m+1)+1\ra 4(j+m)}\otimes w_{4j+3\ra 4(j+1)+1};$$
$$b_{j-2s,5s}=-\kappa_0w_{4(j+m)+1\ra 4(j+m)}\otimes w_{4j+3\ra 4j+1}.$$

$(9)$ If $r_0=6$ and $s>1$, then $\Omega^{6}(Y_t^{(5)})$ is described with
$(6s\times 9s)$-matrix with the following two nonzero elements{\rm:}
$$b_{(j+1)_s,2s+1}=f_1((j)_s,s-2)\kappa^\ell(\a_{3(j+2+m)})\kappa_0w_{4(j+m+1)\ra 4(j+m+1)+2}\otimes w_{4(j+s)+1\ra 4(j+1)};$$
$$b_{3s+(j+1)_s,4s+1}=-f_2((j)_s,s-2)\kappa^\ell(\a_{3(j+2+m)})\kappa_0e_{4(j+m+s+1)+1}\otimes w_{4(j+s)+2\ra 4(j+s+1)+1}.$$

$(10)$ If $r_0=7$ and $s=1$, then $\Omega^{7}(Y_t^{(5)})$ is described with
$(7s\times 8s)$-matrix with the following nonzero elements{\rm:}
$$b_{j+4s,0}=-\kappa_0w_{4(j+m)\ra 4(j+m)+3}\otimes w_{4j\ra 4j+2};$$
$$b_{j+5s,0}=-\kappa_0w_{4(j+m)\ra 4(j+m)+3}\otimes w_{4j\ra 4(j+1)+2};$$
$$b_{j,s}=\kappa_0w_{4(j+m)+2\ra 4(j+m)}\otimes w_{4(j+1)+1\ra 4j};$$
$$b_{j-2s,2s}=-\kappa_0w_{4(j+m)+2\ra 4(j+m)}\otimes w_{4(j+1)+1\ra 4j};$$
$$b_{j-s,2s}=\kappa_0w_{4(j+m+1)+2\ra 4(j+m)}\otimes w_{4(j+1)+1\ra 4j};$$
$$b_{j-2s,3s}=-\kappa^\ell(\a_{3(j+2+m)})\kappa_0w_{4(j+m)+2\ra 4(j+m+1)+1}\otimes w_{4(j+1)+2\ra 4j};$$
$$b_{j+2s,3s}=\kappa^\ell(\a_{3(j+2+m)})\kappa_0w_{4(j+m)\ra 4(j+m+1)+1}\otimes w_{4(j+1)+2\ra 4j+2};$$
$$b_{j-4s,4s}=\kappa^\ell(\a_{3(j+2+m)})\kappa_0w_{4(j+m)+2\ra 4(j+m+1)+1}\otimes w_{4(j+1)+2\ra 4j};$$
$$b_{j-3s,5s}=\kappa^\ell(\a_{3(j+2+m)})\kappa_0w_{4(j+m)+3\ra 4(j+m)+2}\otimes w_{4j+3\ra 4(j+1)+1}.$$

$(11)$ If $r_0=7$ and $s>1$, then $\Omega^{7}(Y_t^{(5)})$ is described with
$(7s\times 8s)$-matrix with the following nonzero elements{\rm:}
$$b_{(j+1)_s,1}=-\kappa^\ell(\g_{j+1+m})f_1((j)_s,s-2)\kappa_0w_{4(j+m+1)+2\ra 4(j+m+1)+3}\otimes w_{4j\ra 4(j+1)};$$
$$b_{s+(j+1)_s,2s+1}=f_2((j)_s,s-2)\kappa_0w_{4(j+m+s+1)+2\ra 4(j+m+2)}\otimes w_{4(j+s)+1\ra 4(j+1)};$$
$$b_{7s+(j)_s,4s+1}=-f_1((j)_s,s-2)\kappa^\ell(\a_{3(j+2+m)})\kappa_0w_{4(j+m+s+1)+1\ra 4(j+m+s+2)+1}\otimes w_{4(j+s)+2\ra 4j+3}.$$

$(12)$ If $r_0=8$ and $s=1$, then $\Omega^{8}(Y_t^{(5)})$ is described with
$(6s\times 6s)$-matrix with the following nonzero elements{\rm:}
$$b_{j+2s,0}=-\kappa_0w_{4(j+m+1)+2\ra 4(j+m)}\otimes w_{4j\ra 4(j+1)+1};$$
$$b_{j+3s,0}=-\kappa_0w_{4(j+m+1)+1\ra 4(j+m)}\otimes w_{4j\ra 4j+2};$$
$$b_{j-3s,3s}=-\kappa^\ell(\a_{3(j+1+m)})\kappa_0w_{4(j+m)+3\ra 4(j+m)+2}\otimes w_{4(j+1)+2\ra 4j};$$
$$b_{j-s,3s}=-\kappa^\ell(\a_{3(j+1+m)})\kappa_0e_{4(j+m)+2}\otimes w_{4(j+1)+2\ra 4j+1};$$
$$b_{j-4s,4s}=-\kappa^\ell(\a_{3(j+1+m)})\kappa_0w_{4(j+m)+3\ra 4(j+m)+2}\otimes w_{4(j+1)+2\ra 4j};$$
$$b_{j-4s,5s}=\kappa_0w_{4(j+m+1)+2\ra 4(j+m)+3}\otimes w_{4j+3\ra 4(j+1)+1};$$
$$b_{j-s,5s}=\kappa_0w_{4(j+m+1)+1\ra 4(j+m)+3}\otimes w_{4j+3\ra 4j+2}.$$

$(13)$ If $r_0=8$ and $s>1$, then $\Omega^{8}(Y_t^{(5)})$ is described with
$(6s\times 6s)$-matrix with the following two nonzero elements{\rm:}
$$b_{(j+1)_s,1}=-f_1(1,s-2)\kappa_0w_{4(j+m)+3\ra 4(j+m+1)}\otimes w_{4j\ra 4(j+1)};$$
$$b_{j,2s+1}=f_1((j)_s,s-2)\kappa^\ell(\a_{3(j+1+m)})\kappa_0w_{4(j+m+s)+2\ra 4(j+m+s+1)+1}\otimes e_{4(j+s)+1}.$$

$(14)$ If $r_0=9$ and $s=1$, then $\Omega^{9}(Y_t^{(5)})$ is described with
$(6s\times 7s)$-matrix with the following two nonzero elements{\rm:}
$$b_{j-s,2s}=\kappa^\ell(\a_{3(j+m)})\kappa_0w_{4(j+m)\ra 4(j+m+1)+1}\otimes w_{4(j+1)+1\ra 4j+1};$$
$$b_{j+2s,4s}=\kappa^\ell(\a_{3(j+m)})\kappa_0w_{4(j+m)+2\ra 4(j+m+1)+2}\otimes w_{4(j+1)+2\ra 4j+3}.$$

$(15)$ If $r_0=9$ and $s>1$, then $\Omega^{9}(Y_t^{(5)})$ is described with
$(6s\times 7s)$-matrix with the following two nonzero elements{\rm:}
$$b_{(j+1)_s,2s}=\kappa_1\kappa_0w_{4(j+m+1)+3\ra 4(j+m+s+2)+1}\otimes w_{4(j+s)+1\ra 4(j+1)};$$
$$b_{(j+1)_s,4s}=\kappa_1\kappa_0w_{4(j+m+1)+3\ra 4(j+m+s+2)+2}\otimes w_{4(j+s)+2\ra 4(j+1)},$$
where $\kappa_1=f_2(s,3)\kappa^\ell(\a_{3(m+2)})$.

$(16)$ If $r_0=10$ and $s=1$, then $\Omega^{10}(Y_t^{(5)})$ is described with
$(7s\times 6s)$-matrix with the following two nonzero elements{\rm:}
$$b_{j-3s,3s}=-\kappa^\ell(\a_{3(j+1+m)})\kappa_0w_{4(j+m)\ra 4(j+m+1)+2}\otimes w_{4j+1\ra 4j};$$
$$b_{j+2s,3s}=-\kappa^\ell(\a_{3(j+1+m)})\kappa_0w_{4(j+m)+3\ra 4(j+m+1)+2}\otimes w_{4j+1\ra 4j+3}.$$

$(17)$ If $r_0=10$ and $s>1$, then $\Omega^{10}(Y_t^{(5)})$ is described with
$(7s\times 6s)$-matrix with the following two nonzero elements{\rm:}
$$b_{(j+1)_s,s}=-f_2((j)_s,s-1)f_2((j)_s,s-3)\kappa^\ell(\a_{3(j+1+m)})\kappa_0w_{4(j+m+1)\ra 4(j+m+1)+1}\otimes w_{4j\ra 4(j+1)};$$
$$b_{5s+(j)_s,5s}=f_2((j)_s,s-3)\kappa_0w_{4(j+m)+3\ra 4(j+m+1)+3}\otimes w_{4j+2\ra 4j+3}.$$

\medskip
$({\rm II})$ Represent an arbitrary $t_0\in\N$ in the form
$t_0=11\ell_0+r_0$, where $0\le r_0\le 10.$ Then
$\Omega^{t_0}(Y_t^{(5)})$ is a $\Omega^{r_0}(Y_t^{(5)})$, whose left
components twisted by $\sigma^{\ell_0}$,
and coefficients multiplied by $(-1)^{\ell_0}$.
\end{pr}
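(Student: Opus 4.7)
The plan is to verify part (I) by direct componentwise matrix computation for each $r_0\in\{0,1,\ldots,10\}$, and then deduce part (II) from the periodic structure of the bimodule resolution recorded in Theorem \ref{resol_thm}. For $r_0=0$ the claim reduces to the definition of $Y_t^{(5)}$. For $1\le r_0\le 10$ I would take the matrix written in case $(r_0+1)$ of the proposition as a candidate lifting $\varphi_{r_0}\colon Q_{t+r_0}\to Q_{r_0}$ of $Y_t^{(5)}$, and check the chain-map identity
$$d_{r_0-1}\cdot\varphi_{r_0}=\varphi_{r_0-1}\cdot d_{t+r_0-1}$$
as an equality of matrices over $\Lambda$. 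Using the componentwise descriptions of $d_0,\ldots,d_{10}$ from Section \ref{sect_res}, this reduces to a finite bookkeeping check on each index block $0\le j<s$, $s\le j<3s$, $3s\le j<5s$, $5s\le j<6s$, and so on; the particular coefficients involving $f_1$, $f_2$, $\kappa^\ell(\g_\bullet)$ and $\kappa_0$ are forced precisely by the requirement that these squares commute.

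In the cases $r_0\in\{5,6,7,8,9,10\}$ the statement splits into $s=1$ and $s>1$ because when $s=1$ several index ranges of $\mathcal Q_s$ collapse and previously separate matrix entries merge. I would dispose of the general $s>1$ situation first from the matrix formulae, then re-derive the $s=1$ matrices by inspection; in both sub-cases the verifications rely on the defining relations of $I'$ (all paths of length $5$ vanish, together with the zigzag relations $\a_{3t+2}\a_{3t+1}\a_{3t}=\a_{3(t+s)+2}\a_{3(t+s)+1}\a_{3(t+s)}$ and $\a_{3t}\g_{t-1}\a_{3(t+s)-1}=0$) to cancel the cross-terms that appear when expanding $d_{r_0-1}\cdot\varphi_{r_0}$.

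For part (II), write $t_0=11\ell_0+r_0$ with $0\le r_0\le 10$. By Theorem \ref{resol_thm} the term $Q_{11\ell_0+r_0}$ is obtained from $Q_{r_0}$ by replacing each summand $P_{i,j}$ with $P_{\sigma^{\ell_0}(i),j}$, and the matrix of $d_{11\ell_0+r_0}$ is obtained from that of $d_{r_0}$ by acting with $\sigma^{\ell_0}$ on the left tensor factors. Composing, a lifting of $Y_t^{(5)}$ up to step $t_0$ is obtained from $\Omega^{r_0}(Y_t^{(5)})$ by applying $\sigma^{\ell_0}$ to the left tensor components. The sign $(-1)^{\ell_0}$ arises because $Y_t^{(5)}$ is built from the length-three path $w_{0\to 3}=\a_2\a_1\a_0$: inspecting the definition of $\sigma(\a_i)$, the sign contributions from the three residues $(i)_3\in\{0,1,2\}$ combine so that $\sigma(\a_2\a_1\a_0)=-\a_{3(n+s)+2}\a_{3(n+s)+1}\a_{3(n+s)}$, contributing one minus sign per application of $\sigma$.

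The main obstacle will be the sheer volume and delicacy of the componentwise verifications in part (I). I expect the hardest case to be $r_0=5$ with $s>1$, where $\Omega^5(Y_t^{(5)})$ involves $\kappa^\ell(\g_\bullet)$ coefficients and several entries depend on discrete comparisons such as $(j)_s$ versus $s-2$; verifying $d_4\cdot\Omega^5(Y_t^{(5)})=\Omega^4(Y_t^{(5)})\cdot d_{t+4}$ requires repeated use of the zigzag relations of $I'$ to cancel cross-terms, and careful tracking of the signs dictated by $\kappa^\ell$ and by $f_1,f_2$ at the boundary indices $j=s-1,2s-1,\ldots$ is where the technical weight of the proof will concentrate.
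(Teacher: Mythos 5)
Your overall strategy is the right one, and it is essentially the only available one: the paper states these translate propositions without any written proof, leaving them as results of direct componentwise computation, and your plan — verify $\varepsilon\circ\Omega^0(Y_t^{(5)})=Y_t^{(5)}$, then check the chain-map squares $d_{r_0-1}\cdot\Omega^{r_0}(Y_t^{(5)})=\Omega^{r_0-1}(Y_t^{(5)})\cdot d_{t+r_0-1}$ block by block using the explicit differentials of Section \ref{sect_res} and the relations of $I'$, treating $s=1$ separately where index ranges collapse — is exactly that computation organized correctly. Your identification of $r_0=5$, $s>1$ as the technical bottleneck is also apt.

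There is, however, one concrete error in your justification of part (II). You derive the factor $(-1)^{\ell_0}$ from the claim that $\sigma(w_{0\to3})=\sigma(\a_2\a_1\a_0)=-\a_{3(n+s)+2}\a_{3(n+s)+1}\a_{3(n+s)}$. By the paper's definition of $\sigma$, the signs on the three arrows are: $\sigma(\a_0)=-\a_{3(n+s)}$ (since $(0)_3=0$ and $(0)_{6s}<3s$), $\sigma(\a_1)=-\a_{3(n+s)+1}$ (since $(1)_3=1$), and $\sigma(\a_2)=+\a_{3(n+s)+2}$ (since $(2)_3=2$ and $(2)_{6s}<3s$). The product of signs is therefore $+1$, not $-1$, so the sign of $\sigma$ on the defining path of $Y^{(5)}_t$ cannot by itself account for the stated $(-1)^{\ell_0}$. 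The factor must instead be traced through the lifting itself — through the interaction of the $\sigma^{\ell_0}$-twisted differentials $d_{11\ell_0+j}$ with the matrix entries across all eleven steps of a period (equivalently, through the relation between the $\kappa^{\ell}$-normalizations appearing in $\Omega^{r_0}(Y^{(5)}_t)$ and those forced at degree $t_0$) — rather than read off from $\sigma$ applied to the single entry of $\Omega^0(Y^{(5)}_t)$. As written, this step of your argument would not produce the correct sign, and it needs to be replaced by an explicit verification that the twisted-and-signed matrices satisfy the chain-map identity with the twisted differentials.
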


%%%%%%%%%%%%%%%%%%%%%%%%%%%%%%%%%%%%%%%%%%%%%%%%%%%%%%%%%%%%%%%%%%%%%%%%%%%%%%%%%%%%%%%%
%                                          6
%%%%%%%%%%%%%%%%%%%%%%%%%%%%%%%%%%%%%%%%%%%%%%%%%%%%%%%%%%%%%%%%%%%%%%%%%%%%%%%%%%%%%%%%
\begin{pr}[Translates for the case 6]
$({\rm I})$ Let $r_0\in\N$, $r_0<11$. $r_0$-translates of the
elements $Y^{(6)}_t$ are described by the following way.

$(1)$ If $r_0=0$, then $\Omega^{0}(Y_t^{(6)})$ is described with
$(8s\times 6s)$-matrix with the following elements $b_{ij}${\rm:}

If $0\le j<s$, then $$b_{ij}=
\begin{cases}
w_{4j\ra 4j+2}\otimes e_{4j},\quad i=j;\\
0,\quad\text{otherwise.}
\end{cases}$$

If $s\le j<3s$, then $b_{ij}=0$.

If $3s\le j<4s$, then $$b_{ij}=
\begin{cases}
-\kappa^\ell(\a_{3(j+m+1)})w_{4j+1\ra 4j+3}\otimes e_{4j+1},\quad i=j-s;\\
0,\quad\text{otherwise.}
\end{cases}$$

If $4s\le j<6s$, then $$b_{ij}=
\begin{cases}
-\kappa^\ell(\a_{3(j+m+2)})w_{4j+2\ra 4(j+1)}\otimes e_{4j+2},\quad i=j-s,\text{ }j<5s-1\text{ or }j=6s-1;\\
0,\quad\text{otherwise.}
\end{cases}$$

If $6s\le j<8s$, then $$b_{ij}=
\begin{cases}
w_{4j+3\ra 4(j+1)+1}\otimes e_{4j+3},\quad i=5s+(j)_s,\text{ }j<7s-1\text{ or }j=8s-1;\\
0,\quad\text{otherwise.}
\end{cases}$$

$(2)$ If $r_0=1$, then $\Omega^{1}(Y_t^{(6)})$ is described with
$(9s\times 7s)$-matrix with the following elements $b_{ij}${\rm:}

If $0\le j<s$, then $$b_{ij}=
\begin{cases}
-\kappa^\ell(\a_{3(j+m+1)})w_{4(j+m+s)+1\ra 4(j+m)+3}\otimes e_{4j},\quad i=j+s;\\
0,\quad\text{otherwise.}
\end{cases}$$

If $s\le j<2s$, then $$b_{ij}=
\begin{cases}
-\kappa^\ell(\a_{3(j+m+2)})w_{4(j+m+s)+1\ra 4(j+m+1)}\otimes e_{4j},\quad i=j-s;\\
-\kappa^\ell(\a_{3(j+m+2)})w_{4(j+m+s)+2\ra 4(j+m+1)}\otimes w_{4j\ra 4(j+s)+1},\quad i=j+s;\\
0,\quad\text{otherwise.}
\end{cases}$$

If $2s\le j<2s-1$, then $b_{ij}=0$.

If $2s-1\le j<2s$, then $$b_{ij}=
\begin{cases}
-\kappa^\ell(\a_{3(j+m+2)})w_{4(j+m)+3\ra 4(j+m+1)}\otimes w_{4j\ra 4(j+s)+2},\quad i=j+3s;\\
\kappa^\ell(\a_{3(j+m+2)})w_{4(j+m)+3\ra 4(j+m+1)}\otimes w_{4j\ra 4j+2},\quad i=j+4s;\\
0,\quad\text{otherwise.}
\end{cases}$$

If $2s\le j<3s-1$, then $b_{ij}=0$.

If $3s-1\le j<3s$, then $$b_{ij}=
\begin{cases}
-w_{4(j+m)+3\ra 4(j+m+s+1)+1}\otimes w_{4j+1\ra 4j+2},\quad i=j+2s;\\
0,\quad\text{otherwise.}
\end{cases}$$

If $3s\le j<4s-1$, then $$b_{ij}=
\begin{cases}
-w_{4(j+m)+3\ra 4(j+m+s+1)+1}\otimes w_{4j+1\ra 4j+2},\quad i=j+2s;\\
0,\quad\text{otherwise.}
\end{cases}$$

If $4s-1\le j<4s$, then $b_{ij}=0$.

If $4s\le j<5s-1$, then $$b_{ij}=
\begin{cases}
-w_{4(j+m)+3\ra 4(j+m+1)+1}\otimes e_{4j+2},\quad i=j;\\
0,\quad\text{otherwise.}
\end{cases}$$

If $5s-1\le j<6s-1$, then $b_{ij}=0$.

If $6s-1\le j<6s$, then $$b_{ij}=
\begin{cases}
w_{4(j+m+1)+1\ra 4(j+m+1)+2}\otimes w_{4(j+s)+2\ra 4(j+1)},\quad i=(j+1)_s;\\
w_{4(j+m)+3\ra 4(j+m+1)+2}\otimes e_{4(j+s)+2},\quad i=j-s;\\
0,\quad\text{otherwise.}
\end{cases}$$

If $6s\le j<7s-1$, then $b_{ij}=0$.

If $7s-1\le j<7s$, then $$b_{ij}=
\begin{cases}
-w_{4(j+m)+3\ra 4(j+m+s+1)+1}\otimes e_{4(j+s)+2},\quad i=j-s;\\
0,\quad\text{otherwise.}
\end{cases}$$

If $7s\le j<8s-1$, then $$b_{ij}=
\begin{cases}
w_{4(j+m+s+1)+1\ra 4(j+m+s+1)+2}\otimes w_{4j+2\ra 4(j+1)},\quad i=(j+1)_s;\\
w_{4(j+m)+3\ra 4(j+m+s+1)+2}\otimes e_{4j+2},\quad i=j-2s;\\
0,\quad\text{otherwise.}
\end{cases}$$

If $8s-1\le j<8s$, then $b_{ij}=0$.

If $8s\le j<9s$, then $$b_{ij}=
\begin{cases}
\kappa^\ell(\a_{3(j+m+2)})w_{4(j+m+1)\ra 4(j+m+1)+3}\otimes e_{4j+3},\quad i=j-2s;\\
0,\quad\text{otherwise.}
\end{cases}$$

$(3)$ If $r_0=2$, then $\Omega^{2}(Y_t^{(6)})$ is described with
$(8s\times 6s)$-matrix with the following elements $b_{ij}${\rm:}

If $0\le j<s$, then $$b_{ij}=
\begin{cases}
-w_{4(j+m)-1\ra 4(j+m)+1}\otimes e_{4j},\quad i=j;\\
0,\quad\text{otherwise.}
\end{cases}$$

If $s\le j<2s$, then $b_{ij}=0$.

If $2s\le j<4s$, then $$b_{ij}=
\begin{cases}
-\kappa^\ell(\a_{3(j+m+2)})w_{4(j+m)+3\ra 4(j+m+1)}\otimes w_{4j+1\ra 4(j+1)},\\\quad\quad\quad i=(j+1)_s,\text{ }j<3s-1\text{ or }j=4s-1;\\
0,\quad\text{otherwise.}
\end{cases}$$

If $4s\le j<5s$, then $$b_{ij}=
\begin{cases}
-\kappa^\ell(\a_{3(j+m+1)})w_{4(j+m+s)+1\ra 4(j+m)+3}\otimes e_{4j+2},\quad i=j-s;\\
0,\quad\text{otherwise.}
\end{cases}$$

If $5s\le j<6s$, then $$b_{ij}=
\begin{cases}
-\kappa^\ell(\a_{3(j+m+1)})w_{4(j+m+s)+1\ra 4(j+m)+3}\otimes e_{4j+2},\quad i=j-s;\\
0,\quad\text{otherwise.}
\end{cases}$$

If $6s\le j<8s$, then $b_{ij}=0$.

$(4)$ If $r_0=3$, then $\Omega^{3}(Y_t^{(6)})$ is described with
$(9s\times 8s)$-matrix with the following elements $b_{ij}${\rm:}

If $0\le j<s$, then $$b_{ij}=
\begin{cases}
\kappa^\ell(\a_{3(j+m+2)})w_{4(j+m)+2\ra 4(j+m+1)}\otimes e_{4j},\quad i=j;\\
\kappa^\ell(\a_{3(j+m+2)})w_{4(j+m)+3\ra 4(j+m+1)}\otimes w_{4j\ra 4j+1},\quad i=j+2s;\\
0,\quad\text{otherwise.}
\end{cases}$$

If $s\le j<3s-1$, then $b_{ij}=0$.

If $3s-1\le j<3s$, then $$b_{ij}=
\begin{cases}
e_{4(j+m+s+1)+2}\otimes w_{4j+1\ra 4(j+1)},\quad i=(j+1)_s;\\
0,\quad\text{otherwise.}
\end{cases}$$

If $3s\le j<2s$, then $b_{ij}=0$.

If $2s\le j<3s-1$, then $$b_{ij}=
\begin{cases}
-e_{4(j+m+s+1)+2}\otimes w_{4j+1\ra 4(j+1)},\quad i=s+(j+1)_s;\\
0,\quad\text{otherwise.}
\end{cases}$$

If $3s-1\le j<4s$, then $b_{ij}=0$.

If $4s\le j<5s-1$, then $$b_{ij}=
\begin{cases}
e_{4(j+m+1)+2}\otimes w_{4(j+s)+1\ra 4(j+1)},\quad i=(j+1)_s;\\
0,\quad\text{otherwise.}
\end{cases}$$

If $5s-1\le j<5s$, then $$b_{ij}=
\begin{cases}
-e_{4(j+m+1)+2}\otimes w_{4(j+s)+1\ra 4(j+1)},\quad i=s+(j+1)_s;\\
0,\quad\text{otherwise.}
\end{cases}$$

If $5s\le j<7s$, then $b_{ij}=0$.

If $7s\le j<8s$, then $$b_{ij}=
\begin{cases}
\kappa^\ell(\a_{3(j+m+2)})f_1(j,8s-1)w_{4(j+m+s-f(j,8s-1)s+1)+2\ra 4(j+m+1)+3}\otimes w_{4j+3\ra 4(j+1)},\\\quad\quad\quad i=(j+1)_s;\\
\kappa^\ell(\a_{3(j+m+2)})f_1(j,8s-1)e_{4(j+m+1)+3}\otimes w_{4j+3\ra 4(j+s-f(j,8s-1)s+1)+1},\\\quad\quad\quad i=2s+(j+1)_s;\\
0,\quad\text{otherwise.}
\end{cases}$$

If $8s\le j<9s$, then $b_{ij}=0$.

$(5)$ If $r_0=4$, then $\Omega^{4}(Y_t^{(6)})$ is described with
$(8s\times 9s)$-matrix with the following elements $b_{ij}${\rm:}

If $0\le j<s$, then $$b_{ij}=
\begin{cases}
-w_{4(j+m)-1\ra 4(j+m)+2}\otimes e_{4j},\quad i=j;\\
w_{4(j+m)+1\ra 4(j+m)+2}\otimes w_{4j\ra 4(j+s)+1},\quad i=j+3s;\\
-e_{4(j+m)+2}\otimes w_{4j\ra 4(j+s)+2},\quad i=j+7s;\\
0,\quad\text{otherwise.}
\end{cases}$$

If $s\le j<2s$, then $$b_{ij}=
\begin{cases}
w_{4(j+m)+1\ra 4(j+m)+2}\otimes w_{4j\ra 4(j+s)+1},\quad i=j+s;\\
-e_{4(j+m)+2}\otimes w_{4j\ra 4(j+s)+2},\quad i=j+4s;\\
0,\quad\text{otherwise.}
\end{cases}$$

If $2s\le j<4s$, then $$b_{ij}=
\begin{cases}
-\kappa^\ell(\a_{3(j+m+1)})e_{4(j+m)+3}\otimes w_{4j+1\ra 4(j+1)},\quad i=(j+1)_s,\text{ }j<3s-1\text{ or }j=4s-1;\\
0,\quad\text{otherwise.}
\end{cases}$$

If $4s\le j<8s$, then $b_{ij}=0$.

$(6)$ If $r_0=5$, then $\Omega^{5}(Y_t^{(6)})$ is described with
$(6s\times 8s)$-matrix with the following elements $b_{ij}${\rm:}

If $0\le j<s$, then $$b_{ij}=
\begin{cases}
\kappa^\ell(\a_{3(j+m+1)})w_{4(j+m)+1\ra 4(j+m)+3}\otimes e_{4j},\quad i=j;\\
\kappa^\ell(\a_{3(j+m+1)})w_{4(j+m+s)+1\ra 4(j+m)+3}\otimes e_{4j},\quad i=j+s;\\
-\kappa^\ell(\a_{3(j+m+1)})e_{4(j+m)+3}\otimes w_{4j\ra 4j+2},\quad i=j+4s;\\
-\kappa^\ell(\a_{3(j+m+1)})e_{4(j+m)+3}\otimes w_{4j\ra 4(j+s)+2},\quad i=j+5s;\\
0,\quad\text{otherwise.}
\end{cases}$$

If $s\le j<5s$, then $b_{ij}=0$.

If $5s\le j<6s$, then $$b_{ij}=
\begin{cases}
\kappa^\ell(\a_{3(j+m+3)})f_1(j,6s-1)e_{4(j+m+2)}\otimes w_{4j+3\ra 4(j+s-f(j,6s-1)s+1)+1},\\\quad\quad\quad i=2s+(j+1)_s;\\
0,\quad\text{otherwise.}
\end{cases}$$

$(7)$ If $r_0=6$, then $\Omega^{6}(Y_t^{(6)})$ is described with
$(7s\times 9s)$-matrix with the following elements $b_{ij}${\rm:}

If $0\le j<s$, then $$b_{ij}=
\begin{cases}
-\kappa^\ell(\a_{3(j+m+1)})w_{4(j+m)\ra 4(j+m)+3}\otimes e_{4j},\quad i=j;\\
\kappa^\ell(\a_{3(j+m+1)})w_{4(j+m)+1\ra 4(j+m)+3}\otimes w_{4j\ra 4j+1},\quad i=j+s;\\
\kappa^\ell(\a_{3(j+m+1)})w_{4(j+m+s)+1\ra 4(j+m)+3}\otimes w_{4j\ra 4(j+s)+1},\quad i=j+3s;\\
0,\quad\text{otherwise.}
\end{cases}$$

If $s\le j<3s$, then $$b_{ij}=
\begin{cases}
\kappa^\ell(\a_{3(j+m+2)})e_{4(j+m+1)}\otimes w_{4(j+s)+1\ra 4(j+1)},\\\quad\quad\quad i=(j+1)_s,\text{ }j<2s-1\text{ or }j=3s-1;\\
0,\quad\text{otherwise.}
\end{cases}$$

If $3s\le j<5s$, then $$b_{ij}=
\begin{cases}
-w_{4(j+m+1)\ra 4(j+m+1)+1}\otimes w_{4(j+s)+2\ra 4(j+1)},\\\quad\quad\quad i=(j+1)_s,\text{ }j<4s-1\text{ or }j=5s-1;\\
0,\quad\text{otherwise.}
\end{cases}$$

If $5s\le j<6s$, then $$b_{ij}=
\begin{cases}
f_1(j,6s-1)w_{4(j+m+1)\ra 4(j+m+s+1)+2}\otimes w_{4j+3\ra 4(j+1)},\quad i=(j+1)_s;\\
0,\quad\text{otherwise.}
\end{cases}$$

If $6s\le j<6s-1$, then $b_{ij}=0$.

If $6s-1\le j<6s$, then $$b_{ij}=
\begin{cases}
w_{4(j+m+s+1)+1\ra 4(j+m+s+1)+2}\otimes w_{4j+3\ra 4(j+s+1)+1},\quad i=3s+(j+1)_s;\\
0,\quad\text{otherwise.}
\end{cases}$$

If $6s\le j<5s$, then $b_{ij}=0$.

If $5s\le j<6s-1$, then $$b_{ij}=
\begin{cases}
-e_{4(j+m+s+1)+2}\otimes w_{4j+3\ra 4(j+1)+1},\quad i=4s+(j+1)_s;\\
e_{4(j+m+s+1)+2}\otimes w_{4j+3\ra 4(j+s+1)+2},\quad i=5s+(j+1)_s;\\
0,\quad\text{otherwise.}
\end{cases}$$

If $6s-1\le j<6s$, then $b_{ij}=0$.

If $6s\le j<7s$, then $$b_{ij}=
\begin{cases}
-f_1(j,7s-1)w_{4(j+m+1)\ra 4(j+m+s+1)+2}\otimes w_{4j+3\ra 4(j+1)},\quad i=(j+1)_s;\\
0,\quad\text{otherwise.}
\end{cases}$$

If $7s\le j<6s$, then $b_{ij}=0$.

If $6s\le j<7s-1$, then $$b_{ij}=
\begin{cases}
w_{4(j+m+s+1)+1\ra 4(j+m+s+1)+2}\otimes w_{4j+3\ra 4(j+s+1)+1},\quad i=3s+(j+1)_s;\\
0,\quad\text{otherwise.}
\end{cases}$$

If $7s-1\le j<7s$, then $$b_{ij}=
\begin{cases}
-e_{4(j+m+s+1)+2}\otimes w_{4j+3\ra 4(j+1)+1},\quad i=4s+(j+1)_s;\\
e_{4(j+m+s+1)+2}\otimes w_{4j+3\ra 4(j+s+1)+2},\quad i=5s+(j+1)_s;\\
0,\quad\text{otherwise.}
\end{cases}$$

$(8)$ If $r_0=7$, then $\Omega^{7}(Y_t^{(6)})$ is described with
$(6s\times 8s)$-matrix with the following elements $b_{ij}${\rm:}

If $0\le j<s$, then $$b_{ij}=
\begin{cases}
\kappa^\ell(\a_{3(j+m+2)})w_{4(j+m+s)+2\ra 4(j+m+1)}\otimes e_{4j},\quad i=j+s;\\
-\kappa^\ell(\a_{3(j+m+2)})w_{4(j+m)+3\ra 4(j+m+1)}\otimes w_{4j\ra 4j+1},\quad i=j+2s;\\
-\kappa^\ell(\a_{3(j+m+2)})e_{4(j+m+1)}\otimes w_{4j\ra 4j+2},\quad i=j+4s;\\
-\kappa^\ell(\a_{3(j+m+2)})e_{4(j+m+1)}\otimes w_{4j\ra 4(j+s)+2},\quad i=j+5s;\\
0,\quad\text{otherwise.}
\end{cases}$$

If $s\le j<4s-1$, then $b_{ij}=0$.

If $4s-1\le j<4s$, then $$b_{ij}=
\begin{cases}
e_{4(j+m+1)+2}\otimes w_{4(j+s)+2\ra 4(j+1)},\quad i=(j+1)_s;\\
0,\quad\text{otherwise.}
\end{cases}$$

If $4s\le j<5s-1$, then $$b_{ij}=
\begin{cases}
e_{4(j+m+1)+2}\otimes w_{4(j+s)+2\ra 4(j+1)},\quad i=(j+1)_s;\\
0,\quad\text{otherwise.}
\end{cases}$$

If $5s-1\le j<5s$, then $b_{ij}=0$.

If $5s\le j<6s$, then $$b_{ij}=
\begin{cases}
\kappa^\ell(\a_{3(j+m+2)})w_{4(j+m+s-f(j,6s-1)s+1)+2\ra 4(j+m+1)+3}\otimes w_{4j+3\ra 4(j+1)},\quad i=(j+1)_s;\\
0,\quad\text{otherwise.}
\end{cases}$$

$(9)$ If $r_0=8$, then $\Omega^{8}(Y_t^{(6)})$ is described with
$(6s\times 6s)$-matrix with the following elements $b_{ij}${\rm:}

If $0\le j<s$, then $$b_{ij}=
\begin{cases}
\kappa^\ell(\a_{3(j+m+2)})w_{4(j+m)+3\ra 4(j+m+1)}\otimes w_{4j\ra 4(j+1)},\quad i=(j+1)_s;\\
\kappa^\ell(\a_{3(j+m+2)})w_{4(j+m)+2\ra 4(j+m+1)}\otimes w_{4j\ra 4j+1},\quad i=j+s;\\
\kappa^\ell(\a_{3(j+m+2)})w_{4(j+m+s)+2\ra 4(j+m+1)}\otimes w_{4j\ra 4(j+s)+1},\quad i=j+2s;\\
\kappa^\ell(\a_{3(j+m+2)})e_{4(j+m+1)}\otimes w_{4j\ra 4j+3},\quad i=j+5s;\\
0,\quad\text{otherwise.}
\end{cases}$$

If $s\le j<3s$, then $$b_{ij}=
\begin{cases}
w_{4(j+m)+3\ra 4(j+m+1)+1}\otimes w_{4(j+s)+1\ra 4(j+1)},\quad i=(j+1)_s,\text{ }j<2s-1\text{ or }j=3s-1;\\
0,\quad\text{otherwise.}
\end{cases}$$

If $3s\le j<4s-1$, then $$b_{ij}=
\begin{cases}
w_{4(j+m)+3\ra 4(j+m+1)+2}\otimes w_{4(j+s)+2\ra 4(j+1)},\quad i=(j+1)_s;\\
w_{4(j+m+1)+1\ra 4(j+m+1)+2}\otimes w_{4(j+s)+2\ra 4(j+s+1)+2},\quad i=3s+(j+1)_s;\\
0,\quad\text{otherwise.}
\end{cases}$$

If $4s-1\le j<4s$, then $$b_{ij}=
\begin{cases}
w_{4(j+m+1)+1\ra 4(j+m+1)+2}\otimes w_{4(j+s)+2\ra 4(j+s+1)+2},\quad i=4s+(j+1)_s;\\
0,\quad\text{otherwise.}
\end{cases}$$

If $4s\le j<5s-1$, then $b_{ij}=0$.

If $5s-1\le j<5s$, then $$b_{ij}=
\begin{cases}
w_{4(j+m)+3\ra 4(j+m+1)+2}\otimes w_{4(j+s)+2\ra 4(j+1)},\quad i=(j+1)_s;\\
w_{4(j+m+1)+1\ra 4(j+m+1)+2}\otimes w_{4(j+s)+2\ra 4(j+s+1)+2},\quad i=3s+(j+1)_s;\\
0,\quad\text{otherwise.}
\end{cases}$$

If $5s\le j<4s$, then $b_{ij}=0$.

If $4s\le j<5s-1$, then $$b_{ij}=
\begin{cases}
w_{4(j+m+1)+1\ra 4(j+m+1)+2}\otimes w_{4(j+s)+2\ra 4(j+s+1)+2},\quad i=4s+(j+1)_s;\\
0,\quad\text{otherwise.}
\end{cases}$$

If $5s-1\le j<5s$, then $b_{ij}=0$.

If $5s\le j<6s$, then $$b_{ij}=
\begin{cases}
-\kappa^\ell(\a_{3(j+m+2)})w_{4(j+m)+3\ra 4(j+m+1)+3}\otimes w_{4j+3\ra 4(j+1)},\quad i=(j+1)_s;\\
-\kappa^\ell(\a_{3(j+m+2)})w_{4(j+m+1+f(j,6s-1)s)+2\ra 4(j+m+1)+3}\otimes w_{4j+3\ra 4(j+1+f(j,6s-1)s)+1},\\\quad\quad\quad i=2s+(j+1)_s;\\
-\kappa^\ell(\a_{3(j+m+2)})w_{4(j+m+1+f(j,6s-1)s)+1\ra 4(j+m+1)+3}\otimes w_{4j+3\ra 4(j+s-f(j,6s-1)s+1)+2},\\\quad\quad\quad i=3s+(j+1)_s;\\
0,\quad\text{otherwise.}
\end{cases}$$

$(10)$ If $r_0=9$, then $\Omega^{9}(Y_t^{(6)})$ is described with
$(7s\times 7s)$-matrix with the following elements $b_{ij}${\rm:}

If $0\le j<s$, then $$b_{ij}=
\begin{cases}
w_{4(j+m)+3\ra 4(j+m+s+1)+1}\otimes e_{4j},\quad i=j;\\
w_{4(j+m+1)\ra 4(j+m+s+1)+1}\otimes w_{4j\ra 4j+1},\quad i=j+s;\\
0,\quad\text{otherwise.}
\end{cases}$$

If $s\le j<2s$, then $$b_{ij}=
\begin{cases}
w_{4(j+m+1)\ra 4(j+m+s+1)+1}\otimes w_{4j\ra 4(j+s)+1},\quad i=j;\\
0,\quad\text{otherwise.}
\end{cases}$$

If $2s\le j<5s-1$, then $b_{ij}=0$.

If $5s-1\le j<6s-1$, then $$b_{ij}=
\begin{cases}
-f_2(j,5s-1)\kappa^\ell(\a_{3(j+m+2)})e_{4(j+m+1)+3}\otimes w_{4j+2\ra 4(j+1)},\quad i=(j+1)_s;\\
0,\quad\text{otherwise.}
\end{cases}$$

If $6s-1\le j<6s$, then $b_{ij}=0$.

If $6s\le j<7s$, then $$b_{ij}=
\begin{cases}
f_2(j,7s-1)\kappa^\ell(\a_{3(j+m+3)})e_{4(j+m+2)}\otimes w_{4j+3\ra 4(j+1+f(j,7s-1)s)+1},\quad i=s+(j+1)_s;\\
0,\quad\text{otherwise.}
\end{cases}$$

$(11)$ If $r_0=10$, then $\Omega^{10}(Y_t^{(6)})$ is described with
$(6s\times 6s)$-matrix with the following elements $b_{ij}${\rm:}

If $0\le j<s$, then $$b_{ij}=
\begin{cases}
\kappa^\ell(\a_{3(j+m+1)})w_{4(j+m)\ra 4(j+m)+3}\otimes e_{4j},\quad i=j;\\
-\kappa^\ell(\a_{3(j+m+1)})w_{4(j+m+s)+1\ra 4(j+m)+3}\otimes w_{4j\ra 4j+1},\quad i=j+s;\\
\kappa^\ell(\a_{3(j+m+1)})w_{4(j+m)+1\ra 4(j+m)+3}\otimes w_{4j\ra 4(j+s)+1},\quad i=j+2s;\\
\kappa^\ell(\a_{3(j+m+1)})w_{4(j+m+s)+2\ra 4(j+m)+3}\otimes w_{4j\ra 4j+2},\quad i=j+3s;\\
-\kappa^\ell(\a_{3(j+m+1)})w_{4(j+m)+2\ra 4(j+m)+3}\otimes w_{4j\ra 4(j+s)+2},\quad i=j+4s;\\
\kappa^\ell(\a_{3(j+m+1)})e_{4(j+m)+3}\otimes w_{4j\ra 4j+3},\quad i=j+5s;\\
0,\quad\text{otherwise.}
\end{cases}$$

If $s\le j<2s$, then $$b_{ij}=
\begin{cases}
f_1(j,2s-1)w_{4(j+m+1)\ra 4(j+m+1)+2}\otimes w_{4(j+s)+1\ra 4(j+1)},\quad i=(j+1)_s;\\
0,\quad\text{otherwise.}
\end{cases}$$

If $2s\le j<s$, then $b_{ij}=0$.

If $s\le j<2s-1$, then $$b_{ij}=
\begin{cases}
-w_{4(j+m+1)+1\ra 4(j+m+1)+2}\otimes w_{4(j+s)+1\ra 4(j+s+1)+1},\quad i=s+(j+1)_s;\\
0,\quad\text{otherwise.}
\end{cases}$$

If $2s-1\le j<2s$, then $$b_{ij}=
\begin{cases}
-w_{4(j+m+1)+1\ra 4(j+m+1)+2}\otimes w_{4(j+s)+1\ra 4(j+s+1)+1},\quad i=2s+(j+1)_s;\\
0,\quad\text{otherwise.}
\end{cases}$$

If $2s\le j<3s$, then $$b_{ij}=
\begin{cases}
w_{4(j+m)+2\ra 4(j+m+1)+2}\otimes w_{4(j+s)+1\ra 4(j+s)+2},\quad i=j+2s;\\
-w_{4(j+m)+3\ra 4(j+m+1)+2}\otimes w_{4(j+s)+1\ra 4j+3},\quad i=j+3s;\\
0,\quad\text{otherwise.}
\end{cases}$$

If $3s\le j<6s$, then $b_{ij}=0$.

\medskip
$({\rm II})$ Represent an arbitrary $t_0\in\N$ in the form
$t_0=11\ell_0+r_0$, where $0\le r_0\le 10.$ Then
$\Omega^{t_0}(Y_t^{(6)})$ is a $\Omega^{r_0}(Y_t^{(6)})$, whose left
components twisted by $\sigma^{\ell_0}$.
\end{pr}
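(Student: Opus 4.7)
The plan is to establish the matrix descriptions (1)--(11) by induction on $r_0$ using the defining chain-map property of translates, and then deduce part (II) from the periodicity in Theorem~\ref{resol_thm}.

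First I would verify the base case $r_0=0$. By definition, $\Omega^{0}(Y^{(6)}_t)$ is a lift $\varphi_0\colon Q_t\to Q_0$ of the cocycle $Y^{(6)}_t\colon Q_t\to R$ along the augmentation $\varepsilon$. Comparing the matrix in item (1) with the description of $Y^{(6)}_t$ in Section~5, one sees that the tensor entries $w\otimes e_j$ displayed there are precisely the canonical preimages under $\varepsilon$ of the corresponding generators of $R$; this identifies $\varphi_0$ with the stated matrix.

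For the inductive step from $r_0$ to $r_0+1$ (for $r_0=0,\ldots,9$), I would verify the chain-map identity
\[
d_{r_0}\circ\Omega^{r_0+1}(Y^{(6)}_t)=\Omega^{r_0}(Y^{(6)}_t)\circ d_{t+r_0}
\]
by computing the matrix product $d_{r_0}\cdot B$ column by column, where $B$ is the candidate matrix for $\Omega^{r_0+1}(Y^{(6)}_t)$ given in the statement, and comparing with the corresponding entries of $\Omega^{r_0}(Y^{(6)}_t)\cdot d_{t+r_0}$. Using the piecewise descriptions of $d_r$ from Section~\ref{sect_res}, each entry of the product reduces to a sum of tensors $w_1\otimes w_2$ in $R\otimes R$; most terms vanish because either a path of length $5$ appears or one of the relations $\a_{3t}\g_{t-1}\a_{3(t+s)-1}=0$ is triggered, and the surviving terms are exactly the nonzero entries of $\Omega^{r_0}(Y^{(6)}_t)\cdot d_{t+r_0}$. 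Minimality of the lift is immediate since every image tensor involves either an idempotent or a single arrow, so no summand can be eliminated by a null-homotopy.

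The main obstacle will be the combinatorial bookkeeping. Since $Y^{(6)}_t$ has nonzero entries in four separated block rows of $j$, and each $d_r$ is itself defined piecewise on four or five $j$-ranges, each inductive step splits into $16$--$20$ subcases indexed by $j_2=\lfloor j/s\rfloor$ and $i_2=(j)_s$, with extra boundary cases at $(i_2)=s-1$ and at $j\equiv 5s-1,6s-1,7s-1,8s-1$. The sign factors $f_1(\cdot,\cdot)$ and indicator shifts $f(\cdot,\cdot)$ must line up exactly; tracking these across the four block rows is where errors are easiest to make. The special sub-statements for $s=1$ in items (6), (8), (10), (12), (14), (16) must be handled separately, since several $j$-ranges collapse and new tensors arise from paths that previously passed through distinct vertices.

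Finally, part (II) is a direct consequence of Theorem~\ref{resol_thm}: the terms $Q_{11\ell_0+r_0}$ and differentials $d_{11\ell_0+r}$ are obtained from $Q_{r_0}$ and $d_r$ by applying $\sigma^{\ell_0}$ to all left tensor factors. Because the lifting procedure is linear and the identity $d_{r}\circ\varphi_{r+1}=\varphi_{r}\circ d_{t+r}$ is $\sigma^{\ell_0}$-equivariant on left tensor components, the minimal lift $\Omega^{11\ell_0+r_0}(Y^{(6)}_t)$ coincides with the $\sigma^{\ell_0}$-twist of $\Omega^{r_0}(Y^{(6)}_t)$, as claimed.
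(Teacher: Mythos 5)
Your proposal is correct and matches the (implicit) method of the paper: these translate descriptions are justified precisely by checking the chain-map identities $d_{r_0}\circ\Omega^{r_0+1}(Y^{(6)}_t)=\Omega^{r_0}(Y^{(6)}_t)\circ d_{t+r_0}$ against the explicit differentials of Section~3, together with the $\sigma^{\ell_0}$-equivariance of the periodic resolution for part (II). One small slip: this proposition has no separate $s=1$ sub-items (those occur in cases 4, 5, 8, etc.), and the appeal to minimality of the lift is unnecessary since formula \eqref{mult_formula} only requires some lift, not the minimal one.
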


%%%%%%%%%%%%%%%%%%%%%%%%%%%%%%%%%%%%%%%%%%%%%%%%%%%%%%%%%%%%%%%%%%%%%%%%%%%%%%%%%%%%%%%%
%                                          7
%%%%%%%%%%%%%%%%%%%%%%%%%%%%%%%%%%%%%%%%%%%%%%%%%%%%%%%%%%%%%%%%%%%%%%%%%%%%%%%%%%%%%%%%
\begin{pr}[Translates for the case 7]
$({\rm I})$ Let $r_0\in\N$, $r_0<11$. $r_0$-translates of the
elements $Y^{(7)}_t$ are described by the following way.

$(1)$ If $r_0=0$, then $\Omega^{0}(Y_t^{(7)})$ is described with
$(8s\times 6s)$-matrix with the following elements $b_{ij}${\rm:}

If $0\le j<2s$, then $$b_{ij}=
\begin{cases}
w_{4j\ra 4(j+s)+2}\otimes e_{4j},\quad i=(j)_s;\\
0,\quad\text{otherwise.}
\end{cases}$$

If $2s\le j<4s$, then $$b_{ij}=
\begin{cases}
w_{4j+1\ra 4j+3}\otimes e_{4j+1},\quad i=j-s;\\
0,\quad\text{otherwise.}
\end{cases}$$

If $4s\le j<6s$, then $$b_{ij}=
\begin{cases}
w_{4j+2\ra 4(j+1)}\otimes e_{4j+2},\quad i=j-s;\\
0,\quad\text{otherwise.}
\end{cases}$$

If $6s\le j<8s$, then $b_{ij}=0$.

$(2)$ If $r_0=1$, then $\Omega^{1}(Y_t^{(7)})$ is described with
$(9s\times 7s)$-matrix with the following elements $b_{ij}${\rm:}

If $0\le j<s$, then $$b_{ij}=
\begin{cases}
w_{4(j+m)+1\ra 4(j+m)+3}\otimes e_{4j},\quad i=j;\\
w_{4(j+m+s)+1\ra 4(j+m)+3}\otimes e_{4j},\quad i=j+s;\\
0,\quad\text{otherwise.}
\end{cases}$$

If $s\le j<2s$, then $$b_{ij}=
\begin{cases}
w_{4(j+m)+1\ra 4(j+m+1)}\otimes e_{4j},\quad i=j;\\
w_{4(j+m+s)+2\ra 4(j+m+1)}\otimes w_{4j\ra 4(j+s)+1},\quad i=j+s;\\
w_{4(j+m)+2\ra 4(j+m+1)}\otimes w_{4j\ra 4j+1},\quad i=j+2s;\\
0,\quad\text{otherwise.}
\end{cases}$$

If $2s\le j<3s$, then $$b_{ij}=
\begin{cases}
w_{4(j+m)+2\ra 4(j+m+1)+1}\otimes e_{4j+1},\quad i=j;\\
0,\quad\text{otherwise.}
\end{cases}$$

If $3s\le j<4s$, then $$b_{ij}=
\begin{cases}
w_{4(j+m)+2\ra 4(j+m+1)+1}\otimes e_{4j+1},\quad i=j;\\
0,\quad\text{otherwise.}
\end{cases}$$

If $4s\le j<5s$, then $b_{ij}=0$.

If $5s\le j<6s$, then $$b_{ij}=
\begin{cases}
w_{4(j+m)+3\ra 4(j+m+s+1)+2}\otimes e_{4(j+s)+2},\quad i=j-s;\\
w_{4(j+m+1)\ra 4(j+m+s+1)+2}\otimes w_{4(j+s)+2\ra 4j+3},\quad i=j+s;\\
0,\quad\text{otherwise.}
\end{cases}$$

If $6s\le j<7s$, then $b_{ij}=0$.

If $7s\le j<8s$, then $$b_{ij}=
\begin{cases}
w_{4(j+m)+3\ra 4(j+m+1)+2}\otimes e_{4j+2},\quad i=j-2s;\\
w_{4(j+m+1)\ra 4(j+m+1)+2}\otimes w_{4j+2\ra 4j+3},\quad i=j-s;\\
0,\quad\text{otherwise.}
\end{cases}$$

If $8s\le j<9s$, then $$b_{ij}=
\begin{cases}
w_{4(j+m+1+f(j,9s-1)s)+1\ra 4(j+m+1)+3}\otimes w_{4j+3\ra 4(j+1)},\quad i=(j+1)_s;\\
0,\quad\text{otherwise.}
\end{cases}$$

$(3)$ If $r_0=2$, then $\Omega^{2}(Y_t^{(7)})$ is described with
$(8s\times 6s)$-matrix with the following elements $b_{ij}${\rm:}

If $2s\le j<3s$, then $$b_{ij}=
\begin{cases}
w_{4(j+m)+2\ra 4(j+m+1)}\otimes e_{4j+1},\quad i=j-s;\\
0,\quad\text{otherwise.}
\end{cases}$$

If $3s\le j<4s$, then $$b_{ij}=
\begin{cases}
w_{4(j+m)+2\ra 4(j+m+1)}\otimes e_{4j+1},\quad i=j-s;\\
0,\quad\text{otherwise.}
\end{cases}$$

If $4s\le j<5s$, then $$b_{ij}=
\begin{cases}
w_{4(j+m+s)+1\ra 4(j+m)+3}\otimes e_{4j+2},\quad i=j-s;\\
0,\quad\text{otherwise.}
\end{cases}$$

If $5s\le j<6s$, then $$b_{ij}=
\begin{cases}
w_{4(j+m+s)+1\ra 4(j+m)+3}\otimes e_{4j+2},\quad i=j-s;\\
0,\quad\text{otherwise.}
\end{cases}$$

If $6s\le j<7s-1$, then $b_{ij}=0$.

If $7s-1\le j<7s$, then $$b_{ij}=
\begin{cases}
e_{4(j+m+s+1)+2}\otimes w_{4j+3\ra 4(j+s+1)+1},\quad i=s+(j+1)_s;\\
0,\quad\text{otherwise.}
\end{cases}$$

If $7s\le j<6s$, then $b_{ij}=0$.

If $6s\le j<7s-1$, then $$b_{ij}=
\begin{cases}
e_{4(j+m+s+1)+2}\otimes w_{4j+3\ra 4(j+s+1)+1},\quad i=2s+(j+1)_s;\\
0,\quad\text{otherwise.}
\end{cases}$$

If $7s-1\le j<7s$, then $b_{ij}=0$.

If $7s\le j<8s$, then $$b_{ij}=
\begin{cases}
e_{4(j+m+s+1)+2}\otimes w_{4j+3\ra 4(j+s+1)+1},\quad i=s+f(j,8s-1)s+(j+1)_s;\\
0,\quad\text{otherwise.}
\end{cases}$$

$(4)$ If $r_0=3$, then $\Omega^{3}(Y_t^{(7)})$ is described with
$(9s\times 8s)$-matrix with the following elements $b_{ij}${\rm:}

If $0\le j<s$, then $$b_{ij}=
\begin{cases}
w_{4(j+m)+2\ra 4(j+m+1)}\otimes e_{4j},\quad i=j;\\
w_{4(j+m+s)+2\ra 4(j+m+1)}\otimes e_{4j},\quad i=j+s;\\
0,\quad\text{otherwise.}
\end{cases}$$

If $s\le j<2s$, then $b_{ij}=0$.

If $2s\le j<3s$, then $$b_{ij}=
\begin{cases}
e_{4(j+m+1)+2}\otimes w_{4j+1\ra 4(j+1)},\quad i=f(j,3s-1)s+(j+1)_s;\\
w_{4(j+m)+3\ra 4(j+m+1)+2}\otimes e_{4j+1},\quad i=j;\\
0,\quad\text{otherwise.}
\end{cases}$$

If $3s\le j<4s$, then $b_{ij}=0$.

If $4s\le j<5s$, then $$b_{ij}=
\begin{cases}
e_{4(j+m+s+1)+2}\otimes w_{4(j+s)+1\ra 4(j+1)},\quad i=s-f(j,5s-1)s+(j+1)_s;\\
w_{4(j+m)+3\ra 4(j+m+s+1)+2}\otimes e_{4(j+s)+1},\quad i=j-s;\\
0,\quad\text{otherwise.}
\end{cases}$$

If $5s\le j<7s$, then $b_{ij}=0$.

If $7s\le j<8s$, then $$b_{ij}=
\begin{cases}
w_{4(j+m+s-f(j,8s-1)s+1)+2\ra 4(j+m+1)+3}\otimes w_{4j+3\ra 4(j+1)},\quad i=(j+1)_s;\\
w_{4(j+m-f(j,8s-1)s+1)+2\ra 4(j+m+1)+3}\otimes w_{4j+3\ra 4(j+1)},\quad i=s+(j+1)_s;\\
0,\quad\text{otherwise.}
\end{cases}$$

If $8s\le j<9s$, then $b_{ij}=0$.

$(5)$ If $r_0=4$, then $\Omega^{4}(Y_t^{(7)})$ is described with
$(8s\times 9s)$-matrix with the following elements $b_{ij}${\rm:}

If $0\le j<s$, then $$b_{ij}=
\begin{cases}
w_{4(j+m+s)-1\ra 4(j+m+s)+2}\otimes e_{4j},\quad i=j;\\
w_{4(j+m+s)+1\ra 4(j+m+s)+2}\otimes w_{4j\ra 4j+1},\quad i=j+2s;\\
e_{4(j+m+s)+2}\otimes w_{4j\ra 4j+2},\quad i=j+5s;\\
0,\quad\text{otherwise.}
\end{cases}$$

If $s\le j<2s$, then $$b_{ij}=
\begin{cases}
w_{4(j+m+s)-1\ra 4(j+m+s)+2}\otimes e_{4j},\quad i=j-s;\\
w_{4(j+m+s)+1\ra 4(j+m+s)+2}\otimes w_{4j\ra 4j+1},\quad i=j+2s;\\
e_{4(j+m+s)+2}\otimes w_{4j\ra 4j+2},\quad i=j+6s;\\
0,\quad\text{otherwise.}
\end{cases}$$

If $2s\le j<3s$, then $$b_{ij}=
\begin{cases}
w_{4(j+m+s)+1\ra 4(j+m)+3}\otimes e_{4j+1},\quad i=j;\\
w_{4(j+m+s)+2\ra 4(j+m)+3}\otimes w_{4j+1\ra 4j+2},\quad i=j+3s;\\
0,\quad\text{otherwise.}
\end{cases}$$

If $3s\le j<4s$, then $$b_{ij}=
\begin{cases}
w_{4(j+m+s)+1\ra 4(j+m)+3}\otimes e_{4j+1},\quad i=j;\\
w_{4(j+m+s)+2\ra 4(j+m)+3}\otimes w_{4j+1\ra 4j+2},\quad i=j+4s;\\
0,\quad\text{otherwise.}
\end{cases}$$

If $4s\le j<8s$, then $b_{ij}=0$.

$(6)$ If $r_0=5$, then $\Omega^{5}(Y_t^{(7)})$ is described with
$(6s\times 8s)$-matrix with the following elements $b_{ij}${\rm:}

If $s\le j<2s$, then $$b_{ij}=
\begin{cases}
w_{4(j+m+1)+1\ra 4(j+m+1)+2}\otimes w_{4(j+s)+1\ra 4(j+1)},\quad i=s-f(j,2s-1)s+(j+1)_s;\\
w_{4(j+m+1)\ra 4(j+m+1)+2}\otimes e_{4(j+s)+1},\quad i=j+s;\\
0,\quad\text{otherwise.}
\end{cases}$$

If $2s\le j<3s$, then $$b_{ij}=
\begin{cases}
w_{4(j+m+1)+1\ra 4(j+m+1)+2}\otimes w_{4(j+s)+1\ra 4(j+1)},\quad i=f(j,3s-1)s+(j+1)_s;\\
w_{4(j+m+1)\ra 4(j+m+1)+2}\otimes e_{4(j+s)+1},\quad i=j+s;\\
0,\quad\text{otherwise.}
\end{cases}$$

If $3s\le j<5s$, then $b_{ij}=0$.

If $5s\le j<6s$, then $$b_{ij}=
\begin{cases}
e_{4(j+m+2)}\otimes w_{4j+3\ra 4(j+s-f(j,6s-1)s+1)+1},\quad i=2s+(j+1)_s;\\
e_{4(j+m+2)}\otimes w_{4j+3\ra 4(j+1+f(j,6s-1)s)+1},\quad i=3s+(j+1)_s;\\
0,\quad\text{otherwise.}
\end{cases}$$

$(7)$ If $r_0=6$, then $\Omega^{6}(Y_t^{(7)})$ is described with
$(7s\times 9s)$-matrix with the following elements $b_{ij}${\rm:}

If $0\le j<s$, then $$b_{ij}=
\begin{cases}
w_{4(j+m)+1\ra 4(j+m)+3}\otimes w_{4j\ra 4j+1},\quad i=j+s;\\
w_{4(j+m+s)+1\ra 4(j+m)+3}\otimes w_{4j\ra 4(j+s)+1},\quad i=j+3s;\\
0,\quad\text{otherwise.}
\end{cases}$$

If $s\le j<2s$, then $$b_{ij}=
\begin{cases}
e_{4(j+m+1)}\otimes w_{4(j+s)+1\ra 4(j+1)},\quad i=(j+1)_s;\\
w_{4(j+m+s)+1\ra 4(j+m+1)}\otimes e_{4(j+s)+1},\quad i=j;\\
0,\quad\text{otherwise.}
\end{cases}$$

If $2s\le j<3s$, then $$b_{ij}=
\begin{cases}
e_{4(j+m+1)}\otimes w_{4(j+s)+1\ra 4(j+1)},\quad i=(j+1)_s;\\
w_{4(j+m+s)+1\ra 4(j+m+1)}\otimes e_{4(j+s)+1},\quad i=j+s;\\
0,\quad\text{otherwise.}
\end{cases}$$

If $3s\le j<4s$, then $$b_{ij}=
\begin{cases}
e_{4(j+m+s+1)+1}\otimes w_{4(j+s)+2\ra 4(j+s+1)+1},\quad i=s+f(j,4s-1)2s+(j+1)_s;\\
0,\quad\text{otherwise.}
\end{cases}$$

If $4s\le j<5s$, then $$b_{ij}=
\begin{cases}
e_{4(j+m+s+1)+1}\otimes w_{4(j+s)+2\ra 4(j+s+1)+1},\quad i=3s-f(j,5s-1)2s+(j+1)_s;\\
0,\quad\text{otherwise.}
\end{cases}$$

If $5s\le j<6s$, then $$b_{ij}=
\begin{cases}
w_{4(j+m+1)+1\ra 4(j+m+1)+2}\otimes w_{4j+3\ra 4(j+1)+1},\quad i=3s-f(j,6s-1)2s+(j+1)_s;\\
0,\quad\text{otherwise.}
\end{cases}$$

If $6s\le j<7s$, then $$b_{ij}=
\begin{cases}
w_{4(j+m+1)+1\ra 4(j+m+1)+2}\otimes w_{4j+3\ra 4(j+1)+1},\quad i=s+f(j,7s-1)2s+(j+1)_s;\\
0,\quad\text{otherwise.}
\end{cases}$$

$(8)$ If $r_0=7$, then $\Omega^{7}(Y_t^{(7)})$ is described with
$(6s\times 8s)$-matrix with the following elements $b_{ij}${\rm:}

If $s\le j<2s$, then $$b_{ij}=
\begin{cases}
w_{4(j+m)+3\ra 4(j+m+s+1)+1}\otimes e_{4(j+s)+1},\quad i=j+s;\\
w_{4(j+m+1)\ra 4(j+m+s+1)+1}\otimes w_{4(j+s)+1\ra 4(j+s)+2},\quad i=j+3s;\\
e_{4(j+m+s+1)+1}\otimes w_{4(j+s)+1\ra 4j+3},\quad i=j+5s;\\
0,\quad\text{otherwise.}
\end{cases}$$

If $2s\le j<3s$, then $$b_{ij}=
\begin{cases}
w_{4(j+m)+3\ra 4(j+m+s+1)+1}\otimes e_{4(j+s)+1},\quad i=j+s;\\
w_{4(j+m+1)\ra 4(j+m+s+1)+1}\otimes w_{4(j+s)+1\ra 4(j+s)+2},\quad i=j+3s;\\
e_{4(j+m+s+1)+1}\otimes w_{4(j+s)+1\ra 4j+3},\quad i=j+5s;\\
0,\quad\text{otherwise.}
\end{cases}$$

If $3s\le j<6s$, then $b_{ij}=0$.

$(9)$ If $r_0=8$, then $\Omega^{8}(Y_t^{(7)})$ is described with
$(6s\times 6s)$-matrix with the following elements $b_{ij}${\rm:}

If $0\le j<s$, then $$b_{ij}=
\begin{cases}
w_{4(j+m)+3\ra 4(j+m+1)}\otimes w_{4j\ra 4(j+1)},\quad i=(j+1)_s;\\
w_{4(j+m)+2\ra 4(j+m+1)}\otimes w_{4j\ra 4j+1},\quad i=j+s;\\
w_{4(j+m+s)+2\ra 4(j+m+1)}\otimes w_{4j\ra 4(j+s)+1},\quad i=j+2s;\\
0,\quad\text{otherwise.}
\end{cases}$$

If $s\le j<2s$, then $$b_{ij}=
\begin{cases}
w_{4(j+m)+3\ra 4(j+m+s+1)+1}\otimes w_{4(j+s)+1\ra 4(j+1)},\quad i=(j+1)_s;\\
w_{4(j+m+s)+2\ra 4(j+m+s+1)+1}\otimes e_{4(j+s)+1},\quad i=j;\\
0,\quad\text{otherwise.}
\end{cases}$$

If $2s\le j<3s$, then $$b_{ij}=
\begin{cases}
w_{4(j+m)+3\ra 4(j+m+s+1)+1}\otimes w_{4(j+s)+1\ra 4(j+1)},\quad i=(j+1)_s;\\
w_{4(j+m+s)+2\ra 4(j+m+s+1)+1}\otimes e_{4(j+s)+1},\quad i=j;\\
0,\quad\text{otherwise.}
\end{cases}$$

If $3s\le j<4s$, then $$b_{ij}=
\begin{cases}
e_{4(j+m+s+1)+2}\otimes w_{4(j+s)+2\ra 4(j+s+1)+1},\quad i=s+f(j,4s-1)s+(j+1)_s;\\
0,\quad\text{otherwise.}
\end{cases}$$

If $4s\le j<5s$, then $$b_{ij}=
\begin{cases}
e_{4(j+m+s+1)+2}\otimes w_{4(j+s)+2\ra 4(j+s+1)+1},\quad i=2s-f(j,5s-1)s+(j+1)_s;\\
0,\quad\text{otherwise.}
\end{cases}$$

If $5s\le j<6s$, then $$b_{ij}=
\begin{cases}
w_{4(j+m+s-f(j,6s-1)s+1)+2\ra 4(j+m+1)+3}\otimes w_{4j+3\ra 4(j+s-f(j,6s-1)s+1)+1},\quad i=s+(j+1)_s;\\
w_{4(j+m+f(j,6s-1)s+1)+2\ra 4(j+m+1)+3}\otimes w_{4j+3\ra 4(j+f(j,6s-1)s+1)+1},\quad i=2s+(j+1)_s;\\
0,\quad\text{otherwise.}
\end{cases}$$

$(10)$ If $r_0=9$, then $\Omega^{9}(Y_t^{(7)})$ is described with
$(7s\times 7s)$-matrix with the following elements $b_{ij}${\rm:}

If $2s\le j<3s$, then $$b_{ij}=
\begin{cases}
w_{4(j+m+1)\ra 4(j+m+1)+2}\otimes e_{4j+1},\quad i=j-s;\\
e_{4(j+m+1)+2}\otimes w_{4j+1\ra 4j+3},\quad i=j+3s;\\
0,\quad\text{otherwise.}
\end{cases}$$

If $3s\le j<4s$, then $$b_{ij}=
\begin{cases}
w_{4(j+m+1)\ra 4(j+m+1)+2}\otimes e_{4j+1},\quad i=j-s;\\
e_{4(j+m+1)+2}\otimes w_{4j+1\ra 4j+3},\quad i=j+3s;\\
0,\quad\text{otherwise.}
\end{cases}$$

If $4s\le j<7s$, then $b_{ij}=0$.

$(11)$ If $r_0=10$, then $\Omega^{10}(Y_t^{(7)})$ is described with
$(6s\times 6s)$-matrix with the following elements $b_{ij}${\rm:}

If $0\le j<s$, then $$b_{ij}=
\begin{cases}
w_{4(j+m+s)+1\ra 4(j+m)+3}\otimes w_{4j\ra 4j+1},\quad i=j+s;\\
w_{4(j+m)+1\ra 4(j+m)+3}\otimes w_{4j\ra 4(j+s)+1},\quad i=j+2s;\\
w_{4(j+m+s)+2\ra 4(j+m)+3}\otimes w_{4j\ra 4j+2},\quad i=j+3s;\\
w_{4(j+m)+2\ra 4(j+m)+3}\otimes w_{4j\ra 4(j+s)+2},\quad i=j+4s;\\
0,\quad\text{otherwise.}
\end{cases}$$

If $s\le j<2s$, then $$b_{ij}=
\begin{cases}
w_{4(j+m+1)\ra 4(j+m+s+1)+2}\otimes w_{4(j+s)+1\ra 4(j+1)},\quad i=(j+1)_s;\\
w_{4(j+m)+3\ra 4(j+m+s+1)+2}\otimes w_{4(j+s)+1\ra 4j+3},\quad i=j+4s;\\
0,\quad\text{otherwise.}
\end{cases}$$

If $2s\le j<3s$, then $$b_{ij}=
\begin{cases}
w_{4(j+m+1)\ra 4(j+m+s+1)+2}\otimes w_{4(j+s)+1\ra 4(j+1)},\quad i=(j+1)_s;\\
w_{4(j+m)+3\ra 4(j+m+s+1)+2}\otimes w_{4(j+s)+1\ra 4j+3},\quad i=j+3s;\\
0,\quad\text{otherwise.}
\end{cases}$$

If $3s\le j<5s$, then $b_{ij}=0$.

If $5s\le j<6s$, then $$b_{ij}=
\begin{cases}
w_{4(j+m+1)\ra 4(j+m+2)}\otimes w_{4j+3\ra 4(j+1)},\quad i=(j+1)_s;\\
w_{4(j+m+s-f(j,6s-1)s+1)+1\ra 4(j+m+2)}\otimes w_{4j+3\ra 4(j+f(j,6s-1)s+1)+1},\quad i=2s+(j+1)_s;\\
w_{4(j+m+s-f(j,6s-1)s+1)+2\ra 4(j+m+2)}\otimes w_{4j+3\ra 4(j+f(j,6s-1)s+1)+2},\quad i=4s+(j+1)_s;\\
0,\quad\text{otherwise.}
\end{cases}$$

\medskip
$({\rm II})$ Represent an arbitrary $t_0\in\N$ in the form
$t_0=11\ell_0+r_0$, where $0\le r_0\le 10.$ Then
$\Omega^{t_0}(Y_t^{(7)})$ is a $\Omega^{r_0}(Y_t^{(7)})$, whose left
components twisted by $\sigma^{\ell_0}$.
\end{pr}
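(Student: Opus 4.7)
The plan is to build up $\Omega^{r_0}(Y_t^{(7)})$ for $0 \le r_0 \le 10$ by the standard inductive lifting procedure for chain maps over projective resolutions. The base case $\Omega^0(Y_t^{(7)}) = Y_t^{(7)}$ is immediate from the definition of $Y_t^{(7)}$ in Section 5 (indeed, comparing the matrices shows the formulas in part (1) of the proposition coincide with the matrix defining $Y_t^{(7)}$). Inductively, given $\Omega^{r_0-1}(Y_t^{(7)}): Q_{t+r_0-1}\to Q_{r_0-1}$, projectivity of $Q_{t+r_0}$ together with the exactness of the resolution $Q_\bullet\to R$ from Theorem \ref{resol_thm} guarantees a lift $\Omega^{r_0}(Y_t^{(7)}): Q_{t+r_0}\to Q_{r_0}$, unique up to homotopy, satisfying
$$d_{r_0-1}\circ \Omega^{r_0}(Y_t^{(7)}) = \Omega^{r_0-1}(Y_t^{(7)})\circ d_{t+r_0-1}.$$

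For each $r_0$ the verification is a direct matrix computation. I would, for every $r_0\in\{1,\dots,10\}$ and every column range in the partition (the blocks $0\le j<s$, $s\le j<2s$, and so on, with the distinguished boundary cases at $j=s-1,2s-1,\dots,8s-1$), multiply out both sides of the displayed identity using the explicit componentwise descriptions of $d_{r_0-1}$ from Section \ref{sect_res} and the proposed formulas for $b_{ij}$. The coefficient functions $\kappa^\ell(\cdot)$, the sign functions $f,f_1,f_2$, and the idempotent indices (which reduce modulo $(n+2)s$) all must be tracked carefully. In each case only finitely many products of the form $w_{a\to b}\otimes w_{c\to d}$ of paths in the quiver $\mathcal{Q}_s$ arise, and the required equalities follow from the defining relations of $R_s^\prime$ (the length-five vanishing, the commutativity relation on $\a_{3t+2}\a_{3t+1}\a_{3t}$, and the ideal relation $\a_{3t}\g_{t-1}\a_{3(t+s)-1}=0$).

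The statement (II) then follows directly from the periodicity encoded in Theorem \ref{resol_thm}: since $Q_{11\ell_0+r_0}$ is obtained from $Q_{r_0}$ by twisting left tensor components via $\sigma^{\ell_0}$, and likewise $d_{11\ell_0+r_0}$ is obtained from $d_{r_0}$ by the same twist, applying $\sigma^{\ell_0}$ to the left components of $\Omega^{r_0}(Y_t^{(7)})$ yields a homomorphism $Q_{t+11\ell_0+r_0}\to Q_{11\ell_0+r_0}$ that automatically satisfies the lifted chain-map identities. No sign adjustment is required in this case (unlike cases 3 and 5, where the coefficients pick up a factor of $(-1)^{\ell_0}$), because every nonzero entry of $\Omega^{r_0}(Y_t^{(7)})$ already consists of positive path products without the sign-sensitive $\kappa^\ell$-factors being raised to $\sigma$-orbit-dependent powers. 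The principal obstacle is purely combinatorial bookkeeping: the number of sub-cases is large, and one must separately handle $s=1$ versus $s>1$ because several branches of the differentials $d_{r_0}$ (and hence of the lifts) collapse or reindex when $s=1$, as is explicit in the proposition's own case split at $r_0\in\{5,6,7\}$ for proposition \ref{type4} and should be paralleled here implicitly through the $f(\cdot,\cdot)$ indicators.
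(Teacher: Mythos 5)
Your proposal is correct and follows the same route the paper takes implicitly: the paper states these translate formulas without a written proof, relying exactly on the lifting identity $d_{r_0-1}\circ\Omega^{r_0}(Y_t^{(7)})=\Omega^{r_0-1}(Y_t^{(7)})\circ d_{t+r_0-1}$ and a case-by-case matrix verification against the differentials of Section \ref{sect_res}, with part (II) following from the $\sigma^{\ell_0}$-periodicity of the resolution in Theorem \ref{resol_thm}. Your observation that no $(-1)^{\ell_0}$ factor arises here is consistent with the fact that none of the entries of $\Omega^{r_0}(Y_t^{(7)})$ carry $\kappa^\ell$-coefficients, matching the paper's statement.
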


%%%%%%%%%%%%%%%%%%%%%%%%%%%%%%%%%%%%%%%%%%%%%%%%%%%%%%%%%%%%%%%%%%%%%%%%%%%%%%%%%%%%%%%%
%                                          8
%%%%%%%%%%%%%%%%%%%%%%%%%%%%%%%%%%%%%%%%%%%%%%%%%%%%%%%%%%%%%%%%%%%%%%%%%%%%%%%%%%%%%%%%
\begin{pr}[Translates for the case 8]
$({\rm I})$ Let $r_0\in\N$, $r_0<11$. $r_0$-translates of the
elements $Y^{(8)}_t$ are described by the following way.

$(1)$ If $r_0=0$, then $\Omega^{0}(Y_t^{(8)})$ is described with
$(9s\times 6s)$-matrix with one nonzero element that is of the following form{\rm:}
$$b_{0,0}=w_{4(j+m)\ra 4(j+m)+3}\otimes e_{4j}.$$

$(2)$ If $r_0=1$, then $\Omega^{1}(Y_t^{(8)})$ is described with
$(8s\times 7s)$-matrix with the following two nonzero elements{\rm:}
$$b_{s+(j+1)_s,4s-1}=w_{4(j+m+s+1)+1\ra 4(j+m+2)}\otimes w_{4j+1\ra 4(j+1)};$$
$$b_{s+(j+1)_s,6s-1}=w_{4(j+m+s+1)+1\ra 4(j+m+1)+3}\otimes w_{4j+2\ra 4(j+1)}.$$

$(3)$ If $r_0=2$, then $\Omega^{2}(Y_t^{(8)})$ is described with
$(9s\times 6s)$-matrix with one nonzero element that is of the following form{\rm:}
$$b_{(j+1)_s,4s-1}=w_{4(j+m)+3\ra 4(j+m+1+s)+1}\otimes w_{4j+1\ra 4(j+1)}.$$

$(4)$ If $r_0=3$, then $\Omega^{3}(Y_t^{(8)})$ is described with
$(8s\times 8s)$-matrix with the following two nonzero elements{\rm:}
$$b_{s+(j+1)_s,2s-1}=e_{4(j+m+s+1)+2}\otimes w_{4j\ra 4(j+1)};$$
$$b_{s+(j+1)_s,4s-1}=w_{4(j+m+s+1)+2\ra 4(j+m+1)+3}\otimes w_{4j+1\ra 4(j+1)}.$$

$(5)$ If $r_0=4$, then $\Omega^{4}(Y_t^{(8)})$ is described with
$(6s\times 9s)$-matrix with the following two nonzero elements{\rm:}
$$b_{(j+1)_s,3s-1}=w_{4(j+m)+3\ra 4(j+m+1)+2}\otimes w_{4(j+s)+1\ra 4(j+1)};$$
$$b_{s+(j+1)_s,3s-1}=w_{4(j+m+1)\ra 4(j+m+1)+2}\otimes w_{4(j+s)+1\ra 4(j+1)}.$$

$(6)$ If $r_0=5$, then $\Omega^{5}(Y_t^{(8)})$ is described with
$(7s\times 8s)$-matrix with the following two nonzero elements{\rm:}
$$b_{(j+1)_s,s-1}=w_{4(j+m+1+sf(j,s-1))+1\ra 4(j+m+1)+3}\otimes w_{4j\ra 4(j+1)};$$
$$b_{(j+1)_s,3s-1}=w_{4(j+m+s+1)+1\ra 4(j+m+2)}\otimes w_{4(j+s)+1\ra 4(j+1)}.$$

$(7)$ If $r_0=6$, then $\Omega^{6}(Y_t^{(8)})$ is described with
$(6s\times 9s)$-matrix with the following two nonzero elements{\rm:}
$$b_{(j+1)_s,3s-1}=w_{4(j+m+1)\ra 4(j+m+s+1)+1}\otimes w_{4(j+s)+1\ra 4(j+1)};$$
$$b_{s+(j+1)_s,3s-1}=e_{4(j+m+s+1)+1}\otimes w_{4(j+s)+1\ra 4(j+s+1)+1}.$$

$(8)$ If $r_0=7$, then $\Omega^{7}(Y_t^{(8)})$ is described with
$(6s\times 8s)$-matrix with the following two nonzero elements{\rm:}
$$b_{(j+1)_s,s-1}=w_{4(j+m+1+sf(j,s-1))+2\ra 4(j+m+2)}\otimes w_{4j\ra 4(j+1)};$$
$$b_{(j+1)_s,3s-1}=w_{4(j+m+s+1)+2\ra 4(j+m+s+2)+1}\otimes w_{4(j+s)+1\ra 4(j+1)}.$$

$(9)$ If $r_0=8$, then $\Omega^{8}(Y_t^{(8)})$ is described with
$(7s\times 6s)$-matrix with the following two nonzero elements{\rm:}
$$b_{(j+1)_s,4s-1}=w_{4(j+m)+3\ra 4(j+m+1)+2}\otimes w_{4j+1\ra 4(j+1)};$$
$$b_{s+(j+1)_s,4s-1}=e_{4(j+m+1)+2}\otimes w_{4j+1\ra 4(j+1)+1}.$$

$(10)$ If $r_0=9$, then $\Omega^{9}(Y_t^{(8)})$ is described with
$(6s\times 7s)$-matrix with the following nonzero elements{\rm:}
$$b_{j+s,s-1}=w_{4(j+m+1)\ra 4(j+m+1)+3}\otimes w_{4j\ra 4j+1};$$
$$b_{j+5s,s-1}=w_{4(j+m+1)+2\ra 4(j+m)+3}\otimes w_{4j\ra 4j+3},\text{ }s=1;$$
$$b_{j+2s,s-1}=w_{4(j+m+1)\ra 4(j+m+1)+3}\otimes w_{4j\ra 4(j+s)+1},\text{ }s>1;$$
$$b_{(j+1)_s,3s-1}=w_{4(j+m+1)+3\ra 4(j+m+s+2)+2}\otimes w_{4(j+s)+1\ra 4(j+1)}.$$

$(11)$ If $r_0=10$ and $s=1$, then $\Omega^{10}(Y_t^{(8)})$ is described with
$(8s\times 6s)$-matrix with the following nonzero elements{\rm:}
$$b_{j,0}=w_{4(j+m)\ra 4(j+m+1)+2}\otimes e_{4j};$$
$$b_{j+5s,0}=w_{4(j+m)+3\ra 4(j+m+1)+2}\otimes w_{4j\ra 4j+3};$$
$$b_{j-s,s}=w_{4(j+m)\ra 4(j+m+1)+2}\otimes e_{4j};$$
$$b_{j-3s,3s}=w_{4(j+m)\ra 4(j+m)+3}\otimes w_{4j+1\ra 4j};$$
$$b_{j-5s,5s}=e_{4(j+m)}\otimes w_{4j+2\ra 4j};$$
$$b_{j-2s,7s}=w_{4(j+m)+3\ra 4(j+m)+1}\otimes e_{4j+3}.$$

$(12)$ If $r_0=10$ and $s>1$, then $\Omega^{10}(Y_t^{(8)})$ is described with
$(8s\times 6s)$-matrix with the following nonzero elements{\rm:}
$$b_{j+2s,2s-1}=w_{4(j+m)+2\ra 4(j+m+1)+2}\otimes w_{4j\ra 4(j+s)+2};$$
$$b_{j+4s,2s-1}=w_{4(j+m)+3\ra 4(j+m+1)+2}\otimes w_{4j\ra 4j+3};$$
$$b_{s+(j+1)_s,3s-2}=w_{4(j+m+s+1)+1\ra 4(j+m+1)+3}\otimes w_{4j+1\ra 4(j+1)+1};$$
$$b_{(j+1)_s,4s-2}=w_{4(j+m+1)\ra 4(j+m+1)+3}\otimes w_{4j+1\ra 4(j+1)};$$
$$b_{j+2s,4s-2}=w_{4(j+m)+3\ra 4(j+m+1)+3}\otimes w_{4j+1\ra 4j+3};$$
$$b_{j+2s,4s-1}=w_{4(j+m)+3\ra 4(j+m+1)+3}\otimes w_{4j+1\ra 4j+3};$$
$$b_{s+(j+1)_s,5s-2}=w_{4(j+m+s+1)+1\ra 4(j+m+2)}\otimes w_{4j+2\ra 4(j+1)+1};$$
$$b_{3s+(j+1)_s,5s-2}=w_{4(j+m+s+1)+2\ra 4(j+m+2)}\otimes w_{4j+2\ra 4(j+1)+2}.$$

\medskip
$({\rm II})$ Represent an arbitrary $t_0\in\N$ in the form
$t_0=11\ell_0+r_0$, where $0\le r_0\le 10.$ Then
$\Omega^{t_0}(Y_t^{(8)})$ is a $\Omega^{r_0}(Y_t^{(8)})$, whose left
components twisted by $\sigma^{\ell_0}$.
\end{pr}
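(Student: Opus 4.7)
The plan is to proceed by induction on $r_0$, constructing the chain map $\{\varphi_{r_0}:Q_{t+r_0}\to Q_{r_0}\}_{r_0\ge 0}$ that lifts the cocycle $Y^{(8)}_t:Q_t\to Q_0$. The base case $r_0=0$ is immediate: by the very definition of $Y^{(8)}_t$ as an $(9s\times 6s)$-matrix with the single nonzero entry $w_{0\ra 3}\otimes e_0$ in position $(0,0)$, we have $\varphi_0=Y^{(8)}_t=\Omega^0(Y^{(8)}_t)$, which coincides with the formula stated in item $(1)$. For the inductive step, assuming that $\Omega^{r_0-1}(Y^{(8)}_t)$ is given by the stated matrix, the candidate matrix for $\Omega^{r_0}(Y^{(8)}_t)$ is the one written in item $(r_0+1)$ of the proposition, and one must verify the chain map relation
$$
d_{r_0-1}\cdot \Omega^{r_0}(Y^{(8)}_t) = \Omega^{r_0-1}(Y^{(8)}_t)\cdot d_{t+r_0-1},
$$
where $d_k$ are the differentials of the bimodule resolution described in Theorem~\ref{resol_thm}. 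Since the differentials beyond $d_{10}$ are obtained from $d_0,\dots,d_{10}$ by twisting along powers of $\sigma$, for the part $({\rm I})$ of the proposition one only needs to verify this equation with the concrete $d_0,\dots,d_{10}$ already computed.

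The verification itself is a componentwise matrix computation: each nonzero entry of the product $d_{r_0-1}\cdot\Omega^{r_0}(Y^{(8)}_t)$ is a sum of at most a few tensors $w_{i\ra j'}\otimes w_{j\ra i'}$ indexed by the idempotents appearing in $Q_{t+r_0-1}$, and one checks it equals the corresponding entry of $\Omega^{r_0-1}(Y^{(8)}_t)\cdot d_{t+r_0-1}$. Because the matrices $Y^{(8)}_t$ and its translates have very few nonzero columns (typically concentrated near $j=s-1,\,2s-1,\,3s-1,\,4s-1,\,5s-1$, i.e.\ the ``boundary'' indices where functions like $f_1(j,ks-1)$ flip sign), most entries of both sides collapse to $0$ and only a handful of identities like $w_{\bullet\ra\bullet}w_{\bullet\ra\bullet}=w_{\bullet\ra\bullet}$ modulo the ideal $I'$ (paths of length $5$ and commutation relations $\alpha_{3t+2}\alpha_{3t+1}\alpha_{3t}-\alpha_{3(t+s)+2}\alpha_{3(t+s)+1}\alpha_{3(t+s)}$, $\alpha_{3t}\gamma_{t-1}\alpha_{3(t+s)-1}$) need to be checked. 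The latter identities account for the appearance of the coefficient $\kappa^\ell$ in the formulas when one permutes tensor factors past an application of $\sigma$.

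The second part $({\rm II})$ of the proposition, giving $\Omega^{t_0}(Y^{(8)}_t)$ for an arbitrary $t_0=11\ell_0+r_0$, follows from part $({\rm I})$ and the structure of the resolution: by Theorem~\ref{resol_thm} the differentials $d_{11\ell_0+r_0}$ are obtained from $d_{r_0}$ by letting $\sigma^{\ell_0}$ act on left tensor components, hence the lift $\Omega^{11\ell_0+r_0}(Y^{(8)}_t)$ is obtained from $\Omega^{r_0}(Y^{(8)}_t)$ by the same twist applied to left components. Formally this is justified by induction on $\ell_0$: given $\Omega^{11\ell_0}(Y^{(8)}_t)$ with left components twisted by $\sigma^{\ell_0}$, the next eleven translates are computed by the same matrix manipulations as in $({\rm I})$, only with each left-tensor factor carrying an extra $\sigma^{\ell_0}$, so the resulting matrices are precisely $\sigma^{\ell_0}$-twisted versions of $\Omega^{r_0}(Y^{(8)}_t)$.

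The main obstacle will be the sheer size of the bookkeeping: eleven separate case analyses, each requiring one to trace the single nonzero cocycle component of $Y^{(8)}_t$ through the complicated indexing of $Q_{t+r_0}$, with careful attention to the boundary cases $s=1$ versus $s>1$ (as signalled by the splits in items $(11)$) and to the numerous sign conventions encoded by $f_1$, $f_2$, and $f$. Once a suitable tabular organization is set up, the individual identities to be checked reduce to verifying equality of two paths in $K[\mathcal{Q}_s]$ modulo $I'$, which is routine; the conceptual content lies entirely in producing the correct matrix entries, which the proposition already does.
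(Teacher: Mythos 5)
Your proposal is correct and follows the only viable route, which is also the one the paper implicitly takes: the paper states these translate propositions without proof, the implicit justification being exactly the componentwise verification of the chain-map squares $d_{r_0-1}\circ\Omega^{r_0}(Y^{(8)}_t)=\Omega^{r_0-1}(Y^{(8)}_t)\circ d_{t+r_0-1}$ against the differentials of Theorem~\ref{resol_thm}, together with the periodicity-by-$\sigma$ argument for part $({\rm II})$. Your outline adds nothing beyond and omits nothing essential relative to what the paper supplies.
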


%%%%%%%%%%%%%%%%%%%%%%%%%%%%%%%%%%%%%%%%%%%%%%%%%%%%%%%%%%%%%%%%%%%%%%%%%%%%%%%%%%%%%%%%
%                                          9
%%%%%%%%%%%%%%%%%%%%%%%%%%%%%%%%%%%%%%%%%%%%%%%%%%%%%%%%%%%%%%%%%%%%%%%%%%%%%%%%%%%%%%%%
\begin{pr}[Translates for the case 9]
$({\rm I})$ Let $r_0\in\N$, $r_0<11$. $r_0$-translates of the
elements $Y^{(9)}_t$ are described by the following way.

$(1)$ If $r_0=0$, then $\Omega^{0}(Y_t^{(9)})$ is described with
$(9s\times 6s)$-matrix with the following elements $b_{ij}${\rm:}

If $s\le j<2s$, then $$b_{ij}=
\begin{cases}
\kappa^\ell(\a_{3(j+m+2)})e_{4j}\otimes e_{4j},\quad i=j-s;\\
0,\quad\text{otherwise.}
\end{cases}$$

If $2s\le j<4s$, then $$b_{ij}=
\begin{cases}
e_{4j+1}\otimes e_{4j+1},\quad i=j-s;\\
0,\quad\text{otherwise.}
\end{cases}$$

If $4s\le j<5s$, then $b_{ij}=0$.

If $5s\le j<6s$, then $$b_{ij}=
\begin{cases}
e_{4(j+s)+2}\otimes e_{4(j+s)+2},\quad i=j-2s;\\
0,\quad\text{otherwise.}
\end{cases}$$

If $6s\le j<7s$, then $b_{ij}=0$.

If $7s\le j<8s$, then $$b_{ij}=
\begin{cases}
e_{4j+2}\otimes e_{4j+2},\quad i=j-3s;\\
0,\quad\text{otherwise.}
\end{cases}$$

If $8s\le j<9s$, then $$b_{ij}=
\begin{cases}
-\kappa^\ell(\a_{3(j+m+2)})e_{4j+3}\otimes e_{4j+3},\quad i=j-3s;\\
0,\quad\text{otherwise.}
\end{cases}$$

$(2)$ If $r_0=1$, then $\Omega^{1}(Y_t^{(9)})$ is described with
$(8s\times 7s)$-matrix with the following elements $b_{ij}${\rm:}

If $0\le j<s$, then $$b_{ij}=
\begin{cases}
e_{4(j+m+s)+1}\otimes e_{4j},\quad i=j+s;\\
0,\quad\text{otherwise.}
\end{cases}$$

If $s\le j<2s$, then $$b_{ij}=
\begin{cases}
e_{4(j+m+s)+1}\otimes e_{4j},\quad i=j-s;\\
0,\quad\text{otherwise.}
\end{cases}$$

If $2s\le j<3s$, then $$b_{ij}=
\begin{cases}
\kappa^\ell(\a_{3(j+m+3)})w_{4(j+m)+2\ra 4(j+m+1)}\otimes e_{4j+1},\quad i=j;\\
-\kappa^\ell(\a_{3(j+m+3)})w_{4(j+m)+3\ra 4(j+m+1)}\otimes w_{4j+1\ra 4j+2},\quad i=j+2s;\\
-\kappa^\ell(\a_{3(j+m+3)})e_{4(j+m+1)}\otimes w_{4j+1\ra 4j+3},\quad i=j+4s;\\
0,\quad\text{otherwise.}
\end{cases}$$

If $3s\le j<4s$, then $$b_{ij}=
\begin{cases}
\kappa^\ell(\a_{3(j+m+3)})w_{4(j+m)+2\ra 4(j+m+1)}\otimes e_{4j+1},\quad i=j;\\
-\kappa^\ell(\a_{3(j+m+3)})w_{4(j+m)+3\ra 4(j+m+1)}\otimes w_{4j+1\ra 4j+2},\quad i=j+2s;\\
-\kappa^\ell(\a_{3(j+m+3)})e_{4(j+m+1)}\otimes w_{4j+1\ra 4j+3},\quad i=j+3s;\\
0,\quad\text{otherwise.}
\end{cases}$$

If $4s\le j<5s$, then $$b_{ij}=
\begin{cases}
-\kappa^\ell(\a_{3(j+m+2)})e_{4(j+m)+3}\otimes e_{4j+2},\quad i=j;\\
0,\quad\text{otherwise.}
\end{cases}$$

If $5s\le j<6s$, then $$b_{ij}=
\begin{cases}
-\kappa^\ell(\a_{3(j+m+2)})e_{4(j+m)+3}\otimes e_{4j+2},\quad i=j;\\
0,\quad\text{otherwise.}
\end{cases}$$

If $6s\le j<7s$, then $$b_{ij}=
\begin{cases}
-w_{4(j+m+s+1)+1\ra 4(j+m+s+1)+2}\otimes w_{4j+3\ra 4(j+1)},\quad i=(j+s+1)_{2s};\\
e_{4(j+m+s+1)+2}\otimes w_{4j+3\ra 4(j+s+1)+1},\quad i=2s+(j+s+1)_{2s};\\
-w_{4(j+m+1)\ra 4(j+m+s+1)+2}\otimes e_{4j+3},\quad i=j;\\
0,\quad\text{otherwise.}
\end{cases}$$

If $7s\le j<8s$, then $$b_{ij}=
\begin{cases}
w_{4(j+m+s+1)+1\ra 4(j+m+s+1)+2}\otimes w_{4j+3\ra 4(j+1)},\quad i=(j+s+1)_{2s};\\
-e_{4(j+m+s+1)+2}\otimes w_{4j+3\ra 4(j+s+1)+1},\quad i=2s+(j+s+1)_{2s};\\
w_{4(j+m+1)\ra 4(j+m+s+1)+2}\otimes e_{4j+3},\quad i=j-s;\\
0,\quad\text{otherwise.}
\end{cases}$$

$(3)$ If $r_0=2$, then $\Omega^{2}(Y_t^{(9)})$ is described with
$(9s\times 6s)$-matrix with the following elements $b_{ij}${\rm:}

If $0\le j<s$, then $$b_{ij}=
\begin{cases}
\kappa^\ell(\a_{3(j+m+2)})w_{4(j+m)-1\ra 4(j+m)}\otimes e_{4j},\quad i=j;\\
0,\quad\text{otherwise.}
\end{cases}$$

If $s\le j<2s$, then $$b_{ij}=
\begin{cases}
-e_{4(j+m)+1}\otimes w_{4(j+s)+1\ra 4(j+s)+2},\quad i=j+2s;\\
0,\quad\text{otherwise.}
\end{cases}$$

If $2s\le j<3s$, then $$b_{ij}=
\begin{cases}
e_{4(j+m)+2}\otimes e_{4j+1},\quad i=j-s;\\
0,\quad\text{otherwise.}
\end{cases}$$

If $3s\le j<4s$, then $$b_{ij}=
\begin{cases}
-e_{4(j+m+s)+1}\otimes w_{4j+1\ra 4j+2},\quad i=j+s;\\
0,\quad\text{otherwise.}
\end{cases}$$

If $4s\le j<5s$, then $$b_{ij}=
\begin{cases}
e_{4(j+m+s)+2}\otimes e_{4(j+s)+1},\quad i=j-2s;\\
0,\quad\text{otherwise.}
\end{cases}$$

If $5s\le j<6s$, then $$b_{ij}=
\begin{cases}
-w_{4(j+m)+1\ra 4(j+m)+2}\otimes e_{4(j+s)+2},\quad i=j-2s;\\
0,\quad\text{otherwise.}
\end{cases}$$

If $6s\le j<7s$, then $$b_{ij}=
\begin{cases}
-w_{4(j+m)+1\ra 4(j+m)+2}\otimes e_{4(j+s)+2},\quad i=j-2s;\\
0,\quad\text{otherwise.}
\end{cases}$$

If $7s\le j<8s$, then $$b_{ij}=
\begin{cases}
\kappa^\ell(\a_{3(j+m+2)})f_2(j,8s-1)e_{4(j+m)+3}\otimes w_{4j+3\ra 4(j+1)},\quad i=(j+1)_s;\\
0,\quad\text{otherwise.}
\end{cases}$$

If $8s\le j<9s-1$, then $$b_{ij}=
\begin{cases}
\kappa^\ell(\a_{3(j+m+3)})w_{4(j+m)+3\ra 4(j+m+1)}\otimes w_{4j+3\ra 4(j+1)},\quad i=(j+1)_s;\\
0,\quad\text{otherwise.}
\end{cases}$$

If $9s-1\le j<8s$, then $b_{ij}=0$.

If $8s\le j<9s$, then $$b_{ij}=
\begin{cases}
-\kappa^\ell(\a_{3(j+m+3)})e_{4(j+m+1)}\otimes e_{4j+3},\quad i=j-3s;\\
0,\quad\text{otherwise.}
\end{cases}$$

$(4)$ If $r_0=3$, then $\Omega^{3}(Y_t^{(9)})$ is described with
$(8s\times 8s)$-matrix with the following elements $b_{ij}${\rm:}

If $0\le j<s$, then $$b_{ij}=
\begin{cases}
-e_{4(j+m+s)+2}\otimes e_{4j},\quad i=j+s;\\
0,\quad\text{otherwise.}
\end{cases}$$

If $s\le j<2s$, then $$b_{ij}=
\begin{cases}
e_{4(j+m+s)+2}\otimes e_{4j},\quad i=j-s;\\
0,\quad\text{otherwise.}
\end{cases}$$

If $2s\le j<3s$, then $$b_{ij}=
\begin{cases}
-\kappa^\ell(\a_{3(j+m+2)})e_{4(j+m)+3}\otimes e_{4j+1},\quad i=j;\\
0,\quad\text{otherwise.}
\end{cases}$$

If $3s\le j<4s$, then $$b_{ij}=
\begin{cases}
-\kappa^\ell(\a_{3(j+m+2)})e_{4(j+m)+3}\otimes e_{4j+1},\quad i=j;\\
0,\quad\text{otherwise.}
\end{cases}$$

If $4s\le j<5s$, then $$b_{ij}=
\begin{cases}
-\kappa^\ell(\a_{3(j+m+3)})e_{4(j+m+1)}\otimes e_{4j+2},\quad i=j;\\
0,\quad\text{otherwise.}
\end{cases}$$

If $5s\le j<6s$, then $$b_{ij}=
\begin{cases}
-\kappa^\ell(\a_{3(j+m+3)})e_{4(j+m+1)}\otimes e_{4j+2},\quad i=j;\\
0,\quad\text{otherwise.}
\end{cases}$$

If $6s\le j<7s$, then $$b_{ij}=
\begin{cases}
e_{4(j+m+s+1)+1}\otimes e_{4j+3},\quad i=j+s;\\
0,\quad\text{otherwise.}
\end{cases}$$

If $7s\le j<8s$, then $$b_{ij}=
\begin{cases}
e_{4(j+m+s+1)+1}\otimes e_{4j+3},\quad i=j-s;\\
0,\quad\text{otherwise.}
\end{cases}$$

$(5)$ If $r_0=4$, then $\Omega^{4}(Y_t^{(9)})$ is described with
$(6s\times 9s)$-matrix with the following elements $b_{ij}${\rm:}

If $0\le j<s$, then $$b_{ij}=
\begin{cases}
-\kappa^\ell(\a_{3(j+m+1)})e_{4(j+2)-1}\otimes e_{4j},\quad i=j;\\
0,\quad\text{otherwise.}
\end{cases}$$

If $s\le j<2s$, then $$b_{ij}=
\begin{cases}
-w_{4(j+m)+1\ra 4(j+m)+2}\otimes e_{4(j+s)+1},\quad i=j+s;\\
-e_{4(j+m)+2}\otimes w_{4(j+s)+1\ra 4(j+s)+2},\quad i=j+4s;\\
0,\quad\text{otherwise.}
\end{cases}$$

If $2s\le j<3s$, then $$b_{ij}=
\begin{cases}
-w_{4(j+m)+1\ra 4(j+m)+2}\otimes e_{4(j+s)+1},\quad i=j+s;\\
-e_{4(j+m)+2}\otimes w_{4(j+s)+1\ra 4(j+s)+2},\quad i=j+5s;\\
0,\quad\text{otherwise.}
\end{cases}$$

If $3s\le j<4s$, then $$b_{ij}=
\begin{cases}
e_{4(j+m+s)+1}\otimes e_{4(j+s)+2},\quad i=j+s;\\
0,\quad\text{otherwise.}
\end{cases}$$

If $4s\le j<5s$, then $$b_{ij}=
\begin{cases}
e_{4(j+m+s)+1}\otimes e_{4(j+s)+2},\quad i=j+2s;\\
0,\quad\text{otherwise.}
\end{cases}$$

If $5s\le j<6s$, then $$b_{ij}=
\begin{cases}
-\kappa^\ell(\a_{3(j+m+3)})f_1(j,6s-1)e_{4(j+m+1)}\otimes w_{4j+3\ra 4(j+1)},\quad i=s+(j+1)_s;\\
-\kappa^\ell(\a_{3(j+m+3)})w_{4(j+m)+3\ra 4(j+m+1)}\otimes e_{4j+3},\quad i=j+3s;\\
0,\quad\text{otherwise.}
\end{cases}$$

$(6)$ If $r_0=5$, then $\Omega^{5}(Y_t^{(9)})$ is described with
$(7s\times 8s)$-matrix with the following elements $b_{ij}${\rm:}

If $0\le j<s$, then $$b_{ij}=
\begin{cases}
\kappa^\ell(\a_{3(j+m+2)})w_{4(j+m)+1\ra 4(j+m)+3}\otimes e_{4j},\quad i=j;\\
\kappa^\ell(\a_{3(j+m+2)})w_{4(j+m+s)+1\ra 4(j+m)+3}\otimes e_{4j},\quad i=j+s;\\
\kappa^\ell(\a_{3(j+m+2)})e_{4(j+m)+3}\otimes w_{4j\ra 4j+2},\quad i=j+4s;\\
\kappa^\ell(\a_{3(j+m+2)})e_{4(j+m)+3}\otimes w_{4j\ra 4(j+s)+2},\quad i=j+5s;\\
0,\quad\text{otherwise.}
\end{cases}$$

If $s\le j<2s$, then $$b_{ij}=
\begin{cases}
\kappa^\ell(\a_{3(j+m+3)})e_{4(j+m+1)}\otimes e_{4(j+s)+1},\quad i=j+s;\\
\kappa^\ell(\a_{3(j+m+3)})w_{4(j+m)+3\ra 4(j+m+1)}\otimes w_{4(j+s)+1\ra 4(j+s)+2},\quad i=j+3s;\\
0,\quad\text{otherwise.}
\end{cases}$$

If $2s\le j<3s$, then $$b_{ij}=
\begin{cases}
\kappa^\ell(\a_{3(j+m+3)})e_{4(j+m+1)}\otimes e_{4(j+s)+1},\quad i=j+s;\\
\kappa^\ell(\a_{3(j+m+3)})w_{4(j+m)+3\ra 4(j+m+1)}\otimes w_{4(j+s)+1\ra 4(j+s)+2},\quad i=j+3s;\\
0,\quad\text{otherwise.}
\end{cases}$$

If $3s\le j<4s$, then $$b_{ij}=
\begin{cases}
-e_{4(j+m+s+1)+1}\otimes w_{4(j+s)+2\ra 4(j+1)},\quad i=(j+s+1)_{2s};\\
-w_{4(j+m)+3\ra 4(j+m+s+1)+1}\otimes e_{4(j+s)+2},\quad i=j+s;\\
0,\quad\text{otherwise.}
\end{cases}$$

If $4s\le j<5s$, then $$b_{ij}=
\begin{cases}
-e_{4(j+m+s+1)+1}\otimes w_{4(j+s)+2\ra 4(j+1)},\quad i=(j+s+1)_{2s};\\
-w_{4(j+m)+3\ra 4(j+m+s+1)+1}\otimes e_{4(j+s)+2},\quad i=j+s;\\
0,\quad\text{otherwise.}
\end{cases}$$

If $5s\le j<6s$, then $$b_{ij}=
\begin{cases}
-w_{4(j+m+1)+1\ra 4(j+m+1)+2}\otimes w_{4j+3\ra 4(j+1)},\quad i=(j+1)_{2s};\\
-e_{4(j+m+1)+2}\otimes e_{4j+3},\quad i=j+2s;\\
0,\quad\text{otherwise.}
\end{cases}$$

If $6s\le j<7s$, then $$b_{ij}=
\begin{cases}
-w_{4(j+m+1)+1\ra 4(j+m+1)+2}\otimes w_{4j+3\ra 4(j+1)},\quad i=(j+1)_{2s};\\
e_{4(j+m+1)+2}\otimes e_{4j+3},\quad i=j;\\
0,\quad\text{otherwise.}
\end{cases}$$

$(7)$ If $r_0=6$, then $\Omega^{6}(Y_t^{(9)})$ is described with
$(6s\times 9s)$-matrix with the following elements $b_{ij}${\rm:}

If $0\le j<s$, then $$b_{ij}=
\begin{cases}
-\kappa^\ell(\a_{3(j+m+2)})e_{4(j+m)}\otimes e_{4j},\quad i=j;\\
0,\quad\text{otherwise.}
\end{cases}$$

If $s\le j<2s$, then $$b_{ij}=
\begin{cases}
e_{4(j+m+s)+1}\otimes e_{4(j+s)+1},\quad i=j;\\
0,\quad\text{otherwise.}
\end{cases}$$

If $2s\le j<3s$, then $$b_{ij}=
\begin{cases}
e_{4(j+m+s)+1}\otimes e_{4(j+s)+1},\quad i=j+s;\\
0,\quad\text{otherwise.}
\end{cases}$$

If $3s\le j<4s$, then $$b_{ij}=
\begin{cases}
e_{4(j+m+s)+2}\otimes e_{4(j+s)+2},\quad i=j+2s;\\
0,\quad\text{otherwise.}
\end{cases}$$

If $4s\le j<5s$, then $$b_{ij}=
\begin{cases}
e_{4(j+m+s)+2}\otimes e_{4(j+s)+2},\quad i=j+2s;\\
0,\quad\text{otherwise.}
\end{cases}$$

If $5s\le j<6s$, then $$b_{ij}=
\begin{cases}
\kappa^\ell(\a_{3(j+m+2)})e_{4(j+m)+3}\otimes e_{4j+3},\quad i=j+2s;\\
0,\quad\text{otherwise.}
\end{cases}$$

$(8)$ If $r_0=7$, then $\Omega^{7}(Y_t^{(9)})$ is described with
$(6s\times 8s)$-matrix with the following elements $b_{ij}${\rm:}

If $0\le j<s$, then $$b_{ij}=
\begin{cases}
\kappa^\ell(\a_{3(j+m+3)})w_{4(j+m+s)+2\ra 4(j+m+1)}\otimes e_{4j},\quad i=j+s;\\
-\kappa^\ell(\a_{3(j+m+3)})w_{4(j+m)+3\ra 4(j+m+1)}\otimes w_{4j\ra 4j+1},\quad i=j+2s;\\
\kappa^\ell(\a_{3(j+m+3)})e_{4(j+m+1)}\otimes w_{4j\ra 4j+2},\quad i=j+4s;\\
\kappa^\ell(\a_{3(j+m+3)})e_{4(j+m+1)}\otimes w_{4j\ra 4(j+s)+2},\quad i=j+5s;\\
0,\quad\text{otherwise.}
\end{cases}$$

If $s\le j<2s$, then $$b_{ij}=
\begin{cases}
-w_{4(j+m)+3\ra 4(j+m+s+1)+1}\otimes e_{4(j+s)+1},\quad i=j+s;\\
w_{4(j+m+1)\ra 4(j+m+s+1)+1}\otimes w_{4(j+s)+1\ra 4(j+s)+2},\quad i=j+3s;\\
-e_{4(j+m+s+1)+1}\otimes w_{4(j+s)+1\ra 4j+3},\quad i=j+5s;\\
0,\quad\text{otherwise.}
\end{cases}$$

If $2s\le j<3s$, then $$b_{ij}=
\begin{cases}
-w_{4(j+m)+3\ra 4(j+m+s+1)+1}\otimes e_{4(j+s)+1},\quad i=j+s;\\
w_{4(j+m+1)\ra 4(j+m+s+1)+1}\otimes w_{4(j+s)+1\ra 4(j+s)+2},\quad i=j+3s;\\
-e_{4(j+m+s+1)+1}\otimes w_{4(j+s)+1\ra 4j+3},\quad i=j+5s;\\
0,\quad\text{otherwise.}
\end{cases}$$

If $3s\le j<4s$, then $$b_{ij}=
\begin{cases}
-e_{4(j+m+s+1)+2}\otimes w_{4(j+s)+2\ra 4(j+1)},\quad i=(j+s+1)_{2s};\\
-w_{4(j+m+1)\ra 4(j+m+s+1)+2}\otimes e_{4(j+s)+2},\quad i=j+s;\\
w_{4(j+m+s+1)+1\ra 4(j+m+s+1)+2}\otimes w_{4(j+s)+2\ra 4j+3},\quad i=j+3s;\\
0,\quad\text{otherwise.}
\end{cases}$$

If $4s\le j<5s$, then $$b_{ij}=
\begin{cases}
-e_{4(j+m+s+1)+2}\otimes w_{4(j+s)+2\ra 4(j+1)},\quad i=(j+s+1)_{2s};\\
-w_{4(j+m+1)\ra 4(j+m+s+1)+2}\otimes e_{4(j+s)+2},\quad i=j+s;\\
w_{4(j+m+s+1)+1\ra 4(j+m+s+1)+2}\otimes w_{4(j+s)+2\ra 4j+3},\quad i=j+3s;\\
0,\quad\text{otherwise.}
\end{cases}$$

If $5s\le j<6s$, then $$b_{ij}=
\begin{cases}
\kappa^\ell(\a_{3(j+m+3)})w_{4(j+m+s-f_0(j,6s-1)s+1)+2\ra 4(j+m+1)+3}\otimes w_{4j+3\ra 4(j+1)},\quad i=s+(j+1)_s;\\
-\kappa^\ell(\a_{3(j+m+3)})e_{4(j+m+1)+3}\otimes w_{4j+3\ra 4(j+f_0(j,6s-1)s+1)+1},\quad i=2s+(j+1)_s;\\
-\kappa^\ell(\a_{3(j+m+3)})w_{4(j+m+s+1)+1\ra 4(j+m+1)+3}\otimes e_{4j+3},\quad i=j+s;\\
-\kappa^\ell(\a_{3(j+m+3)})w_{4(j+m+1)+1\ra 4(j+m+1)+3}\otimes e_{4j+3},\quad i=j+2s;\\
0,\quad\text{otherwise.}
\end{cases}$$

$(9)$ If $r_0=8$, then $\Omega^{8}(Y_t^{(9)})$ is described with
$(7s\times 6s)$-matrix with the following elements $b_{ij}${\rm:}

If $0\le j<s$, then $$b_{ij}=
\begin{cases}
e_{4(j+m)+1}\otimes w_{4j\ra 4(j+s)+2},\quad i=j+4s;\\
0,\quad\text{otherwise.}
\end{cases}$$

If $s\le j<2s$, then $$b_{ij}=
\begin{cases}
-w_{4(j+m)-1\ra 4(j+m)+1}\otimes e_{4j},\quad i=j-s;\\
e_{4(j+m)+1}\otimes w_{4j\ra 4(j+s)+2},\quad i=j+2s;\\
0,\quad\text{otherwise.}
\end{cases}$$

If $2s\le j<3s$, then $$b_{ij}=
\begin{cases}
-e_{4(j+m)+2}\otimes e_{4j+1},\quad i=j-s;\\
0,\quad\text{otherwise.}
\end{cases}$$

If $3s\le j<4s$, then $$b_{ij}=
\begin{cases}
-e_{4(j+m)+2}\otimes e_{4j+1},\quad i=j-s;\\
0,\quad\text{otherwise.}
\end{cases}$$

If $4s\le j<5s-1$, then $$b_{ij}=
\begin{cases}
-\kappa^\ell(\a_{3(j+m+2)})e_{4(j+m)+3}\otimes w_{4j+2\ra 4(j+1)},\quad i=(j+1)_s;\\
0,\quad\text{otherwise.}
\end{cases}$$

If $5s-1\le j<4s$, then $b_{ij}=0$.

If $4s\le j<5s$, then $$b_{ij}=
\begin{cases}
-\kappa^\ell(\a_{3(j+m+2)})w_{4(j+m+s)+1\ra 4(j+m)+3}\otimes e_{4j+2},\quad i=j-s;\\
0,\quad\text{otherwise.}
\end{cases}$$

If $5s\le j<6s-1$, then $b_{ij}=0$.

If $6s-1\le j<6s$, then $$b_{ij}=
\begin{cases}
\kappa^\ell(\a_{3(j+m+2)})e_{4(j+m)+3}\otimes w_{4j+2\ra 4(j+1)},\quad i=(j+1)_s;\\
0,\quad\text{otherwise.}
\end{cases}$$

If $6s\le j<5s$, then $b_{ij}=0$.

If $5s\le j<6s$, then $$b_{ij}=
\begin{cases}
\kappa^\ell(\a_{3(j+m+2)})w_{4(j+m+s)+1\ra 4(j+m)+3}\otimes e_{4j+2},\quad i=j-s;\\
0,\quad\text{otherwise.}
\end{cases}$$

If $6s\le j<7s$, then $$b_{ij}=
\begin{cases}
-\kappa^\ell(\a_{3(j+m+3)})f_2(j,7s-1)e_{4(j+m+1)}\otimes e_{4j+3},\quad i=j-s;\\
0,\quad\text{otherwise.}
\end{cases}$$

$(10)$ If $r_0=9$, then $\Omega^{9}(Y_t^{(9)})$ is described with
$(6s\times 7s)$-matrix with the following elements $b_{ij}${\rm:}

If $0\le j<s$, then $$b_{ij}=
\begin{cases}
\kappa^\ell(\a_{3(j+m+2)})e_{4(j+m)+3}\otimes e_{4j},\quad i=j;\\
0,\quad\text{otherwise.}
\end{cases}$$

If $s\le j<2s$, then $$b_{ij}=
\begin{cases}
-w_{4(j+m+1)\ra 4(j+m+s+1)+2}\otimes e_{4(j+s)+1},\quad i=j;\\
-e_{4(j+m+s+1)+2}\otimes w_{4(j+s)+1\ra 4j+3},\quad i=j+4s;\\
0,\quad\text{otherwise.}
\end{cases}$$

If $2s\le j<3s$, then $$b_{ij}=
\begin{cases}
-w_{4(j+m+1)\ra 4(j+m+s+1)+2}\otimes e_{4(j+s)+1},\quad i=j;\\
-e_{4(j+m+s+1)+2}\otimes w_{4(j+s)+1\ra 4j+3},\quad i=j+4s;\\
0,\quad\text{otherwise.}
\end{cases}$$

If $3s\le j<4s$, then $$b_{ij}=
\begin{cases}
-e_{4(j+m+1)+1}\otimes e_{4(j+s)+2},\quad i=j;\\
0,\quad\text{otherwise.}
\end{cases}$$

If $4s\le j<5s$, then $$b_{ij}=
\begin{cases}
-e_{4(j+m+1)+1}\otimes e_{4(j+s)+2},\quad i=j;\\
0,\quad\text{otherwise.}
\end{cases}$$

If $5s\le j<6s-1$, then $b_{ij}=0$.

If $6s-1\le j<6s$, then $$b_{ij}=
\begin{cases}
-\kappa^\ell(\a_{3(j+m+s-2)})w_{4(j+m+1)+3\ra 4(j+m+2)}\otimes w_{4j+3\ra 4(j+1)},\quad i=(j+1)_s;\\
0,\quad\text{otherwise.}
\end{cases}$$

If $6s\le j<5s$, then $b_{ij}=0$.

If $5s\le j<6s$, then $$b_{ij}=
\begin{cases}
\kappa^\ell(\a_{3(j+m+s-2)})e_{4(j+m+2)}\otimes w_{4j+3\ra 4(j+f_0(j,6s-1)s+1)+1},\quad i=s+(j+1)_s;\\
\kappa^\ell(\a_{3(j+m+s-2)})w_{4(j+m+s+1)+2\ra 4(j+m+2)}\otimes e_{4j+3},\quad i=j;\\
0,\quad\text{otherwise.}
\end{cases}$$

$(11)$ If $r_0=10$, then $\Omega^{10}(Y_t^{(9)})$ is described with
$(8s\times 6s)$-matrix with the following elements $b_{ij}${\rm:}

If $0\le j<s$, then $$b_{ij}=
\begin{cases}
w_{4(j+m)\ra 4(j+m)+2}\otimes e_{4j},\quad i=j;\\
w_{4(j+m)+1\ra 4(j+m)+2}\otimes w_{4j\ra 4(j+s)+1},\quad i=j+2s;\\
e_{4(j+m)+2}\otimes w_{4j\ra 4(j+s)+2},\quad i=j+4s;\\
0,\quad\text{otherwise.}
\end{cases}$$

If $s\le j<2s$, then $$b_{ij}=
\begin{cases}
w_{4(j+m)\ra 4(j+m)+2}\otimes e_{4j},\quad i=j-s;\\
-w_{4(j+m)+1\ra 4(j+m)+2}\otimes w_{4j\ra 4(j+s)+1},\quad i=j;\\
-e_{4(j+m)+2}\otimes w_{4j\ra 4(j+s)+2},\quad i=j+2s;\\
0,\quad\text{otherwise.}
\end{cases}$$

If $2s\le j<3s$, then $$b_{ij}=
\begin{cases}
\kappa^\ell(\a_{3(j+m+2)})f_2(j,3s-1)w_{4(j+m+s)+1\ra 4(j+m)+3}\otimes e_{4j+1},\quad i=j-s;\\
\kappa^\ell(\a_{3(j+m+2)})f_2(j,3s-1)w_{4(j+m+s)+2\ra 4(j+m)+3}\otimes w_{4j+1\ra 4j+2},\quad i=j+s;\\
\kappa^\ell(\a_{3(j+m+2)})f_2(j,3s-1)e_{4(j+m)+3}\otimes w_{4j+1\ra 4j+3},\quad i=j+3s;\\
0,\quad\text{otherwise.}
\end{cases}$$

If $3s\le j<4s$, then $$b_{ij}=
\begin{cases}
-\kappa^\ell(\a_{3(j+m+2)})f_2(j,4s-1)w_{4(j+m+s)+1\ra 4(j+m)+3}\otimes e_{4j+1},\quad i=j-s;\\
-\kappa^\ell(\a_{3(j+m+2)})f_2(j,4s-1)w_{4(j+m+s)+2\ra 4(j+m)+3}\otimes w_{4j+1\ra 4j+2},\quad i=j+s;\\
\kappa^\ell(\a_{3(j+m+2)})f_2(j,4s-1)e_{4(j+m)+3}\otimes w_{4j+1\ra 4j+3},\quad i=j+2s;\\
0,\quad\text{otherwise.}
\end{cases}$$

If $4s\le j<5s-1$, then $$b_{ij}=
\begin{cases}
\kappa^\ell(\a_{3(j+m+s-3)})e_{4(j+m+1)}\otimes w_{4j+2\ra 4(j+1)},\quad i=(j+1)_s;\\
0,\quad\text{otherwise.}
\end{cases}$$

If $5s-1\le j<4s$, then $b_{ij}=0$.

If $4s\le j<5s$, then $$b_{ij}=
\begin{cases}
\kappa^\ell(\a_{3(j+m+s-3)})w_{4(j+m+s)+2\ra 4(j+m+1)}\otimes e_{4j+2},\quad i=j-s;\\
\kappa^\ell(\a_{3(j+m+s-3)})w_{4(j+m)+3\ra 4(j+m+1)}\otimes w_{4j+2\ra 4j+3},\quad i=j+s;\\
0,\quad\text{otherwise.}
\end{cases}$$

If $5s\le j<6s-1$, then $b_{ij}=0$.

If $6s-1\le j<6s$, then $$b_{ij}=
\begin{cases}
-\kappa^\ell(\a_{3(j+m+s-3)})e_{4(j+m+1)}\otimes w_{4j+2\ra 4(j+1)},\quad i=(j+1)_s;\\
0,\quad\text{otherwise.}
\end{cases}$$

If $6s\le j<5s$, then $b_{ij}=0$.

If $5s\le j<6s$, then $$b_{ij}=
\begin{cases}
-\kappa^\ell(\a_{3(j+m+s-3)})w_{4(j+m+s)+2\ra 4(j+m+1)}\otimes e_{4j+2},\quad i=j-s;\\
0,\quad\text{otherwise.}
\end{cases}$$

If $6s\le j<7s$, then $$b_{ij}=
\begin{cases}
e_{4(j+m+1)+1}\otimes w_{4j+3\ra 4(j+s+1)+1},\quad i=s+(j+s+1)_{2s};\\
w_{4(j+m)+3\ra 4(j+m+1)+1}\otimes e_{4j+3},\quad i=j-s;\\
0,\quad\text{otherwise.}
\end{cases}$$

If $7s\le j<6s$, then $b_{ij}=0$.

If $6s\le j<7s-1$, then $$b_{ij}=
\begin{cases}
w_{4(j+m+1)\ra 4(j+m+1)+1}\otimes w_{4j+3\ra 4(j+s-f_0(j,7s-1)s+1)},\quad i=(j+1)_s;\\
0,\quad\text{otherwise.}
\end{cases}$$

If $7s-1\le j<8s-1$, then $b_{ij}=0$.

If $8s-1\le j<8s$, then $$b_{ij}=
\begin{cases}
w_{4(j+m+1)\ra 4(j+m+1)+1}\otimes w_{4j+3\ra 4(j+1)},\quad i=(j+1)_s;\\
0,\quad\text{otherwise.}
\end{cases}$$

If $8s\le j<7s$, then $b_{ij}=0$.

If $7s\le j<8s$, then $$b_{ij}=
\begin{cases}
e_{4(j+m+1)+1}\otimes w_{4j+3\ra 4(j+s+1)+1},\quad i=s+(j+s+1)_{2s};\\
0,\quad\text{otherwise.}
\end{cases}$$

\medskip
$({\rm II})$ Represent an arbitrary $t_0\in\N$ in the form
$t_0=11\ell_0+r_0$, where $0\le r_0\le 10.$ Then
$\Omega^{t_0}(Y_t^{(9)})$ is a $\Omega^{r_0}(Y_t^{(9)})$, whose left
components twisted by $\sigma^{\ell_0}$,
and coefficients multiplied by $(-1)^{\ell_0}$.
\end{pr}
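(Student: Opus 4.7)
The plan is to construct the chain maps $\{\varphi_i : Q_{t+i} \to Q_i\}_{i \ge 0}$ lifting the cocycle $Y^{(9)}_t$ inductively, where $\varphi_i = \Omega^i(Y^{(9)}_t)$. The starting point is $\varphi_0 = Y^{(9)}_t$ viewed as a map $Q_t \to Q_0$ (via the multiplication map $\varepsilon$); the listed matrix for $\Omega^0(Y^{(9)}_t)$ is obtained directly from the description of $Y^{(9)}_t$ in the generators section, once one rewrites its nonzero entries as lying in the appropriate summands of $Q_0$ instead of $R$.

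Next, I will proceed inductively in $i$: assuming $\varphi_i$ is known, I construct $\varphi_{i+1} : Q_{t+i+1} \to Q_{i+1}$ by solving the lifting equation $d_i \circ \varphi_{i+1} = \varphi_i \circ d_{t+i}$. Since the resolution $Q_\bullet \to R$ from Theorem \ref{resol_thm} is minimal projective and hence exact, and since $Q_{t+i+1}$ is projective, such $\varphi_{i+1}$ exists as soon as $\varphi_i \circ d_{t+i}$ lands in $\Im d_i = \Ker d_{i-1}$, which follows from the inductive hypothesis via $d_{i-1} \varphi_i d_{t+i} = \varphi_{i-1} d_{t+i-1} d_{t+i} = 0$. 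The candidate matrices for $\Omega^{r_0}(Y^{(9)}_t)$ with $1 \le r_0 \le 10$ listed in the statement are then verified to satisfy the lifting equation by a direct componentwise matrix-multiplication check against the explicit descriptions of $d_{t+i}$ and $d_i$ given in section \ref{sect_res}.

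For part $({\rm II})$, I appeal to the periodicity of the resolution established in Theorem \ref{resol_thm}: $Q_{11\ell+r}$ and $d_{11\ell+r}$ are obtained from $Q_r$ and $d_r$ by applying $\sigma^\ell$ to all left tensor components. Consequently, once $\Omega^{r_0}(Y^{(9)}_t)$ is known for $0 \le r_0 \le 10$, the lifting equation for $\Omega^{11\ell_0+r_0}(Y^{(9)}_t)$ is automatically solved by applying $\sigma^{\ell_0}$ to the left tensor components of the previously constructed matrices; the extra sign factor $(-1)^{\ell_0}$ arises from the interaction of $\sigma$ with the coefficient $\kappa^\ell(\a_{3(j+m+2)})$ appearing in $Y^{(9)}_t$, because $\sigma$ flips signs of arrows $\a_i$ with $(i)_3 = 1$ (as seen in the definition of $\sigma$), and the minus-sign entries $-\kappa^\ell(\a_{3(j+m+2)}) e_{4j+3}\otimes e_{4j+3}$ in the last block of $Y^{(9)}_t$ combine with the sign-flips of $\sigma^{\ell_0}$ to yield exactly $(-1)^{\ell_0}$.

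The principal obstacle is the sheer combinatorial bookkeeping: for each of the eleven values of $r_0$ I must match the block decomposition of $Q_{t+i}$ with that of $Q_i$, track the cases $(j)_s \in \{0,\dots,s-1\}$, $j_2 \in \{0,1\}$, and boundary indices such as $j = s-1$, $j = 5s-1$, $j = 6s-1$ where the matrices for $d_r$ behave exceptionally, and confirm that every occurrence of a path of length $\ge 5$ or a forbidden monomial in $I'$ vanishes so that the lifting equation holds modulo $I'$. Splitting the argument by the value of $(j)_s$ and $s$ (particularly isolating $s=1$ from $s>1$, as the statement already does) reduces the verification to a finite list of elementary identities involving the helper functions $f$, $f_0$, $f_1$, $f_2$, $h$ and $\kappa^\ell$, each of which is checked by direct inspection.
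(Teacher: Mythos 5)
Your proposal matches the approach the paper implicitly takes: these translate propositions are stated without proof precisely because they reduce to a direct componentwise verification of the lifting equations $d_i\circ\varphi_{i+1}=\varphi_i\circ d_{t+i}$ against the explicit differentials of Section~\ref{sect_res}, with existence of the lifts guaranteed by exactness and projectivity exactly as you argue, and with part $({\rm II})$ following from the $\sigma^{\ell}$-periodicity of the resolution in Theorem~\ref{resol_thm}. The one point to tighten is your account of the factor $(-1)^{\ell_0}$: it must be verified to be a uniform overall sign across \emph{all} nonzero entries of each translate matrix (coming from how $\sigma$ acts on the specific arrows occurring in the left tensor components), rather than being attributed to the particular negatively-signed entries of $Y^{(9)}_t$; this is, however, part of the same routine check.
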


%%%%%%%%%%%%%%%%%%%%%%%%%%%%%%%%%%%%%%%%%%%%%%%%%%%%%%%%%%%%%%%%%%%%%%%%%%%%%%%%%%%%%%%%
%                                          10
%%%%%%%%%%%%%%%%%%%%%%%%%%%%%%%%%%%%%%%%%%%%%%%%%%%%%%%%%%%%%%%%%%%%%%%%%%%%%%%%%%%%%%%%
\begin{pr}[Translates for the case 10]
$({\rm I})$ Let $r_0\in\N$, $r_0<11$. Denote by $$\kappa_0=\begin{cases}
\kappa^\ell(\a_6),\quad s=1;\\-\kappa^\ell(\g_2)\kappa^\ell(\a_6),\quad s>1.
\end{cases}$$ Then $r_0$-translates of the
elements $Y^{(10)}_t$ are described by the following way.

$(1)$ If $r_0=0$, then $\Omega^{0}(Y_t^{(10)})$ is described with
$(9s\times 6s)$-matrix with one nonzero element that is of the following form{\rm:}
$$b_{0,s}=\kappa^\ell(\a_6)w_{4j\ra 4(j+1)}\otimes e_{4j}.$$

$(2)$ If $r_0=1$, then $\Omega^{1}(Y_t^{(10)})$ is described with
$(8s\times 7s)$-matrix with the following two nonzero elements{\rm:}
$$b_{(j+s+1)_{2s},2s}=f_2(s,1)\kappa_0w_{4(j+m+s+1)+1\ra 4(j+m+s+2)}\otimes w_{4j+1\ra 4(j+1)};$$
$$b_{(j+s+1)_{2s},3s}=-f_2(s,1)\kappa_0w_{4(j+m+s+1)+1\ra 4(j+m+s+2)}\otimes w_{4j+1\ra 4(j+1)}.$$

$(3)$ If $r_0=2$, then $\Omega^{2}(Y_t^{(10)})$ is described with
$(9s\times 6s)$-matrix with one nonzero element that is of the following form{\rm:}
$$b_{(j+1)_s,0}=\kappa_0w_{4(j+m)+3\ra 4(j+m+1)}\otimes w_{4j\ra 4(j+1)}.$$

$(4)$ If $r_0=3$, then $\Omega^{3}(Y_t^{(10)})$ is described with
$(8s\times 8s)$-matrix with the following two nonzero elements{\rm:}
$$b_{(j+1)_{2s},0}=-\kappa^\ell(\a_{3(j+m+2)})\kappa_0e_{4(j+m+1)+2}\otimes w_{4j\ra 4(j+1)};$$
$$b_{(j+1)_{2s},s}=-\kappa^\ell(\a_{3(j+m+2)})\kappa_0e_{4(j+m+1)+2}\otimes w_{4j\ra 4(j+1)}.$$

$(5)$ If $r_0=4$, then $\Omega^{4}(Y_t^{(10)})$ is described with
$(6s\times 9s)$-matrix with one nonzero element that is of the following form{\rm:}
$$b_{(j+1)_s,0}=-f_2(s,1)\kappa_0e_{4(j+m)+3}\otimes w_{4j\ra 4(j+1)}.$$

$(6)$ If $r_0=5$, then $\Omega^{5}(Y_t^{(10)})$ is described with
$(7s\times 8s)$-matrix with the following two nonzero elements{\rm:}
$$b_{(j+s+1)_{2s},0}=-f_2(s,1)\kappa^\ell(\g_{j+m})\kappa_0w_{4(j+m+s+1)+1\ra 4(j+m+1)+3}\otimes w_{4j\ra 4(j+1)};$$
$$b_{(j+1)_{2s},0}=-f_2(s,1)\kappa^\ell(\g_{j+m})\kappa_0w_{4(j+m+1)+1\ra 4(j+m+1)+3}\otimes w_{4j\ra 4(j+1)}.$$

$(7)$ If $r_0=6$, then $\Omega^{6}(Y_t^{(10)})$ is described with
$(6s\times 9s)$-matrix with one nonzero element that is of the following form{\rm:}
$$b_{(j+1)_s,0}=-f_2(s,1)\kappa_0e_{4(j+m+1)}\otimes w_{4j\ra 4(j+1)}.$$

$(8)$ If $r_0=7$, then $\Omega^{7}(Y_t^{(10)})$ is described with
$(6s\times 8s)$-matrix with one nonzero element that is of the following form{\rm:}
$$b_{(j+1)_s,0}=\kappa_1\kappa_0w_{4(j+m+sf(j,s-1)+1)+2\ra 4(j+m+2)}\otimes w_{4j\ra 4(j+1)},$$
where $$\kappa_1=\begin{cases}-\kappa^\ell(\g_m),\quad s=1;\\
\kappa^\ell(\g_m)\kappa^\ell(\g_{m+1})\kappa^\ell(\g_{m+2}),\quad s>1.\end{cases}$$

$(9)$ If $r_0=8$, then $\Omega^{8}(Y_t^{(10)})$ is described with
$(7s\times 6s)$-matrix with one nonzero element that is of the following form{\rm:}
$$b_{(j+1)_s,s(1-f(s,1))}=\kappa_1\kappa_0w_{4(j+m)+3\ra 4(j+m+s+1)+1}\otimes w_{4j\ra 4(j+1)},$$
where $$\kappa_1=\begin{cases}-\kappa^\ell(\a_{3(m+4)}),\quad s=1;\\
\kappa^\ell(\g_{m-1})\kappa^\ell(\g_m)\kappa^\ell(\g_{m+1})\kappa^{\ell+1}(\a_0),\quad s>1.\end{cases}$$

$(10)$ If $r_0=9$ and $s=1$, then $\Omega^{9}(Y_t^{(10)})$ is described with
$(6s\times 7s)$-matrix with one nonzero element that is of the following form{\rm:}
$$b_{(j+1)_s,0}=-\kappa^\ell(\g_{j+m})\kappa_0e_{4(j+m+1)+3}\otimes w_{4j\ra 4(j+1)}.$$

$(11)$ If $r_0=9$ and $s>1$, then $\Omega^{9}(Y_t^{(10)})$ is described with
$(6s\times 7s)$-matrix with the following two nonzero elements{\rm:}
$$b_{5s+j,0}=-\kappa_1\kappa_0w_{4(j+m+1)+2\ra 4(j+m+1)+3}\otimes w_{4j\ra 4j+3};$$
$$b_{6s+j,0}=\kappa_1\kappa_0w_{4(j+m+s+1)+2\ra 4(j+m+1)+3}\otimes w_{4j\ra 4j+3},$$
where $\kappa_1=-\kappa^\ell(\g_{m-1})\kappa^\ell(\g_m)\kappa^\ell(\g_{m+1})$.

$(12)$ If $r_0=10$ and $s=1$, then $\Omega^{10}(Y_t^{(10)})$ is described with
$(8s\times 6s)$-matrix with the following two nonzero elements{\rm:}
$$b_{(j+1)_s,0}=-\kappa^\ell(\a_{3(j+m+3)})\kappa_0w_{4(j+m+1)\ra 4(j+m+s+1)+2}\otimes w_{4j\ra 4(j+1)};$$
$$b_{(j+1)_s,s}=\kappa^\ell(\a_{3(j+m+3)})\kappa_0w_{4(j+m+1)\ra 4(j+m+s+1)+2}\otimes w_{4j\ra 4(j+1)}.$$

$(13)$ If $r_0=10$ and $s>1$, then $\Omega^{10}(Y_t^{(10)})$ is described with
$(8s\times 6s)$-matrix with the following nonzero elements{\rm:}
$$b_{(j+1)_s,0}=\kappa_1\kappa_0w_{4(j+m+1)\ra 4(j+m+s+1)+2}\otimes w_{4j\ra 4(j+1)};$$
$$b_{5s+j,0}=-\kappa_1\kappa_0w_{4(j+m+1)-1\ra 4(j+m+s+1)+2}\otimes w_{4j\ra 4(j+1)-1};$$
$$b_{(j+1)_s,s}=\kappa_1\kappa_0w_{4(j+m+1)\ra 4(j+m+s+1)+2}\otimes w_{4j\ra 4(j+1)};$$
$$b_{4s+j,s}=-\kappa_1\kappa_0w_{4(j+m+1)-1\ra 4(j+m+s+1)+2}\otimes w_{4j\ra 4(j+1)-1};$$
$$b_{5s+(j+1)_s,7s-1+sf((j)_s,0)}=\kappa_1\kappa_0w_{4(j+m+s+1)+3\ra 4(j+m+s+1)+5}\otimes w_{4j+3\ra 4(j+1)+3},$$
where $\kappa_1=\kappa^\ell(\g_{m-2})\kappa^\ell(\g_{m-1})\kappa^\ell(\g_m)\kappa^\ell(\g_{m+1})\kappa^{\ell+1}(\a_3)$.

\medskip
$({\rm II})$ Represent an arbitrary $t_0\in\N$ in the form
$t_0=11\ell_0+r_0$, where $0\le r_0\le 10.$ Then
$\Omega^{t_0}(Y_t^{(10)})$ is a $\Omega^{r_0}(Y_t^{(10)})$, whose left
components twisted by $\sigma^{\ell_0}$,
and coefficients multiplied by $(-1)^{\ell_0}$.
\end{pr}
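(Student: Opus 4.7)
The plan is to construct the chain maps $\varphi_i: Q_{t+i}\to Q_i$ lifting the cocycle $Y^{(10)}_t$ step by step for $i=0,1,\dots,10$, and then extend to arbitrary $i$ via the periodicity of the bimodule resolution established in Theorem \ref{resol_thm}.

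First I would take $\varphi_0=Y^{(10)}_t$ as given in item (10) of the generators description, which immediately yields part (1) of statement (I); the scalar $\kappa^\ell(\a_6)$ and the column index $s$ are read off from the definition of the generator matrix. For the inductive step, suppose $\varphi_{r_0}=\Omega^{r_0}(Y^{(10)}_t)$ has been constructed. Then $\varphi_{r_0+1}$ is determined (uniquely up to homotopy) by the commutativity condition
\[
d_{r_0}\,\varphi_{r_0+1}=\varphi_{r_0}\,d_{t+r_0}.
\]
Here $d_{t+r_0}$ is obtained from $d_{r_0}$ (described explicitly in Section \ref{sect_res}) by applying $\sigma^{\ell}$ to the left tensor components, where $t=11\ell+r$. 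For each $r_0\in\{0,1,\dots,10\}$ I would write down the matrix product on the right-hand side, recognise it as the image under $d_{r_0}$ of a (minimal) lift $\varphi_{r_0+1}$, and read off the nonzero entries. The coefficient $\kappa_0$ arises naturally in the first step: when $s=1$ the element $\g_2$ is absent, while for $s>1$ a sign emerges from the relation $\a_0\g_{s-1}\a_{3(2s-1)+2}\in I'$ which forces $\kappa^\ell(\g_2)$ to appear.

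Once all eleven translates are constructed, part (II) follows from the periodicity statement in Theorem \ref{resol_thm}: since $Q_{11\ell_0+r}$ is obtained from $Q_r$ by applying $\sigma^{\ell_0}$ to the left tensor factors of the summands $P_{i,j}$, and $d_{11\ell_0+r}$ is obtained from $d_r$ by the same twist, the chain map $\Omega^{11\ell_0+r_0}(Y^{(10)}_t)$ is obtained from $\Omega^{r_0}(Y^{(10)}_t)$ by the same operation. The factor $(-1)^{\ell_0}$ on the coefficients reflects the fact that the defining scalar of $Y^{(10)}_t$ involves $\kappa^\ell(\a_6)$, and the sign conventions in the definition of $\sigma$ (namely $\sigma(\a_i)=-\a_{3(n+s)+i}$ when $(i)_3=1$) produce exactly one sign change per full period.

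The main obstacle will be the bookkeeping in the inductive step for $r_0\in\{5,6,7,8,9,10\}$: at these levels the resolution grows to $8s$ and $9s$ rows/columns, many columns of $d_{r_0}$ have up to six nonzero entries, and the case $s=1$ degenerates in several different ways (the auxiliary vertices at indices $(j+s+1)_{2s}$ collide with other indices, the function $f_0(j,\cdot)$ collapses, and the relation $\a_0\g_{s-1}\a_{3(2s-1)+2}=0$ has to be used to kill otherwise-nonzero entries). Accordingly I would split each inductive step into the subcases $s=1$ and $s>1$ exactly as the statement does, and in each subcase verify the matrix equation $d_{r_0}\varphi_{r_0+1}=\varphi_{r_0}d_{t+r_0}$ entry by entry, using the helper functions $f$, $h$, $f_0$, $f_1$, $f_2$ to organise the case analysis. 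The verifications for cases 1--9 (Propositions above) provide templates for all the standard manipulations, so the argument is methodical albeit lengthy, and I would omit the explicit matrix multiplications from the write-up as is done throughout Section 6.
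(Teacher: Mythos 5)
Your overall strategy --- lift the cocycle step by step through the commutativity condition for a chain map and then invoke the periodicity of the resolution for part (II) --- is the same (implicit) route the paper takes; these propositions are obtained by exactly such direct lifting computations, which the paper does not write out. However, two concrete points in your recipe are wrong and would derail the computation if followed literally.

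First, the lifting equation. The generator $Y^{(10)}_t$ lives in a degree $t$ satisfying condition (10), so $t=11\ell+4$. The chain map is $\varphi_{r_0}\colon Q_{t+r_0}\to Q_{r_0}$, and the commutativity condition is $d_{r_0}\varphi_{r_0+1}=\varphi_{r_0}\,d_{t+r_0}$ where $d_{t+r_0}$ is the $\sigma$-twist of $d_{(4+r_0)_{11}}$, \emph{not} of $d_{r_0}$ as you state. Already at $r_0=0$ this matters: $d_t$ is a twist of $d_4$ (an $(8s\times 9s)$ matrix, matching the fact that $\Omega^1(Y^{(10)}_t)$ is $(8s\times 7s)$ with source $Q_{t+1}\cong$ a twist of $Q_5$), whereas $d_0$ is $(7s\times 6s)$ and does not even have compatible dimensions. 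The offset by $4$ must be carried through all eleven steps.

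Second, you invoke the relations $\a_0\g_{s-1}\a_{3(2s-1)+2}$ and $\a_{3s}\g_{s-1}\a_{3(s-1)+2}$ both to explain the coefficient $\kappa_0$ and to kill entries in the degenerate case $s=1$. Those relations generate part of the ideal $I$ defining $R_s$, not the ideal $I'$ defining $R_s^\prime$, which is the algebra treated in this paper; in $R_s^\prime$ the zero relations are $\a_{3t}\g_{t-1}\a_{3(t+s)-1}$ for all $t$, with no exceptional cases. The coefficient $\kappa_0$ does not come from a zero relation at all: it records the signs $\kappa^\ell(\cdot)$ produced by the automorphism $\sigma$ (whose sign on $\g_i$ depends on whether $(i)_s=s-1$, which is what distinguishes $s=1$ from $s>1$), accumulated along the lift. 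With the correct differentials and the correct ideal the rest of your plan --- entry-by-entry verification organised by the helper functions, split into the subcases $s=1$ and $s>1$, followed by the periodicity argument for part (II) --- is sound.
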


%%%%%%%%%%%%%%%%%%%%%%%%%%%%%%%%%%%%%%%%%%%%%%%%%%%%%%%%%%%%%%%%%%%%%%%%%%%%%%%%%%%%%%%%
%                                          11
%%%%%%%%%%%%%%%%%%%%%%%%%%%%%%%%%%%%%%%%%%%%%%%%%%%%%%%%%%%%%%%%%%%%%%%%%%%%%%%%%%%%%%%%
\begin{pr}[Translates for the case 11]
$({\rm I})$ Let $r_0\in\N$, $r_0<11$. $r_0$-translates of the
elements $Y^{(11)}_t$ are described by the following way.

$(1)$ If $r_0=0$, then $\Omega^{0}(Y_t^{(11)})$ is described with
$(8s\times 6s)$-matrix with the following elements $b_{ij}${\rm:}

If $0\le j<2s$, then $$b_{ij}=
\begin{cases}
w_{4(j+m)\ra 4(j+m)+1}\otimes e_{4j},\quad i=(j)_s;\\
0,\quad\text{otherwise.}
\end{cases}$$

If $2s\le j<4s$, then $$b_{ij}=
\begin{cases}
\kappa^\ell(\a_{3(j+m+2+1)})w_{4(j+m)+1\ra 4(j+m+1)}\otimes e_{4j+1},\quad i=j-s;\\
0,\quad\text{otherwise.}
\end{cases}$$

If $4s\le j<6s$, then $$b_{ij}=
\begin{cases}
-\kappa^\ell(\a_{3(j+m+2)})w_{4(j+m)+2\ra 4(j+m)+3}\otimes e_{4j+2},\quad i=j-s;\\
0,\quad\text{otherwise.}
\end{cases}$$

If $6s\le j<8s$, then $$b_{ij}=
\begin{cases}
f_1(j,7s)w_{4(j+m)+3\ra 4(j+m+1)+2}\otimes e_{4j+3},\quad i=5s+(j)_s;\\
0,\quad\text{otherwise.}
\end{cases}$$

$(2)$ If $r_0=1$, then $\Omega^{1}(Y_t^{(11)})$ is described with
$(9s\times 7s)$-matrix with the following elements $b_{ij}${\rm:}

If $0\le j<s$, then $$b_{ij}=
\begin{cases}
\kappa_1w_{4(j+m)+1\ra 4(j+m+1)}\otimes e_{4j},\quad i=j;\\
\kappa_1w_{4(j+m+s)+1\ra 4(j+m+1)}\otimes e_{4j},\quad i=j+s;\\
-\kappa_1w_{4(j+m)+2\ra 4(j+m+1)}\otimes w_{4j\ra 4j+1},\quad i=j+2s;\\
-\kappa_1w_{4(j+m+s)+2\ra 4(j+m+1)}\otimes w_{4j\ra 4(j+s)+1},\quad i=j+3s;\\
0,\quad\text{otherwise,}
\end{cases}$$
where $\kappa_1=\kappa^\ell(\a_{3(j+m+3)})$.

If $s\le j<2s$, then $$b_{ij}=
\begin{cases}
w_{4(j+m+s)+2\ra 4(j+m+s+1)+1}\otimes e_{4(j+s)+1},\quad i=j+s;\\
w_{4(j+m)+3\ra 4(j+m+s+1)+1}\otimes w_{4(j+s)+1\ra 4(j+s)+2},\quad i=j+3s;\\
w_{4(j+m+1)\ra 4(j+m+s+1)+1}\otimes w_{4(j+s)+1\ra 4j+3},\quad i=j+5s;\\
0,\quad\text{otherwise.}
\end{cases}$$

If $2s\le j<3s$, then $$b_{ij}=
\begin{cases}
w_{4(j+m+s+1)+1\ra 4(j+m+s+1)+2}\otimes w_{4j+1\ra 4(j+1)},\quad i=(j+s+1)_{2s};\\
-w_{4(j+m)+3\ra 4(j+m+s+1)+2}\otimes w_{4j+1\ra 4j+2},\quad i=j+2s;\\
0,\quad\text{otherwise.}
\end{cases}$$

If $3s\le j<4s$, then $$b_{ij}=
\begin{cases}
w_{4(j+m)+2\ra 4(j+m+1)+1}\otimes e_{4j+1},\quad i=j;\\
w_{4(j+m)+3\ra 4(j+m+1)+1}\otimes w_{4j+1\ra 4j+2},\quad i=j+2s;\\
w_{4(j+m+1)\ra 4(j+m+1)+1}\otimes w_{4j+1\ra 4j+3},\quad i=j+3s;\\
0,\quad\text{otherwise.}
\end{cases}$$

If $4s\le j<5s$, then $$b_{ij}=
\begin{cases}
w_{4(j+m+1)+1\ra 4(j+m+1)+2}\otimes w_{4(j+s)+1\ra 4(j+1)},\quad i=(j+1)_{2s};\\
-w_{4(j+m)+3\ra 4(j+m+1)+2}\otimes w_{4(j+s)+1\ra 4(j+s)+2},\quad i=j+s;\\
0,\quad\text{otherwise.}
\end{cases}$$

If $5s\le j<7s$, then $$b_{ij}=
\begin{cases}
-w_{4(j+m)+3\ra 4(j+m+s+1)+2}\otimes e_{4(j+s)+2},\quad i=j-s;\\
0,\quad\text{otherwise.}
\end{cases}$$

If $7s\le j<8s$, then $$b_{ij}=
\begin{cases}
\kappa_1w_{4(j+m+s+1+sf(j,8s-1))+1\ra 4(j+m+1)+3}\otimes w_{4j+3\ra 4(j+1)},\quad i=(j+1)_s;\\
\kappa_1w_{4(j+m+1+sf(j,8s-1))+1\ra 4(j+m+1)+3}\otimes w_{4j+3\ra 4(j+1)},\quad i=s+(j+1)_s;\\
\kappa_1w_{4(j+m+s+1+sf(j,8s-1))+2\ra 4(j+m+1)+3}\otimes w_{4j+3\ra 4(j+s+1+sf(j,8s-1))+1},\quad i=2s+(j+1)_s;\\
\kappa_1w_{4(j+m+1+sf(j,8s-1))+2\ra 4(j+m+1)+3}\otimes w_{4j+3\ra 4(j+1+sf(j,8s-1))+1},\quad i=3s+(j+1)_s;\\
\kappa_1w_{4(j+m+1)\ra 4(j+m+1)+3}\otimes e_{4j+3},\quad i=j-s;\\
0,\quad\text{otherwise,}
\end{cases}$$
where $\kappa_1=-\kappa^\ell(\a_{3(j+m+3)})$.

If $8s\le j<9s$, then $$b_{ij}=
\begin{cases}
\kappa_1w_{4(j+m+s+1)+1\ra 4(j+m+2)}\otimes w_{4j+3\ra 4(j+1)},\quad i=(j+s+1)_{2s};\\
\kappa_1w_{4(j+m+1)\ra 4(j+m+2)}\otimes e_{4j+3},\quad i=j-2s;\\
0,\quad\text{otherwise,}
\end{cases}$$
where $\kappa_1=\kappa^\ell(\a_{3(j+m+4)})$.

$(3)$ If $r_0=2$, then $\Omega^{2}(Y_t^{(11)})$ is described with
$(8s\times 6s)$-matrix with the following elements $b_{ij}${\rm:}

If $0\le j<2s$, then $$b_{ij}=
\begin{cases}
-f_1(j,s)w_{4(j+m)-1\ra 4(j+m)+2}\otimes e_{4j},\quad i=(j)_s;\\
-e_{4(j+m)+2}\otimes w_{4j\ra 4j+1},\quad i=j+s;\\
0,\quad\text{otherwise.}
\end{cases}$$

If $2s\le j<4s$, then $$b_{ij}=
\begin{cases}
\kappa^{\ell+1}(\a_{3(j+m+2)})e_{4(j+m)+3}\otimes w_{4j+1\ra 4(j+1)},\quad i=(j+1)_s;\\
0,\quad\text{otherwise.}
\end{cases}$$

If $4s\le j<6s$, then $$b_{ij}=
\begin{cases}
-\kappa^\ell(\a_{3(j+m+3)})w_{4(j+m+s)+1\ra 4(j+m+1)}\otimes e_{4j+2},\quad i=j-s;\\
0,\quad\text{otherwise.}
\end{cases}$$

If $6s\le j<8s$, then $$b_{ij}=
\begin{cases}
-f_2((j)_s,s-1)f_1(j,7s)w_{4(j+m)+3\ra 4(j+m+1)+1}\otimes w_{4j+3\ra 4(j+1)},\quad i=(j+1)_s;\\
e_{4(j+m+1)+1}\otimes w_{4j+3\ra 4(j+s+1)+2},\quad i=3s+(j+s+1)_{2s};\\
0,\quad\text{otherwise.}
\end{cases}$$

$(4)$ If $r_0=3$, then $\Omega^{3}(Y_t^{(11)})$ is described with
$(6s\times 8s)$-matrix with the following elements $b_{ij}${\rm:}

If $0\le j<s$, then $$b_{ij}=
\begin{cases}
-\kappa_1w_{4(j+m+s)+2\ra 4(j+m)+3}\otimes e_{4j},\quad i=j+s;\\
\kappa_1e_{4(j+m)+3}\otimes w_{4j\ra 4j+1},\quad i=j+2s;\\
0,\quad\text{otherwise,}
\end{cases}$$
where $\kappa_1=-\kappa^\ell(\a_{3(j+m+2)})$.

If $s\le j<3s$, then $$b_{ij}=
\begin{cases}
f_2((j)_s,s-1)f_1(j,2s)e_{4(j+m+s+1)+2}\otimes w_{4(j+s)+1\ra 4(j+1)},\quad i=(j+s+1)_{2s};\\
0,\quad\text{otherwise.}
\end{cases}$$

If $3s\le j<4s$, then $$b_{ij}=
\begin{cases}
-w_{4(j+m+1)\ra 4(j+m+1)+1}\otimes e_{4(j+s)+2},\quad i=j+s;\\
-e_{4(j+m+1)+1}\otimes w_{4(j+s)+2\ra 4j+3},\quad i=j+4s;\\
0,\quad\text{otherwise.}
\end{cases}$$

If $4s\le j<5s$, then $$b_{ij}=
\begin{cases}
-w_{4(j+m+1)\ra 4(j+m+1)+1}\otimes e_{4(j+s)+2},\quad i=j+s;\\
-e_{4(j+m+1)+1}\otimes w_{4(j+s)+2\ra 4j+3},\quad i=j+2s;\\
0,\quad\text{otherwise.}
\end{cases}$$

If $5s\le j<6s$, then $$b_{ij}=
\begin{cases}
\kappa_1w_{4(j+m+s+1+sf(j,6s-1))+2\ra 4(j+m+2)}\otimes w_{4j+3\ra 4(j+1)},\quad i=(j+1)_s;\\
\kappa_1w_{4(j+m+1+sf(j,6s-1))+2\ra 4(j+m+2)}\otimes w_{4j+3\ra 4(j+1)},\quad i=s+(j+1)_s;\\
\kappa_1w_{4(j+m+1)+3\ra 4(j+m+2)}\otimes w_{4j+3\ra 4(j+s+1+sf(j,6s-1))+1},\quad i=2s+(j+1)_s;\\
0,\quad\text{otherwise,}
\end{cases}$$
where $\kappa_1=-\kappa^\ell(\a_{3(j+m+4)})$.

$(5)$ If $r_0=4$, then $\Omega^{4}(Y_t^{(11)})$ is described with
$(7s\times 9s)$-matrix with the following elements $b_{ij}${\rm:}

If $0\le j<s$, then $$b_{ij}=
\begin{cases}
\kappa_1w_{4(j+m)-1\ra 4(j+m)+3}\otimes e_{4j},\quad i=j;\\
-\kappa_1e_{4(j+m)+3}\otimes w_{4j\ra 4(j+1)},\quad i=(j+1)_s,\text{ }j<s-1;\\
-\kappa_1w_{4(j+m)+1\ra 4(j+m)+3}\otimes w_{4j\ra 4(j+s)+1},\quad i=j+3s;\\
\kappa_1w_{4(j+m+s)+2\ra 4(j+m)+3}\otimes w_{4j\ra 4j+2},\quad i=j+5s;\\
0,\quad\text{otherwise,}
\end{cases}$$
where $\kappa_1=-\kappa^\ell(\a_{3(j+m+2)})$.

If $s\le j<3s$, then $$b_{ij}=
\begin{cases}
\kappa_1f_2((j)_s,s-1)f_1(j,2s)e_{4(j+m+1)}\otimes w_{4(j+s)+1\ra 4(j+1)},\quad i=s+(j+1)_s;\\
-\kappa_1w_{4(j+m)+1\ra 4(j+m+1)}\otimes e_{4(j+s)+1},\quad i=j+s;\\
0,\quad\text{otherwise,}
\end{cases}$$
where $\kappa_1=-\kappa^\ell(\a_{3(j+m+3)})$.

If $3s\le j<4s$, then $$b_{ij}=
\begin{cases}
w_{4(j+m)+3\ra 4(j+m+1)+1}\otimes w_{4(j+s)+2\ra 4(j+1)},\quad i=(j+1)_s,\text{ }j<4s-1;\\
-f_2((j)_s,s-1)w_{4(j+m+1)\ra 4(j+m+1)+1}\otimes w_{4(j+s)+2\ra 4(j+1)},\quad i=s+(j+1)_s;\\
-w_{4(j+m)+2\ra 4(j+m+1)+1}\otimes e_{4(j+s)+2},\quad i=j+2s;\\
0,\quad\text{otherwise.}
\end{cases}$$

If $4s\le j<5s$, then $$b_{ij}=
\begin{cases}
w_{4(j+m)+3\ra 4(j+m+1)+1}\otimes w_{4(j+s)+2\ra 4(j+1)},\quad i=(j+1)_s,\text{ }j=5s-1;\\
f_2(j,5s-1)w_{4(j+m+1)\ra 4(j+m+1)+1}\otimes w_{4(j+s)+2\ra 4(j+1)},\quad i=s+(j+1)_s;\\
-w_{4(j+m)+2\ra 4(j+m+1)+1}\otimes e_{4(j+s)+2},\quad i=j+3s;\\
0,\quad\text{otherwise.}
\end{cases}$$

If $5s\le j<6s$, then $$b_{ij}=
\begin{cases}
w_{4(j+m+s+1)+1\ra 4(j+m+s+1)+2}\otimes w_{4j+3\ra 4(j+1)+1},\quad i=2s+(j+1)_{2s};\\
-w_{4(j+m)+3\ra 4(j+m+s+1)+2}\otimes w_{4j+3\ra 4(j+1)},\quad i=(j+1)_s,\text{ }j<6s-1;\\
e_{4(j+m+s+1)+2}\otimes w_{4j+3\ra 4(j+1)+2},\quad i=7s+(j+1)_s,\text{ }j<6s-1;\\
e_{4(j+m+s+1)+2}\otimes w_{4j+3\ra 4(j+1)+2},\quad i=5s+(j+1)_s,\text{ }j=6s-1;\\
0,\quad\text{otherwise.}
\end{cases}$$

If $6s\le j<7s$, then $$b_{ij}=
\begin{cases}
w_{4(j+m+s+1)+1\ra 4(j+m+s+1)+2}\otimes w_{4j+3\ra 4(j+1)+1},\quad i=2s+(j+1)_{2s};\\
e_{4(j+m+s+1)+2}\otimes w_{4j+3\ra 4(j+1)+2},\quad i=5s+(j+1)_s,\text{ }j<7s-1;\\
-w_{4(j+m)+3\ra 4(j+m+s+1)+2}\otimes w_{4j+3\ra 4(j+1)},\quad i=(j+1)_s,\text{ }j=7s-1;\\
e_{4(j+m+s+1)+2}\otimes w_{4j+3\ra 4(j+1)+2},\quad i=7s+(j+1)_s,\text{ }j=7s-1;\\
0,\quad\text{otherwise.}
\end{cases}$$

$(6)$ If $r_0=5$, then $\Omega^{5}(Y_t^{(11)})$ is described with
$(6s\times 8s)$-matrix with the following elements $b_{ij}${\rm:}

If $0\le j<s$, then $$b_{ij}=
\begin{cases}
\kappa^\ell(\a_{3(j+m+3)})e_{4(j+m+1)}\otimes w_{4j\ra 4j+1},\quad i=j+2s;\\
0,\quad\text{otherwise.}
\end{cases}$$

If $s\le j<3s$, then $$b_{ij}=
\begin{cases}
w_{4(j+m+1)\ra 4(j+m+1)+1}\otimes e_{4(j+s)+1},\quad i=j+s;\\
0,\quad\text{otherwise.}
\end{cases}$$

If $3s\le j<5s$, then $$b_{ij}=
\begin{cases}
w_{4(j+m+1)+1\ra 4(j+m+1)+2}\otimes w_{4(j+s)+2\ra 4(j+1)},\quad i=(j+1)_{2s};\\
w_{4(j+m)+3\ra 4(j+m+1)+2}\otimes e_{4(j+s)+2},\quad i=j+s;\\
f_1(j,4s)e_{4(j+m+1)+2}\otimes w_{4(j+s)+2\ra 4j+3},\quad i=6s+(j)_{2s};\\
0,\quad\text{otherwise.}
\end{cases}$$

If $5s\le j<6s$, then $$b_{ij}=
\begin{cases}
-\kappa_1w_{4(j+m+s+1+sf(j,6s-1))+1\ra 4(j+m+1)+3}\otimes w_{4j+3\ra 4(j+1)},\quad i=(j+1)_s;\\
-\kappa_1w_{4(j+m+1+sf(j,6s-1))+1\ra 4(j+m+1)+3}\otimes w_{4j+3\ra 4(j+1)},\quad i=s+(j+1)_s;\\
\kappa_1e_{4(j+m+1)+3}\otimes w_{4j+3\ra 4(j+s+1+sf(j,6s-1))+2},\quad i=4s+(j+1)_s;\\
\kappa_1e_{4(j+m+1)+3}\otimes w_{4j+3\ra 4(j+1+sf(j,6s-1))+2},\quad i=5s+(j+1)_s;\\
0,\quad\text{otherwise,}
\end{cases}$$
where $\kappa_1=\kappa^\ell(\a_{3(j+m+3)})f_2((j)_s,s-1)$.

$(7)$ If $r_0=6$, then $\Omega^{6}(Y_t^{(11)})$ is described with
$(6s\times 9s)$-matrix with the following elements $b_{ij}${\rm:}

If $0\le j<s$, then $$b_{ij}=
\begin{cases}
\kappa_1w_{4(j+m)+1\ra 4(j+m+1)}\otimes w_{4j\ra 4j+1},\quad i=j+s;\\
-\kappa_1w_{4(j+m+s)+2\ra 4(j+m+1)}\otimes w_{4j\ra 4j+1},\quad i=j+2s;\\
\kappa_1w_{4(j+m)+2\ra 4(j+m+1)}\otimes w_{4j\ra 4(j+s)+1},\quad i=j+4s;\\
\kappa_1e_{4(j+m+1)}\otimes w_{4j\ra 4(j+1)},\quad i=(j+1)_s,\text{ }j=s-1;\\
0,\quad\text{otherwise,}
\end{cases}$$
where $\kappa_1=\kappa^\ell(\a_{3(j+m+3)})$.

If $s\le j<3s$, then $$b_{ij}=
\begin{cases}
w_{4(j+m)+2\ra 4(j+m+1)+1}\otimes e_{4(j+s)+1},\quad i=j+2s-sf_0(j,2s);\\
-w_{4(j+m+1)\ra 4(j+m+1)+1}\otimes w_{4(j+s)+1\ra 4(j+1)},\quad i=(j+1)_s,\text{ }2s-1\le j<3s-1;\\
0,\quad\text{otherwise.}
\end{cases}$$

If $3s\le j<5s$, then $$b_{ij}=
\begin{cases}
-w_{4(j+m)+3\ra 4(j+m+1)+2}\otimes w_{4(j+s)+2\ra 4j+3},\quad i=7s+(j)_s;\\
w_{4(j+m+1)\ra 4(j+m+1)+2}\otimes w_{4(j+s)+2\ra 4(j+1)},\quad i=(j+1)_s,\text{ }4s-1\le j<5s-1;\\
0,\quad\text{otherwise.}
\end{cases}$$

If $5s\le j<6s$, then $$b_{ij}=
\begin{cases}
\kappa_1w_{4(j+m+1)\ra 4(j+m+1)+3}\otimes w_{4j+3\ra 4(j+1)},\quad i=(j+1)_s;\\
\kappa_1w_{4(j+m+s+1+sf(j,6s-1))+1\ra 4(j+m+1)+3}\otimes w_{4j+3\ra 4(j+s+1+sf(j,6s-1))+1},\quad i=s+(j+1)_s;\\
\kappa_1w_{4(j+m+1+sf(j,6s-1))+2\ra 4(j+m+1)+3}\otimes w_{4j+3\ra 4(j+s+1+sf(j,6s-1))+1},\quad i=2s+(j+1)_s;\\
-\kappa_1w_{4(j+m+1+sf(j,6s-1))+1\ra 4(j+m+1)+3}\otimes w_{4j+3\ra 4(j+1+sf(j,6s-1))+1},\quad i=3s+(j+1)_s;\\
-\kappa_1w_{4(j+m+s+1+sf(j,6s-1))+2\ra 4(j+m+1)+3}\otimes w_{4j+3\ra 4(j+1+sf(j,6s-1))+1},\quad i=4s+(j+1)_s;\\
0,\quad\text{otherwise,}
\end{cases}$$
where $\kappa_1=-\kappa^\ell(\a_{3(j+m+3)})f_2(j,6s-1)$.

$(8)$ If $r_0=7$, then $\Omega^{7}(Y_t^{(11)})$ is described with
$(7s\times 8s)$-matrix with the following elements $b_{ij}${\rm:}

If $s\le j<2s$, then $$b_{ij}=
\begin{cases}
w_{4(j+m)+3\ra 4(j+m+s+1)+1}\otimes w_{4j\ra 4(j+s)+1},\quad i=j+s;\\
w_{4(j+m+1)\ra 4(j+m+s+1)+1}\otimes w_{4j\ra 4(j+s)+2},\quad i=j+3s;\\
w_{4(j+m+1)\ra 4(j+m+s+1)+1}\otimes w_{4j\ra 4j+2},\quad i=j+4s;\\
0,\quad\text{otherwise.}
\end{cases}$$

If $2s\le j<4s$, then $$b_{ij}=
\begin{cases}
w_{4(j+m)+3\ra 4(j+m+s+1)+2}\otimes e_{4j+1},\quad i=j;\\
0,\quad\text{otherwise.}
\end{cases}$$

If $4s\le j<6s$, then $$b_{ij}=
\begin{cases}
\kappa_1w_{4(j+m+s+1)+2\ra 4(j+m+1)+3}\otimes w_{4j+2\ra 4(j+1)},\quad i=(j+s+1)_{2s};\\
\kappa_1w_{4(j+m+1)\ra 4(j+m+1)+3}\otimes e_{4j+2},\quad i=j;\\
-\kappa_1w_{4(j+m+1)+1\ra 4(j+m+1)+3}\otimes w_{4j+2\ra 4j+3},\quad i=j+2s;\\
0,\quad\text{otherwise,}
\end{cases}$$
where $\kappa_1=\kappa^\ell(\a_{3(j+m+3)})f_1(j,5s)$.

If $6s\le j<7s$, then $$b_{ij}=
\begin{cases}
\kappa_1w_{4(j+m+s+1+sf(j,7s-1))+2\ra 4(j+m+2)}\otimes w_{4j+3\ra 4(j+1)},\quad i=s+(j+1)_s;\\
-\kappa_1w_{4(j+m+1)+3\ra 4(j+m+2)}\otimes w_{4j+3\ra 4(j+1+sf(j,7s-1))+1},\quad i=2s+(j+1)_s;\\
-\kappa_1e_{4(j+m+2)}\otimes w_{4j+3\ra 4(j+1+sf(j,7s-1))+2},\quad i=4s+(j+1)_s;\\
-\kappa_1e_{4(j+m+2)}\otimes w_{4j+3\ra 4(j+s+1+sf(j,7s-1))+2},\quad i=5s+(j+1)_s;\\
0,\quad\text{otherwise,}
\end{cases}$$
where $\kappa_1=-\kappa^\ell(\a_{3(j+m+4)})$.

$(9)$ If $r_0=8$, then $\Omega^{8}(Y_t^{(11)})$ is described with
$(6s\times 6s)$-matrix with the following elements $b_{ij}${\rm:}

If $0\le j<s$, then $$b_{ij}=
\begin{cases}
-\kappa_1w_{4(j+m)-1\ra 4(j+m)+3}\otimes e_{4j},\quad i=j;\\
\kappa_1f_2(j,s-1)e_{4(j+m)+3}\otimes w_{4j\ra 4(j+1)},\quad i=(j+1)_s;\\
\kappa_1w_{4(j+m)+2\ra 4(j+m)+3}\otimes w_{4j\ra 4j+1},\quad i=j+s;\\
-\kappa_1w_{4(j+m+s)+1\ra 4(j+m)+3}\otimes w_{4j\ra 4j+2},\quad i=j+3s;\\
0,\quad\text{otherwise,}
\end{cases}$$
where $\kappa_1=\kappa^\ell(\a_{3(j+m+2)})$.

If $s\le j<3s$, then $$b_{ij}=
\begin{cases}
-w_{4(j+m)+3\ra 4(j+m+1)+2}\otimes w_{4(j+s)+1\ra 4(j+1)},\quad i=(j+1)_s;\\
0,\quad\text{otherwise.}
\end{cases}$$

If $3s\le j<5s$, then $$b_{ij}=
\begin{cases}
w_{4(j+m+1)\ra 4(j+m+s+1)+1}\otimes w_{4(j+s)+2\ra 4j+3},\quad i=5s+(j)_s;\\
0,\quad\text{otherwise.}
\end{cases}$$

If $5s\le j<6s$, then $$b_{ij}=
\begin{cases}
\kappa_1w_{4(j+m+s+1+sf(j,6s-1))+2\ra 4(j+m+2)}\otimes w_{4j+3\ra 4(j+s+1+sf(j,6s-1))+1},\quad i=s+(j+1)_s;\\
\kappa_1w_{4(j+m+1+sf(j,6s-1))+2\ra 4(j+m+2)}\otimes w_{4j+3\ra 4(j+1+sf(j,6s-1))+1},\quad i=2s+(j+1)_s;\\
\kappa_1w_{4(j+m+1+sf(j,6s-1))+1\ra 4(j+m+2)}\otimes w_{4j+3\ra 4(j+s+1+sf(j,6s-1))+2},\quad i=3s+(j+1)_s;\\
0,\quad\text{otherwise,}
\end{cases}$$
where $\kappa_1=\kappa^\ell(\a_{3(j+m-1)})\kappa^{\ell+1}(\g_{j+m-2})$.

$(10)$ If $r_0=9$, then $\Omega^{9}(Y_t^{(11)})$ is described with
$(8s\times 7s)$-matrix with the following elements $b_{ij}${\rm:}

If $0\le j<s$, then $$b_{ij}=
\begin{cases}
w_{4(j+m)+3\ra 4(j+m+s+1)+2}\otimes e_{4j},\quad i=j;\\
-w_{4(j+m+1)\ra 4(j+m+s+1)+2}\otimes w_{4j\ra 4(j+s)+1},\quad i=j+2s;\\
0,\quad\text{otherwise.}
\end{cases}$$

If $s\le j<2s$, then $$b_{ij}=
\begin{cases}
w_{4(j+m+1)\ra 4(j+m+s+1)+2}\otimes w_{4j\ra 4(j+s-sf_0(j,s))+1},\quad i=j;\\
0,\quad\text{otherwise.}
\end{cases}$$

If $2s\le j<4s$, then $$b_{ij}=
\begin{cases}
\kappa_1w_{4(j+m+1)\ra 4(j+m+1)+3}\otimes e_{4j+1},\quad i=j-s;\\
-\kappa_1e_{4(j+m+1)+3}\otimes w_{4j+1\ra 4(j+1)},\quad i=(j+1)_s,\text{ }j<3s-1\text{ or }j=4s-1;\\
0,\quad\text{otherwise,}
\end{cases}$$
where $\kappa_1=\kappa^{\ell+1}(\a_{3(j+m-3)})$.

If $4s\le j<6s$, then $$b_{ij}=
\begin{cases}
\kappa_1w_{4(j+m+s+1)+1\ra 4(j+m+2)}\otimes e_{4j+2},\quad i=j-s;\\
-\kappa_1w_{4(j+m+1)+3\ra 4(j+m+2)}\otimes w_{4j+2\ra 4(j+1)},\quad i=(j+1)_s,\text{ }5s-1\le j<6s-1;\\
0,\quad\text{otherwise,}
\end{cases}$$
where $\kappa_1=f_1(j,5s)\kappa^\ell(\a_{3(j+m-1)})\kappa^\ell(\g_{j+m-2})$.

If $6s\le j<8s$, then $$b_{ij}=
\begin{cases}
w_{4(j+m+1)+3\ra 4(j+m+s+2)+1}\otimes w_{4j+3\ra 4(j+1)},\quad i=(j+1)_s,\text{ }j<7s-1\text{ or }j=8s-1;\\
-w_{4(j+m+2)\ra 4(j+m+s+2)+1}\otimes w_{4j+3\ra 4(j+1)+1},\\\quad\quad\quad i=s+(j+1)_s,\text{ }j<7s-1\text{ or }j=8s-1;\\
-w_{4(j+m+2)\ra 4(j+m+s+2)+1}\otimes w_{4j+3\ra 4(j+s+1)+1},\\\quad\quad\quad i=s+(j+1)_s,\text{ }7s-1\le j<8s-1;\\
0,\quad\text{otherwise.}
\end{cases}$$

$(11)$ If $r_0=10$, then $\Omega^{10}(Y_t^{(11)})$ is described with
$(9s\times 6s)$-matrix with the following elements $b_{ij}${\rm:}

If $0\le j<s$, then $$b_{ij}=
\begin{cases}
-\kappa_1w_{4(j+m)\ra 4(j+m)+3}\otimes e_{4j},\quad i=j;\\
\kappa_1w_{4(j+m+s)+1\ra 4(j+m)+3}\otimes w_{4j\ra 4j+1},\quad i=j+s;\\
-\kappa_1w_{4(j+m)+1\ra 4(j+m)+3}\otimes w_{4j\ra 4(j+s)+1},\quad i=j+2s;\\
-\kappa_1w_{4(j+m+s)+2\ra 4(j+m)+3}\otimes w_{4j\ra 4j+2},\quad i=j+3s;\\
\kappa_1w_{4(j+m)+2\ra 4(j+m)+3}\otimes w_{4j\ra 4(j+s)+2},\quad i=j+4s;\\
-\kappa_1e_{4(j+m)+3}\otimes w_{4j\ra 4j+3},\quad i=j+5s;\\
0,\quad\text{otherwise,}
\end{cases}$$
where $\kappa_1=\kappa^{\ell+1}(\a_{3(j+m-4)})$.

If $s\le j<2s$, then $$b_{ij}=
\begin{cases}
\kappa_1w_{4(j+m)\ra 4(j+m+1)}\otimes e_{4j},\quad i=j-s;\\
-\kappa_1w_{4(j+m+s)+1\ra 4(j+m+1)}\otimes w_{4j\ra 4j+1},\quad i=j+s;\\
\kappa_1w_{4(j+m)+2\ra 4(j+m+1)}\otimes w_{4j\ra 4(j+s)+2},\quad i=j+2s;\\
-\kappa_1w_{4(j+m+s)+2\ra 4(j+m+1)}\otimes w_{4j\ra 4j+2},\quad i=j+3s;\\
\kappa_1w_{4(j+m)+3\ra 4(j+m+1)}\otimes w_{4j\ra 4j+3},\quad i=j+4s;\\
0,\quad\text{otherwise,}
\end{cases}$$
where $\kappa_1=\kappa^{\ell+1}(\a_{3(j+m-3)})$.

If $2s\le j<3s-1$, then $$b_{ij}=
\begin{cases}
-w_{4(j+m+1)\ra 4(j+m+1)+1}\otimes w_{4j+1\ra 4(j+1)},\quad i=(j+1)_s;\\
0,\quad\text{otherwise.}
\end{cases}$$

If $3s-1\le j<4s-1$, then $b_{ij}=0$.

If $4s-1\le j<4s$, then $$b_{ij}=
\begin{cases}
-w_{4(j+m+1)\ra 4(j+m+1)+1}\otimes w_{4j+1\ra 4(j+1)},\quad i=(j+1)_s;\\
0,\quad\text{otherwise.}
\end{cases}$$

If $4s\le j<5s$, then $$b_{ij}=
\begin{cases}
-w_{4(j+m+s)+2\ra 4(j+m+s+1)+1}\otimes e_{4j+2},\quad i=j-s;\\
-w_{4(j+m)+3\ra 4(j+m+s+1)+1}\otimes w_{4j+2\ra 4j+3},\quad i=j+s;\\
-w_{4(j+m+1)\ra 4(j+m+s+1)+1}\otimes w_{4j+2\ra 4(j+1)},\quad i=(j+1)_s,\text{ }j<5s-1;\\
0,\quad\text{otherwise.}
\end{cases}$$

If $5s\le j<6s-1$, then $b_{ij}=0$.

If $6s-1\le j<6s$, then $$b_{ij}=
\begin{cases}
w_{4(j+m+1)\ra 4(j+m+s+1)+2}\otimes w_{4(j+s)+2\ra 4(j+1)},\quad i=(j+1)_s;\\
0,\quad\text{otherwise.}
\end{cases}$$

If $6s\le j<7s$, then $$b_{ij}=
\begin{cases}
-w_{4(j+m)+2\ra 4(j+m+1)+1}\otimes e_{4(j+s)+2},\quad i=j-2s;\\
w_{4(j+m)+3\ra 4(j+m+1)+1}\otimes w_{4(j+s)+2\ra 4j+3},\quad i=j-s;\\
-w_{4(j+m+1)\ra 4(j+m+1)+1}\otimes w_{4(j+s)+2\ra 4(j+1)},\quad i=(j+1)_s,\text{ }j=7s-1;\\
0,\quad\text{otherwise.}
\end{cases}$$

If $7s\le j<8s-1$, then $$b_{ij}=
\begin{cases}
w_{4(j+m+1)\ra 4(j+m+1)+2}\otimes w_{4j+2\ra 4(j+1)},\quad i=(j+1)_s;\\
0,\quad\text{otherwise.}
\end{cases}$$

If $8s-1\le j<8s$, then $b_{ij}=0$.

If $8s\le j<9s$, then $$b_{ij}=
\begin{cases}
\kappa_1w_{4(j+m+s+1+sf(j,9s-1))+1\ra 4(j+m+1)+3}\otimes w_{4j+3\ra 4(j+1+sf(j,9s-1))+1},\quad i=s+(j+1)_s;\\
-\kappa_1w_{4(j+m+1+sf(j,9s-1))+1\ra 4(j+m+1)+3}\otimes w_{4j+3\ra 4(j+s+1+sf(j,9s-1))+1},\quad i=2s+(j+1)_s;\\
-\kappa_1w_{4(j+m+s+1+sf(j,9s-1))+2\ra 4(j+m+1)+3}\otimes w_{4j+3\ra 4(j+1+sf(j,9s-1))+2},\quad i=3s+(j+1)_s;\\
\kappa_1w_{4(j+m+1+sf(j,9s-1))+2\ra 4(j+m+1)+3}\otimes w_{4j+3\ra 4(j+s+1+sf(j,9s-1))+2},\quad i=4s+(j+1)_s;\\
-\kappa_1e_{4(j+m+1)+3}\otimes w_{4j+3\ra 4(j+1)+3},\quad i=5s+(j+1)_s;\\
0,\quad\text{otherwise,}
\end{cases}$$
where $\kappa_1=-\kappa^{\ell+1}(\a_{3(j+m-3)})$.

\medskip
$({\rm II})$ Represent an arbitrary $t_0\in\N$ in the form
$t_0=11\ell_0+r_0$, where $0\le r_0\le 10.$ Then
$\Omega^{t_0}(Y_t^{(11)})$ is a $\Omega^{r_0}(Y_t^{(11)})$, whose left
components twisted by $\sigma^{\ell_0}$,
and coefficients multiplied by $(-1)^{\ell_0}$.
\end{pr}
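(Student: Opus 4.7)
The plan is to prove the proposition by induction on $r_0$, exploiting the defining property of $\Omega$-translates: a chain map $\{\varphi_i\}_{i\ge 0}$ lifting the cocycle $Y_t^{(11)}$ must satisfy $d_{r_0-1}\circ\varphi_{r_0}=\varphi_{r_0-1}\circ d_{t+r_0-1}$ for every $r_0\ge 1$, together with $\varepsilon\circ\varphi_0$ representing $Y_t^{(11)}$ in the Hom-complex. Since lifts are unique up to homotopy, verifying that the matrix claimed in part $(r_0+1)$ of item $({\rm I})$ satisfies this commutation with the matrix given in part $(r_0)$ will suffice to identify it with $\Omega^{r_0}(Y_t^{(11)})$.

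First I would fix the base case $r_0=0$: the matrix listed in item $(1)$ is simply the matrix of $Y_t^{(11)}$ itself (as introduced in Section 4, generator 11), so there is nothing to verify beyond parsing the componentwise description. Next, for each $r_0$ from $1$ to $10$ in turn, I would take the proposed matrix $B_{r_0}$ for $\Omega^{r_0}(Y_t^{(11)})$ and compute the two matrix products $d_{r_0-1}\cdot B_{r_0}$ and $B_{r_0-1}\cdot d_{t+r_0-1}$, using the componentwise formulas for $d_{r}$ given in Section 3 (together with the $\sigma^\ell$-twisting rule for differentials beyond index $10$). Equality of these two products in each entry completes the inductive step. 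The special values of $j$ appearing with $f$, $f_0$, $f_1$, $f_2$ in the formulas — such as the split at $j=s-1$, at $j=6s-1$, etc. — correspond exactly to where boundary terms in $d_r$ collapse or change sign, so those piecewise conditions will reappear naturally in the verification.

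Once the range $0\le r_0\le 10$ is established, part $({\rm II})$ follows from Theorem~\ref{resol_thm}: the resolution is $11$-periodic up to the twist $\sigma$, so $Q_{11\ell_0+r_0}$ is obtained from $Q_{r_0}$ by replacing each left tensor factor $e_i$ with $e_{\sigma^{\ell_0}(i)}$, and $d_{11\ell_0+r_0}$ is obtained from $d_{r_0}$ by the same rule. Applying this structural fact to the chain map $\{\varphi_i\}$ shows that $\Omega^{11\ell_0+r_0}(Y_t^{(11)})$ differs from $\Omega^{r_0}(Y_t^{(11)})$ precisely by twisting left components by $\sigma^{\ell_0}$ and, because $Y_t^{(11)}$ involves a factor $f_1(j,7s)=\pm 1$ that is negated under $\sigma$ (through the sign convention in $\sigma(\a_i)$), by multiplying coefficients by $(-1)^{\ell_0}$.

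The main obstacle is unquestionably the sheer combinatorial bookkeeping: each of the eleven cases splits into several $j$-ranges, many of which further split at endpoints like $j=s-1$ or $j=6s-1$ where $f$-indicators toggle, and the matrix products involve sums over multiple nonzero entries of $d_r$ that must cancel pairwise in the relations (the expressions $\a_{3t+2}\a_{3t+1}\a_{3t}-\a_{3(t+s)+2}\a_{3(t+s)+1}\a_{3(t+s)}$ and $\a_{3t}\g_{t-1}\a_{3(t+s)-1}$ generating $I'$) to leave only the required target entry. Organizing these verifications so that the $\sigma$-twist behaves uniformly across cases, and so that the signs from $f_1,f_2$ match on both sides of every commutative square, will be the delicate part; I would carry it out range-by-range rather than attempting a single uniform formula, mirroring the case division already present in the statement.
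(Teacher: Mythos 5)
Your proposal is correct and follows essentially the same route as the paper: the paper states these translate formulas as the result of directly verifying the lifting conditions $d_{r_0-1}\circ\Omega^{r_0}(Y_t^{(11)})=\Omega^{r_0-1}(Y_t^{(11)})\circ d_{t+r_0-1}$ against the componentwise differentials of Section~\ref{sect_res}, with part $({\rm II})$ following from the $\sigma$-twisted periodicity in Theorem~\ref{resol_thm}, exactly as you describe. The only caveat is that your attribution of the $(-1)^{\ell_0}$ factor to the $f_1(j,7s)$ term is a loose heuristic; the sign really comes from tracking how the sign conventions in the definition of $\sigma(\a_i)$ act on the specific left tensor components appearing in this generator, but this does not change the structure of the argument.
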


%%%%%%%%%%%%%%%%%%%%%%%%%%%%%%%%%%%%%%%%%%%%%%%%%%%%%%%%%%%%%%%%%%%%%%%%%%%%%%%%%%%%%%%%
%                                          12
%%%%%%%%%%%%%%%%%%%%%%%%%%%%%%%%%%%%%%%%%%%%%%%%%%%%%%%%%%%%%%%%%%%%%%%%%%%%%%%%%%%%%%%%
\begin{pr}[Translates for the case 12]
$({\rm I})$ Let $r_0\in\N$, $r_0<11$. $r_0$-translates of the
elements $Y^{(12)}_t$ are described by the following way.

$(1)$ If $r_0=0$, then $\Omega^{0}(Y_t^{(12)})$ is described with
$(8s\times 6s)$-matrix with the following two nonzero elements{\rm:}
$$b_{3s,4s}=\kappa^\ell(\a_{3(j+m+2)})w_{4(j+m)+2\ra 4(j+m)+3}\otimes e_{4j+2};$$
$$b_{4s,5s}=\kappa^\ell(\a_{3(j+m+2)})w_{4(j+m)+2\ra 4(j+m)+3}\otimes e_{4j+2}.$$

$(2)$ If $r_0=1$, then $\Omega^{1}(Y_t^{(12)})$ is described with
$(9s\times 7s)$-matrix with the following nonzero elements{\rm:}
$$b_{j,(3)_s}=-\kappa_1w_{4(j+m)+1\ra 4(j+m+1)}\otimes e_{4j};$$
$$b_{j+s,(3)_s}=\kappa_1w_{4(j+m+s)+1\ra 4(j+m+1)}\otimes e_{4j};$$
$$b_{j+2s,(3)_s}=-\kappa_1w_{4(j+m)+2\ra 4(j+m+1)}\otimes w_{4j\ra 4j+1};$$
$$b_{j+3s,(3)_s}=\kappa_1w_{4(j+m+s)+2\ra 4(j+m+1)}\otimes w_{4j\ra 4(j+s)+1};$$
$$b_{j,2s+(3)_s}=-w_{4(j+m)+2\ra 4(j+m+1)+2}\otimes e_{4j+1};$$
$$b_{j-s,4s+(3)_s}=-w_{4(j+m+s)+2\ra 4(j+m+s+1)+2}\otimes e_{4(j+s)+1};$$
$$b_{(j+1)_s+f(s,1),7s+(3)_s-(1)_s}=-\kappa_2w_{4(j+m+s+1)+1\ra 4(j+m+1)+3}\otimes w_{4j+3\ra 4(j+1)};$$
$$b_{s+(j+1)_s-f(s,1),7s+(3)_s-(1)_s}=\kappa_2w_{4(j+m+1)+1\ra 4(j+m+1)+3}\otimes w_{4j+3\ra 4(j+1)};$$
$$b_{2s+(j+1)_s+f(s,1),7s+(3)_s-(1)_s}=-\kappa_2w_{4(j+m+s+1)+2\ra 4(j+m+1)+3}\otimes w_{4j+3\ra 4(j+s+1)+1};$$
$$b_{3s+(j+1)_s-f(s,1),7s+(3)_s-(1)_s}=\kappa_2w_{4(j+m+1)+2\ra 4(j+m+1)+3}\otimes w_{4j+3\ra 4(j+1)+1},$$
where $\kappa_1=\kappa^\ell(\a_{3((3)_s+m+3)})$, $\kappa_2=-\kappa^\ell(\a_{3(7s+(3)_s-(1)_s+m)})$.

$(3)$ If $r_0=2$, then $\Omega^{2}(Y_t^{(12)})$ is described with
$(8s\times 6s)$-matrix with the following two nonzero elements{\rm:}
$$b_{(j+1)_s,2s+(2)_s}=-f_2(s,1)\kappa^\ell(\a_{3(j+m+2)})e_{4(j+m)+3}\otimes w_{4j+1\ra 4(j+1)};$$
$$b_{(j+1)_s,3s+(2)_s}=f_2(s,1)\kappa^\ell(\a_{3(j+m+2)})e_{4(j+m)+3}\otimes w_{4j+1\ra 4(j+1)}.$$

$(4)$ If $r_0=3$, then $\Omega^{3}(Y_t^{(12)})$ is described with
$(6s\times 8s)$-matrix with the following two nonzero elements{\rm:}
$$b_{s+(j+1)_s-f(s,1),s+(2)_s}=-f_2(s,1)e_{4(j+m+1)+2}\otimes w_{4(j+s)+1\ra 4(j+1)};$$
$$b_{(j+1)_s+f(s,1),2s+(2)_s}=f_2(s,1)e_{4(j+m+1)+2}\otimes w_{4(j+s)+1\ra 4(j+1)}.$$

$(5)$ If $r_0=4$, then $\Omega^{4}(Y_t^{(12)})$ is described with
$(7s\times 9s)$-matrix with the following nonzero elements{\rm:}
$$b_{(j+1)_s,(2)_s}=-\kappa^\ell(\a_{3(j+m+2)})e_{4(j+m)+3}\otimes w_{4j\ra 4(j+1)};$$
$$b_{(j+1)_s,s+(2)_s+f(s,1)}=-f_2(s,1)\kappa^\ell(\a_{3(j+m+2)})w_{4(j+m)+3\ra 4(j+m+1)}\otimes w_{4(j+s)+1\ra 4(j+1)};$$
$$b_{s+(j+1)_s,s+(2)_s+f(s,1)}=f_2(s,1)\kappa^\ell(\a_{3(j+m+2)})e_{4(j+m+1)}\otimes w_{4(j+s)+1\ra 4(j+1)};$$
$$b_{s+(j+1)_s,2s+(2)_s-f(s,1)}=-f_2(s,1)\kappa^\ell(\a_{3(j+m+2)})e_{4(j+m+1)}\otimes w_{4(j+s)+1\ra 4(j+1)}.$$

$(6)$ If $r_0=5$, then $\Omega^{5}(Y_t^{(12)})$ is described with
$(6s\times 8s)$-matrix with the following two nonzero elements{\rm:}
$$b_{(j+1)_s+f(s,1),s+(2)_s}=e_{4(j+m+s+1)+1}\otimes w_{4(j+s)+1\ra 4(j+1)};$$
$$b_{s+(j+1)_s-f(s,1),2s+(2)_s}=e_{4(j+m+s+1)+1}\otimes w_{4(j+s)+1\ra 4(j+1)}.$$

$(7)$ If $r_0=6$, then $\Omega^{6}(Y_t^{(12)})$ is described with
$(6s\times 9s)$-matrix with the following nonzero elements{\rm:}
$$b_{(j+1)_s,(2)_s}=-\kappa^\ell(\a_{3(j+m+3)})e_{4(j+m+1)}\otimes w_{4j\ra 4(j+1)};$$
$$b_{j,s+(2)_s}=w_{4(j+m+s)+1\ra 4(j+m+s+1)+1}\otimes e_{4(j+s)+1};$$
$$b_{j+s,2s+(2)_s}=w_{4(j+m+s)+1\ra 4(j+m+s+1)+1}\otimes e_{4(j+s)+1}.$$

$(8)$ If $r_0=7$, then $\Omega^{7}(Y_t^{(12)})$ is described with
$(7s\times 8s)$-matrix with the following nonzero elements{\rm:}
$$b_{j+2s,(2)_s}=-w_{4(j+m)+3\ra 4(j+m+1)+1}\otimes w_{4j\ra 4j+1};$$
$$b_{j+4s,(2)_s}=-w_{4(j+m+1)\ra 4(j+m+1)+1}\otimes w_{4j\ra 4j+2};$$
$$b_{j+5s,(2)_s}=-w_{4(j+m+1)\ra 4(j+m+1)+1}\otimes w_{4j\ra 4(j+s)+2};$$
$$b_{j,s+(2)_s}=w_{4(j+m)+2\ra 4(j+m+1)+1}\otimes e_{4j};$$
$$b_{j+s,s+(2)_s}=-w_{4(j+m)+3\ra 4(j+m+1)+1}\otimes w_{4j\ra 4(j+s)+1};$$
$$b_{j+3s,s+(2)_s}=-w_{4(j+m+1)\ra 4(j+m+1)+1}\otimes w_{4j\ra 4(j+s)+2};$$
$$b_{j+4s,s+(2)_s}=-w_{4(j+m+1)\ra 4(j+m+1)+1}\otimes w_{4j\ra 4j+2}.$$

$(9)$ If $r_0=8$ and $s=2$, then $\Omega^{8}(Y_t^{(12)})$ is described with
$(6s\times 6s)$-matrix with the following nonzero elements{\rm:}
$$b_{j,s+1}=-w_{4(j+m+s)+2\ra 4(j+m+s+1)+2}\otimes e_{4(j+2)+1};$$
$$b_{s+(j+1)_s,2s+1}=-e_{4(j+m+s+1)+2}\otimes w_{4(j+2)+1\ra 4(j+s+1)+1};$$
$$b_{4s+(j+1)_s,3s+1}=-e_{4(j+m+1)+1}\otimes w_{4(j+s)+2\ra 4(j+s+1)+2};$$
$$b_{(j+1)_s,4s+1}=-w_{4(j+m)+3\ra 4(j+m+1)+1}\otimes w_{4(j+s)+2\ra 4(j+1)};$$
$$b_{3s+(j+1)_s,4s+1}=-e_{4(j+m+1)+1}\otimes w_{4(j+s)+2\ra 4(j+s+1)+2};$$
$$b_{2s+(j+1)_s,5s+1}=-w_{4(j+m+s+1)+2\ra 4(j+m)}\otimes w_{4j+3\ra 4(j+s+1)+1};$$
$$b_{3s+(j+1)_s,5s+1}=-w_{4(j+m+s+1)+1\ra 4(j+m)}\otimes w_{4j+3\ra 4(j+1)+2}.$$

$(10)$ If $r_0=8$ and $s\ne 2$, then $\Omega^{8}(Y_t^{(12)})$ is described with
$(6s\times 6s)$-matrix with the following nonzero elements{\rm:}
$$b_{s+(j+1)_s,s+1}=-e_{4(j+m+s+1)+2}\otimes w_{4(j+s)+1\ra 4(j+s+1)+1};$$
$$b_{j,2s-f_2(s,1)}=-w_{4(j+m+s)+2\ra 4(j+m+s+1)+2}\otimes e_{4(j+s)+1};$$
$$b_{(j+1)_s,3s+1}=-w_{4(j+m)+3\ra 4(j+m+1)+1}\otimes w_{4(j+s)+2\ra 4(j+1)};$$
$$b_{3s+(j+1)_s,3s+1}=-e_{4(j+m+1)+1}\otimes w_{4(j+s)+2\ra 4(j+s+1)+2};$$
$$b_{4s+(j+1)_s,4s-f_2(s,1)}=-e_{4(j+m+1)+1}\otimes w_{4(j+s)+2\ra 4(j+s+1)+2};$$
$$b_{2s+(j+1)_s,5s+(1)_s}=-f_2(s,1)\kappa^{\ell+1}(\a_{3(j+m-1)})
w_{4(j+m+1+sf(s,1))+2\ra 4(j+m+2)}\otimes w_{4j+3\ra 4(j+1+sf(s,1))+1};$$
$$b_{3s+(j+1)_s,5s+(1)_s}=-f_2(s,1)\kappa^{\ell+1}(\a_{3(j+m-1)})
w_{4(j+m+1+sf(s,1))+1\ra 4(j+m+2)}\otimes w_{4j+3\ra 4(j+s+1+sf(s,1))+2}.$$

$(11)$ If $r_0=9$ and $s=1$, then $\Omega^{9}(Y_t^{(12)})$ is described with
$(8s\times 7s)$-matrix with the following nonzero elements{\rm:}
$$b_{j+s,0}=w_{4(j+m)\ra 4(j+m+1)+2}\otimes w_{4j\ra 4j+1};$$
$$b_{j+2s,0}=-w_{4(j+m)\ra 4(j+m+1)+2}\otimes w_{4j\ra 4(j+1)+1};$$
$$b_{j-3s,3s}=\kappa^{\ell+1}(\a_{3(j+m)})e_{4(j+m)+3}\otimes w_{4j+1\ra 4j};$$
$$b_{j-4s,4s}=\kappa^{\ell+1}(\a_{3(j+m)})w_{4(j+m)+3\ra 4(j+m)}\otimes w_{4j+2\ra 4j};$$
$$b_{j-6s,6s}=-w_{4(j+m)+3\ra 4(j+m)+1}\otimes w_{4j+3\ra 4j}.$$

$(12)$ If $r_0=9$ and $s=2$, then $\Omega^{9}(Y_t^{(12)})$ is described with
$(8s\times 7s)$-matrix with the following nonzero elements{\rm:}
$$b_{j+s,1}=w_{4(j+m+1)\ra 4(j+m+1)+2}\otimes w_{4j\ra 4j+1};$$
$$b_{j+2s,1}=-w_{4(j+m+1)\ra 4(j+m+1)+2}\otimes w_{4j\ra 4(j+2)+1};$$
$$b_{(j+1)_s,3s+1}=e_{4(j+m+1)+3}\otimes w_{4j+1\ra 4(j+1)};$$
$$b_{(j+1)_s,4s+1}=w_{4(j+m+1)+3\ra 4(j+m)}\otimes w_{4j+2\ra 4(j+1)};$$
$$b_{(j+1)_s,6s+1}=-w_{4(j+m+1)+3\ra 4(j+m+2)+1}\otimes w_{4j+3\ra 4(j+1)}.$$

$(13)$ If $r_0=9$ and $s>2$, then $\Omega^{9}(Y_t^{(12)})$ is described with
$(8s\times 7s)$-matrix with the following nonzero elements{\rm:}
$$b_{j+f(s,1),s+1}=w_{4(j+m+1)\ra 4(j+m+1)+2}\otimes w_{4j\ra 4(j+s)+1};$$
$$b_{j+s+f(s,1),s+1}=-w_{4(j+m+1)\ra 4(j+m+1)+2}\otimes w_{4j\ra 4j+1};$$
$$b_{j-s,2s+1}=-\kappa^{\ell+1}(\a_{3(j+m)})w_{4(j+m+1)\ra 4(j+m+1)+3}\otimes e_{4j+1};$$
$$b_{j+3s-f(s,1),2s+1}=\kappa^{\ell+1}(\a_{3(j+m)})w_{4(j+m+1)+2\ra 4(j+m+1)+3}\otimes w_{4j+1\ra 4j+3};$$
$$b_{(j+1)_s,5s+1}=-\kappa^{\ell+1}(\a_{3(j+m-1)})w_{4(j+m+1)+3\ra 4(j+m+2)}\otimes w_{4j+2\ra 4(j+1)};$$
$$b_{(j+1)_s,7s+1}=-w_{4(j+m+1)+3\ra 4(j+m+2)+1}\otimes w_{4j+3\ra 4(j+1)}.$$

$(14)$ If $r_0=10$ and $s=1$, then $\Omega^{10}(Y_t^{(12)})$ is described with
$(9s\times 6s)$-matrix with the following nonzero elements{\rm:}
$$b_{j+s,0}=-\kappa_1w_{4(j+m+1)+1\ra 4(j+m)+3}\otimes w_{4j\ra 4j+1};$$
$$b_{j+2s,0}=\kappa_1w_{4(j+m)+1\ra 4(j+m)+3}\otimes w_{4j\ra 4(j+1)+1};$$
$$b_{j+3s,0}=\kappa_1w_{4(j+m+1)+2\ra 4(j+m)+3}\otimes w_{4j\ra 4j+2};$$
$$b_{j+4s,0}=-\kappa_1w_{4(j+m)+2\ra 4(j+m)+3}\otimes w_{4j\ra 4(j+1)+2};$$
$$b_{j+5s,0}=\kappa_1e_{4(j+m)+3}\otimes w_{4j\ra 4j+3};$$
$$b_{j,s}=-\kappa_2w_{4(j+m)+1\ra 4(j+m)}\otimes w_{4j\ra 4(j+1)+1};$$
$$b_{j+s,s}=\kappa_2w_{4(j+m+1)+1\ra 4(j+m)}\otimes w_{4j\ra 4j+1};$$
$$b_{j+2s,s}=\kappa_2w_{4(j+m)+2\ra 4(j+m)}\otimes w_{4j\ra 4(j+1)+2};$$
$$b_{j+3s,s}=-\kappa_2w_{4(j+m+1)+2\ra 4(j+m)}\otimes w_{4j\ra 4j+2};$$
$$b_{j+4s,s}=\kappa_2w_{4(j+m)+3\ra 4(j+m)}\otimes w_{4j\ra 4j+3};$$
$$b_{j-s,2s}=w_{4(j+m+1)+1\ra 4(j+m)+1}\otimes e_{4j+1};$$
$$b_{j+s,2s}=-w_{4(j+m+1)+2\ra 4(j+m)+1}\otimes w_{4j+1\ra 4j+2};$$
$$b_{j+3s,2s}=-w_{4(j+m)+3\ra 4(j+m)+1}\otimes w_{4j+1\ra 4j+3};$$
$$b_{j-3s,5s}=-w_{4(j+m+1)+1\ra 4(j+m+1)+2}\otimes w_{4(j+1)+2\ra 4j+1};$$
$$b_{j-2s,5s}=-w_{4(j+m)+2\ra 4(j+m+1)+2}\otimes e_{4(j+1)+2};$$
$$b_{j-s,5s}=e_{4(j+m+1)+2}\otimes w_{4(j+1)+2\ra 4j+2};$$
$$b_{j,5s}=-w_{4(j+m)+3\ra 4(j+m+1)+2}\otimes w_{4(j+1)+2\ra 4j+3};$$
$$b_{j-3s,7s}=-w_{4(j+m+1)+2\ra 4(j+m)+2}\otimes e_{4j+2};$$
$$b_{j-2s,7s}=w_{4(j+m)+3\ra 4(j+m)+2}\otimes w_{4j+2\ra 4j+3};$$
$$b_{j-8s,8s}=-\kappa^{\ell+1}(\a_{3(j+m)})w_{4(j+m)\ra 4(j+m)+3}\otimes w_{4j+3\ra 4j},$$
where $\kappa_1=-\kappa^{\ell+1}(\a_{3m})$, $\kappa_2=-\kappa^{\ell+1}(\a_{3(s+m)})$.

$(15)$ If $r_0=10$ and $s=2$, then $\Omega^{10}(Y_t^{(12)})$ is described with
$(9s\times 6s)$-matrix with the following nonzero elements{\rm:}
$$b_{j+s,1}=w_{4(j+m+2)+1\ra 4(j+m)+3}\otimes w_{4j\ra 4j+1};$$
$$b_{j+2s,1}=-w_{4(j+m)+1\ra 4(j+m)+3}\otimes w_{4j\ra 4(j+2)+1};$$
$$b_{j+3s,1}=-w_{4(j+m+s)+2\ra 4(j+m)+3}\otimes w_{4j\ra 4j+2};$$
$$b_{j+4s,1}=w_{4(j+m)+2\ra 4(j+m)+3}\otimes w_{4j\ra 4(j+s)+2};$$
$$b_{j+5s,1}=-e_{4(j+m)+3}\otimes w_{4j\ra 4j+3};$$
$$b_{j,s+1}=w_{4(j+m)+1\ra 4(j+m+1)}\otimes w_{4j\ra 4(j+2)+1};$$
$$b_{j+s,s+1}=-w_{4(j+m+2)+1\ra 4(j+m+1)}\otimes w_{4j\ra 4j+1};$$
$$b_{j+2s,s+1}=-w_{4(j+m)+2\ra 4(j+m+1)}\otimes w_{4j\ra 4(j+s)+2};$$
$$b_{j+3s,s+1}=w_{4(j+m+s)+2\ra 4(j+m+1)}\otimes w_{4j\ra 4j+2};$$
$$b_{j+4s,s+1}=-w_{4(j+m)+3\ra 4(j+m+1)}\otimes w_{4j\ra 4j+3};$$
$$b_{j-s,2s+1}=w_{4(j+m+2)+1\ra 4(j+m+s+1)+1}\otimes e_{4j+1};$$
$$b_{j+s,2s+1}=-w_{4(j+m+s)+2\ra 4(j+m+s+1)+1}\otimes w_{4j+1\ra 4j+2};$$
$$b_{j+3s,2s+1}=-w_{4(j+m)+3\ra 4(j+m+s+1)+1}\otimes w_{4j+1\ra 4j+3};$$
$$b_{j-2s,5s+1}=-w_{4(j+m)+2\ra 4(j+m+1)+2}\otimes e_{4(j+s)+2};$$
$$b_{j,5s+1}=-w_{4(j+m)+3\ra 4(j+m+1)+2}\otimes w_{4(j+s)+2\ra 4j+3};$$
$$b_{2s+(j+1)_s,7s}=-w_{4(j+m+s+1)+1\ra 4(j+m+s+1)+2}\otimes w_{4j+2\ra 4(j+1)+1};$$
$$b_{4s+(j+1)_s,7s}=e_{4(j+m+s+1)+2}\otimes w_{4j+2\ra 4(j+1)+2};$$
$$b_{j-3s,8s-1}=-w_{4(j+m+s)+2\ra 4(j+m+s+1)+2}\otimes e_{4j+2};$$
$$b_{j-2s,8s-1}=w_{4(j+m)+3\ra 4(j+m+s+1)+2}\otimes w_{4j+2\ra 4j+3};$$
$$b_{(j+1)_s,8s}=w_{4(j+m+1)\ra 4(j+m+1)+3}\otimes w_{4j+3\ra 4(j+1)};$$
$$b_{j-3s,8s}=-w_{4(j+m)+3\ra 4(j+m+1)+3}\otimes e_{4j+3};$$
$$b_{j-3s,9s-1}=w_{4(j+m)+3\ra 4(j+m+1)+3}\otimes e_{4j+3}.$$

$(16)$ If $r_0=10$ and $s>2$, then $\Omega^{10}(Y_t^{(12)})$ is described with
$(9s\times 6s)$-matrix with the following nonzero elements{\rm:}
$$b_{j+2s,1}=-\kappa_1w_{4(j+m)+1\ra 4(j+m)+3}\otimes w_{4j\ra 4(j+s)+1};$$
$$b_{j+4s,1}=\kappa_1w_{4(j+m)+2\ra 4(j+m)+3}\otimes w_{4j\ra 4(j+s)+2};$$
$$b_{j,s+1}=-\kappa_2w_{4(j+m)+1\ra 4(j+m+1)}\otimes w_{4j\ra 4(j+s)+1};$$
$$b_{j+2s,s+1}=\kappa_2w_{4(j+m)+2\ra 4(j+m+1)}\otimes w_{4j\ra 4(j+s)+2};$$
$$b_{j+4s,s+1}=\kappa_2w_{4(j+m)+3\ra 4(j+m+1)}\otimes w_{4j\ra 4j+3};$$
$$b_{j-s,2s+1}=w_{4(j+m+s)+1\ra 4(j+m+s+1)+1}\otimes e_{4j+1};$$
$$b_{j+s,2s+1}=-w_{4(j+m+s)+2\ra 4(j+m+s+1)+1}\otimes w_{4j+1\ra 4j+2};$$
$$b_{j+3s,2s+1}=-w_{4(j+m)+3\ra 4(j+m+s+1)+1}\otimes w_{4j+1\ra 4j+3};$$
$$b_{j-s,3s+1}=w_{4(j+m+s)+1\ra 4(j+m+s+1)+1}\otimes e_{4j+1};$$
$$b_{j+s,3s+1}=-w_{4(j+m+s)+2\ra 4(j+m+s+1)+1}\otimes w_{4j+1\ra 4j+2};$$
$$b_{j+2s,3s+1}=w_{4(j+m)+3\ra 4(j+m+s+1)+1}\otimes w_{4j+1\ra 4j+3};$$
$$b_{s+(j+1)_s,5s}=-w_{4(j+m+1)+1\ra 4(j+m+1)+2}\otimes w_{4(j+s)+2\ra 4(j+s+1)+1};$$
$$b_{3s+(j+1)_s,5s}=e_{4(j+m+1)+2}\otimes w_{4(j+s)+2\ra 4(j+s+1)+2};$$
$$b_{j-2s,5s+1}=-w_{4(j+m)+2\ra 4(j+m+1)+2}\otimes e_{4(j+s)+2};$$
$$b_{j,5s+1}=-w_{4(j+m)+3\ra 4(j+m+1)+2}\otimes w_{4(j+s)+2\ra 4j+3};$$
$$b_{j-3s,7s+1}=-w_{4(j+m+s)+2\ra 4(j+m+s+1)+2}\otimes e_{4j+2};$$
$$b_{j-2s,7s+1}=w_{4(j+m)+3\ra 4(j+m+s+1)+2}\otimes w_{4j+2\ra 4j+3};$$
$$b_{(j+1)_s,8s}=-\kappa_3w_{4(j+m+1)\ra 4(j+m+1)+3}\otimes w_{4j+3\ra 4(j+1)};$$
$$b_{2s+(j+1)_s,8s}=-\kappa_3w_{4(j+m+1)+1\ra 4(j+m+1)+3}\otimes w_{4j+3\ra 4(j+s+1)+1};$$
$$b_{4s+(j+1)_s,8s}=\kappa_3w_{4(j+m+1)+2\ra 4(j+m+1)+3}\otimes w_{4j+3\ra 4(j+s+1)+2};$$
$$b_{j-3s,8s}=\kappa_3w_{4(j+m)+3\ra 4(j+m+1)+3}\otimes e_{4j+3};$$
$$b_{j-3s,8s+1}=-\kappa^{\ell+1}(\a_{3(j+m-2)})w_{4(j+m)+3\ra 4(j+m+1)+3}\otimes e_{4j+3},$$
where $\kappa_1=\kappa^{\ell+1}(\a_{3m})$, $\kappa_2=\kappa^{\ell+1}(\a_{3(s+m-1)})$, $\kappa_3=\kappa^{\ell+1}(\a_{3m})$.

\medskip
$({\rm II})$ Represent an arbitrary $t_0\in\N$ in the form
$t_0=11\ell_0+r_0$, where $0\le r_0\le 10.$ Then
$\Omega^{t_0}(Y_t^{(12)})$ is a $\Omega^{r_0}(Y_t^{(12)})$, whose left
components twisted by $\sigma^{\ell_0}$.
\end{pr}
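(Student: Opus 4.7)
The plan is to prove the proposition by induction on $r_0$, using the defining recursion
$$d_{r_0-1} \circ \Omega^{r_0}(Y_t^{(12)}) = \Omega^{r_0-1}(Y_t^{(12)}) \circ d_{t+r_0-1},$$
which determines each translate uniquely up to homotopy. For the base case $r_0 = 0$, the matrix $\Omega^0(Y_t^{(12)})$ is the lift of the cocycle $Y_t^{(12)} \in \Hom_\Lambda(Q_t, R)$ through the augmentation $\varepsilon: Q_0 \to R$, and one checks that this lift coincides entry-by-entry with the matrix defining $Y_t^{(12)}$ in the preceding section (both have the two nonzero entries at positions $(3s, 4s)$ and $(4s, 5s)$).

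For the inductive step ($1 \le r_0 \le 10$), I would verify directly that the matrix given in the statement satisfies the lifting equation, by computing both sides column-by-column. This uses the explicit formulas for $d_{r_0-1}$ and $d_{t+r_0-1}$ from Section \ref{sect_res}: recall that $d_{t+r_0-1}$ is obtained from $d_{r_0-1}$ by applying $\sigma^\ell$ to left tensor components, where $t = 11\ell + r$. For each column $j$, the entries of the product $d_{r_0-1} \circ \Omega^{r_0}(Y_t^{(12)})$ are sums of tensor products of the form $w_{\cdot \ra \cdot} \otimes w_{\cdot \ra \cdot}$, which must be matched against the corresponding entry of $\Omega^{r_0-1}(Y_t^{(12)}) \circ d_{t+r_0-1}$. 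The cases $s = 1$, $s = 2$, and $s > 2$ need to be handled separately for $r_0 \in \{8, 9, 10\}$, as the formulas in the statement indicate.

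For Part (II), the periodicity of the resolution established in Theorem \ref{resol_thm}, namely that $Q_{11\ell+r}$ and $d_{11\ell+r}$ are obtained from $Q_r$ and $d_r$ by applying $\sigma^\ell$ to all left tensor components, implies by functoriality of the lifting construction that $\Omega^{11\ell_0+r_0}(Y_t^{(12)})$ is obtained from $\Omega^{r_0}(Y_t^{(12)})$ by twisting the left components via $\sigma^{\ell_0}$.

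The main obstacle is the combinatorial bookkeeping in the inductive step: for each $r_0$ and each column $j$ of the matrix, one has to enumerate all nonzero contributions to both sides of the lifting equation, including the delicate boundary cases at $j = s-1, 2s-1, \ldots, 9s-1$ where the generic formulas break down and edge-of-range formulas apply. Particular care is required in (i) tracking the signs contributed by the indicator functions $f_1, f_2$ and by the twisting coefficients $\kappa^\ell(\a_\cdot)$, (ii) isolating the small-$s$ anomalies where summation ranges collapse and the list of nonzero entries changes qualitatively, and (iii) ensuring that the representative chosen in each degree (unique only up to homotopy) is the one compatible with the representative already fixed at the next degree, so that the induction closes consistently.
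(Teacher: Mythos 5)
The paper gives no written proof of this proposition — like the other translate propositions it is justified only by the implicit direct computation of lifting the cocycle through the bimodule resolution — and your proposal describes exactly that computation: the base case $\varepsilon\circ\Omega^0=Y^{(12)}_t$, the column-by-column verification of $d_{r_0-1}\circ\Omega^{r_0}=\Omega^{r_0-1}\circ d_{t+r_0-1}$ for $1\le r_0\le 10$, and Part (II) from the $\sigma$-periodicity of Theorem \ref{resol_thm}. Your approach is correct and is essentially the same as the paper's.
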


%%%%%%%%%%%%%%%%%%%%%%%%%%%%%%%%%%%%%%%%%%%%%%%%%%%%%%%%%%%%%%%%%%%%%%%%%%%%%%%%%%%%%%%%
%                                          13
%%%%%%%%%%%%%%%%%%%%%%%%%%%%%%%%%%%%%%%%%%%%%%%%%%%%%%%%%%%%%%%%%%%%%%%%%%%%%%%%%%%%%%%%
\begin{pr}[Translates for the case 13]
$({\rm I})$ Let $r_0\in\N$, $r_0<11$. $r_0$-translates of the
elements $Y^{(13)}_t$ are described by the following way.

$(1)$ If $r_0=0$, then $\Omega^{0}(Y_t^{(13)})$ is described with
$(9s\times 6s)$-matrix with the following elements $b_{ij}${\rm:}

If $s\le j<2s$, then $$b_{ij}=
\begin{cases}
e_{4(j+m+s)+1}\otimes e_{4(j+s)+1},\quad i=j;\\
0,\quad\text{otherwise.}
\end{cases}$$

If $2s\le j<3s$, then $b_{ij}=0$.

If $3s\le j<4s$, then $$b_{ij}=
\begin{cases}
e_{4(j+m)+1}\otimes e_{4j+1},\quad i=j-s;\\
0,\quad\text{otherwise.}
\end{cases}$$

If $4s\le j<5s$, then $b_{ij}=0$.

If $5s\le j<7s$, then $$b_{ij}=
\begin{cases}
e_{4(j+m+s)+2}\otimes e_{4(j+s)+2},\quad i=j-2s;\\
0,\quad\text{otherwise.}
\end{cases}$$

If $7s\le j<8s$, then $b_{ij}=0$.

If $8s\le j<9s$, then $$b_{ij}=
\begin{cases}
w_{4(j+m)+3\ra 4(j+m+1)}\otimes e_{4j+3},\quad i=j-3s;\\
0,\quad\text{otherwise.}
\end{cases}$$

$(2)$ If $r_0=1$, then $\Omega^{1}(Y_t^{(13)})$ is described with
$(8s\times 7s)$-matrix with the following elements $b_{ij}${\rm:}

If $0\le j<2s$, then $$b_{ij}=
\begin{cases}
e_{4(j+m)+2}\otimes w_{4j\ra 4j+1},\quad i=j+2s;\\
0,\quad\text{otherwise.}
\end{cases}$$

If $2s\le j<4s$, then $$b_{ij}=
\begin{cases}
w_{4(j+m)+2\ra 4(j+m)+3}\otimes e_{4j+1},\quad i=j;\\
0,\quad\text{otherwise.}
\end{cases}$$

If $4s\le j<6s$, then $$b_{ij}=
\begin{cases}
w_{4(j+m)+3\ra 4(j+m+1)}\otimes e_{4j+2},\quad i=j;\\
0,\quad\text{otherwise.}
\end{cases}$$

If $6s\le j<8s$, then $$b_{ij}=
\begin{cases}
e_{4(j+m+1)+1}\otimes w_{4j+3\ra 4(j+1)},\quad i=(j+1)_{2s};\\
w_{4(j+m+1)\ra 4(j+m+1)+1}\otimes e_{4j+3},\quad i=6s+(j)_s;\\
0,\quad\text{otherwise.}
\end{cases}$$

$(3)$ If $r_0=2$, then $\Omega^{2}(Y_t^{(13)})$ is described with
$(6s\times 6s)$-matrix with the following elements $b_{ij}${\rm:}

If $s\le j<3s$, then $$b_{ij}=
\begin{cases}
e_{4(j+m+s)+2}\otimes e_{4(j+s)+1},\quad i=j;\\
0,\quad\text{otherwise.}
\end{cases}$$

If $3s\le j<5s$, then $$b_{ij}=
\begin{cases}
e_{4(j+m)+1}\otimes e_{4(j+s)+2},\quad i=j;\\
0,\quad\text{otherwise.}
\end{cases}$$

If $5s\le j<6s$, then $$b_{ij}=
\begin{cases}
w_{4(j+m)+3\ra 4(j+m+1)}\otimes w_{4j+3\ra 4(j+1)},\quad i=(j+1)_s;\\
0,\quad\text{otherwise.}
\end{cases}$$

$(4)$ If $r_0=3$, then $\Omega^{3}(Y_t^{(13)})$ is described with
$(7s\times 8s)$-matrix with the following elements $b_{ij}${\rm:}

If $0\le j<s$, then $$b_{ij}=
\begin{cases}
w_{4(j+m)+2\ra 4(j+m)+3}\otimes e_{4j},\quad i=j;\\
w_{4(j+m+s)+2\ra 4(j+m)+3}\otimes e_{4j},\quad i=j+s;\\
0,\quad\text{otherwise.}
\end{cases}$$

If $s\le j<3s$, then $$b_{ij}=
\begin{cases}
w_{4(j+m)+3\ra 4(j+m+1)}\otimes e_{4(j+s)+1},\quad i=j+s;\\
0,\quad\text{otherwise.}
\end{cases}$$

If $3s\le j<5s$, then $$b_{ij}=
\begin{cases}
w_{4(j+m+1)\ra 4(j+m+1)+1}\otimes e_{4(j+s)+2},\quad i=j+s;\\
e_{4(j+m+1)+1}\otimes w_{4(j+s)+2\ra 4j+3},\quad i=6s+(j)_{2s};\\
0,\quad\text{otherwise.}
\end{cases}$$

If $5s\le j<7s$, then $$b_{ij}=
\begin{cases}
e_{4(j+m+s+1)+2}\otimes w_{4j+3\ra 4(j+1)},\quad i=(j+s+1)_{2s};\\
0,\quad\text{otherwise.}
\end{cases}$$

$(5)$ If $r_0=4$, then $\Omega^{4}(Y_t^{(13)})$ is described with
$(6s\times 9s)$-matrix with the following elements $b_{ij}${\rm:}

If $0\le j<s$, then $$b_{ij}=
\begin{cases}
w_{4(j+m)-1\ra 4(j+m)}\otimes e_{4j},\quad i=j;\\
0,\quad\text{otherwise.}
\end{cases}$$

If $s\le j<3s$, then $$b_{ij}=
\begin{cases}
e_{4(j+m)+1}\otimes e_{4(j+s)+1},\quad i=j+s;\\
0,\quad\text{otherwise.}
\end{cases}$$

If $3s\le j<5s$, then $$b_{ij}=
\begin{cases}
e_{4(j+m)+2}\otimes e_{4(j+s)+2},\quad i=j+3s-sf_0(j,4s);\\
0,\quad\text{otherwise.}
\end{cases}$$

If $5s\le j<6s$, then $b_{ij}=0$.

$(6)$ If $r_0=5$, then $\Omega^{5}(Y_t^{(13)})$ is described with
$(6s\times 8s)$-matrix with the following elements $b_{ij}${\rm:}

If $0\le j<s$, then $$b_{ij}=
\begin{cases}
e_{4(j+m+1)}\otimes w_{4j\ra 4j+1},\quad i=j+2s;\\
e_{4(j+m+1)}\otimes w_{4j\ra 4(j+s)+1},\quad i=j+3s;\\
0,\quad\text{otherwise.}
\end{cases}$$

If $s\le j<3s$, then $$b_{ij}=
\begin{cases}
e_{4(j+m+1)+1}\otimes w_{4(j+s)+1\ra 4(j+1)},\quad i=(j+1)_{2s};\\
w_{4(j+m+1)\ra 4(j+m+1)+1}\otimes e_{4(j+s)+1},\quad i=j+s;\\
0,\quad\text{otherwise.}
\end{cases}$$

If $3s\le j<5s$, then $$b_{ij}=
\begin{cases}
w_{4(j+m)+3\ra 4(j+m+1)+2}\otimes e_{4(j+s)+2},\quad i=j+s;\\
e_{4(j+m+1)+2}\otimes w_{4(j+s)+2\ra 4j+3},\quad i=6s+(j)_{2s};\\
0,\quad\text{otherwise.}
\end{cases}$$

If $5s\le j<6s$, then $$b_{ij}=
\begin{cases}
w_{4(j+m+s+1+sf(j,6s-1))+1\ra 4(j+m+1)+3}\otimes w_{4j+3\ra 4(j+1)},\quad i=(j+1)_s;\\
w_{4(j+m+1+sf(j,6s-1))+1\ra 4(j+m+1)+3}\otimes w_{4j+3\ra 4(j+1)},\quad i=s+(j+1)_s;\\
e_{4(j+m+1)+3}\otimes w_{4j+3\ra 4(j+s+1+sf(j,6s-1))+2},\quad i=4s+(j+1)_s;\\
e_{4(j+m+1)+3}\otimes w_{4j+3\ra 4(j+1+sf(j,6s-1))+2},\quad i=5s+(j+1)_s;\\
0,\quad\text{otherwise.}
\end{cases}$$

$(7)$ If $r_0=6$, then $\Omega^{6}(Y_t^{(13)})$ is described with
$(7s\times 9s)$-matrix with the following elements $b_{ij}${\rm:}

If $0\le j<2s$, then $$b_{ij}=
\begin{cases}
e_{4(j+m+s)+1}\otimes w_{4j\ra 4(j+s)+1},\quad i=j+3sf_0(j,s);\\
0,\quad\text{otherwise.}
\end{cases}$$

If $2s\le j<4s$, then $$b_{ij}=
\begin{cases}
e_{4(j+m+s)+2}\otimes e_{4j+1},\quad i=j+s(1-f_0(j,3s));\\
0,\quad\text{otherwise.}
\end{cases}$$

If $4s\le j<6s$, then $$b_{ij}=
\begin{cases}
w_{4(j+m)+2\ra 4(j+m)+3}\otimes e_{4j+2},\quad i=j+s;\\
e_{4(j+m)+3}\otimes w_{4j+2\ra 4j+3},\quad i=7s+(j)_s;\\
0,\quad\text{otherwise.}
\end{cases}$$

If $6s\le j<7s$, then $$b_{ij}=
\begin{cases}
e_{4(j+m+1)}\otimes w_{4j+3\ra 4(j+1)},\quad i=(j+1)_s;\\
0,\quad\text{otherwise.}
\end{cases}$$

$(8)$ If $r_0=7$, then $\Omega^{7}(Y_t^{(13)})$ is described with
$(6s\times 8s)$-matrix with the following elements $b_{ij}${\rm:}

If $0\le j<s$, then $$b_{ij}=
\begin{cases}
w_{4(j+m)+2\ra 4(j+m)+3}\otimes e_{4j},\quad i=j;\\
w_{4(j+m+s)+2\ra 4(j+m)+3}\otimes e_{4j},\quad i=j+s;\\
0,\quad\text{otherwise.}
\end{cases}$$

If $s\le j<3s$, then $$b_{ij}=
\begin{cases}
e_{4(j+m+1)+2}\otimes w_{4(j+s)+1\ra 4(j+1)},\quad i=(j+1)_{2s};\\
w_{4(j+m)+3\ra 4(j+m+1)+2}\otimes e_{4(j+s)+1},\quad i=j+s;\\
0,\quad\text{otherwise.}
\end{cases}$$

If $3s\le j<5s$, then $$b_{ij}=
\begin{cases}
w_{4(j+m+1)\ra 4(j+m+s+1)+1}\otimes e_{4(j+s)+2},\quad i=j+s;\\
e_{4(j+m+s+1)+1}\otimes w_{4(j+s)+2\ra 4j+3},\quad i=j+3s;\\
0,\quad\text{otherwise.}
\end{cases}$$

If $5s\le j<6s$, then $$b_{ij}=
\begin{cases}
w_{4(j+m+1+sf(j,6s-1))+2\ra 4(j+m+2)}\otimes w_{4j+3\ra 4(j+1)},\quad i=s+(j+1)_s;\\
e_{4(j+m+2)}\otimes w_{4j+3\ra 4(j+s+1+sf(j,6s-1))+2},\quad i=4s+(j+1)_s;\\
e_{4(j+m+2)}\otimes w_{4j+3\ra 4(j+1+sf(j,6s-1))+2},\quad i=5s+(j+1)_s;\\
0,\quad\text{otherwise.}
\end{cases}$$

$(9)$ If $r_0=8$, then $\Omega^{8}(Y_t^{(13)})$ is described with
$(8s\times 6s)$-matrix with the following elements $b_{ij}${\rm:}

If $0\le j<2s$, then $$b_{ij}=
\begin{cases}
w_{4(j+m)-1\ra 4(j+m+s)+2}\otimes e_{4j},\quad i=(j)_s;\\
e_{4(j+m+s)+2}\otimes w_{4j\ra 4(j+s)+1},\quad i=s+(j+s)_{2s};\\
0,\quad\text{otherwise.}
\end{cases}$$

If $2s\le j<4s$, then $$b_{ij}=
\begin{cases}
w_{4(j+m)+2\ra 4(j+m)+3}\otimes e_{4j+1},\quad i=j-s;\\
0,\quad\text{otherwise.}
\end{cases}$$

If $4s\le j<6s$, then $$b_{ij}=
\begin{cases}
w_{4(j+m)+3\ra 4(j+m+1)}\otimes w_{4j+2\ra 4(j+1)},\quad i=(j+1)_s,\text{ }j<5s-1\text{ or }j=6s-1;\\
w_{4(j+m+s)+1\ra 4(j+m+1)}\otimes e_{4j+2},\quad i=j-s;\\
e_{4(j+m+1)}\otimes w_{4j+2\ra 4j+3},\quad i=5s+(j)_s;\\
0,\quad\text{otherwise.}
\end{cases}$$

If $6s\le j<8s$, then $$b_{ij}=
\begin{cases}
e_{4(j+m+s+1)+1}\otimes w_{4j+3\ra 4(j+1)+2},\quad i=3s+(j+1)_{2s};\\
0,\quad\text{otherwise.}
\end{cases}$$

$(10)$ If $r_0=9$, then $\Omega^{9}(Y_t^{(13)})$ is described with
$(9s\times 7s)$-matrix with the following elements $b_{ij}${\rm:}

If $s\le j<2s$, then $$b_{ij}=
\begin{cases}
w_{4(j+m)+3\ra 4(j+m+1)}\otimes e_{4j},\quad i=j-s;\\
0,\quad\text{otherwise.}
\end{cases}$$

If $2s\le j<4s$, then $$b_{ij}=
\begin{cases}
w_{4(j+m+1)\ra 4(j+m+1)+1}\otimes e_{4j+1},\quad i=j-s;\\
0,\quad\text{otherwise.}
\end{cases}$$

If $4s\le j<5s$, then $$b_{ij}=
\begin{cases}
e_{4(j+m+s+1)+1}\otimes e_{4j+2},\quad i=j-s;\\
0,\quad\text{otherwise.}
\end{cases}$$

If $5s\le j<6s$, then $$b_{ij}=
\begin{cases}
e_{4(j+m+s+1)+2}\otimes w_{4(j+s)+2\ra 4j+3},\quad i=j;\\
0,\quad\text{otherwise.}
\end{cases}$$

If $6s\le j<7s$, then $$b_{ij}=
\begin{cases}
e_{4(j+m+1)+1}\otimes e_{4(j+s)+2},\quad i=j-2s;\\
0,\quad\text{otherwise.}
\end{cases}$$

If $7s\le j<8s$, then $$b_{ij}=
\begin{cases}
e_{4(j+m+1)+2}\otimes w_{4j+2\ra 4j+3},\quad i=j-s;\\
0,\quad\text{otherwise.}
\end{cases}$$

If $8s\le j<9s$, then $$b_{ij}=
\begin{cases}
e_{4(j+m+1)+3}\otimes w_{4j+3\ra 4(j+1)},\quad i=(j+1)_s;\\
0,\quad\text{otherwise.}
\end{cases}$$

$(11)$ If $r_0=10$, then $\Omega^{10}(Y_t^{(13)})$ is described with
$(8s\times 6s)$-matrix with the following elements $b_{ij}${\rm:}

If $0\le j<2s$, then $$b_{ij}=
\begin{cases}
w_{4(j+m)\ra 4(j+m+s)+1}\otimes e_{4j},\quad i=(j)_s;\\
e_{4(j+m+s)+1}\otimes w_{4j\ra 4j+1},\quad i=j+s;\\
0,\quad\text{otherwise.}
\end{cases}$$

If $2s\le j<4s$, then $$b_{ij}=
\begin{cases}
w_{4(j+m+s)+1\ra 4(j+m+1)}\otimes e_{4j+1},\quad i=j-s;\\
w_{4(j+m+s)+2\ra 4(j+m+1)}\otimes w_{4j+1\ra 4j+2},\quad i=j+s;\\
w_{4(j+m)+3\ra 4(j+m+1)}\otimes w_{4j+1\ra 4j+3},\quad i=5s+(j)_s;\\
0,\quad\text{otherwise.}
\end{cases}$$

If $4s\le j<6s$, then $$b_{ij}=
\begin{cases}
w_{4(j+m+s)+2\ra 4(j+m)+3}\otimes e_{4j+2},\quad i=j-s;\\
e_{4(j+m)+3}\otimes w_{4j+2\ra 4j+3},\quad i=5s+(j)_s;\\
0,\quad\text{otherwise.}
\end{cases}$$

If $6s\le j<8s$, then $$b_{ij}=
\begin{cases}
w_{4(j+m+1)\ra 4(j+m+s+1)+2}\otimes w_{4j+3\ra 4(j+1)},\quad i=(j+1)_s;\\
w_{4(j+m+s+1)+1\ra 4(j+m+s+1)+2}\otimes w_{4j+3\ra 4(j+1)+1},\quad i=s+(j+1)_{2s};\\
e_{4(j+m+s+1)+2}\otimes w_{4j+3\ra 4(j+1)+2},\quad i=3s+(j+1)_{2s},\text{ }j<7s;\\
0,\quad\text{otherwise.}
\end{cases}$$

If $8s\le j<7s$, then $b_{ij}=0$.

If $7s\le j<8s$, then $$b_{ij}=
\begin{cases}
e_{4(j+m+s+1)+2}\otimes w_{4j+3\ra 4(j+1)+2},\quad i=3s+(j+1)_{2s};\\
0,\quad\text{otherwise.}
\end{cases}$$

\medskip
$({\rm II})$ Represent an arbitrary $t_0\in\N$ in the form
$t_0=11\ell_0+r_0$, where $0\le r_0\le 10.$ Then
$\Omega^{t_0}(Y_t^{(13)})$ is a $\Omega^{r_0}(Y_t^{(13)})$, whose left
components twisted by $\sigma^{\ell_0}$.
\end{pr}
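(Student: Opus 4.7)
The plan is to construct the chain map lift of the cocycle $Y^{(13)}_t$ inductively in $r_0$, using the explicit description of the differentials $d_r$ from Section~\ref{sect_res}. For the base case $r_0 = 0$, I would check directly that the stated matrix defines a morphism of $\Lambda$-modules $\varphi_0 : Q_t \to Q_0$ lifting $Y^{(13)}_t$, i.e.\ that $\varepsilon\circ\varphi_0$ represents the same cocycle class; this reduces to a componentwise identification of the nonzero entries with those of $Y^{(13)}_t$ itself (which is why $\Omega^0(Y^{(13)}_t)$ is essentially $Y^{(13)}_t$ written in the form of a $Q_t \to Q_0$ map).

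For the inductive step, assuming $\Omega^{r_0-1}(Y^{(13)}_t)$ has been established as in the list, I would verify the chain-map relation
\begin{equation*}
d_{r_0-1}\circ\Omega^{r_0}(Y^{(13)}_t) \;=\; \Omega^{r_0-1}(Y^{(13)}_t)\circ d_{t+r_0-1}
\end{equation*}
by direct matrix multiplication. The computation splits according to the column intervals $[0,s), [s,2s), [2s,3s), \ldots$ coming from the piecewise definition of both the differentials and the proposed translate. In each interval, the nonzero entries of the two sides are pairs of tensors of the form $w_{i\to j}\otimes w_{i'\to j'}$, and agreement follows from two facts: the combinatorics of paths in $\mathcal Q_s$ (so that only admissible paths $w_{i\to j}$ of length $\le 4$ survive, all paths of length $5$ being killed by $I'$), and the two families of relations defining $I'$ (the commutativity relation $\alpha_{3t+2}\alpha_{3t+1}\alpha_{3t}=\alpha_{3(t+s)+2}\alpha_{3(t+s)+1}\alpha_{3(t+s)}$ and the zero relations $\alpha_{3t}\gamma_{t-1}\alpha_{3(t+s)-1}=0$). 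Signs are tracked through the auxiliary functions $f,f_0,f_1,f_2,h$, and boundary indices $j = s-1$, $j = (j+1)_s$ etc.\ contribute extra terms that cancel pairwise.

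Once the eleven cases $r_0 = 0,\ldots,10$ are in place, part~(II) follows directly from Theorem~\ref{resol_thm}: since $Q_{11\ell_0 + r}$ is obtained from $Q_r$ by replacing every summand $P_{i,j}$ with $P_{\sigma^{\ell_0}(i),j}$ and the differential $d_{11\ell_0 + r}$ is obtained from $d_r$ by applying $\sigma^{\ell_0}$ to the left tensor components, the unique (up to homotopy) lift $\Omega^{11\ell_0+r_0}(Y^{(13)}_t)$ must be the $\sigma^{\ell_0}$-twist of $\Omega^{r_0}(Y^{(13)}_t)$. No coefficient sign $(-1)^{\ell_0}$ enters because the nonzero entries of all eleven matrices in part~(I) are built from idempotents $e_i$ and length-one paths $w_{i\to j}=\alpha$ or $\gamma$ for which $\sigma$-action produces a sign that is exactly absorbed by the intrinsic signs in $d_r$, in contrast with cases where genuinely quadratic or cubic path products appear (cases~3, 5, 9, 10, 11).

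The principal obstacle is purely bookkeeping. Each matrix $\Omega^{r_0}(Y^{(13)}_t)$ contains up to eight intervals of column indices, and the differentials $d_{r_0-1}, d_{t+r_0-1}$ themselves are piecewise defined; at the boundary indices $j = s-1, 2s-1, \ldots$ and at the wrap-arounds $(j+1)_s$, $(j+s+1)_{2s}$, multiple nonzero entries of both sides collide and must either match or cancel. The cleanest organization is to fix $r_0$, tabulate for each target column $j$ of $\Omega^{r_0}$ the set of source rows $i$ in $Q_{r_0-1}$ that receive a contribution from each side of the commutativity square, and then match them path-by-path using the relations of $R'_s$ and the $\sigma$-equivariance already checked in the proof of Theorem~\ref{resol_thm}.
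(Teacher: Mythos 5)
Your strategy---verify the base case $r_0=0$ directly and then check the commutation squares $d_{r_0-1}\circ\Omega^{r_0}(Y^{(13)}_t)=\Omega^{r_0-1}(Y^{(13)}_t)\circ d_{t+r_0-1}$ by piecewise matrix multiplication, using the relations of $I'$ and the path combinatorics of $\mathcal Q_s$---is exactly the (implicit) method of the paper, which states these translate propositions as the outcome of straightforward calculation of matrix products, just as in the proof of Theorem \ref{resol_thm}. One inaccuracy in your justification of part (II): it is not true that all nonzero entries of the eleven matrices are idempotents or length-one arrows (for instance $w_{4(j+m+s+1)+1\ra 4(j+m+1)+3}$ in the $r_0=5$ block and $w_{4(j+m+1)\ra 4(j+m+s+1)+2}$ in the $r_0=10$ block are length-two paths), so the absence of the factor $(-1)^{\ell_0}$ cannot be read off from that premise; it must instead be confirmed by the same $\sigma$-twisted commutation check, which your overall scheme already covers.
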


%%%%%%%%%%%%%%%%%%%%%%%%%%%%%%%%%%%%%%%%%%%%%%%%%%%%%%%%%%%%%%%%%%%%%%%%%%%%%%%%%%%%%%%%
%                                          14
%%%%%%%%%%%%%%%%%%%%%%%%%%%%%%%%%%%%%%%%%%%%%%%%%%%%%%%%%%%%%%%%%%%%%%%%%%%%%%%%%%%%%%%%
\begin{pr}[Translates for the case 14]
$({\rm I})$ Let $r_0\in\N$, $r_0<11$. $r_0$-translates of the
elements $Y^{(14)}_t$ are described by the following way.

$(1)$ If $r_0=0$, then $\Omega^{0}(Y_t^{(14)})$ is described with
$(9s\times 6s)$-matrix with one nonzero element that is of the following form{\rm:}
$$b_{5s,7s}=\kappa^\ell(\a_{3(j+m+3)})w_{4(j+m)+3\ra 4(j+m+1)+3}\otimes e_{4j+3}.$$

$(2)$ If $r_0=1$, then $\Omega^{1}(Y_t^{(14)})$ is described with
$(8s\times 7s)$-matrix with the following two nonzero elements{\rm:}
$$b_{j+2s,2s+(3)_s}=-\kappa^\ell(\a_{3(j+m+3)})w_{4(j+m)+3\ra 4(j+m+1)+3}\otimes w_{4j+1\ra 4j+2};$$
$$b_{j+2s,3s+(3)_s}=-\kappa^\ell(\a_{3(j+m+3)})w_{4(j+m)+3\ra 4(j+m+1)+3}\otimes w_{4j+1\ra 4j+2}.$$

$(3)$ If $r_0=2$, then $\Omega^{2}(Y_t^{(14)})$ is described with
$(6s\times 6s)$-matrix with one nonzero element that is of the following form{\rm:}
$$b_{j,(3)_s}=-f_2(s,1)\kappa^\ell(\a_{3(j+m+3)})w_{4(j+m)-1\ra 4(j+m)+3}\otimes e_{4j}.$$

$(4)$ If $r_0=3$, then $\Omega^{3}(Y_t^{(14)})$ is described with
$(7s\times 8s)$-matrix with one nonzero element that is of the following form{\rm:}
$$b_{(j+1)_s,(2)_s}=-f_2(s,1)\kappa^\ell(\a_{3(j+m+3)})w_{4(j+m+1+sf(s,1))+2\ra 4(j+m+1)+3}\otimes w_{4j\ra 4(j+1)}.$$

$(5)$ If $r_0=4$, then $\Omega^{4}(Y_t^{(14)})$ is described with
$(6s\times 9s)$-matrix with one nonzero element that is of the following form{\rm:}
$$b_{s+(j+1)_s,(2)_s}=-f_2(s,1)\kappa^\ell(\a_{3(j+m+3)})e_{4(j+m+1)}\otimes w_{4j\ra 4(j+1)}.$$

$(6)$ If $r_0=5$, then $\Omega^{5}(Y_t^{(14)})$ is described with
$(6s\times 8s)$-matrix with one nonzero element that is of the following form{\rm:}
$$b_{(j+1)_s,(2)_s}=\kappa^{\ell+1}(\a_{3(j+m-1)})w_{4(j+m+1+sf(s,1))+1\ra 4(j+m+2)}\otimes w_{4j\ra 4(j+1)}.$$

$(7)$ If $r_0=6$, then $\Omega^{6}(Y_t^{(14)})$ is described with
$(7s\times 9s)$-matrix with one nonzero element that is of the following form{\rm:}
$$b_{(j+1)_s,s+2-3f(s,1)}=-w_{4(j+m+1)\ra 4(j+m+s+1)+1}\otimes w_{4j\ra 4(j+1)}.$$

$(8)$ If $r_0=7$, then $\Omega^{7}(Y_t^{(14)})$ is described with
$(6s\times 8s)$-matrix with one nonzero element that is of the following form{\rm:}
$$b_{(j+1)_s,(2)_s}=-\kappa^{\ell+1}(\a_{3(j+m-1)})w_{4(j+m+1+sf(s,1))+2\ra 4(j+m+1)+3}\otimes w_{4j\ra 4(j+1)}.$$

$(9)$ If $r_0=8$, then $\Omega^{8}(Y_t^{(14)})$ is described with
$(8s\times 6s)$-matrix with one nonzero element that is of the following form{\rm:}
$$b_{(j+1)_s,s+2-3f(s,1)}=-f_2(s,1)w_{4(j+m)+3\ra 4(j+m+s+1)+2}\otimes w_{4j\ra 4(j+1)}.$$

$(10)$ If $r_0=9$, then $\Omega^{9}(Y_t^{(14)})$ is described with
$(9s\times 7s)$-matrix with one nonzero element that is of the following form{\rm:}
$$b_{j,(2)_s}=\kappa^{\ell+1}(\a_{3(j+m-1)})w_{4(j+m)+3\ra 4(j+m+1)+3}\otimes e_{4j}.$$

$(11)$ If $r_0=10$, then $\Omega^{10}(Y_t^{(14)})$ is described with
$(8s\times 6s)$-matrix with one nonzero element that is of the following form{\rm:}
$$b_{j+s-f(s,1),4s+1}=-f_2(s,1)\kappa^{\ell+1}(\a_{3(j+m-1)})w_{4(j+m)+3\ra 4(j+m+1)+3}\otimes w_{4j+2\ra 4j+3}.$$

\medskip
$({\rm II})$ Represent an arbitrary $t_0\in\N$ in the form
$t_0=11\ell_0+r_0$, where $0\le r_0\le 10.$ Then
$\Omega^{t_0}(Y_t^{(14)})$ is a $\Omega^{r_0}(Y_t^{(14)})$, whose left
components twisted by $\sigma^{\ell_0}$.
\end{pr}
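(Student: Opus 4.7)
The plan is to verify the proposition by induction on $r_0 \in \{0,1,\dots,10\}$, checking at each step that the proposed matrix for $\Omega^{r_0}(Y^{(14)}_t)$ fits into a commutative square with the differentials of the bimodule resolution $Q_\bullet$ constructed in Section \ref{sect_res}. Recall that, by definition, an $r_0$-translate $\varphi_{r_0} = \Omega^{r_0}(Y^{(14)}_t)$ of the cocycle $Y^{(14)}_t \in \Hom_\Lambda(Q_t, R)$ is a homomorphism $Q_{t+r_0} \to Q_{r_0}$ satisfying $\varphi_{r_0-1} \circ d_{t+r_0-1} = d_{r_0-1} \circ \varphi_{r_0}$, with $\varphi_0$ a lift of $Y^{(14)}_t$ itself. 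Since each such lift is unique up to homotopy, it suffices to exhibit one that commutes.

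First I would treat the base case $r_0 = 0$: the matrix listed in part $(1)$ has its single nonzero entry $b_{5s,7s} = \kappa^\ell(\a_{3(j+m+3)}) w_{4(j+m)+3 \ra 4(j+m+1)+3} \otimes e_{4j+3}$, which agrees with $Y^{(14)}_t$ itself when composed with $\varepsilon$, so this is indeed a valid lift. Then for each $r_0 \in \{1,2,\dots,10\}$ I would perform a straightforward but lengthy verification: using the block descriptions of $d_{r_0-1}$ and $d_{t+r_0-1}$ (note that $d_{t+r_0-1}$ equals $d_{r_0-1}$ with left tensor components twisted by $\sigma^\ell$, by Theorem \ref{resol_thm}), I would compute the products of matrices entry-by-entry and confirm that $\varphi_{r_0-1} d_{t+r_0-1}$ and $d_{r_0-1} \varphi_{r_0}$ coincide. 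The sparsity of all matrices involved --- each has only a few nonzero block entries, concentrated near rows/columns of the form $(j+1)_s$, $j \pm ks$ --- keeps each verification to a short local computation on a single path in $\mathcal{Q}_s$.

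Two subtleties deserve particular care. First, the coefficients $\kappa^\ell(\a_{3(\cdot)})$, $\kappa^{\ell+1}(\a_{3(\cdot)})$ appearing in the various formulas must be reconciled: the shift from $\kappa^\ell$ to $\kappa^{\ell+1}$ at $r_0 \ge 5$ reflects that a path through the period boundary of the resolution picks up one application of $\sigma$, and this shift must be tracked via the relations $\sigma(\a_i) = \pm\a_{3(n+s)+i}$ given in Section \ref{sect_res}. Second, the factors $f_2(s,1)$ and $sf(s,1)$ appearing in many entries encode the difference between the cases $s = 1$ and $s > 1$, which arise because certain indices that are distinct for $s > 1$ collapse when $s = 1$; one must verify that the proposed formulas give a well-defined and commuting lift in both regimes.

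The main obstacle is purely combinatorial bookkeeping: there is no conceptual difficulty, but one must execute eleven separate matrix-product verifications, each involving nontrivial index arithmetic modulo $s$, $2s$, and $(n+2)s$, and correctly track signs and automorphism twists. Once all eleven translates are verified, part $({\rm II})$ follows immediately from Theorem \ref{resol_thm}: since $Q_{11\ell_0 + r_0}$ (respectively $d_{11\ell_0 + r_0-1}$) is obtained from $Q_{r_0}$ (resp.\ $d_{r_0-1}$) by applying $\sigma^{\ell_0}$ to left tensor components, a chain map lifting $\Omega^{r_0}(Y^{(14)}_t)$ through $\ell_0$ further periods is obtained by applying $\sigma^{\ell_0}$ to the left tensor components of the matrix $\Omega^{r_0}(Y^{(14)}_t)$, which is exactly the asserted description.
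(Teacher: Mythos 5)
Your overall strategy is the right one and is, implicitly, the paper's own: the paper prints no proof for the translate propositions, and the only available justification is exactly the direct verification of the chain-map squares $d_{r_0-1}\circ\varphi_{r_0}=\varphi_{r_0-1}\circ d_{t+r_0-1}$ that you describe, together with the observation that part (II) follows from the $\sigma$-periodicity of the resolution in Theorem \ref{resol_thm}. Your remarks about the uniqueness of lifts up to homotopy, about the $s=1$ versus $s>1$ bookkeeping, and about part (II) are all correct.

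There is, however, one concrete error in your setup that would make the computation fail as literally described. You assert that $d_{t+r_0-1}$ ``equals $d_{r_0-1}$ with left tensor components twisted by $\sigma^{\ell}$.'' For a type-14 generator the degree satisfies $t=11\ell+6$ (condition (14) has residue $r=6$), so $t+r_0-1=11\ell+(r_0+5)$, and by Theorem \ref{resol_thm} the differential $d_{t+r_0-1}$ is obtained from $d_{(r_0+5)_{11}}$ --- not from $d_{r_0-1}$ --- by twisting the left tensor components (by $\sigma^{\ell}$ for $r_0\le 5$ and by $\sigma^{\ell+1}$ for $r_0\ge 6$). With your identification the factors are not even composable: e.g.\ for $r_0=1$ the source-side differential must be a twist of $d_6$ (a map $Q_7\to Q_6$, matching the $(8s\times 7s)$ shape of $\Omega^1(Y^{(14)}_t)$ with source of rank $8s$), whereas a twist of $d_0$ has the wrong domain and codomain. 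Curiously, you do track the period crossing correctly for the coefficients (the switch from $\kappa^{\ell}$ to $\kappa^{\ell+1}$ at $r_0\ge 5$, which is where $t+r_0$ first reaches $11(\ell+1)$), so the residue $r=6$ is implicitly present in your argument; the slip is only in naming the differentials. Once that index is corrected, the eleven verifications proceed as you outline.
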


%%%%%%%%%%%%%%%%%%%%%%%%%%%%%%%%%%%%%%%%%%%%%%%%%%%%%%%%%%%%%%%%%%%%%%%%%%%%%%%%%%%%%%%%
%                                          15
%%%%%%%%%%%%%%%%%%%%%%%%%%%%%%%%%%%%%%%%%%%%%%%%%%%%%%%%%%%%%%%%%%%%%%%%%%%%%%%%%%%%%%%%
\begin{pr}[Translates for the case 15]
$({\rm I})$ Let $r_0\in\N$, $r_0<11$. $r_0$-translates of the
elements $Y^{(15)}_t$ are described by the following way.

$(1)$ If $r_0=0$, then $\Omega^{0}(Y_t^{(15)})$ is described with
$(9s\times 6s)$-matrix with the following elements $b_{ij}${\rm:}

If $0\le j<s$, then $$b_{ij}=
\begin{cases}
\kappa^\ell(\a_{3(j+m+3)})e_{4(j+m)}\otimes e_{4j},\quad i=j;\\
0,\quad\text{otherwise.}
\end{cases}$$

If $s\le j<2s$, then $b_{ij}=0$.

If $2s\le j<3s$, then $$b_{ij}=
\begin{cases}
w_{4(j+m)+1\ra 4(j+m)+2}\otimes e_{4j+1},\quad i=j-s;\\
0,\quad\text{otherwise.}
\end{cases}$$

If $3s\le j<4s$, then $b_{ij}=0$.

If $4s\le j<5s$, then $$b_{ij}=
\begin{cases}
w_{4(j+m+s)+1\ra 4(j+m+s)+2}\otimes e_{4(j+s)+1},\quad i=j-2s;\\
0,\quad\text{otherwise.}
\end{cases}$$

If $5s\le j<7s$, then $b_{ij}=0$.

If $7s\le j<8s$, then $$b_{ij}=
\begin{cases}
\kappa^\ell(\a_{3(j+m+3)})e_{4(j+m)+3}\otimes e_{4j+3},\quad i=j-2s;\\
0,\quad\text{otherwise.}
\end{cases}$$

If $8s\le j<9s$, then $b_{ij}=0$.

$(2)$ If $r_0=1$, then $\Omega^{1}(Y_t^{(15)})$ is described with
$(8s\times 7s)$-matrix with the following elements $b_{ij}${\rm:}

If $0\le j<2s$, then $$b_{ij}=
\begin{cases}
w_{4(j+m+s)+1\ra 4(j+m+s)+2}\otimes e_{4j},\quad i=(j+s)_{2s};\\
0,\quad\text{otherwise.}
\end{cases}$$

If $2s\le j<4s$, then $$b_{ij}=
\begin{cases}
\kappa_1w_{4(j+m)+2\ra 4(j+m)+3}\otimes e_{4j+1},\quad i=j;\\
\kappa_1e_{4(j+m)+3}\otimes w_{4j+1\ra 4j+2},\quad i=j+2s;\\
0,\quad\text{otherwise,}
\end{cases}$$
where $\kappa_1=-\kappa^\ell(\a_{3(j+m+3)})$.

If $4s\le j<5s$, then $b_{ij}=0$.

If $5s\le j<6s$, then $$b_{ij}=
\begin{cases}
-\kappa^\ell(\a_{3(j+m+4)})e_{4(j+m+1)}\otimes w_{4j+2\ra 4j+3},\quad i=j+s;\\
0,\quad\text{otherwise.}
\end{cases}$$

If $6s\le j<7s$, then $$b_{ij}=
\begin{cases}
w_{4(j+m+1)\ra 4(j+m+s+1)+1}\otimes e_{4j+3},\quad i=j;\\
0,\quad\text{otherwise.}
\end{cases}$$

If $7s\le j<8s$, then $b_{ij}=0$.

$(3)$ If $r_0=2$, then $\Omega^{2}(Y_t^{(15)})$ is described with
$(6s\times 6s)$-matrix with the following elements $b_{ij}${\rm:}

If $0\le j<s$, then $$b_{ij}=
\begin{cases}
-\kappa^\ell(\a_{3(j+m+2)})e_{4(j+m)-1}\otimes e_{4j},\quad i=j;\\
0,\quad\text{otherwise.}
\end{cases}$$

If $s\le j<3s$, then $$b_{ij}=
\begin{cases}
-w_{4(j+m)+1\ra 4(j+m)+2}\otimes w_{4(j+s)+1\ra 4(j+s)+2},\quad i=j+2s;\\
0,\quad\text{otherwise.}
\end{cases}$$

If $3s\le j<5s$, then $b_{ij}=0$.

If $5s\le j<6s$, then $$b_{ij}=
\begin{cases}
\kappa^\ell(\a_{3(j+m+4)})e_{4(j+m+1)}\otimes e_{4j+3},\quad i=j;\\
0,\quad\text{otherwise.}
\end{cases}$$

$(4)$ If $r_0=3$, then $\Omega^{3}(Y_t^{(15)})$ is described with
$(7s\times 8s)$-matrix with the following elements $b_{ij}${\rm:}

If $0\le j<s$, then $$b_{ij}=
\begin{cases}
-\kappa^\ell(\a_{3(j+m+3)})w_{4(j+m)+2\ra 4(j+m)+3}\otimes e_{4j},\quad i=j;\\
-\kappa^\ell(\a_{3(j+m+3)})e_{4(j+m)+3}\otimes w_{4j\ra 4j+1},\quad i=j+2s;\\
0,\quad\text{otherwise.}
\end{cases}$$

If $s\le j<3s$, then $$b_{ij}=
\begin{cases}
-\kappa^\ell(\a_{3(j+m+4)})e_{4(j+m+1)}\otimes w_{4(j+s)+1\ra 4(j+s)+2},\quad i=j+3s;\\
0,\quad\text{otherwise.}
\end{cases}$$

If $3s\le j<5s$, then $$b_{ij}=
\begin{cases}
w_{4(j+m+1)\ra 4(j+m+s+1)+1}\otimes e_{4(j+s)+2},\quad i=j+s;\\
0,\quad\text{otherwise.}
\end{cases}$$

If $5s\le j<7s$, then $$b_{ij}=
\begin{cases}
w_{4(j+m+1)+1\ra 4(j+m+1)+2}\otimes e_{4j+3},\quad i=6s+(j)_{2s};\\
0,\quad\text{otherwise.}
\end{cases}$$

$(5)$ If $r_0=4$, then $\Omega^{4}(Y_t^{(15)})$ is described with
$(6s\times 9s)$-matrix with the following elements $b_{ij}${\rm:}

If $0\le j<s$, then $$b_{ij}=
\begin{cases}
-\kappa^\ell(\a_{3(j+m+3)})e_{4(j+m)}\otimes e_{4j},\quad i=j+s;\\
0,\quad\text{otherwise.}
\end{cases}$$

If $s\le j<3s$, then $b_{ij}=0$.

If $3s\le j<5s$, then $$b_{ij}=
\begin{cases}
w_{4(j+m+s)+1\ra 4(j+m+s)+2}\otimes e_{4(j+s)+2},\quad i=j+2s-sf_0(j,4s);\\
0,\quad\text{otherwise.}
\end{cases}$$

If $5s\le j<6s$, then $$b_{ij}=
\begin{cases}
-\kappa^\ell(\a_{3(j+m+3)})e_{4(j+m)+3}\otimes e_{4j+3},\quad i=j+3s;\\
0,\quad\text{otherwise.}
\end{cases}$$

$(6)$ If $r_0=5$, then $\Omega^{5}(Y_t^{(15)})$ is described with
$(6s\times 8s)$-matrix with the following elements $b_{ij}${\rm:}

If $0\le j<s$, then $$b_{ij}=
\begin{cases}
\kappa_1w_{4(j+m+s)+1\ra 4(j+m+1)}\otimes e_{4j},\quad i=j+s;\\
\kappa_1e_{4(j+m+1)}\otimes w_{4j\ra 4j+1},\quad i=j+2s;\\
-\kappa_1w_{4(j+m)+3\ra 4(j+m+1)}\otimes w_{4j\ra 4j+2},\quad i=j+4s;\\
0,\quad\text{otherwise,}
\end{cases}$$
where $\kappa_1=-\kappa^\ell(\a_{3(j+m+4)})$.

If $s\le j<3s$, then $$b_{ij}=
\begin{cases}
e_{4(j+m+s+1)+1}\otimes w_{4(j+s)+1\ra 4(j+1)},\quad i=(j+s+1)_{2s};\\
-w_{4(j+m)+3\ra 4(j+m+s+1)+1}\otimes w_{4(j+s)+1\ra 4(j+s)+2},\quad i=j+3s;\\
0,\quad\text{otherwise.}
\end{cases}$$

If $3s\le j<5s$, then $$b_{ij}=
\begin{cases}
-w_{4(j+m+s+1)+1\ra 4(j+m+s+1)+2}\otimes w_{4(j+s)+2\ra 4(j+1)},\quad i=(j+s+1)_{2s};\\
w_{4(j+m)+3\ra 4(j+m+s+1)+2}\otimes e_{4(j+s)+2},\quad i=j+s;\\
0,\quad\text{otherwise.}
\end{cases}$$

If $5s\le j<6s$, then $$b_{ij}=
\begin{cases}
-\kappa_1w_{4(j+m+s+1)+1\ra 4(j+m+1)+3}\otimes w_{4j+3\ra 4(j+1)},\quad i=(j+s+1)_{2s};\\
\kappa_1e_{4(j+m+1)+3}\otimes w_{4j+3\ra 4(j+1)+2},\quad i=4s+(j+1)_{2s};\\
-\kappa_1w_{4(j+m+s+1)+2\ra 4(j+m+1)+3}\otimes e_{4j+3},\quad i=j+s;\\
0,\quad\text{otherwise,}
\end{cases}$$
where $\kappa_1=\kappa^\ell(\a_{3(j+m+4)})$.

$(7)$ If $r_0=6$, then $\Omega^{6}(Y_t^{(15)})$ is described with
$(7s\times 9s)$-matrix with the following elements $b_{ij}${\rm:}

If $0\le j<s$, then $$b_{ij}=
\begin{cases}
e_{4(j+m)+1}\otimes w_{4j\ra 4j+1},\quad i=j+s;\\
0,\quad\text{otherwise.}
\end{cases}$$

If $s\le j<2s$, then $$b_{ij}=
\begin{cases}
-w_{4(j+m)\ra 4(j+m)+1}\otimes e_{4j},\quad i=j-s;\\
e_{4(j+m)+1}\otimes w_{4j\ra 4j+1},\quad i=j+2s;\\
0,\quad\text{otherwise.}
\end{cases}$$

If $2s\le j<4s$, then $b_{ij}=0$.

If $4s\le j<5s$, then $$b_{ij}=
\begin{cases}
\kappa^\ell(\a_{3(j+m+3)})w_{4(j+m)+2\ra 4(j+m)+3}\otimes e_{4j+2},\quad i=j+s;\\
0,\quad\text{otherwise.}
\end{cases}$$

If $5s\le j<6s$, then $$b_{ij}=
\begin{cases}
-\kappa_1w_{4(j+m)+2\ra 4(j+m)+3}\otimes e_{4j+2},\quad i=j+s;\\
\kappa_1e_{4(j+m)+3}\otimes w_{4j+2\ra 4j+3},\quad i=j+2s;\\
0,\quad\text{otherwise,}
\end{cases}$$
where $\kappa_1=\kappa^\ell(\a_{3(j+m+3)})$.

If $6s\le j<7s$, then $$b_{ij}=
\begin{cases}
-f_2(j,7s-1)\kappa_1e_{4(j+m+1)}\otimes e_{4j+3},\quad i=j+2s;\\
\kappa_1e_{4(j+m+1)}\otimes w_{4j+3\ra 4(j+1)},\quad i=(j+1)_s,\text{ }j<7s-1;\\
0,\quad\text{otherwise,}
\end{cases}$$
where $\kappa_1=\kappa^\ell(\a_{3(j+m+4)})$.

$(8)$ If $r_0=7$, then $\Omega^{7}(Y_t^{(15)})$ is described with
$(6s\times 8s)$-matrix with the following elements $b_{ij}${\rm:}

If $0\le j<s$, then $$b_{ij}=
\begin{cases}
\kappa_1w_{4(j+m+s)+2\ra 4(j+m)+3}\otimes e_{4j},\quad i=j+s;\\
-\kappa_1e_{4(j+m)+3}\otimes w_{4j\ra 4j+1},\quad i=j+2s;\\
0,\quad\text{otherwise,}
\end{cases}$$
where $\kappa_1=\kappa^\ell(\a_{3(j+m+3)})$.

If $s\le j<3s$, then $$b_{ij}=
\begin{cases}
e_{4(j+m+s+1)+2}\otimes w_{4(j+s)+1\ra 4(j+1)},\quad i=(j+s+1)_{2s};\\
-w_{4(j+m+1)\ra 4(j+m+s+1)+2}\otimes w_{4(j+s)+1\ra 4(j+s)+2},\quad i=j+3s;\\
0,\quad\text{otherwise.}
\end{cases}$$

If $3s\le j<5s$, then $$b_{ij}=
\begin{cases}
-w_{4(j+m+1)\ra 4(j+m+1)+1}\otimes e_{4(j+s)+2},\quad i=j+s;\\
0,\quad\text{otherwise.}
\end{cases}$$

If $5s\le j<6s$, then $$b_{ij}=
\begin{cases}
-\kappa_1e_{4(j+m+2)}\otimes w_{4j+3\ra 4(j+s+1)+2},\quad i=4s+(j+s+1)_{2s};\\
\kappa_1w_{4(j+m+s+1)+1\ra 4(j+m+2)}\otimes e_{4j+3},\quad i=j+s;\\
\kappa_1w_{4(j+m+s+1)+2\ra 4(j+m+2)}\otimes w_{4j+3\ra 4(j+1)},\quad i=s+(j+1)_s,\text{ }j=6s-1;\\
-\kappa_1w_{4(j+m+1)+3\ra 4(j+m+2)}\otimes w_{4j+3\ra 4(j+1)+1},\quad i=2s+(j+1)_s,\text{ }j=6s-1;\\
0,\quad\text{otherwise,}
\end{cases}$$
where $\kappa_1=\kappa^{\ell+1}(\a_{3(j+m-1)})$.

$(9)$ If $r_0=8$, then $\Omega^{8}(Y_t^{(15)})$ is described with
$(8s\times 6s)$-matrix with the following elements $b_{ij}${\rm:}

If $0\le j<s$, then $$b_{ij}=
\begin{cases}
e_{4(j+m)+2}\otimes w_{4j\ra 4j+1},\quad i=j+s;\\
w_{4(j+m)+1\ra 4(j+m)+2}\otimes w_{4j\ra 4(j+s)+2},\quad i=j+4s;\\
0,\quad\text{otherwise.}
\end{cases}$$

If $s\le j<2s$, then $$b_{ij}=
\begin{cases}
-w_{4(j+m)-1\ra 4(j+m)+2}\otimes e_{4j},\quad i=j-s;\\
-e_{4(j+m)+2}\otimes w_{4j\ra 4j+1},\quad i=j+s;\\
-w_{4(j+m)+1\ra 4(j+m)+2}\otimes w_{4j\ra 4(j+s)+2},\quad i=j+2s;\\
0,\quad\text{otherwise.}
\end{cases}$$

If $2s\le j<4s$, then $$b_{ij}=
\begin{cases}
\kappa^\ell(\a_{3(j+m+3)})e_{4(j+m)+3}\otimes w_{4j+1\ra 4(j+1)},\quad i=(j+1)_s,\text{ }j<3s-1\text{ or }j=4s-1;\\
0,\quad\text{otherwise.}
\end{cases}$$

If $4s\le j<6s$, then $$b_{ij}=
\begin{cases}
\kappa_1w_{4(j+m+s)+1\ra 4(j+m+1)}\otimes e_{4j+2},\quad i=j-s;\\
-\kappa_1e_{4(j+m+1)}\otimes w_{4j+2\ra 4j+3},\quad i=j+s,\text{ }j<5s;\\
0,\quad\text{otherwise,}
\end{cases}$$
where $\kappa_1=-\kappa^{\ell+1}(\a_{3(j+m-2)})$.

If $6s\le j<7s$, then $$b_{ij}=
\begin{cases}
-w_{4(j+m+1)\ra 4(j+m+1)+1}\otimes e_{4j+3},\quad i=j-s;\\
0,\quad\text{otherwise.}
\end{cases}$$

If $7s\le j<8s$, then $b_{ij}=0$.

$(10)$ If $r_0=9$, then $\Omega^{9}(Y_t^{(15)})$ is described with
$(9s\times 7s)$-matrix with the following elements $b_{ij}${\rm:}

If $0\le j<s$, then $$b_{ij}=
\begin{cases}
\kappa^{\ell+1}(\a_{3(j+m-3)})e_{4(j+m)+3}\otimes e_{4j},\quad i=j;\\
0,\quad\text{otherwise.}
\end{cases}$$

If $s\le j<2s$, then $$b_{ij}=
\begin{cases}
-\kappa^{\ell+1}(\a_{3(j+m-2)})e_{4(j+m+1)}\otimes w_{4j\ra 4(j+s)+1},\quad i=j;\\
0,\quad\text{otherwise.}
\end{cases}$$

If $2s\le j<4s$, then $$b_{ij}=
\begin{cases}
-w_{4(j+m+1)\ra 4(j+m+s+1)+1}\otimes e_{4j+1},\quad i=j-s;\\
0,\quad\text{otherwise.}
\end{cases}$$

If $4s\le j<5s$, then $b_{ij}=0$.

If $5s\le j<6s$, then $$b_{ij}=
\begin{cases}
-w_{4(j+m+1)+1\ra 4(j+m+1)+2}\otimes e_{4(j+s)+2},\quad i=j-2s;\\
0,\quad\text{otherwise.}
\end{cases}$$

If $6s\le j<7s$, then $b_{ij}=0$.

If $7s\le j<8s$, then $$b_{ij}=
\begin{cases}
-w_{4(j+m+s+1)+1\ra 4(j+m+s+1)+2}\otimes e_{4j+2},\quad i=j-3s;\\
0,\quad\text{otherwise.}
\end{cases}$$

If $8s\le j<9s$, then $$b_{ij}=
\begin{cases}
\kappa_1w_{4(j+m+1)+2\ra 4(j+m+1)+3}\otimes e_{4j+3},\quad i=j-3s;\\
-\kappa_1e_{4(j+m+1)+3}\otimes w_{4j+3\ra 4(j+1)},\quad i=(j+1)_s,\text{ }j=9s-1;\\
0,\quad\text{otherwise,}
\end{cases}$$
where $\kappa_1=-\kappa^{\ell+1}(\a_{3(j+m-2)})$.

$(11)$ If $r_0=10$, then $\Omega^{10}(Y_t^{(15)})$ is described with
$(8s\times 6s)$-matrix with the following elements $b_{ij}${\rm:}

If $0\le j<s$, then $$b_{ij}=
\begin{cases}
-w_{4(j+m)\ra 4(j+m)+1}\otimes e_{4j},\quad i=j;\\
0,\quad\text{otherwise.}
\end{cases}$$

If $s\le j<2s$, then $b_{ij}=0$.

If $2s\le j<4s$, then $$b_{ij}=
\begin{cases}
\kappa_1w_{4(j+m+s)+1\ra 4(j+m+1)}\otimes e_{4j+1},\quad i=j-s;\\
\kappa_1e_{4(j+m+1)}\otimes w_{4j+1\ra 4(j+1)},\quad i=(j+1)_s,\text{ }j<3s-1\text{ or }j=4s-1;\\
0,\quad\text{otherwise,}
\end{cases}$$
where $\kappa_1=-f_1(j,3s)\kappa^\ell(\a_{3(j+m-2)})$.

If $4s\le j<6s$, then $$b_{ij}=
\begin{cases}
\kappa_1w_{4(j+m+s)+2\ra 4(j+m)+3}\otimes e_{4j+2},\quad i=j-s;\\
\kappa_1e_{4(j+m)+3}\otimes w_{4j+2\ra 4j+3},\quad i=j+s,\text{ }j<5s;\\
0,\quad\text{otherwise,}
\end{cases}$$
where $\kappa_1=\kappa^{\ell+1}(\a_{3(j+m-3)})$.

If $6s\le j<7s$, then $$b_{ij}=
\begin{cases}
w_{4(j+m+1)+1\ra 4(j+m+1)+2}\otimes w_{4j+3\ra 4(j+s+1)+1},\quad i=s+(j+s+1)_{2s};\\
-w_{4(j+m)+3\ra 4(j+m+1)+2}\otimes e_{4j+3},\quad i=j-s;\\
0,\quad\text{otherwise.}
\end{cases}$$

If $7s\le j<8s$, then $$b_{ij}=
\begin{cases}
-w_{4(j+m+1)+1\ra 4(j+m+1)+2}\otimes w_{4j+3\ra 4(j+s+1)+1},\quad i=s+(j+s+1)_{2s};\\
0,\quad\text{otherwise.}
\end{cases}$$

\medskip
$({\rm II})$ Represent an arbitrary $t_0\in\N$ in the form
$t_0=11\ell_0+r_0$, where $0\le r_0\le 10.$ Then
$\Omega^{t_0}(Y_t^{(15)})$ is a $\Omega^{r_0}(Y_t^{(15)})$, whose left
components twisted by $\sigma^{\ell_0}$.
\end{pr}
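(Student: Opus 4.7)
The plan is to verify, for each $r_0 \in \{0,1,\ldots,10\}$, that the matrix $\Omega^{r_0}(Y_t^{(15)})$ described in the statement is a valid $r_0$-th translate of the cocycle $Y_t^{(15)}$, and then derive part $({\rm II})$ from the periodicity of the resolution established in Theorem \ref{resol_thm}. Recall that by definition a sequence $\{\varphi_i\colon Q_{t+i}\rightarrow Q_i\}_{i\ge 0}$ is a lifting of $Y_t^{(15)}$ to a chain map if $\varphi_0=Y_t^{(15)}$ and the chain-map identity
\begin{equation*}
d_{i-1}\circ\varphi_i=\varphi_{i-1}\circ d_{t+i-1}
\end{equation*}
holds for every $i\ge 1$. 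So the plan reduces to establishing these eleven identities with $\varphi_{r_0}=\Omega^{r_0}(Y_t^{(15)})$ as described.

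First I would fix notation: for each $r_0$, both sides of the identity are matrices whose $(i,j)$-entries lie in $e_{p}Re_{q}\otimes e_{q'}Re_{p'}$ for appropriate vertex indices, and their product is computed via the rule $(a\otimes b)\cdot(c\otimes d)=ac\otimes db$, with nonzero paths only those surviving in $R=R_s^\prime=K[\mathcal{Q}_s]/I^\prime$. I would then proceed $r_0$ by $r_0$: for each fixed $r_0$, partition $j$ into the same ranges $0\le j<s$, $s\le j<2s$, etc.\ that appear in both the description of $d_{r_0-1}$ (from the differential formulas for $d_0,\ldots,d_{10}$) and the description of $\Omega^{r_0}(Y_t^{(15)})$, and compute the two matrix products entry by entry. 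Each nonzero entry on either side corresponds to a pair of composable nonzero paths in $\mathcal{Q}_s$, and the required equality reduces either to coincidence of tensor components or to one of the defining commutativity relations
\begin{equation*}
\a_{3t+2}\a_{3t+1}\a_{3t}=\a_{3(t+s)+2}\a_{3(t+s)+1}\a_{3(t+s)},
\end{equation*}
together with the zero-relations $\a\g\a=0$ and the vanishing of paths of length $5$. The coefficients $\kappa^\ell(\cdot)$ keep track of the signs and scalars introduced by the $\sigma^\ell$-twist and reconcile with Remark~\ref{note_brev}'s multiplication convention.

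Once the eleven identities are verified, part $({\rm II})$ follows mechanically: by Theorem \ref{resol_thm}, $Q_{11\ell_0+r_0}$ and $d_{11\ell_0+r_0}$ are obtained from $Q_{r_0}$ and $d_{r_0}$ by applying $\sigma^{\ell_0}$ to the left tensor components, so if $\varphi_{r_0}=\Omega^{r_0}(Y_t^{(15)})$ satisfies the chain identities, then its $\sigma^{\ell_0}$-twist on left components satisfies them at level $11\ell_0+r_0$. The sign $(-1)^{\ell_0}$ does not appear in this proposition because all nonzero entries of $\Omega^{r_0}(Y_t^{(15)})$ carry coefficients of the form $\kappa^\ell(\a)$ with $\a\in\{\a_{3(j+m+3)},\a_{3(j+m+4)},\ldots\}$ already absorbing the $\sigma$-action; applying $\sigma^{\ell_0}$ to the left component simply increments $\ell$ to $\ell+\ell_0$ and uses $\kappa^{\ell+\ell_0}=\kappa^\ell\kappa^{\ell_0}$ on products.

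The main obstacle is purely combinatorial-algebraic: the number of index ranges and subcases (each $r_0$ produces on the order of $4$ to $10$ $j$-strata, and each entry requires checking indices modulo $s$, $2s$, and $6s$, together with corner cases at $j=s-1$, $j=2s-1$, and $j=(s-2)_s$). The substantive content is always one of the defining relations of $R_s^\prime$ or a length-$5$ vanishing, but the bookkeeping for the matrix entries---tracking which summand of $Q_i^\prime$ is hit and with which sign---is where all the work lies. I would organize the calculation by always reading off from the formulas for $d_{r_0-1}$ and $\Omega^{r_0-1}(Y_t^{(15)})$ the unique nonzero entries contributing to each $(i,j)$-entry of the composition, and matching them term by term against the proposed $\Omega^{r_0}(Y_t^{(15)})$ post-composed with $d_{t+r_0-1}$; the lifting is unique up to homotopy, so once existence of some lift agreeing with the stated matrices is verified in each degree, we are done.
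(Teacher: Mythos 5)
Your proposal is correct and matches the paper's (implicit) approach: the paper states these translate propositions without written proof, the intended justification being exactly the entrywise verification of the lifting identities $d_{r_0-1}\circ\varphi_{r_0}=\varphi_{r_0-1}\circ d_{t+r_0-1}$ against the explicit differentials, with part $({\rm II})$ following from the $\sigma$-periodicity of the resolution established in Theorem \ref{resol_thm}. The only caveat is that your text is an outline of the computation rather than the computation itself, but that is the nature of this statement and the paper offers no more.
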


%%%%%%%%%%%%%%%%%%%%%%%%%%%%%%%%%%%%%%%%%%%%%%%%%%%%%%%%%%%%%%%%%%%%%%%%%%%%%%%%%%%%%%%%
%                                          16
%%%%%%%%%%%%%%%%%%%%%%%%%%%%%%%%%%%%%%%%%%%%%%%%%%%%%%%%%%%%%%%%%%%%%%%%%%%%%%%%%%%%%%%%
\begin{pr}[Translates for the case 16]
$({\rm I})$ Let $r_0\in\N$, $r_0<11$. $r_0$-translates of the
elements $Y^{(16)}_t$ are described by the following way.

$(1)$ If $r_0=0$, then $\Omega^{0}(Y_t^{(16)})$ is described with
$(8s\times 6s)$-matrix with the following two nonzero elements{\rm:}
$$b_{0,0}=w_{4(j+m)\ra 4(j+m)+2}\otimes e_{4j};$$
$$b_{s,2s}=w_{4(j+m)+1\ra 4(j+m)+3}\otimes e_{4j+1}.$$

$(2)$ If $r_0=1$, then $\Omega^{1}(Y_t^{(16)})$ is described with
$(6s\times 7s)$-matrix with the following nonzero elements{\rm:}
$$b_{j,0}=w_{4(j+m)+1\ra 4(j+m)+3}\otimes e_{4j};$$
$$b_{(j+1)_s,3s-1}=w_{4(j+m+s+1)+1\ra 4(j+m+s+1)+2}\otimes w_{4(j+s)+1\ra 4(j+1)};$$
$$b_{2s+(j+1)_s,3s-1}=e_{4(j+m+s+1)+2}\otimes w_{4(j+s)+1\ra 4(j+s+1)+1};$$
$$b_{(j+1)_s,6s-1}=w_{4(j+m+1)+1\ra 4(j+m+2)}\otimes w_{4j+3\ra 4(j+1)};$$
$$b_{j+s,6s-1}=w_{4(j+m+1)\ra 4(j+m+2)}\otimes e_{4j+3}.$$

$(3)$ If $r_0=2$, then $\Omega^{2}(Y_t^{(16)})$ is described with
$(7s\times 6s)$-matrix with the following nonzero elements{\rm:}
$$b_{(j+1)_s,2s-1}=w_{4(j+m)+3\ra 4(j+m+1)}\otimes w_{4(j+s)+1\ra 4(j+1)};$$
$$b_{j,2s-1}=w_{4(j+m+s)+2\ra 4(j+m+1)}\otimes e_{4(j+s)+1};$$
$$b_{j,3s-1}=w_{4(j+m+s)+2\ra 4(j+m+1)}\otimes e_{4(j+s)+1};$$
$$b_{j,4s-1}=w_{4(j+m)+1\ra 4(j+m+1)+1}\otimes e_{4(j+s)+2}.$$

$(4)$ If $r_0=3$, then $\Omega^{3}(Y_t^{(16)})$ is described with
$(6s\times 8s)$-matrix with the following nonzero elements{\rm:}
$$b_{j+s,s-1}=w_{4(j+m+s)+2\ra 4(j+m+1)}\otimes e_{4j};$$
$$b_{j+2s,s-1}=w_{4(j+m)+3\ra 4(j+m+1)}\otimes w_{4j\ra 4j+1};$$
$$b_{j+s,2s-1}=w_{4(j+m)+3\ra 4(j+m+1)+1}\otimes e_{4(j+s)+1}.$$

$(5)$ If $r_0=4$, then $\Omega^{4}(Y_t^{(16)})$ is described with
$(6s\times 9s)$-matrix with the following nonzero elements{\rm:}
$$b_{(j+1)_s,(s-2)_s}=w_{4(j+m)+3\ra 4(j+m+1)}\otimes w_{4j\ra 4(j+1)};$$
$$b_{s+(j+1)_s,(s-2)_s}=e_{4(j+m+1)}\otimes w_{4j\ra 4(j+1)};$$
$$b_{(j+1)_s,2s-(2)_s}=w_{4(j+m)+3\ra 4(j+m+1)+1}\otimes w_{4(j+s)+1\ra 4(j+1)};$$
$$b_{s+(j+1)_s,2s-(2)_s}=w_{4(j+m+1)\ra 4(j+m+1)+1}\otimes w_{4(j+s)+1\ra 4(j+1)};$$
$$b_{j+2s+f(s,1),4s-(2)_s}=w_{4(j+m)+2\ra 4(j+m+1)+2}\otimes e_{4(j+s)+2}.$$

$(6)$ If $r_0=5$, then $\Omega^{5}(Y_t^{(16)})$ is described with
$(7s\times 8s)$-matrix with the following two nonzero elements{\rm:}
$$b_{(j+1)_s,2s-2}=e_{4(j+m+s+1)+1}\otimes w_{4j\ra 4(j+1)};$$
$$b_{j,3s-(2)_s}=w_{4(j+m+1)\ra 4(j+m+s+1)+2}\otimes e_{4j+1}.$$

$(7)$ If $r_0=6$, then $\Omega^{6}(Y_t^{(16)})$ is described with
$(6s\times 9s)$-matrix with the following nonzero elements{\rm:}
$$b_{j+s+2f(s,1),(s-2)_s}=w_{4(j+m+f(s,1))+1\ra 4(j+m)+3}\otimes w_{4j\ra 4(j+f(s,1))+1};$$
$$b_{(j+1)_s,2s-3+2f(s,1)}=w_{4(j+m+1)\ra 4(j+m+1)+2}\otimes w_{4(j+s)+1\ra 4(j+1)};$$
$$b_{s+(j+1)_s,6s-3}=w_{4(j+m+s+1)+1\ra 4(j+m+2)}\otimes w_{4j+3\ra 4(j+s+1)+1},\text{ }s>1.$$

$(8)$ If $r_0=7$, then $\Omega^{7}(Y_t^{(16)})$ is described with
$(8s\times 8s)$-matrix with the following nonzero elements{\rm:}
$$b_{s+(j+1)_s-f(s,1),(s-3)_s}=e_{4(j+m+s+1)+2}\otimes w_{4j\ra 4(j+1)};$$
$$b_{(j+1)_s+f(s,1),2s+(s-3)_s}=w_{4(j+m+1)+2\ra 4(j+m+1)+3}\otimes w_{4j+1\ra 4(j+1)};$$
$$b_{2s+(j+1)_s+f(s,1),7s+(s-3)_s}=w_{4(j+m+1)+3\ra 4(j+m+s+2)+1}\otimes w_{4j+3\ra 4(j+s+1)+1};$$
$$b_{4s+(j+1)_s+f(s,1),7s+(s-3)_s}=w_{4(j+m+2)\ra 4(j+m+s+2)+1}\otimes w_{4j+3\ra 4(j+s+1)+2};$$
$$b_{5s+(j+1)_s-f(s,1),7s+(s-3)_s}=w_{4(j+m+2)\ra 4(j+m+s+2)+1}\otimes w_{4j+3\ra 4(j+1)+2}.$$

$(9)$ If $r_0=8$, then $\Omega^{8}(Y_t^{(16)})$ is described with
$(9s\times 6s)$-matrix with the following nonzero elements{\rm:}
$$b_{(j+1)_s,(s-3)_s}=e_{4(j+m)+3}\otimes w_{4j\ra 4(j+1)};$$
$$b_{(j+1)_s,s+(s-3)_s}=w_{4(j+m)+3\ra 4(j+m+1)}\otimes w_{4j\ra 4(j+1)};$$
$$b_{(j+1)_s,2s+(s-3)_s}=w_{4(j+m)+3\ra 4(j+m+1)+1}\otimes w_{4j+1\ra 4(j+1)};$$
$$b_{s+(j+1)_s+f(s,1),5s+(s-3)_s}=e_{4(j+m+s+1)+2}\otimes w_{4(j+s)+2\ra 4(j+s+1)+1};$$
$$b_{4s+(j+1)_s-f(s,1),6s+(s-3)_s}=e_{4(j+m+1)+1}\otimes w_{4(j+s)+2\ra 4(j+s+1)+2};$$
$$b_{(j+1)_s,8s+(s-3)_s}=w_{4(j+m)+3\ra 4(j+m+1)+3}\otimes w_{4j+3\ra 4(j+1)};$$
$$b_{2s+(j+1)_s-f(s,1),8s+(s-3)_s}=w_{4(j+m+s+1)+2\ra 4(j+m+1)+3}\otimes w_{4j+3\ra 4(j+s+1)+1};$$
$$b_{3s+(j+1)_s+f(s,1),8s+(s-3)_s}=w_{4(j+m+s+1)+1\ra 4(j+m+1)+3}\otimes w_{4j+3\ra 4(j+1)+2}.$$

$(10)$ If $r_0=9$, then $\Omega^{9}(Y_t^{(16)})$ is described with
$(8s\times 7s)$-matrix with the following nonzero elements{\rm:}
$$b_{(j+1)_s,3s-3+2f(s,1)}=w_{4(j+m+1)+3\ra 4(j+m+2)}\otimes w_{4j+1\ra 4(j+1)};$$
$$b_{(j+1)_s,5s-3+2f(s,1)}=e_{4(j+m+1)+3}\otimes w_{4j+2\ra 4(j+1)};$$
$$b_{(j+1)_s,6s-3+2f(s,1)}=e_{4(j+m+1)+3}\otimes w_{4j+2\ra 4(j+1)};$$
$$b_{(j+1)_s,7s-3+2f(s,1)}=w_{4(j+m+1)+3\ra 4(j+m+s+2)+2}\otimes w_{4j+3\ra 4(j+1)}.$$

$(11)$ If $r_0=10$, then $\Omega^{10}(Y_t^{(16)})$ is described with
$(9s\times 6s)$-matrix with the following nonzero elements{\rm:}
$$b_{(j+1)_s,(s-3)_s}=e_{4(j+m+1)}\otimes w_{4j\ra 4(j+1)};$$
$$b_{(j+1)_s,s+(s-3)_s}=w_{4(j+m+1)\ra 4(j+m+1)+1}\otimes w_{4(j+s)+1\ra 4(j+1)};$$
$$b_{s+(j+1)_s+f(s,1),s+(s-3)_s}=e_{4(j+m+1)+1}\otimes w_{4(j+s)+1\ra 4(j+s+1)+1};$$
$$b_{j-2s,6s+(s-3)_s}=w_{4(j+m)+2\ra 4(j+m+1)+2}\otimes e_{4(j+s)+2};$$
$$b_{j-s,6s+(s-3)_s}=w_{4(j+m)+3\ra 4(j+m+1)+2}\otimes w_{4(j+s)+2\ra 4j+3};$$
$$b_{j-2s,7s+(s-3)_s}=w_{4(j+m)+3\ra 4(j+m+1)+3}\otimes e_{4j+3}.$$

\medskip
$({\rm II})$ Represent an arbitrary $t_0\in\N$ in the form
$t_0=11\ell_0+r_0$, where $0\le r_0\le 10.$ Then
$\Omega^{t_0}(Y_t^{(16)})$ is a $\Omega^{r_0}(Y_t^{(16)})$, whose left
components twisted by $\sigma^{\ell_0}$.
\end{pr}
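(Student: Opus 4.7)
The plan is to verify, for each $r_0 \in \{0,1,\dots,10\}$, that the matrix described in part (I) is indeed the $r_0$th translate of $Y_t^{(16)}$, and then to leverage the periodicity of the resolution encoded by $\sigma$ to obtain part (II). Recall that by definition the $r_0$th translate is characterized, up to homotopy, by the chain-map conditions
\begin{equation*}
d_{r_0-1}\circ \Omega^{r_0}(Y_t^{(16)}) = \Omega^{r_0-1}(Y_t^{(16)})\circ d_{t+r_0-1},\qquad 1\le r_0\le 10,
\end{equation*}
together with $\Omega^{0}(Y_t^{(16)}) = Y_t^{(16)}$ (which is immediate from the definition of the generator in the previous section). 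Thus the proof is by induction on $r_0$: given the explicit description of $\Omega^{r_0-1}(Y_t^{(16)})$ (which is the previously verified step), one checks the identity above by substituting in the matrices for $d_{t+r_0-1}$, $\Omega^{r_0-1}(Y_t^{(16)})$, and the candidate for $\Omega^{r_0}(Y_t^{(16)})$, all of which have been fully tabulated earlier in the paper.

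First I would fix $r_0$ and go column by column through the differential $d_{t+r_0-1}$ as described in Section~\ref{sect_res}. For each column index $j$ the matrix of $d_{t+r_0-1}$ has at most a bounded number of nonzero entries, each a tensor product of paths of explicit length; similarly the candidate for $\Omega^{r_0}(Y_t^{(16)})$ only has a bounded number of nonzero entries concentrated at the indices dictated by the support of the generator $Y_t^{(16)}$ (which has two nonzero entries at positions $(0,0)$ and $(s,2s)$, corresponding to the paths $w_{0\ra 2}$ and $w_{1\ra 3}$). Once the two composites are expanded as formal sums of tensors $w_1\otimes w_2$, one reduces them modulo the ideal $I'$ defining $R_s'$. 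The nontrivial reductions come from the length-$5$ relations, from the commutation relations $\a^{3t+2}\a^{3t+1}\a^{3t} - \a^{3(t+s)+2}\a^{3(t+s)+1}\a^{3(t+s)}$, and from the $\a\g\a = 0$ type relations; after these reductions, both composites must agree entry by entry.

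After completing the induction through $r_0 = 10$, part (II) follows from Theorem~\ref{resol_thm}, since $Q_{11\ell+r}$ is obtained from $Q_r$ by replacing each summand $P_{i,j}$ with $P_{\sigma^\ell(i),j}$, and the differentials $d_{11\ell+r}$ are obtained from $d_r$ by applying $\sigma^\ell$ to all left tensor components. The cocycle $Y_t^{(16)}$ has no $\kappa^\ell(\cdot)$ coefficients in its definition (unlike, e.g., cases~5, 9, 10, 11, 14), so translations by $\ell_0$ full periods $11\ell_0$ act on $\Omega^{r_0}(Y_t^{(16)})$ simply by $\sigma^{\ell_0}$ on the left tensor components, with no additional sign $(-1)^{\ell_0}$; this matches the assertion of (II) and explains why no sign factor appears here (contrast, for instance, with Proposition~\ref{type4}'s (II) where one similarly has no sign, versus cases~3, 5, 9, 10, 11 where a sign $(-1)^{\ell_0}$ is required).

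The main obstacle is purely combinatorial: the case analysis by $r_0$ and by the ranges of $j$ inside each column of $d_{t+r_0-1}$ is long, and special attention must be paid to the boundary indices such as $(j+1)_s$, $(j+s+1)_{2s}$, where the value $f(\cdot,\cdot)$ flips the target vertex, as well as to the indicator $f(s,1)$ which shifts entries when $s=1$. To keep errors under control I would present the verification in the same block structure used in the statement (ranges $0\le j<s$, $s\le j<2s$, etc.), in each block writing out the one or two nonzero contributions to $d_{r_0-1}\circ\Omega^{r_0}(Y_t^{(16)})$ and to $\Omega^{r_0-1}(Y_t^{(16)})\circ d_{t+r_0-1}$ and checking they cancel or coincide modulo $I'$. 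No new conceptual ingredient is needed beyond the explicit descriptions already available.
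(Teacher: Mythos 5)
Your overall strategy --- verifying the lifting condition $d_{r_0-1}\circ\Omega^{r_0}(Y_t^{(16)})=\Omega^{r_0-1}(Y_t^{(16)})\circ d_{t+r_0-1}$ by induction on $r_0$, block by block in $j$, reducing modulo the relations of $I^\prime$ --- is exactly the (implicit) method of the paper, which states these propositions as the outcome of precisely such a direct computation. Part (I) of your plan is therefore sound in approach, and the bookkeeping caveats you raise (boundary indices, the $f(s,1)$ shifts) are the right ones.

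There is, however, a genuine gap in your treatment of part (II). You claim that the absence of the factor $(-1)^{\ell_0}$ is explained by the fact that $Y_t^{(16)}$ carries no $\kappa^\ell(\cdot)$ coefficients in its definition. This criterion is false: the generator $Y_t^{(3)}$ likewise has no $\kappa^\ell$ coefficients (its two nonzero entries are $w_{0\ra 1}\otimes e_0$ and $w_{0\ra 4s+1}\otimes e_0$), yet Proposition ``Translates for the case 3'' does require the sign $(-1)^{\ell_0}$ in its part (II); the same holds for cases 5, 10 and 21. The presence or absence of the sign is not read off from the generator but is the result of comparing the $11$th translate $\Omega^{11}(Y_t^{(16)})$ (obtained by one further application of the lifting condition across the period, using the twisted isomorphism $\Omega^{11}({}_\Lambda R)\simeq{}_1R_\sigma$) with the $\sigma$-twist of $\Omega^{0}(Y_t^{(16)})$; whether they agree on the nose or up to an overall sign depends on how the signs in $\sigma$ interact with the particular paths occurring in the entries. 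Your induction stops at $r_0=10$, so this comparison is missing from the plan, and your stated reason for the conclusion of (II) would lead you astray in several of the neighbouring cases. To close the gap you must carry the lifting one more step and exhibit $\Omega^{11}(Y_t^{(16)})$ explicitly as the $\sigma$-twist of $Y_t^{(16)}$ with coefficient $+1$.
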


%%%%%%%%%%%%%%%%%%%%%%%%%%%%%%%%%%%%%%%%%%%%%%%%%%%%%%%%%%%%%%%%%%%%%%%%%%%%%%%%%%%%%%%%
%                                          17
%%%%%%%%%%%%%%%%%%%%%%%%%%%%%%%%%%%%%%%%%%%%%%%%%%%%%%%%%%%%%%%%%%%%%%%%%%%%%%%%%%%%%%%%
\begin{pr}[Translates for the case 17]
$({\rm I})$ Let $r_0\in\N$, $r_0<11$. $r_0$-translates of the
elements $Y^{(17)}_t$ are described by the following way.

$(1)$ If $r_0=0$, then $\Omega^{0}(Y_t^{(17)})$ is described with
$(8s\times 6s)$-matrix with the following two nonzero elements{\rm:}
$$b_{0,0}=w_{4(j+m)\ra 4(j+m+s)+2}\otimes e_{4j};$$
$$b_{0,s}=w_{4(j+m)\ra 4(j+m+s)+2}\otimes e_{4j}.$$

$(2)$ If $r_0=1$, then $\Omega^{1}(Y_t^{(17)})$ is described with
$(6s\times 7s)$-matrix with the following nonzero elements{\rm:}
$$b_{s+(j+1)_s-f(s,1),s+(1)_s}=-w_{4(j+m+1)+1\ra 4(j+m+1)+2}\otimes w_{4(j+s)+1\ra 4(j+1)};$$
$$b_{(j+1)_s+f(s,1),2s+(1)_s}=-w_{4(j+m+1)+1\ra 4(j+m+1)+2}\otimes w_{4(j+s)+1\ra 4(j+1)};$$
$$b_{j+s,5s+(1)_s}=-\kappa^\ell(\a_{3(j+m+5)})w_{4(j+m+1)\ra 4(j+m+2)}\otimes e_{4j+3}.$$

$(3)$ If $r_0=2$, then $\Omega^{2}(Y_t^{(17)})$ is described with
$(7s\times 6s)$-matrix with the following nonzero elements{\rm:}
$$b_{(j+1)_s,(1)_s}=f_2(s,1)\kappa^\ell(\a_{3(j+m+3)})e_{4(j+m)+3}\otimes w_{4j\ra 4(j+1)};$$
$$b_{(j+1)_s,s+(1)_s}=-f_2(s,1)\kappa^\ell(\a_{3(j+m+4)})w_{4(j+m)+3\ra 4(j+m+1)}\otimes w_{4(j+s)+1\ra 4(j+1)};$$
$$b_{j,s+(1)_s}=-\kappa^\ell(\a_{3(j+m+4)})w_{4(j+m+s)+2\ra 4(j+m+1)}\otimes e_{4(j+s)+1};$$
$$b_{(j+1)_s,2s+(1)_s}=f_2(s,1)\kappa^\ell(\a_{3(j+m+4)})w_{4(j+m)+3\ra 4(j+m+1)}\otimes w_{4(j+s)+1\ra 4(j+1)};$$
$$b_{j,2s+(1)_s}=-\kappa^\ell(\a_{3(j+m+4)})w_{4(j+m+s)+2\ra 4(j+m+1)}\otimes e_{4(j+s)+1}.$$

$(4)$ If $r_0=3$, then $\Omega^{3}(Y_t^{(17)})$ is described with
$(6s\times 8s)$-matrix with the following nonzero elements{\rm:}
$$b_{j+s,(1)_s}=\kappa^\ell(\a_{3(j+m+4)})w_{4(j+m+s)+2\ra 4(j+m+1)}\otimes e_{4j};$$
$$b_{j+2s,(1)_s}=-\kappa^\ell(\a_{3(j+m+4)})w_{4(j+m)+3\ra 4(j+m+1)}\otimes w_{4j\ra 4j+1};$$
$$b_{j+s,s+(1)_s}=-w_{4(j+m)+3\ra 4(j+m+s+1)+1}\otimes e_{4(j+s)+1};$$
$$b_{j+s,2s+(1)_s}=-w_{4(j+m)+3\ra 4(j+m+s+1)+1}\otimes e_{4(j+s)+1}.$$

$(5)$ If $r_0=4$, then $\Omega^{4}(Y_t^{(17)})$ is described with
$(6s\times 9s)$-matrix with the following nonzero elements{\rm:}
$$b_{(j+1)_s,0}=-f_2(s,1)\kappa^\ell(\a_{3(j+m+4)})w_{4(j+m)+3\ra 4(j+m+1)}\otimes w_{4j\ra 4(j+1)};$$
$$b_{s+(j+1)_s,0}=f_2(s,1)\kappa^\ell(\a_{3(j+m+4)})e_{4(j+m+1)}\otimes w_{4j\ra 4(j+1)};$$
$$b_{(j+1)_s,2s-f(s,1)}=w_{4(j+m)+3\ra 4(j+m+s+1)+1}\otimes w_{4(j+s)+1\ra 4(j+1)};$$
$$b_{(j+1)_s,4s-f(s,1)}=-w_{4(j+m)+3\ra 4(j+m+s+1)+2}\otimes w_{4(j+s)+2\ra 4(j+1)};$$
$$b_{(j+1)_s,5s}=-f_2(s,1)\kappa^\ell(\a_{3(j+m+4)})w_{4(j+m)+3\ra 4(j+m+1)+3}\otimes w_{4j+3\ra 4(j+1)};$$
$$b_{2s+(j+1)_s,5s}=f_2(s,1)\kappa^\ell(\a_{3(j+m+4)})w_{4(j+m+1+f(s,1))+1\ra 4(j+m+1)+3}\otimes w_{4j+3\ra 4(j+s+1+f(s,1))+1};$$
$$b_{5s+(j+1)_s,5s}=-f_2(s,1)\kappa^\ell(\a_{3(j+m+4)})w_{4(j+m+1+f(s,1))+2\ra 4(j+m+1)+3}\otimes w_{4j+3\ra 4(j+s+1+f(s,1))+2}.$$

$(6)$ If $r_0=5$, then $\Omega^{5}(Y_t^{(17)})$ is described with
$(7s\times 8s)$-matrix with the following nonzero elements{\rm:}
$$b_{(j+1)_s+f(s,1),0}=-e_{4(j+m+1)+1}\otimes w_{4j\ra 4(j+1)};$$
$$b_{s+(j+1)_s-f(s,1),s}=-e_{4(j+m+1)+1}\otimes w_{4j\ra 4(j+1)};$$
$$b_{(j+1)_s,4s+f(s,1)}=f_2(s,1)\kappa^\ell(\a_{3(j+m+4)})w_{4(j+m+1)+1\ra 4(j+m+1)+3}\otimes w_{4j+2\ra 4(j+1)};$$
$$b_{s+(j+1)_s,4s+f(s,1)}=f_2(s,1)\kappa^\ell(\a_{3(j+m+4)})w_{4(j+m+s+1)+1\ra 4(j+m+1)+3}\otimes w_{4j+2\ra 4(j+1)};$$
$$b_{4s+(j+1)_s,4s+f(s,1)}=-f_2(s,1)\kappa^\ell(\a_{3(j+m+4)})e_{4(j+m+1)+3}\otimes w_{4j+2\ra 4(j+1)+2};$$
$$b_{2s+(j+1)_s,6s}=\kappa^\ell(\a_{3(j+m+5)})e_{4(j+m+2)}\otimes w_{4j+3\ra 4(j+1+f(s,1))+1}.$$

$(7)$ If $r_0=6$, then $\Omega^{6}(Y_t^{(17)})$ is described with
$(6s\times 9s)$-matrix with the following nonzero elements{\rm:}
$$b_{(j+1)_s,s+f(s,1)}=w_{4(j+m+1)\ra 4(j+m+s+1)+2}\otimes w_{4(j+s)+1\ra 4(j+1)};$$
$$b_{(j+1)_s,3s+f(s,1)}=w_{4(j+m+1)\ra 4(j+m+1)+1}\otimes w_{4(j+s)+2\ra 4(j+1)};$$
$$b_{s+(j+1)_s,5s}=f_2(s,1)\kappa^{\ell+1}(\a_{3(j+m-1)})w_{4(j+m+s+1+f(s,1))+1\ra 4(j+m+2)}\otimes w_{4j+3\ra 4(j+s+1+f(s,1))+1}.$$

$(8)$ If $r_0=7$, then $\Omega^{7}(Y_t^{(17)})$ is described with
$(8s\times 8s)$-matrix with the following nonzero elements{\rm:}
$$b_{(j+1)_s+f(s,1),0}=-e_{4(j+m+1)+2}\otimes w_{4j\ra 4(j+1)};$$
$$b_{s+(j+1)_s-f(s,1),s}=e_{4(j+m+1)+2}\otimes w_{4j\ra 4(j+1)};$$
$$b_{(j+1)_s,4s+f(s,1)}=\kappa^{\ell+1}(\a_{3(j+m-1)})w_{4(j+m+1)+2\ra 4(j+m+2)}\otimes w_{4j+2\ra 4(j+1)};$$
$$b_{4s+(j+1)_s,4s+f(s,1)}=-\kappa^{\ell+1}(\a_{3(j+m-1)})e_{4(j+m+2)}\otimes w_{4j+2\ra 4(j+1)+2};$$
$$b_{2s+(j+1)_s,6s+f(s,1)}=w_{4(j+m+1)+3\ra 4(j+m+2)+1}\otimes w_{4j+3\ra 4(j+1)+1};$$
$$b_{4s+(j+1)_s,6s+f(s,1)}=w_{4(j+m+2)\ra 4(j+m+2)+1}\otimes w_{4j+3\ra 4(j+1)+2};$$
$$b_{5s+(j+1)_s,6s+f(s,1)}=w_{4(j+m+2)\ra 4(j+m+2)+1}\otimes w_{4j+3\ra 4(j+s+1)+2}.$$

$(9)$ If $r_0=8$ and $s=1$, then $\Omega^{8}(Y_t^{(17)})$ is described with
$(9s\times 6s)$-matrix with the following nonzero elements{\rm:}
$$b_{j,0}=-\kappa^{\ell+1}(\a_{3(j+m-3)})e_{4(j+m)+3}\otimes e_{4j};$$
$$b_{j-s,s}=\kappa^{\ell+1}(\a_{3(j+m-3)})w_{4(j+m)+3\ra 4(j+m)}\otimes e_{4j};$$
$$b_{j-s,5s}=-w_{4(j+m+1)+1\ra 4(j+m+1)+2}\otimes w_{4(j+1)+2\ra 4j+2};$$
$$b_{j-6s,6s}=w_{4(j+m)+3\ra 4(j+m)+1}\otimes w_{4(j+1)+2\ra 4j};$$
$$b_{j-4s,7s}=-w_{4(j+m)+1\ra 4(j+m)+2}\otimes w_{4j+2\ra 4(j+1)+2};$$
$$b_{j-8s,8s}=-\kappa^{\ell+1}(\a_{3(j+m-3)})w_{4(j+m)+3\ra 4(j+m+1)+3}\otimes w_{4j+3\ra 4j};$$
$$b_{j-6s,8s}=-\kappa^{\ell+1}(\a_{3(j+m-3)})w_{4(j+m+1)+2\ra 4(j+m)+3}\otimes w_{4j+3\ra 4(j+1)+1};$$
$$b_{j-5s,8s}=-\kappa^{\ell+1}(\a_{3(j+m-3)})w_{4(j+m+1)+1\ra 4(j+m)+3}\otimes w_{4j+3\ra 4j+2}.$$

$(10)$ If $r_0=8$ and $s>1$, then $\Omega^{8}(Y_t^{(17)})$ is described with
$(9s\times 6s)$-matrix with the following nonzero elements{\rm:}
$$b_{(j+1)_s,0}=\kappa^{\ell+1}(\a_{3(j+m-3)})e_{4(j+m)+3}\otimes w_{4j\ra 4(j+1)};$$
$$b_{(j+1)_s,4s}=w_{4(j+m)+3\ra 4(j+m+1)+1}\otimes w_{4j+2\ra 4(j+1)};$$
$$b_{3s+(j+1)_s,5s}=-w_{4(j+m+1)+1\ra 4(j+m+1)+2}\otimes w_{4(j+s)+2\ra 4(j+s+1)+2};$$
$$b_{4s+(j+1)_s,7s}=-w_{4(j+m+s+1)+1\ra 4(j+m+s+1)+2}\otimes w_{4j+2\ra 4(j+1)+2};$$
$$b_{(j+1)_s,8s}=\kappa^{\ell+1}(\a_{3(j+m-2)})w_{4(j+m)+3\ra 4(j+m+1)+3}\otimes w_{4j+3\ra 4(j+1)};$$
$$b_{2s+(j+1)_s,8s}=\kappa^{\ell+1}(\a_{3(j+m-2)})w_{4(j+m+s+1)+2\ra 4(j+m+1)+3}\otimes w_{4j+3\ra 4(j+s+1)+1};$$
$$b_{3s+(j+1)_s,8s}=\kappa^{\ell+1}(\a_{3(j+m-2)})w_{4(j+m+s+1)+1\ra 4(j+m+1)+3}\otimes w_{4j+3\ra 4(j+1)+2}.$$

$(11)$ If $r_0=9$ and $s=1$, then $\Omega^{9}(Y_t^{(17)})$ is described with
$(8s\times 7s)$-matrix with the following two nonzero elements{\rm:}
$$b_{j-4s,4s}=-\kappa^{\ell+1}(\a_{3(j+m-2)})e_{4(j+m)+3}\otimes w_{4j+2\ra 4j};$$
$$b_{j-6s,6s}=w_{4(j+m)+3\ra 4(j+m)+2}\otimes w_{4j+3\ra 4j}.$$

$(12)$ If $r_0=9$ and $s>1$, then $\Omega^{9}(Y_t^{(17)})$ is described with
$(8s\times 7s)$-matrix with the following two nonzero elements{\rm:}
$$b_{(j+1)_s,5s}=-\kappa^{\ell+1}(\a_{3(j+m-2)})e_{4(j+m+1)+3}\otimes w_{4j+2\ra 4(j+1)};$$
$$b_{(j+1)_s,7s}=-w_{4(j+m+1)+3\ra 4(j+m+2)+2}\otimes w_{4j+3\ra 4(j+1)}.$$

$(13)$ If $r_0=10$ and $s=1$, then $\Omega^{10}(Y_t^{(17)})$ is described with
$(9s\times 6s)$-matrix with the following nonzero elements{\rm:}
$$b_{j,0}=-\kappa^{\ell+1}(\a_{3(j+m-2)})e_{4(j+m)}\otimes e_{4j};$$
$$b_{j+s,2s}=w_{4(j+m+1)+2\ra 4(j+m)+2}\otimes w_{4j+1\ra 4j+2};$$
$$b_{j+3s,2s}=w_{4(j+m)+3\ra 4(j+m)+2}\otimes w_{4j+1\ra 4j+3};$$
$$b_{j,4s}=w_{4(j+m)+2\ra 4(j+m+1)+2}\otimes w_{4(j+1)+1\ra 4(j+1)+2};$$
$$b_{j+s,4s}=-w_{4(j+m)+3\ra 4(j+m+1)+2}\otimes w_{4(j+1)+1\ra 4j+3};$$
$$b_{j-2s,7s}=-\kappa^{\ell+1}(\a_{3(j+m-2)})e_{4(j+m)+3}\otimes e_{4j+3}.$$

$(14)$ If $r_0=10$ and $s>1$, then $\Omega^{10}(Y_t^{(17)})$ is described with
$(9s\times 6s)$-matrix with the following nonzero elements{\rm:}
$$b_{(j+1)_s,0}=\kappa^{\ell+1}(\a_{3(j+m-2)})e_{4(j+m+1)}\otimes w_{4j\ra 4(j+1)};$$
$$b_{j+s,2s}=w_{4(j+m+s)+2\ra 4(j+m+s+1)+2}\otimes w_{4j+1\ra 4j+2};$$
$$b_{j+3s,2s}=w_{4(j+m)+3\ra 4(j+m+s+1)+2}\otimes w_{4j+1\ra 4j+3};$$
$$b_{j,4s}=w_{4(j+m)+2\ra 4(j+m+1)+2}\otimes w_{4(j+s)+1\ra 4(j+s)+2};$$
$$b_{j+s,4s}=-w_{4(j+m)+3\ra 4(j+m+1)+2}\otimes w_{4(j+s)+1\ra 4j+3};$$
$$b_{j-2s,7s}=-\kappa^{\ell+1}(\a_{3(j+m-2)})w_{4(j+m)+3\ra 4(j+m+1)+3}\otimes e_{4j+3}.$$

\medskip
$({\rm II})$ Represent an arbitrary $t_0\in\N$ in the form
$t_0=11\ell_0+r_0$, where $0\le r_0\le 10.$ Then
$\Omega^{t_0}(Y_t^{(17)})$ is a $\Omega^{r_0}(Y_t^{(17)})$, whose left
components twisted by $\sigma^{\ell_0}$,
and coefficients multiplied by $(-1)^{\ell_0}$.
\end{pr}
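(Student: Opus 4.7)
The plan is to verify the proposition by explicit construction of chain maps, using the minimal projective bimodule resolution $(Q_\bullet, d_\bullet)$ described in Section~\ref{sect_res} together with the fact that $\Omega^{11\ell+r}$ can be recovered from $\Omega^r$ via twisting by $\sigma^\ell$ (per Theorem~\ref{resol_thm}). First I would regard the cocycle $Y^{(17)}_t$ as a homomorphism $Y^{(17)}_t : Q_t \to Q_0$ and then construct, for each $r_0 \in \{0,1,\dots,10\}$, a candidate homomorphism $\varphi_{r_0}: Q_{t+r_0} \to Q_{r_0}$ (namely the matrix $b_{ij}$ displayed in item~$(r_0+1)$ of Part~(I)). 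The goal is to show that the sequence $\{\varphi_{r_0}\}$ forms a chain map, i.e., that $\varphi_{r_0-1} \circ d_{t+r_0-1} = d_{r_0-1} \circ \varphi_{r_0}$ for every $r_0 \ge 1$, and that $\varphi_0 = Y^{(17)}_t$, which is immediate by comparing the matrix given for $r_0=0$ with the definition of $Y^{(17)}_t$ in Section on Generators.

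The main body of the proof is a direct matrix computation: for each $r_0 \in \{1,\dots,10\}$, compute both products $d_{r_0-1}\varphi_{r_0}$ and $\varphi_{r_0-1}d_{t+r_0-1}$ componentwise using the explicit formulas for $d_0,\dots,d_{10}$ from Section~\ref{sect_res} and check equality of all nonzero entries. The heart of each verification reduces to identities among paths of the form $\alpha_i\alpha_{i+1}$, $\gamma_j\alpha_{k}$, etc., which follow from the relations defining $R_s^\prime$ (vanishing of length-$5$ paths, the commutation relations $\alpha_{3t+2}\alpha_{3t+1}\alpha_{3t}=\alpha_{3(t+s)+2}\alpha_{3(t+s)+1}\alpha_{3(t+s)}$, and the vanishing relations $\alpha_{3t}\gamma_{t-1}\alpha_{3(t+s)-1}=0$). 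Signs are tracked via the coefficient $\kappa^\ell$, which reflects the action of $\sigma^\ell$ on arrows, together with the auxiliary functions $f, h, f_0, f_1, f_2$; it will be important to match the sign conventions produced by $\sigma^\ell$ with those appearing in the differentials.

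For Part~(II), I would appeal to the fact that $Q_{11\ell_0 + r_0}$ is obtained from $Q_{r_0}$ by applying $\sigma^{\ell_0}$ to every left tensor factor, and $d_{11\ell_0 + r_0}$ is obtained from $d_{r_0}$ by the same twist. Since $\sigma$ acts on the arrows $\alpha_i$ with the sign pattern encoded in the definition of $\sigma$ (which up to a global sign yields the factor $(-1)^{\ell_0}$ for the class of generators at hand), the lift $\Omega^{11\ell_0 + r_0}(Y^{(17)}_t)$ is forced by compatibility with differentials to equal $\sigma^{\ell_0}\bigl(\Omega^{r_0}(Y^{(17)}_t)\bigr)$ up to the factor $(-1)^{\ell_0}$, uniqueness holding modulo homotopy. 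The crucial check is that the sign accumulated from $\sigma$ acting on the specific arrows appearing in the nonzero entries of $\Omega^{r_0}$ matches $(-1)^{\ell_0}$; this is verified case-by-case using the explicit description of $\sigma(\alpha_i)$, $\sigma(\gamma_i)$ given in Section~\ref{sect_res}.

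The main obstacle will be the sheer volume of bookkeeping in the componentwise check of $d_{r_0-1}\varphi_{r_0} = \varphi_{r_0-1}d_{t+r_0-1}$ for each of the eleven values of $r_0$, particularly when $s=1$ versus $s>1$ (where certain column ranges collapse and produce different nonzero patterns) and at the boundary indices $j = s-1$, $j=2s-1$, etc., where the step functions $f, f_0, f_1, f_2$ change value and additional contributions appear or cancel. A secondary but nontrivial difficulty is correctly tracking the signs produced by $\sigma$ when lifting from $r_0 \le 10$ to arbitrary $t_0$, since $\sigma$ acts with different signs on $\alpha_i$ depending on the residues $(i)_3$ and $(i)_{6s}$; the claimed global factor $(-1)^{\ell_0}$ must be checked to be consistent with these local sign choices for every nonzero entry of every matrix $\Omega^{r_0}(Y^{(17)}_t)$.
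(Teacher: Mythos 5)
Your proposal is the correct and, in substance, the only available approach: the paper states these translate propositions without an explicit written proof, the implicit argument being precisely the direct componentwise verification that the displayed matrices $\varphi_{r_0}$ satisfy $\varphi_{r_0-1}d_{t+r_0-1}=d_{r_0-1}\varphi_{r_0}$ against the differentials of Section \ref{sect_res}, with Part (II) following from the $\sigma^{\ell_0}$-twisted structure of $Q_{11\ell_0+r_0}$ and $d_{11\ell_0+r_0}$ given in Theorem \ref{resol_thm}. Your identification of the delicate points (boundary indices where $f,f_0,f_1,f_2$ switch, the $s=1$ versus $s>1$ split, and matching the local signs of $\sigma$ on arrows to the global factor $(-1)^{\ell_0}$) matches where the actual work lies.
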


%%%%%%%%%%%%%%%%%%%%%%%%%%%%%%%%%%%%%%%%%%%%%%%%%%%%%%%%%%%%%%%%%%%%%%%%%%%%%%%%%%%%%%%%
%                                          18
%%%%%%%%%%%%%%%%%%%%%%%%%%%%%%%%%%%%%%%%%%%%%%%%%%%%%%%%%%%%%%%%%%%%%%%%%%%%%%%%%%%%%%%%
\begin{pr}[Translates for the case 18]
$({\rm I})$ Let $r_0\in\N$, $r_0<11$. $r_0$-translates of the
elements $Y^{(18)}_t$ are described by the following way.

$(1)$ If $r_0=0$, then $\Omega^{0}(Y_t^{(18)})$ is described with
$(6s\times 6s)$-matrix with the following elements $b_{ij}${\rm:}

If $s\le j<3s$, then $$b_{ij}=
\begin{cases}
w_{4(j+m+s)+1\ra 4(j+m+s)+2}\otimes e_{4(j+s)+1},\quad i=j;\\
0,\quad\text{otherwise.}
\end{cases}$$

If $3s\le j<5s$, then $b_{ij}=0$.

If $5s\le j<6s$, then $$b_{ij}=
\begin{cases}
-\kappa^\ell(\a_{3(j+m+4+1)})w_{4(j+m)+3\ra 4(j+m+1)}\otimes e_{4j+3},\quad i=j;\\
0,\quad\text{otherwise.}
\end{cases}$$

$(2)$ If $r_0=1$, then $\Omega^{1}(Y_t^{(18)})$ is described with
$(7s\times 7s)$-matrix with the following elements $b_{ij}${\rm:}

If $0\le j<s$, then $$b_{ij}=
\begin{cases}
\kappa_1w_{4(j+m)+1\ra 4(j+m)+3}\otimes e_{4j},\quad i=j;\\
-\kappa_1w_{4(j+m+s)+1\ra 4(j+m)+3}\otimes e_{4j},\quad i=j+s;\\
0,\quad\text{otherwise,}
\end{cases}$$
where $\kappa_1=\kappa^\ell(\a_{3(j+4+m)})$.

If $s\le j<3s$, then $$b_{ij}=
\begin{cases}
\kappa_1w_{4(j+m+s)+2\ra 4(j+m+1)}\otimes e_{4(j+s)+1},\quad i=j+s;\\
\kappa_1w_{4(j+m)+3\ra 4(j+m+1)}\otimes w_{4(j+s)+1\ra 4(j+s)+2},\quad i=j+3s;\\
0,\quad\text{otherwise,}
\end{cases}$$
where $\kappa_1=\kappa^\ell(\a_{3(j+m+4+1)})$.

If $3s\le j<5s$, then $$b_{ij}=
\begin{cases}
-w_{4(j+m)+3\ra 4(j+m+1)+1}\otimes e_{4(j+s)+2},\quad i=j+s;\\
0,\quad\text{otherwise.}
\end{cases}$$

If $5s\le j<7s$, then $$b_{ij}=
\begin{cases}
-w_{4(j+m+s+1)+1\ra 4(j+m+s+1)+2}\otimes w_{4j+3\ra 4(j+1)},\quad i=(j+s+1)_{2s};\\
-w_{4(j+m+1)\ra 4(j+m+s+1)+2}\otimes e_{4j+3},\quad i=6s+(j)_s;\\
0,\quad\text{otherwise.}
\end{cases}$$

$(3)$ If $r_0=2$, then $\Omega^{2}(Y_t^{(18)})$ is described with
$(6s\times 6s)$-matrix with the following elements $b_{ij}${\rm:}

If $0\le j<s$, then $$b_{ij}=
\begin{cases}
\kappa^\ell(\a_{3(j+m+4)})w_{4(j+m)-1\ra 4(j+m)}\otimes e_{4j},\quad i=j;\\
0,\quad\text{otherwise.}
\end{cases}$$

If $s\le j<3s$, then $b_{ij}=0$.

If $3s\le j<5s$, then $$b_{ij}=
\begin{cases}
-w_{4(j+m)+1\ra 4(j+m)+2}\otimes e_{4(j+s)+2},\quad i=j;\\
0,\quad\text{otherwise.}
\end{cases}$$

If $5s\le j<6s$, then $b_{ij}=0$.

$(4)$ If $r_0=3$, then $\Omega^{3}(Y_t^{(18)})$ is described with
$(6s\times 8s)$-matrix with the following elements $b_{ij}${\rm:}

If $0\le j<s$, then $$b_{ij}=
\begin{cases}
\kappa_1w_{4(j+m)+2\ra 4(j+m+1)}\otimes e_{4j},\quad i=j;\\
\kappa_1w_{4(j+m)+3\ra 4(j+m+1)}\otimes w_{4j\ra 4j+1},\quad i=j+2s;\\
0,\quad\text{otherwise,}
\end{cases}$$
where $\kappa_1=\kappa^\ell(\a_{3(j+m+4+1)})$.

If $s\le j<3s$, then $$b_{ij}=
\begin{cases}
-w_{4(j+m)+3\ra 4(j+m+1)+1}\otimes e_{4(j+s)+1},\quad i=j+s;\\
0,\quad\text{otherwise.}
\end{cases}$$

If $3s\le j<4s$, then $$b_{ij}=
\begin{cases}
-f_1(j,4s-1)e_{4(j+m+1)+2}\otimes w_{4(j+s)+2\ra 4(j+1)},\quad i=(j+1)_{2s};\\
0,\quad\text{otherwise.}
\end{cases}$$

If $4s\le j<5s$, then $$b_{ij}=
\begin{cases}
f_1(j,5s-1)e_{4(j+m+1)+2}\otimes w_{4(j+s)+2\ra 4(j+1)},\quad i=(j+1)_{2s};\\
0,\quad\text{otherwise.}
\end{cases}$$

If $5s\le j<6s$, then $$b_{ij}=
\begin{cases}
\kappa_1w_{4(j+m+s+1+sf(j,6s-1))+2\ra 4(j+m+1)+3}\otimes w_{4j+3\ra 4(j+1)},\quad i=(j+1)_s;\\
\kappa_1e_{4(j+m+1)+3}\otimes w_{4j+3\ra 4(j+s+1+sf(j,6s-1))+1},\quad i=2s+(j+1)_s;\\
0,\quad\text{otherwise,}
\end{cases}$$
where $\kappa_1=-\kappa^\ell(\a_{3(j+m+4+1)})f_1(j,6s-1)$.

$(5)$ If $r_0=4$, then $\Omega^{4}(Y_t^{(18)})$ is described with
$(7s\times 9s)$-matrix with the following elements $b_{ij}${\rm:}

If $s\le j<2s$, then $$b_{ij}=
\begin{cases}
w_{4(j+m)-1\ra 4(j+m+s)+1}\otimes e_{4j},\quad i=j-s;\\
0,\quad\text{otherwise.}
\end{cases}$$

If $2s\le j<3s$, then $$b_{ij}=
\begin{cases}
w_{4(j+m+s)+1\ra 4(j+m+s)+2}\otimes e_{4j+1},\quad i=j;\\
-e_{4(j+m+s)+2}\otimes w_{4j+1\ra 4j+2},\quad i=j+3s;\\
0,\quad\text{otherwise.}
\end{cases}$$

If $3s\le j<4s$, then $$b_{ij}=
\begin{cases}
w_{4(j+m+s)+1\ra 4(j+m+s)+2}\otimes e_{4j+1},\quad i=j;\\
-e_{4(j+m+s)+2}\otimes w_{4j+1\ra 4j+2},\quad i=j+4s;\\
0,\quad\text{otherwise.}
\end{cases}$$

If $4s\le j<6s$, then $$b_{ij}=
\begin{cases}
\kappa^\ell(\a_{3(j+4+m)})f_1(j,6s-1)e_{4(j+m)+3}\otimes w_{4j+2\ra 4(j+1)},\\\quad\quad\quad i=(j+1)_s,\text{ }j<5s-1\text{ or }j=6s-1;\\
0,\quad\text{otherwise.}
\end{cases}$$

If $6s\le j<7s$, then $b_{ij}=0$.

$(6)$ If $r_0=5$, then $\Omega^{5}(Y_t^{(18)})$ is described with
$(6s\times 8s)$-matrix with the following elements $b_{ij}${\rm:}

If $0\le j<s$, then $$b_{ij}=
\begin{cases}
\kappa_1w_{4(j+m)+1\ra 4(j+m)+3}\otimes e_{4j},\quad i=j;\\
\kappa_1w_{4(j+m+s)+1\ra 4(j+m)+3}\otimes e_{4j},\quad i=j+s;\\
-\kappa_1e_{4(j+m)+3}\otimes w_{4j\ra 4j+2},\quad i=j+4s;\\
-\kappa_1e_{4(j+m)+3}\otimes w_{4j\ra 4(j+s)+2},\quad i=j+5s;\\
0,\quad\text{otherwise,}
\end{cases}$$
where $\kappa_1=-\kappa^\ell(\a_{3(j+4+m)})$.

If $s\le j<3s$, then $$b_{ij}=
\begin{cases}
-w_{4(j+m+1)+1\ra 4(j+m+1)+2}\otimes w_{4(j+s)+1\ra 4(j+1)},\quad i=(j+1)_{2s};\\
-w_{4(j+m+1)\ra 4(j+m+1)+2}\otimes e_{4(j+s)+1},\quad i=j+s;\\
0,\quad\text{otherwise.}
\end{cases}$$

If $3s\le j<5s$, then $b_{ij}=0$.

If $5s\le j<6s$, then $$b_{ij}=
\begin{cases}
-\kappa^{\ell+1}(\a_{3(j+m)})f_1(j,6s-1)e_{4(j+m+2)}\otimes w_{4j+3\ra 4(j+s+1+sf(j,6s-1))+1},\quad i=2s+(j+1)_s;\\
0,\quad\text{otherwise.}
\end{cases}$$

$(7)$ If $r_0=6$, then $\Omega^{6}(Y_t^{(18)})$ is described with
$(8s\times 9s)$-matrix with the following elements $b_{ij}${\rm:}

If $0\le j<s$, then $$b_{ij}=
\begin{cases}
w_{4(j+m)\ra 4(j+m+s)+2}\otimes e_{4j},\quad i=j;\\
-w_{4(j+m+s)+1\ra 4(j+m+s)+2}\otimes w_{4j\ra 4(j+s)+1},\quad i=j+3s;\\
0,\quad\text{otherwise.}
\end{cases}$$

If $s\le j<2s$, then $$b_{ij}=
\begin{cases}
-w_{4(j+m)\ra 4(j+m+s)+2}\otimes e_{4j},\quad i=j-s;\\
w_{4(j+m+s)+1\ra 4(j+m+s)+2}\otimes w_{4j\ra 4(j+s)+1},\quad i=j;\\
0,\quad\text{otherwise.}
\end{cases}$$

If $2s\le j<3s$, then $$b_{ij}=
\begin{cases}
\kappa^\ell(\a_{3(j+m+4)})f_2((j)_s,s-1)w_{4(j+m)+1\ra 4(j+m)+3}\otimes e_{4j+1},\quad i=j-s;\\
0,\quad\text{otherwise.}
\end{cases}$$

If $3s\le j<4s$, then $$b_{ij}=
\begin{cases}
-\kappa^\ell(\a_{3(j+m+4)})f_2((j)_s,s-1)w_{4(j+m)+1\ra 4(j+m)+3}\otimes e_{4j+1},\quad i=j;\\
0,\quad\text{otherwise.}
\end{cases}$$

If $4s\le j<6s$, then $$b_{ij}=
\begin{cases}
-\kappa^{\ell+1}(\a_{3(j+m-1)})e_{4(j+m+1)}\otimes w_{4j+2\ra 4(j+1)},\quad i=(j+1)_s,\text{ }j<5s-1\text{ or }j=6s-1;\\
0,\quad\text{otherwise.}
\end{cases}$$

If $6s\le j<7s-1$, then $$b_{ij}=
\begin{cases}
w_{4(j+m+1)\ra 4(j+m+s+1)+1}\otimes w_{4j+3\ra 4(j+1)},\quad i=(j+1)_s;\\
0,\quad\text{otherwise.}
\end{cases}$$

If $7s-1\le j<6s$, then $b_{ij}=0$.

If $6s\le j<7s$, then $$b_{ij}=
\begin{cases}
-e_{4(j+m+s+1)+1}\otimes w_{4j+3\ra 4(j+s+1)+1},\quad i=3s+(j+1)_s-2sf(j,7s-1);\\
0,\quad\text{otherwise.}
\end{cases}$$

If $7s\le j<8s-1$, then $b_{ij}=0$.

If $8s-1\le j<8s$, then $$b_{ij}=
\begin{cases}
w_{4(j+m+1)\ra 4(j+m+s+1)+1}\otimes w_{4j+3\ra 4(j+1)},\quad i=(j+1)_s;\\
0,\quad\text{otherwise.}
\end{cases}$$

If $8s\le j<7s$, then $b_{ij}=0$.

If $7s\le j<8s$, then $$b_{ij}=
\begin{cases}
-e_{4(j+m+s+1)+1}\otimes w_{4j+3\ra 4(j+s+1)+1},\quad i=s+(j+1)_s+2sf(j,8s-1);\\
0,\quad\text{otherwise.}
\end{cases}$$

$(8)$ If $r_0=7$, then $\Omega^{7}(Y_t^{(18)})$ is described with
$(9s\times 8s)$-matrix with the following elements $b_{ij}${\rm:}

If $s\le j<2s$, then $$b_{ij}=
\begin{cases}
-\kappa_1w_{4(j+m)+2\ra 4(j+m+1)}\otimes e_{4j},\quad i=j;\\
\kappa_1w_{4(j+m)+3\ra 4(j+m+1)}\otimes w_{4j\ra 4(j+s)+1},\quad i=j+s;\\
\kappa_1e_{4(j+m+1)}\otimes w_{4j\ra 4(j+s)+2},\quad i=j+3s;\\
\kappa_1e_{4(j+m+1)}\otimes w_{4j\ra 4j+2},\quad i=j+4s;\\
0,\quad\text{otherwise,}
\end{cases}$$
where $\kappa_1=\kappa^{\ell+1}(\a_{3(j+m-1)})$.

If $2s\le j<4s$, then $$b_{ij}=
\begin{cases}
w_{4(j+m)+3\ra 4(j+m+1)+1}\otimes e_{4j+1},\quad i=j;\\
w_{4(j+m+1)\ra 4(j+m+1)+1}\otimes w_{4j+1\ra 4j+2},\quad i=j+2s;\\
-e_{4(j+m+1)+1}\otimes w_{4j+1\ra 4j+3},\quad i=j+4s;\\
0,\quad\text{otherwise.}
\end{cases}$$

If $4s\le j<9s$, then $b_{ij}=0$.

$(9)$ If $r_0=8$, then $\Omega^{8}(Y_t^{(18)})$ is described with
$(8s\times 6s)$-matrix with the following elements $b_{ij}${\rm:}

If $0\le j<s$, then $$b_{ij}=
\begin{cases}
w_{4(j+m)-1\ra 4(j+m+s)+1}\otimes e_{4j},\quad i=j;\\
e_{4(j+m+s)+1}\otimes w_{4j\ra 4j+2},\quad i=j+3s;\\
0,\quad\text{otherwise.}
\end{cases}$$

If $s\le j<2s$, then $$b_{ij}=
\begin{cases}
e_{4(j+m+s)+1}\otimes w_{4j\ra 4j+2},\quad i=j+3s;\\
0,\quad\text{otherwise.}
\end{cases}$$

If $2s\le j<4s$, then $$b_{ij}=
\begin{cases}
\kappa^{\ell+1}(\a_{3(j+m-1)})w_{4(j+m)+3\ra 4(j+m+1)}\otimes w_{4j+1\ra 4(j+1)},\\\quad\quad\quad i=(j+1)_s,\text{ }j<3s-1\text{ or }j=4s-1;\\
\kappa^{\ell+1}(\a_{3(j+m-1)})w_{4(j+m)+2\ra 4(j+m+1)}\otimes e_{4j+1},\quad i=j-s;\\
0,\quad\text{otherwise.}
\end{cases}$$

If $4s\le j<6s$, then $b_{ij}=0$.

If $6s\le j<8s$, then $$b_{ij}=
\begin{cases}
-f_1(j,7s)e_{4(j+m+s+1)+2}\otimes w_{4j+3\ra 4(j+s+1)+1},\quad i=s+(j+s+1)_{2s};\\
0,\quad\text{otherwise.}
\end{cases}$$

$(10)$ If $r_0=9$, then $\Omega^{9}(Y_t^{(18)})$ is described with
$(9s\times 7s)$-matrix with the following elements $b_{ij}${\rm:}

If $0\le j<s$, then $$b_{ij}=
\begin{cases}
-\kappa^{\ell+1}(\a_{3(j+m-1)})w_{4(j+m)+3\ra 4(j+m+1)}\otimes e_{4j},\quad i=j;\\
0,\quad\text{otherwise.}
\end{cases}$$

If $s\le j<2s$, then $b_{ij}=0$.

If $2s\le j<3s$, then $$b_{ij}=
\begin{cases}
w_{4(j+m+1)\ra 4(j+m+1)+2}\otimes e_{4j+1},\quad i=j-s;\\
-e_{4(j+m+1)+2}\otimes w_{4j+1\ra 4j+3},\quad i=j+3s;\\
0,\quad\text{otherwise.}
\end{cases}$$

If $3s\le j<4s$, then $b_{ij}=0$.

If $4s\le j<5s$, then $$b_{ij}=
\begin{cases}
w_{4(j+m+1)\ra 4(j+m+s+1)+2}\otimes e_{4(j+s)+1},\quad i=j-2s;\\
-e_{4(j+m+s+1)+2}\otimes w_{4(j+s)+1\ra 4j+3},\quad i=j+2s;\\
0,\quad\text{otherwise.}
\end{cases}$$

If $5s\le j<7s$, then $b_{ij}=0$.

If $7s\le j<8s$, then $$b_{ij}=
\begin{cases}
-\kappa^{\ell+1}(\a_{3(j+m-1)})f_1(j,8s-1)e_{4(j+m+1)+3}\otimes w_{4j+3\ra 4(j+1)},\quad i=(j+1)_s;\\
0,\quad\text{otherwise.}
\end{cases}$$

If $8s\le j<9s$, then $b_{ij}=0$.

$(11)$ If $r_0=10$, then $\Omega^{10}(Y_t^{(18)})$ is described with
$(8s\times 6s)$-matrix with the following elements $b_{ij}${\rm:}

If $0\le j<s$, then $$b_{ij}=
\begin{cases}
w_{4(j+m)\ra 4(j+m+s)+2}\otimes e_{4j},\quad i=j;\\
-w_{4(j+m+s)+1\ra 4(j+m+s)+2}\otimes w_{4j\ra 4j+1},\quad i=j+s;\\
e_{4(j+m+s)+2}\otimes w_{4j\ra 4j+2},\quad i=j+3s;\\
0,\quad\text{otherwise.}
\end{cases}$$

If $s\le j<2s$, then $$b_{ij}=
\begin{cases}
-w_{4(j+m)\ra 4(j+m+s)+2}\otimes e_{4j},\quad i=j-s;\\
-w_{4(j+m+s)+1\ra 4(j+m+s)+2}\otimes w_{4j\ra 4j+1},\quad i=j+s;\\
e_{4(j+m+s)+2}\otimes w_{4j\ra 4j+2},\quad i=j+3s;\\
0,\quad\text{otherwise.}
\end{cases}$$

If $2s\le j<3s$, then $$b_{ij}=
\begin{cases}
-\kappa_1w_{4(j+m+s)+1\ra 4(j+m)+3}\otimes e_{4j+1},\quad i=j-s;\\
\kappa_1w_{4(j+m+s)+2\ra 4(j+m)+3}\otimes w_{4j+1\ra 4j+2},\quad i=j+s;\\
\kappa_1e_{4(j+m)+3}\otimes w_{4j+1\ra 4j+3},\quad i=j+3s;\\
0,\quad\text{otherwise,}
\end{cases}$$
where $\kappa_1=\kappa^{\ell+1}(\a_{3(j+m-2)})$.

If $3s\le j<4s$, then $$b_{ij}=
\begin{cases}
-\kappa_1w_{4(j+m+s)+1\ra 4(j+m)+3}\otimes e_{4j+1},\quad i=j-s;\\
\kappa_1w_{4(j+m+s)+2\ra 4(j+m)+3}\otimes w_{4j+1\ra 4j+2},\quad i=j+s;\\
-\kappa_1e_{4(j+m)+3}\otimes w_{4j+1\ra 4j+3},\quad i=j+2s;\\
0,\quad\text{otherwise,}
\end{cases}$$
where $\kappa_1=\kappa^{\ell+1}(\a_{3(j+m-2)})$.

If $4s\le j<8s$, then $b_{ij}=0$.

\medskip
$({\rm II})$ Represent an arbitrary $t_0\in\N$ in the form
$t_0=11\ell_0+r_0$, where $0\le r_0\le 10.$ Then
$\Omega^{t_0}(Y_t^{(18)})$ is a $\Omega^{r_0}(Y_t^{(18)})$, whose left
components twisted by $\sigma^{\ell_0}$.
\end{pr}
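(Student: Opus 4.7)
The plan is to verify the explicit matrices for $\Omega^{r_0}(Y_t^{(18)})$ with $0\le r_0\le 10$ by induction on $r_0$, and then derive part (II) from the periodicity structure of the resolution established in Theorem \ref{resol_thm}.

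First I would set up the induction base. For $r_0=0$ the homomorphism $\Omega^0(Y_t^{(18)})$ equals the cocycle $Y^{(18)}_t$ itself (up to the lifting conventions), so the base case reduces to matching the two descriptions of $Y^{(18)}_t$ given in the definition and in item $(1)$ of the proposition; this is immediate. For each subsequent $r_0$, by definition of a lift of a chain map, the matrix $\Omega^{r_0}(Y^{(18)}_t)$ must satisfy
\begin{equation*}
d_{r_0-1}\circ \Omega^{r_0}(Y^{(18)}_t) \;=\; \Omega^{r_0-1}(Y^{(18)}_t)\circ d_{t+r_0-1},
\end{equation*}
with $\Omega^{r_0}(Y^{(18)}_t)\colon Q_{t+r_0}\to Q_{r_0}$. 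So the strategy for each $r_0\in\{1,\dots,10\}$ is: write the candidate matrix $B=(b_{ij})$ given in the proposition, compute both sides of the above equation using the componentwise descriptions of $d_{r_0-1}$ and $d_{t+r_0-1}$ from Section \ref{sect_res}, and check that they agree. This is a direct matrix multiplication check, case by case on the ranges of the column index $j$ dictated by the block structure of $d_\bullet$.

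The next step is to establish minimality. Since $R$ is self-injective and the resolution in Theorem \ref{resol_thm} is minimal, every lift is unique modulo homotopy, and among lifts the matrix whose entries lie in the radical-complement layer (i.e.\ whose nonzero entries are given as in the proposition) is canonical. Thus confirming that the candidate is a valid lift is enough, because any other lift differs by a coboundary and yields the same cohomology class used later in the multiplication formula (\ref{mult_formula}).

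For part (II), I would invoke the construction of $d_{11\ell_0+r_0}$ from $d_{r_0}$ in Theorem \ref{resol_thm}: the bimodule resolution is periodic with period $11$ up to the twist $\sigma$, meaning $Q_{11\ell_0+r_0}$ is $Q_{r_0}$ with every summand $P_{i,j}$ replaced by $P_{\sigma^{\ell_0}(i),j}$, and the differential is obtained by applying $\sigma^{\ell_0}$ to the left tensor components. A lift of $Y^{(18)}_t$ to degree $11\ell_0+r_0$ must then be obtained from $\Omega^{r_0}(Y^{(18)}_t)$ by the same $\sigma^{\ell_0}$-twist on left components; uniqueness modulo homotopy (plus the fact that the $\sigma$-twist is compatible with the left module structure of $\Lambda$) makes the resulting matrix a valid lift. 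This gives the assertion in (II), with the sign convention matching because $Y^{(18)}$ carries trivial $(-1)^{\ell_0}$ factor (unlike the odd-index cases such as $Y^{(17)}$), which is consistent with how $\sigma$ acts on the relevant arrows through the automorphism formulas for $\sigma(\alpha_i)$ and $\sigma(\gamma_i)$.

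The main obstacle will be the sheer bookkeeping in part (I): each of the eleven cases splits into several sub-ranges of $j$ (determined by the block structure of $d_{t+r_0-1}$, with extra casework at the boundary indices $j=(k-1)s$ and $j=ks-1$ where the functions $f_0,f_1,f_2$ toggle signs), and the auxiliary scalars $\kappa^{\ell}(\cdot)$ and the $\sigma$-action on arrow coefficients have to be tracked carefully. The cleanest way is to treat the four blocks of columns of $Q_{t+r_0}$ separately, verifying $d_{r_0-1}B=B'd_{t+r_0-1}$ block by block, where $B'=\Omega^{r_0-1}(Y^{(18)}_t)$; most of the verifications reduce to the relations in $I'$ (paths of length $5$ vanish and the commutativity/zero relations in item (b)) applied to the resulting two-tensor expressions.
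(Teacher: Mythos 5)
Your proposal matches the paper's (implicit) justification: the paper states this proposition without a written proof, the intended argument being exactly the one you describe — verify the chain-map identity $d_{r_0-1}\circ\Omega^{r_0}(Y^{(18)}_t)=\Omega^{r_0-1}(Y^{(18)}_t)\circ d_{t+r_0-1}$ block by block against the differentials of Section \ref{sect_res}, with uniqueness of lifts up to homotopy making any valid lift acceptable for the multiplication formula \eqref{mult_formula}, and part (II) following from the $\sigma^{\ell_0}$-twisted periodicity of Theorem \ref{resol_thm}. No substantive difference from the paper's route.
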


%%%%%%%%%%%%%%%%%%%%%%%%%%%%%%%%%%%%%%%%%%%%%%%%%%%%%%%%%%%%%%%%%%%%%%%%%%%%%%%%%%%%%%%%
%                                          19
%%%%%%%%%%%%%%%%%%%%%%%%%%%%%%%%%%%%%%%%%%%%%%%%%%%%%%%%%%%%%%%%%%%%%%%%%%%%%%%%%%%%%%%%
\begin{pr}[Translates for the case 19]
$({\rm I})$ Let $r_0\in\N$, $r_0<11$. $r_0$-translates of the
elements $Y^{(19)}_t$ are described by the following way.

$(1)$ If $r_0=0$, then $\Omega^{0}(Y_t^{(19)})$ is described with
$(7s\times 6s)$-matrix with the following two nonzero elements{\rm:}
$$b_{s,s}=\kappa^\ell(\a_{3(j+5)})w_{4(j+m+s)+1\ra 4(j+m+1)}\otimes e_{4(j+s)+1};$$
$$b_{2s,2s}=\kappa^\ell(\a_{3(j+5)})w_{4(j+m+s)+1\ra 4(j+m+1)}\otimes e_{4(j+s)+1}.$$

$(2)$ If $r_0=1$, then $\Omega^{1}(Y_t^{(19)})$ is described with
$(6s\times 7s)$-matrix with the following two nonzero elements{\rm:}
$$b_{j,(3)_s}=-\kappa^\ell(\a_{3(j+m+5)})w_{4(j+m)+1\ra 4(j+m+1)}\otimes e_{4j};$$
$$b_{j+s,(3)_s}=\kappa^\ell(\a_{3(j+m+5)})w_{4(j+m+s)+1\ra 4(j+m+1)}\otimes e_{4j}.$$

$(3)$ If $r_0=2$, then $\Omega^{2}(Y_t^{(19)})$ is described with
$(6s\times 6s)$-matrix with the following nonzero elements{\rm:}
$$b_{(j+1)_s,(2)_s}=-f_2(s,1)\kappa^\ell(\a_{3(j+m+5)})w_{4(j+m)+3\ra 4(j+m+1)}\otimes w_{4j\ra 4(j+1)};$$
$$b_{(j+1)_s,s+(2)_s}=f_2(s,1)w_{4(j+m)+3\ra 4(j+m+1)+1}\otimes w_{4(j+s)+1\ra 4(j+1)};$$
$$b_{(j+1)_s,2s+(2)_s}=-f_2(s,1)w_{4(j+m)+3\ra 4(j+m+1)+1}\otimes w_{4(j+s)+1\ra 4(j+1)}.$$

$(4)$ If $r_0=3$, then $\Omega^{3}(Y_t^{(19)})$ is described with
$(7s\times 8s)$-matrix with the following two nonzero elements{\rm:}
$$b_{s+(j+1)_s-f(s,1),2s+(2)_s}=-f_2(s,1)e_{4(j+m+s+1)+2}\otimes w_{4j+1\ra 4(j+1)};$$
$$b_{(j+1)_s+f(s,1),3s+(2)_s}=f_2(s,1)e_{4(j+m+s+1)+2}\otimes w_{4j+1\ra 4(j+1)}.$$

$(5)$ If $r_0=4$, then $\Omega^{4}(Y_t^{(19)})$ is described with
$(6s\times 9s)$-matrix with the following nonzero elements{\rm:}
$$b_{(j+1)_s,(2)_s}=-\kappa^\ell(\a_{3(j+m+4)})e_{4(j+m)+3}\otimes w_{4j\ra 4(j+1)};$$
$$b_{(j+1)_s,s+(2)_s+f(s,1)}=w_{4(j+m)+3\ra 4(j+m+1)+2}\otimes w_{4(j+s)+1\ra 4(j+1)};$$
$$b_{s+(j+1)_s,s+(2)_s+f(s,1)}=-w_{4(j+m+1)\ra 4(j+m+1)+2}\otimes w_{4(j+s)+1\ra 4(j+1)};$$
$$b_{s+(j+1)_s,2s+(2)_s-f(s,1)}=w_{4(j+m+1)\ra 4(j+m+1)+2}\otimes w_{4(j+s)+1\ra 4(j+1)}.$$

$(6)$ If $r_0=5$, then $\Omega^{5}(Y_t^{(19)})$ is described with
$(8s\times 8s)$-matrix with the following nonzero elements{\rm:}
$$b_{s+(j+1)_s-f(s,1),(2)_s}=w_{4(j+m+s+1)+1\ra 4(j+m+s+1)+2}\otimes w_{4j\ra 4(j+1)};$$
$$b_{(j+1)_s+f(s,1),s+(2)_s}=-w_{4(j+m+s+1)+1\ra 4(j+m+s+1)+2}\otimes w_{4j\ra 4(j+1)};$$
$$b_{(j+1)_s+f(s,1),2s+(2)_s}=-\kappa^{\ell+1}(\a_{3(j+m-1)})w_{4(j+m+1)+1\ra 4(j+m+1)+3}\otimes w_{4j+1\ra 4(j+1)};$$
$$b_{s+(j+1)_s-f(s,1),3s+(2)_s}=-\kappa^{\ell+1}(\a_{3(j+m-1)})w_{4(j+m+1)+1\ra 4(j+m+1)+3}\otimes w_{4j+1\ra 4(j+1)}.$$

$(7)$ If $r_0=6$, then $\Omega^{6}(Y_t^{(19)})$ is described with
$(9s\times 9s)$-matrix with the following nonzero elements{\rm:}
$$b_{(j+1)_s,s+(2)_s}=-\kappa^{\ell+1}(\a_{3(j+m-1)})e_{4(j+m+1)}\otimes w_{4j\ra 4(j+1)};$$
$$b_{j-s,2s+(2)_s}=w_{4(j+m)+1\ra 4(j+m+1)+1}\otimes e_{4j+1};$$
$$b_{j,3s+(2)_s}=w_{4(j+m)+1\ra 4(j+m+1)+1}\otimes e_{4j+1}.$$

$(8)$ If $r_0=7$, then $\Omega^{7}(Y_t^{(19)})$ is described with
$(8s\times 8s)$-matrix with the following nonzero elements{\rm:}
$$b_{j+s,(2)_s}=w_{4(j+m+s)+2\ra 4(j+m+s+1)+1}\otimes e_{4j};$$
$$b_{j+2s,(2)_s}=-w_{4(j+m)+3\ra 4(j+m+s+1)+1}\otimes w_{4j\ra 4j+1};$$
$$b_{j+4s,(2)_s}=-w_{4(j+m+1)\ra 4(j+m+s+1)+1}\otimes w_{4j\ra 4j+2};$$
$$b_{j+5s,(2)_s}=-w_{4(j+m+1)\ra 4(j+m+s+1)+1}\otimes w_{4j\ra 4(j+s)+2};$$
$$b_{j+s,s+(2)_s}=-w_{4(j+m)+3\ra 4(j+m+s+1)+1}\otimes w_{4j\ra 4(j+s)+1};$$
$$b_{j+3s,s+(2)_s}=-w_{4(j+m+1)\ra 4(j+m+s+1)+1}\otimes w_{4j\ra 4(j+s)+2};$$
$$b_{j+4s,s+(2)_s}=-w_{4(j+m+1)\ra 4(j+m+s+1)+1}\otimes w_{4j\ra 4j+2}.$$

$(9)$ If $r_0=8$ and $s=2$, then $\Omega^{8}(Y_t^{(19)})$ is described with
$(9s\times 6s)$-matrix with the following nonzero elements{\rm:}
$$b_{j-s,2s+1}=w_{4(j+m)+2\ra 4(j+m+1)+2}\otimes e_{4j+1};$$
$$b_{(j+1)_s,3s+1}=w_{4(j+m)+3\ra 4(j+m+s+1)+1}\otimes w_{4j+1\ra 4(j+1)};$$
$$b_{s+(j+1)_s,4s+1}=e_{4(j+m+s+1)+2}\otimes w_{4(j+2)+1\ra 4(j+s+1)+1}.$$

$(10)$ If $r_0=8$ and $s\ne 2$, then $\Omega^{8}(Y_t^{(19)})$ is described with
$(9s\times 6s)$-matrix with the following nonzero elements{\rm:}
$$b_{(j+1)_s,s+1+f(s,1)}=w_{4(j+m)+3\ra 4(j+m+1+f(s,1))+1}\otimes w_{4(j+s+f(s,1))+1\ra 4(j+1)};$$
$$b_{s+(j+1)_s,2s+1+f(s,1)}=e_{4(j+m+1+f(s,1))+2}\otimes w_{4(j+f(s,1))+1\ra 4(j+1+f(s,1))+1};$$
$$b_{j-2s+f(s,1),4s+1-3f(s,1)}=w_{4(j+m+s+f(s,1))+2\ra 4(j+m+s+1+f(s,1))+2}\otimes e_{4(j+s+f(s,1))+1}.$$

$(11)$ If $r_0=9$ and $s=2$, then $\Omega^{9}(Y_t^{(19)})$ is described with
$(8s\times 7s)$-matrix with the following nonzero elements{\rm:}
$$b_{j,s+1}=-w_{4(j+m+1)\ra 4(j+m+s+1)+2}\otimes w_{4j\ra 4(j+2)+1};$$
$$b_{j+s,s+1}=w_{4(j+m+1)\ra 4(j+m+s+1)+2}\otimes w_{4j\ra 4j+1};$$
$$b_{(j+1)_s,3s+1}=e_{4(j+m+1)+3}\otimes w_{4j+1\ra 4(j+1)}.$$

$(12)$ If $r_0=9$ and $s\ne 2$, then $\Omega^{9}(Y_t^{(19)})$ is described with
$(8s\times 7s)$-matrix with the following nonzero elements{\rm:}
$$b_{j-f(s,1),1}=f_2(s,1)w_{4(j+m)+3\ra 4(j+m+s+1)+2}\otimes e_{4j};$$
$$b_{j-s,2s+1}=\kappa^{\ell+1}(\a_{3(j+m-1)})w_{4(j+m+1)\ra 4(j+m+1)+3}\otimes e_{4j+1};$$
$$b_{j+3s,2s+1}=-\kappa^{\ell+1}(\a_{3(j+m-1)})w_{4(j+m+1)+2\ra 4(j+m+1)+3}\otimes w_{4j+1\ra 4j+3}.$$

$(13)$ If $r_0=10$ and $s=1$, then $\Omega^{10}(Y_t^{(19)})$ is described with
$(6s\times 6s)$-matrix with the following nonzero elements{\rm:}
$$b_{j,0}=\kappa^{\ell+1}(\a_{3(j+m-1)})w_{4(j+m)\ra 4(j+m)+3}\otimes e_{4j};$$
$$b_{j+s,0}=-\kappa^{\ell+1}(\a_{3(j+m-1)})w_{4(j+m+1)+1\ra 4(j+m)+3}\otimes w_{4j\ra 4j+1};$$
$$b_{j+3s,0}=\kappa^{\ell+1}(\a_{3(j+m-1)})w_{4(j+m+1)+2\ra 4(j+m)+3}\otimes w_{4j\ra 4j+2};$$
$$b_{j-s,s}=-w_{4(j+m)\ra 4(j+m+1)+2}\otimes w_{4(j+1)+1\ra 4j};$$
$$b_{j+s,s}=-w_{4(j+m+1)+1\ra 4(j+m+1)+2}\otimes w_{4(j+1)+1\ra 4j+1}.$$

$(14)$ If $r_0=10$ and $s=2$, then $\Omega^{10}(Y_t^{(19)})$ is described with
$(6s\times 6s)$-matrix with the following nonzero elements{\rm:}
$$b_{j+s,1}=-w_{4(j+m+2)+1\ra 4(j+m)+3}\otimes w_{4j\ra 4j+1};$$
$$b_{j+2s,1}=w_{4(j+m)+1\ra 4(j+m)+3}\otimes w_{4j\ra 4(j+2)+1};$$
$$b_{j+3s,1}=w_{4(j+m+s)+2\ra 4(j+m)+3}\otimes w_{4j\ra 4j+2};$$
$$b_{j+4s,1}=-w_{4(j+m)+2\ra 4(j+m)+3}\otimes w_{4j\ra 4(j+s)+2};$$
$$b_{j+5s,1}=e_{4(j+m)+3}\otimes w_{4j\ra 4j+3};$$
$$b_{2s+(j+1)_s,2s}=-w_{4(j+m+1)+1\ra 4(j+m+1)+2}\otimes w_{4(j+2)+1\ra 4(j+s+1)+1};$$
$$b_{j+2s,2s+1}=w_{4(j+m)+2\ra 4(j+m+1)+2}\otimes w_{4(j+2)+1\ra 4(j+s)+2};$$
$$b_{j+3s,2s+1}=-w_{4(j+m)+3\ra 4(j+m+1)+2}\otimes w_{4(j+2)+1\ra 4j+3};$$
$$b_{(j+1)_s,5s}=-w_{4(j+m+1)\ra 4(j+m)}\otimes w_{4j+3\ra 4(j+1)}.$$

$(15)$ If $r_0=10$ and $s>2$, then $\Omega^{10}(Y_t^{(19)})$ is described with
$(6s\times 6s)$-matrix with the following nonzero elements{\rm:}
$$b_{j,1}=\kappa^{\ell+1}(\a_{3(j+m-1)})w_{4(j+m)\ra 4(j+m)+3}\otimes e_{4j};$$
$$b_{j+2s,1}=\kappa^{\ell+1}(\a_{3(j+m-1)})w_{4(j+m)+1\ra 4(j+m)+3}\otimes w_{4j\ra 4(j+s)+1};$$
$$b_{j+4s,1}=-\kappa^{\ell+1}(\a_{3(j+m-1)})w_{4(j+m)+2\ra 4(j+m)+3}\otimes w_{4j\ra 4(j+s)+2};$$
$$b_{(j+1)_s,s}=w_{4(j+m+1)\ra 4(j+m+1)+2}\otimes w_{4(j+s)+1\ra 4(j+1)};$$
$$b_{s+(j+1)_s,s}=-w_{4(j+m+1)+1\ra 4(j+m+1)+2}\otimes w_{4(j+s)+1\ra 4(j+s+1)+1};$$
$$b_{(j+1)_s,5s}=-\kappa^{\ell+1}(\a_{3(j+m-1)})w_{4(j+m+1)\ra 4(j+m+2)}\otimes w_{4j+3\ra 4(j+1)};$$
$$b_{2s+(j+1)_s,5s}=-\kappa^{\ell+1}(\a_{3(j+m-1)})w_{4(j+m+s+1)+1\ra 4(j+m+2)}\otimes w_{4j+3\ra 4(j+1)+1};$$
$$b_{4s+(j+1)_s,5s}=\kappa^{\ell+1}(\a_{3(j+m-1)})w_{4(j+m+s+1)+2\ra 4(j+m+2)}\otimes w_{4j+3\ra 4(j+1)+2}.$$

\medskip
$({\rm II})$ Represent an arbitrary $t_0\in\N$ in the form
$t_0=11\ell_0+r_0$, where $0\le r_0\le 10.$ Then
$\Omega^{t_0}(Y_t^{(19)})$ is a $\Omega^{r_0}(Y_t^{(19)})$, whose left
components twisted by $\sigma^{\ell_0}$,
and coefficients multiplied by $(-1)^{\ell_0}$.
\end{pr}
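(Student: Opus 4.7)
The plan is to prove the proposition by induction on $r_0$, using the definition of the $\Omega$-translate. Recall that $\Omega^{r_0}(Y^{(19)}_t)$ is characterized (up to homotopy) by the chain-map relation
\begin{equation*}
d_{r_0-1}\cdot\Omega^{r_0}(Y^{(19)}_t)=\Omega^{r_0-1}(Y^{(19)}_t)\cdot d_{t+r_0-1},
\end{equation*}
together with the base case $\Omega^{0}(Y^{(19)}_t)=Y^{(19)}_t$, which was written out in the list of generators. So the whole proposition reduces to verifying, for each $r_0\in\{0,1,\dots,10\}$, that the explicit $(a_{r_0}\times b_{r_0})$-matrix displayed in case $(r_0{+}1)$ is a valid lift fitting into the diagram between $d_{r_0-1}$ and $d_{t+r_0-1}$ (matrices that were fully described in Section \ref{sect_res}).

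First I would treat $r_0=0$ as the starting data and verify that $Y^{(19)}_t$ is indeed a cocycle: this is the equation $Y^{(19)}_t\cdot d_t=d_{-1}\cdot(\text{lift})$, which by direct substitution of the two nonzero entries of $Y^{(19)}_t$ into the relevant block of $d_t$ reduces to the defining relations (paths of length $\ge 5$ and the $\a_{3t+2}\a_{3t+1}\a_{3t}$-equalities) of $R_s^\prime$. Then I would perform the inductive step column by column: for the $j$th column of $\Omega^{r_0}(Y^{(19)}_t)$ one multiplies the previous translate by the $j$th column of $d_{t+r_0-1}$ (whose nonzero entries are listed in Section \ref{sect_res}), and one checks that the result equals the $d_{r_0-1}$-image of the proposed column. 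Because both matrices have a controlled sparsity pattern, on each column only a few terms survive, and the coefficient bookkeeping $\kappa^\ell(\cdot)$, $f_1,f_2,f$ is forced by the sign conventions of $\sigma$ introduced in Section \ref{sect_res}. Minimality of the resolution \eqref{resolv} ensures that any two such lifts differ by a homotopy through projectives without idempotent entries, so the displayed representative is the natural one.

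Part (II) of the proposition follows formally once (I) is established: since $Q_{11\ell+r}$ is obtained from $Q_r$ by replacing each $P_{i,j}$ by $P_{\sigma^\ell(i),j}$ and $d_{11\ell+r}$ is obtained from $d_r$ by applying $\sigma^\ell$ to all left tensor components, the $(11\ell_0+r_0)$-translate of $Y^{(19)}_t$ must coincide on the shape-level with $\Omega^{r_0}(Y^{(19)}_t)$, with the left tensor factors twisted by $\sigma^{\ell_0}$. The extra sign $(-1)^{\ell_0}$ arises because the cocycle representative for the case 19 class has degree satisfying the parity condition $r=2$, $\ell(n+s)+m\equiv s+1\,(2s)$, $\ell\ndiv 2$ or $\myChar=2$; the action of $\sigma$ on the relevant idempotent pair $(e_0,e_0)$ picks up a minus sign that accumulates as $(-1)^{\ell_0}$, exactly as already observed for cases 3, 5, 9, 10, 17.

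The main obstacle is purely combinatorial: one must trace through $11$ different block-structures of the differentials (each already split into four or five ranges of column indices) and confirm the coincidence for every case. The computation is entirely mechanical—there is no hidden idea beyond writing the product of two sparse block matrices—but the case analysis is large, and the principal risk is arithmetic error in the sign functions $f_1,f_2,f_0,f$ and in the $\kappa^\ell$-coefficients when $\sigma$ acts on arrows $\a_{3t+k}$ or $\g_t$ near the boundary indices $(j)_s=s-1$ and $j=6s-1$. I would therefore organize the verification in a uniform table (for each $r_0$, each block of $j$, each nonzero matrix entry) and match it against the corresponding block of $d_{r_0-1}$, rather than try to write the proof as running prose.
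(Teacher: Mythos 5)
Your plan is the same one the paper (implicitly) relies on: these propositions carry no written proof, and the intended verification is exactly the mechanical check that the displayed matrices satisfy the lifting equations $d_{r_0-1}\cdot\Omega^{r_0}(Y^{(19)}_t)=\Omega^{r_0-1}(Y^{(19)}_t)\cdot d_{t+r_0-1}$ against the sparse block descriptions of the differentials from Section~\ref{sect_res}, exactly as the paper does for $d_md_{m+1}=0$ in the proof of Theorem~\ref{resol_thm}. So the strategy is correct and not genuinely different.

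One concrete inaccuracy in your justification of part (II): the degree of a type-19 generator satisfies condition (19), i.e.\ $r=9$, $\ell(n+s)+m\equiv 0\,(2s)$, $\ell\div 2$ or $\myChar=2$ --- not the condition $r=2$, $\ell(n+s)+m\equiv s+1\,(2s)$ that you quote (that is the type-5/21 condition). Moreover the sign $(-1)^{\ell_0}$ cannot come from ``the action of $\sigma$ on the relevant idempotent pair,'' since $\sigma(e_i)=e_{4(n+s)+i}$ carries no sign; the signs of $\sigma$ live entirely on the arrows $\a_i,\g_i$, and the factor $(-1)^{\ell_0}$ has to be extracted by tracking $\kappa(\,\cdot\,)$ through the left tensor components of the specific paths occurring in the nonzero entries when one composes with the twisted differentials $d_{11\ell_0+r}$. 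That bookkeeping is part of the computation, not a formal consequence of periodicity, so part (II) still requires the same entry-by-entry verification as part (I).
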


%%%%%%%%%%%%%%%%%%%%%%%%%%%%%%%%%%%%%%%%%%%%%%%%%%%%%%%%%%%%%%%%%%%%%%%%%%%%%%%%%%%%%%%%
%                                          20
%%%%%%%%%%%%%%%%%%%%%%%%%%%%%%%%%%%%%%%%%%%%%%%%%%%%%%%%%%%%%%%%%%%%%%%%%%%%%%%%%%%%%%%%
\begin{pr}[Translates for the case 20]
$({\rm I})$ Let $r_0\in\N$, $r_0<11$. $r_0$-translates of the
elements $Y^{(20)}_t$ are described by the following way.

$(1)$ If $r_0=0$, then $\Omega^{0}(Y_t^{(20)})$ is described with
$(7s\times 6s)$-matrix with the following elements $b_{ij}${\rm:}

If $0\le j<s$, then $$b_{ij}=
\begin{cases}
\kappa^\ell(\a_{3(j+m+4)})w_{4(j+m)\ra 4(j+m)+3}\otimes e_{4j},\quad i=j;\\
0,\quad\text{otherwise.}
\end{cases}$$

If $s\le j<2s$, then $$b_{ij}=
\begin{cases}
-\kappa^\ell(\a_{3(j+m+4+1)})w_{4(j+m+s)+1\ra 4(j+m+1)}\otimes e_{4(j+s)+1},\quad i=j;\\
0,\quad\text{otherwise.}
\end{cases}$$

If $2s\le j<3s$, then $b_{ij}=0$.

If $3s\le j<4s$, then $$b_{ij}=
\begin{cases}
w_{4(j+m+s)+2\ra 4(j+m+s+1)+1}\otimes e_{4(j+s)+2},\quad i=j;\\
0,\quad\text{otherwise.}
\end{cases}$$

If $4s\le j<6s$, then $b_{ij}=0$.

If $6s\le j<7s$, then $$b_{ij}=
\begin{cases}
w_{4(j+m)+3\ra 4(j+m+1)+2}\otimes e_{4j+3},\quad i=j-s;\\
0,\quad\text{otherwise.}
\end{cases}$$

$(2)$ If $r_0=1$, then $\Omega^{1}(Y_t^{(20)})$ is described with
$(6s\times 7s)$-matrix with the following elements $b_{ij}${\rm:}

If $0\le j<s$, then $$b_{ij}=
\begin{cases}
-\kappa^\ell(\a_{3(j+m+5)})w_{4(j+m)+1\ra 4(j+m+1)}\otimes e_{4j},\quad i=j;\\
0,\quad\text{otherwise.}
\end{cases}$$

If $s\le j<2s$, then $$b_{ij}=
\begin{cases}
-w_{4(j+m+s)+2\ra 4(j+m+s+1)+1}\otimes e_{4(j+s)+1},\quad i=j+s;\\
0,\quad\text{otherwise.}
\end{cases}$$

If $2s\le j<3s$, then $b_{ij}=0$.

If $3s\le j<4s$, then $$b_{ij}=
\begin{cases}
-w_{4(j+m)+3\ra 4(j+m+s+1)+2}\otimes e_{4(j+s)+2},\quad i=j+s;\\
0,\quad\text{otherwise.}
\end{cases}$$

If $4s\le j<5s$, then $b_{ij}=0$.

If $5s\le j<6s$, then $$b_{ij}=
\begin{cases}
\kappa^\ell(\a_{3(j+m+5)})w_{4(j+m+1)\ra 4(j+m+1)+3}\otimes e_{4j+3},\quad i=j+s;\\
0,\quad\text{otherwise.}
\end{cases}$$

$(3)$ If $r_0=2$, then $\Omega^{2}(Y_t^{(20)})$ is described with
$(6s\times 6s)$-matrix with the following elements $b_{ij}${\rm:}

If $0\le j<s$, then $$b_{ij}=
\begin{cases}
\kappa_1w_{4(j+m)+2\ra 4(j+m+1)}\otimes w_{4j\ra 4j+1},\quad i=j+s;\\
\kappa_1w_{4(j+m)+3\ra 4(j+m+1)}\otimes w_{4j\ra 4(j+1)},\quad i=(j+1)_s,\text{ }j=s-1;\\
0,\quad\text{otherwise,}
\end{cases}$$
where $\kappa_1=\kappa^\ell(\a_{3(j+m+5)})$.

If $s\le j<2s-1$, then $$b_{ij}=
\begin{cases}
w_{4(j+m)+3\ra 4(j+m+s+1)+1}\otimes w_{4(j+s)+1\ra 4(j+1)},\quad i=(j+1)_s;\\
0,\quad\text{otherwise.}
\end{cases}$$

If $2s-1\le j<3s-1$, then $b_{ij}=0$.

If $3s-1\le j<3s$, then $$b_{ij}=
\begin{cases}
w_{4(j+m)+3\ra 4(j+m+s+1)+1}\otimes w_{4(j+s)+1\ra 4(j+1)},\quad i=(j+1)_s;\\
0,\quad\text{otherwise.}
\end{cases}$$

If $3s\le j<4s-1$, then $$b_{ij}=
\begin{cases}
w_{4(j+m)+3\ra 4(j+m+s+1)+2}\otimes w_{4(j+s)+2\ra 4(j+1)},\quad i=(j+1)_s;\\
0,\quad\text{otherwise.}
\end{cases}$$

If $4s-1\le j<5s-1$, then $b_{ij}=0$.

If $5s-1\le j<5s$, then $$b_{ij}=
\begin{cases}
w_{4(j+m)+3\ra 4(j+m+s+1)+2}\otimes w_{4(j+s)+2\ra 4(j+1)},\quad i=(j+1)_s;\\
0,\quad\text{otherwise.}
\end{cases}$$

If $5s\le j<6s$, then $$b_{ij}=
\begin{cases}
-\kappa^\ell(\a_{3(j+m+5)})w_{4(j+m+s+1+sf(j,6s-1))+2\ra 4(j+m+1)+3}\otimes w_{4j+3\ra 4(j+s+1+sf(j,6s-1))+1},\\\quad\quad\quad i=s+(j+1)_s;\\
0,\quad\text{otherwise.}
\end{cases}$$

$(4)$ If $r_0=3$, then $\Omega^{3}(Y_t^{(20)})$ is described with
$(7s\times 8s)$-matrix with the following elements $b_{ij}${\rm:}

If $0\le j<s$, then $$b_{ij}=
\begin{cases}
w_{4(j+m)+3\ra 4(j+m+1)+1}\otimes w_{4j\ra 4j+1},\quad i=j+2s;\\
0,\quad\text{otherwise.}
\end{cases}$$

If $s\le j<4s$, then $b_{ij}=0$.

If $4s\le j<6s$, then $$b_{ij}=
\begin{cases}
\kappa^\ell(\a_{3(j+m+5)})f_1(j,5s)w_{4(j+m+1)+2\ra 4(j+m+1)+3}\otimes w_{4j+2\ra 4(j+1)},\\\quad\quad\quad i=(j+1)_s,\text{ }j<5s-1\text{ or }j=6s-1;\\
0,\quad\text{otherwise.}
\end{cases}$$

If $6s\le j<7s$, then $b_{ij}=0$.

$(5)$ If $r_0=4$, then $\Omega^{4}(Y_t^{(20)})$ is described with
$(6s\times 9s)$-matrix with the following elements $b_{ij}${\rm:}

If $0\le j<s$, then $$b_{ij}=
\begin{cases}
\kappa_1w_{4(j+m+s)+1\ra 4(j+m)+3}\otimes w_{4j\ra 4j+1},\quad i=j+2s;\\
-\kappa_1w_{4(j+m+s)+2\ra 4(j+m)+3}\otimes w_{4j\ra 4j+2},\quad i=j+5s;\\
\kappa_1e_{4(j+m)+3}\otimes w_{4j\ra 4(j+1)},\quad i=(j+1)_s,\text{ }j<s-1;\\
0,\quad\text{otherwise,}
\end{cases}$$
where $\kappa_1=\kappa^\ell(\a_{3(j+m+4)})$.

If $s\le j<3s$, then $$b_{ij}=
\begin{cases}
w_{4(j+m)+3\ra 4(j+m+s+1)+2}\otimes w_{4(j+s)+1\ra 4(j+1)},\quad i=(j+1)_s,\text{ }j<2s-1\text{ or }j=3s-1;\\
0,\quad\text{otherwise.}
\end{cases}$$

If $3s\le j<6s$, then $b_{ij}=0$.

$(6)$ If $r_0=5$, then $\Omega^{5}(Y_t^{(20)})$ is described with
$(8s\times 8s)$-matrix with the following elements $b_{ij}${\rm:}

If $s\le j<2s$, then $$b_{ij}=
\begin{cases}
w_{4(j+m+1)+1\ra 4(j+m+1)+2}\otimes w_{4j\ra 4(j+1)},\quad i=(j+1)_{2s};\\
w_{4(j+m+1)\ra 4(j+m+1)+2}\otimes w_{4j\ra 4(j+s)+1},\quad i=j+s;\\
0,\quad\text{otherwise.}
\end{cases}$$

If $2s\le j<4s$, then $$b_{ij}=
\begin{cases}
\kappa_1w_{4(j+m+1)+1\ra 4(j+m+1)+3}\otimes w_{4j+1\ra 4(j+1)},\\\quad\quad\quad i=(j+1)_s,\text{ }j<3s-1\text{ or }j=4s-1;\\
\kappa_1w_{4(j+m+s+1)+1\ra 4(j+m+1)+3}\otimes w_{4j+1\ra 4(j+1)},\\\quad\quad\quad i=s+(j+1)_s,\text{ }j<3s-1\text{ or }j=4s-1;\\
0,\quad\text{otherwise,}
\end{cases}$$
where $\kappa_1=\kappa^{\ell+1}(\a_{3(j+m-1)})$.

If $4s\le j<6s$, then $$b_{ij}=
\begin{cases}
\kappa^{\ell+1}(\a_{3(j+m)})e_{4(j+m+2)}\otimes w_{4j+2\ra 4(j+1)+1},\quad i=2s+(j+1)_s,\text{ }j<5s-1\text{ or }j=6s-1;\\
0,\quad\text{otherwise.}
\end{cases}$$

If $6s\le j<8s$, then $b_{ij}=0$.

$(7)$ If $r_0=6$, then $\Omega^{6}(Y_t^{(20)})$ is described with
$(9s\times 9s)$-matrix with the following elements $b_{ij}${\rm:}

If $0\le j<s$, then $$b_{ij}=
\begin{cases}
-\kappa^{\ell+1}(\a_{3(j+m-2)})w_{4(j+m)+1\ra 4(j+m)+3}\otimes w_{4j\ra 4j+1},\quad i=j+s;\\
0,\quad\text{otherwise.}
\end{cases}$$

If $s\le j<2s-1$, then $$b_{ij}=
\begin{cases}
\kappa^{\ell+1}(\a_{3(j+m-1)})e_{4(j+m+1)}\otimes w_{4j\ra 4(j+1)},\quad i=(j+1)_s;\\
0,\quad\text{otherwise.}
\end{cases}$$

If $2s-1\le j<2s$, then $b_{ij}=0$.

If $2s\le j<4s$, then $$b_{ij}=
\begin{cases}
w_{4(j+m+1)\ra 4(j+m+s+1)+1}\otimes w_{4j+1\ra 4(j+1)},\quad i=(j+1)_s,\text{ }j<3s-1\text{ or }j=4s-1;\\
0,\quad\text{otherwise.}
\end{cases}$$

If $4s\le j<5s-1$, then $$b_{ij}=
\begin{cases}
e_{4(j+m+1)+1}\otimes w_{4j+2\ra 4(j+1)+1},\quad i=s+(j+1)_s;\\
0,\quad\text{otherwise.}
\end{cases}$$

If $5s-1\le j<7s-1$, then $b_{ij}=0$.

If $7s-1\le j<7s$, then $$b_{ij}=
\begin{cases}
e_{4(j+m+s+1)+1}\otimes w_{4(j+s)+2\ra 4(j+s+1)+1},\quad i=s+(j+1)_s;\\
0,\quad\text{otherwise.}
\end{cases}$$

If $7s\le j<9s$, then $b_{ij}=0$.

$(8)$ If $r_0=7$, then $\Omega^{7}(Y_t^{(20)})$ is described with
$(8s\times 8s)$-matrix with the following elements $b_{ij}${\rm:}

If $0\le j<s$, then $$b_{ij}=
\begin{cases}
-w_{4(j+m)+3\ra 4(j+m+1)+1}\otimes w_{4j\ra 4j+1},\quad i=j+2s;\\
-w_{4(j+m+1)\ra 4(j+m+1)+1}\otimes w_{4j\ra 4j+2},\quad i=j+4s;\\
-w_{4(j+m+1)\ra 4(j+m+1)+1}\otimes w_{4j\ra 4(j+s)+2},\quad i=j+5s;\\
0,\quad\text{otherwise.}
\end{cases}$$

If $s\le j<2s$, then $b_{ij}=0$.

If $2s\le j<4s$, then $$b_{ij}=
\begin{cases}
\kappa^{\ell+1}(\a_{3(j+m)})w_{4(j+m+1)+2\ra 4(j+m+2)}\otimes w_{4j+1\ra 4(j+1)},\\\quad\quad\quad i=(j+1)_s,\text{ }j<3s-1\text{ or }j=4s-1;\\
0,\quad\text{otherwise.}
\end{cases}$$

If $4s\le j<8s$, then $b_{ij}=0$.

$(9)$ If $r_0=8$, then $\Omega^{8}(Y_t^{(20)})$ is described with
$(9s\times 6s)$-matrix with the following elements $b_{ij}${\rm:}

If $0\le j<s$, then $$b_{ij}=
\begin{cases}
-\kappa_1w_{4(j+m)+2\ra 4(j+m+1)}\otimes w_{4j\ra 4j+1},\quad i=j+s;\\
-\kappa_1w_{4(j+m)+1\ra 4(j+m+1)}\otimes w_{4j\ra 4(j+s)+2},\quad i=j+4s;\\
\kappa_1w_{4(j+m)+3\ra 4(j+m+1)}\otimes w_{4j\ra 4(j+1)},\quad i=(j+1)_s,\text{ }j=s-1;\\
0,\quad\text{otherwise,}
\end{cases}$$
where $\kappa_1=\kappa^{\ell+1}(\a_{3(j+m-1)})$.

If $s\le j<2s-1$, then $$b_{ij}=
\begin{cases}
w_{4(j+m)+3\ra 4(j+m+s+1)+1}\otimes w_{4(j+s)+1\ra 4(j+1)},\quad i=(j+1)_s;\\
0,\quad\text{otherwise.}
\end{cases}$$

If $2s-1\le j<4s-1$, then $b_{ij}=0$.

If $4s-1\le j<4s$, then $$b_{ij}=
\begin{cases}
w_{4(j+m)+3\ra 4(j+m+1)+1}\otimes w_{4j+1\ra 4(j+1)},\quad i=(j+1)_s;\\
0,\quad\text{otherwise.}
\end{cases}$$

If $4s\le j<7s$, then $b_{ij}=0$.

If $7s\le j<8s$, then $$b_{ij}=
\begin{cases}
\kappa_1w_{4(j+m)+3\ra 4(j+m+1)+3}\otimes w_{4j+3\ra 4(j+1)},\quad i=(j+1)_s;\\
\kappa_1w_{4(j+m+1+sf(j,8s-1))+2\ra 4(j+m+1)+3}\otimes w_{4j+3\ra 4(j+1+sf(j,8s-1))+1},\quad i=2s+(j+1)_s;\\
\kappa_1w_{4(j+m+1+sf(j,8s-1))+1\ra 4(j+m+1)+3}\otimes w_{4j+3\ra 4(j+s+1+sf(j,8s-1))+2},\quad i=3s+(j+1)_s;\\
0,\quad\text{otherwise,}
\end{cases}$$
where $\kappa_1=-\kappa^{\ell+1}(\a_{3(j+m-1)})$.

If $8s\le j<9s$, then $b_{ij}=0$.

$(10)$ If $r_0=9$, then $\Omega^{9}(Y_t^{(20)})$ is described with
$(8s\times 7s)$-matrix with the following elements $b_{ij}${\rm:}

If $0\le j<s$, then $$b_{ij}=
\begin{cases}
w_{4(j+m)+3\ra 4(j+m+1)+2}\otimes e_{4j},\quad i=j;\\
0,\quad\text{otherwise.}
\end{cases}$$

If $s\le j<3s-1$, then $b_{ij}=0$.

If $3s-1\le j<4s-1$, then $$b_{ij}=
\begin{cases}
-\kappa^{\ell+1}(\a_{3(j+m-1)})e_{4(j+m+1)+3}\otimes w_{4j+1\ra 4(j+1)},\quad i=(j+1)_s;\\
0,\quad\text{otherwise.}
\end{cases}$$

If $4s-1\le j<5s$, then $b_{ij}=0$.

If $5s\le j<6s$, then $$b_{ij}=
\begin{cases}
\kappa^{\ell+1}(\a_{3(j+m)})w_{4(j+m+1)+3\ra 4(j+m+2)}\otimes w_{4j+2\ra 4(j+1)},\quad i=(j+1)_s;\\
0,\quad\text{otherwise.}
\end{cases}$$

If $6s\le j<7s-1$, then $$b_{ij}=
\begin{cases}
-w_{4(j+m+1)+3\ra 4(j+m+2)+1}\otimes w_{4j+3\ra 4(j+1)},\quad i=(j+1)_s;\\
0,\quad\text{otherwise.}
\end{cases}$$

If $7s-1\le j<7s$, then $$b_{ij}=
\begin{cases}
-w_{4(j+m+2)\ra 4(j+m+2)+1}\otimes w_{4j+3\ra 4(j+1)+1},\quad i=2s+(j+1)_s;\\
-e_{4(j+m+2)+1}\otimes w_{4j+3\ra 4(j+s+1)+2},\quad i=3s+(j+1)_s;\\
0,\quad\text{otherwise.}
\end{cases}$$

If $7s\le j<8s$, then $b_{ij}=0$.

$(11)$ If $r_0=10$, then $\Omega^{10}(Y_t^{(20)})$ is described with
$(6s\times 6s)$-matrix with the following elements $b_{ij}${\rm:}

If $0\le j<s$, then $$b_{ij}=
\begin{cases}
\kappa_1w_{4(j+m)\ra 4(j+m)+3}\otimes e_{4j},\quad i=j;\\
-\kappa_1w_{4(j+m+s)+1\ra 4(j+m)+3}\otimes w_{4j\ra 4j+1},\quad i=j+s;\\
\kappa_1w_{4(j+m)+1\ra 4(j+m)+3}\otimes w_{4j\ra 4(j+s)+1},\quad i=j+2s;\\
\kappa_1w_{4(j+m+s)+2\ra 4(j+m)+3}\otimes w_{4j\ra 4j+2},\quad i=j+3s;\\
-\kappa_1w_{4(j+m)+2\ra 4(j+m)+3}\otimes w_{4j\ra 4(j+s)+2},\quad i=j+4s;\\
\kappa_1e_{4(j+m)+3}\otimes w_{4j\ra 4j+3},\quad i=j+5s;\\
0,\quad\text{otherwise,}
\end{cases}$$
where $\kappa_1=\kappa^{\ell+1}(\a_{3(j+m-2)})$.

If $s\le j<2s-1$, then $b_{ij}=0$.

If $2s-1\le j<2s$, then $$b_{ij}=
\begin{cases}
-w_{4(j+m+1)\ra 4(j+m+s+1)+2}\otimes w_{4(j+s)+1\ra 4(j+1)},\quad i=(j+1)_s;\\
0,\quad\text{otherwise.}
\end{cases}$$

If $2s\le j<5s-1$, then $b_{ij}=0$.

If $5s-1\le j<5s$, then $$b_{ij}=
\begin{cases}
w_{4(j+m+1)\ra 4(j+m+1)+1}\otimes w_{4(j+s)+2\ra 4(j+1)},\quad i=(j+1)_s;\\
-e_{4(j+m+1)+1}\otimes w_{4(j+s)+2\ra 4(j+s+1)+1},\quad i=s+(j+1)_s;\\
0,\quad\text{otherwise.}
\end{cases}$$

If $5s\le j<6s-1$, then $b_{ij}=0$.

If $6s-1\le j<6s$, then $$b_{ij}=
\begin{cases}
\kappa_1w_{4(j+m+1)+1\ra 4(j+m+2)}\otimes w_{4j+3\ra 4(j+s+1)+1},\quad i=2s+(j+1)_s;\\
\kappa_1w_{4(j+m+s+1)+2\ra 4(j+m+2)}\otimes w_{4j+3\ra 4(j+1)+2},\quad i=3s+(j+1)_s;\\
-\kappa_1w_{4(j+m+1)+2\ra 4(j+m+2)}\otimes w_{4j+3\ra 4(j+s+1)+2},\quad i=4s+(j+1)_s;\\
\kappa_1w_{4(j+m+1)+3\ra 4(j+m+2)}\otimes w_{4j+3\ra 4(j+1)+3},\quad i=5s+(j+1)_s;\\
0,\quad\text{otherwise,}
\end{cases}$$
where $\kappa_1=-f_2(s,1)\kappa^{\ell+1}(\a_{3(j+m+1)})$.

\medskip
$({\rm II})$ Represent an arbitrary $t_0\in\N$ in the form
$t_0=11\ell_0+r_0$, where $0\le r_0\le 10.$ Then
$\Omega^{t_0}(Y_t^{(20)})$ is a $\Omega^{r_0}(Y_t^{(20)})$, whose left
components twisted by $\sigma^{\ell_0}$.
\end{pr}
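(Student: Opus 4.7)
The plan is to verify, by induction on $r_0$, that the explicit matrices $\Omega^{r_0}(Y_t^{(20)})$ described in the proposition form a chain map of bimodule complexes lifting the cocycle $Y_t^{(20)}$. Recall that for a $t$-cocycle $f\in\Ker\delta^t$, its translates $\Omega^i(f):Q_{t+i}\to Q_i$ are characterized (up to homotopy) by commuting the squares
$$d_{i-1}\circ\Omega^i(f)=\Omega^{i-1}(f)\circ d_{t+i-1},$$
starting from $\Omega^0(f)=f$. Thus it suffices to check, for each $r_0\in\{0,\ldots,10\}$, that the matrix given in item $(r_0+1)$ together with the matrix from item $(r_0)$ makes the corresponding square commute, where the differentials $d_{t+r_0}$ and $d_{r_0}$ are the ones computed in section \ref{sect_res} (with appropriate $\sigma^\ell$-twists on the left tensor components of $d_{t+r_0}$, determined by $t=11\ell+r$).

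First, I would verify that $\Omega^0(Y_t^{(20)})=Y_t^{(20)}$ coincides with the degree-$t$ matrix presentation of the generator, which is immediate from the definition in case 20 of the generator list. Then, for $r_0=1,2,\ldots,10$, I would compute the matrix products $d_{r_0-1}\cdot\Omega^{r_0}(Y_t^{(20)})$ and $\Omega^{r_0-1}(Y_t^{(20)})\cdot d_{t+r_0-1}$ componentwise, using the piecewise formulas for $d_i$ from section \ref{sect_res} and carefully tracking the coefficient shifts produced by $\kappa^\ell$ on the generators of the left idempotent components. In each range of column indices $j$ I would identify which summands of $Q_{t+r_0}$ and $Q_{r_0}$ contribute and check that the two products agree on that block; the relations defining the algebra $R_s^\prime$ (paths of length $\ge 5$ vanishing and the commutation relations $\alpha_{3t+2}\alpha_{3t+1}\alpha_{3t}=\alpha_{3(t+s)+2}\alpha_{3(t+s)+1}\alpha_{3(t+s)}$, together with $\alpha_{3t}\gamma_{t-1}\alpha_{3(t+s)-1}=0$) will enforce the cancellations required to match the two sides.

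For part $({\rm II})$, once the $r_0$-translates for $0\le r_0\le 10$ are established, the extension to arbitrary $t_0=11\ell_0+r_0$ follows from Theorem \ref{resol_thm}: the modules $Q_{11\ell_0+r_0}$ and differentials $d_{11\ell_0+r_0}$ are obtained from $Q_{r_0}$ and $d_{r_0}$ by applying $\sigma^{\ell_0}$ to the left tensor components. Therefore the composition of chain maps shifted by $\ell_0$ periods must twist the left components of $\Omega^{r_0}(Y_t^{(20)})$ by $\sigma^{\ell_0}$, yielding the claimed description. The additional factor $(-1)^{\ell_0}$ (absent here but appearing in analogous propositions for cases 3, 5, 9, 10, 11, 17, 19) would come from tracking a sign arising in the application of $\sigma$ to the relevant arrows; for case 20 one verifies directly that no such sign appears.

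The main obstacle is purely bookkeeping: in the $r_0=5$ and $r_0=10$ cases the matrices $\Omega^{r_0}(Y_t^{(20)})$ have entries involving $\kappa^{\ell+1}$ rather than $\kappa^\ell$, reflecting the fact that lifting across a full application of certain long differentials forces one to absorb a $\sigma$-shift into the coefficient, and the commutativity squares there require simultaneously tracking sign changes coming from the helper functions $f_1$, $f_2$ in $d_5, d_{10}$ and the coefficient shifts in the generator formula. Each verification reduces to a finite check of paths in $\mathcal Q_s$ together with an identification modulo the ideal $I'$, so no new conceptual input is needed beyond the explicit data already assembled in the preceding sections.
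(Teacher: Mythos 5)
Your proposal is correct and follows the same route the paper (implicitly) takes: these translate propositions are justified by direct verification of the lifting squares $d_{r_0-1}\circ\Omega^{r_0}(Y_t^{(20)})=\Omega^{r_0-1}(Y_t^{(20)})\circ d_{t+r_0-1}$ against the differentials of Section \ref{sect_res}, with part (II) following from the $\sigma$-periodicity of the resolution established in Theorem \ref{resol_thm}; the paper offers no further argument beyond this componentwise check. Your remarks on the absence of the $(-1)^{\ell_0}$ factor and on the $\kappa^{\ell+1}$ coefficients in the $r_0=5,10$ blocks are consistent with the stated data.
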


%%%%%%%%%%%%%%%%%%%%%%%%%%%%%%%%%%%%%%%%%%%%%%%%%%%%%%%%%%%%%%%%%%%%%%%%%%%%%%%%%%%%%%%%
%                                          21
%%%%%%%%%%%%%%%%%%%%%%%%%%%%%%%%%%%%%%%%%%%%%%%%%%%%%%%%%%%%%%%%%%%%%%%%%%%%%%%%%%%%%%%%
\begin{pr}[Translates for the case 21]
$({\rm I})$ Let $r_0\in\N$, $r_0<11$. Denote by $\kappa_0=\kappa^\ell(\g_4)\kappa^\ell(\g_5)\kappa^\ell(\g_6)-\kappa^\ell(\a_{15})$.
Then $r_0$-translates of the
elements $Y^{(21)}_t$ are described by the following way.

$(1)$ If $r_0=0$, then $\Omega^{0}(Y_t^{(21)})$ is described with
$(6s\times 6s)$-matrix with one nonzero element that is of the following form{\rm:}
$$b_{0,0}=-\kappa^\ell(\a_{15}) w_{4(j+m)\ra 4(j+m+1)}\otimes e_{4j}.$$

$(2)$ If $r_0=1$, then $\Omega^{1}(Y_t^{(21)})$ is described with
$(6s\times 7s)$-matrix with one nonzero element that is of the following form{\rm:}
$$b_{(j+1)_s,(1)_s}=f_2(s,1)\kappa_0w_{4(j+m+1+f(s,1))+1\ra 4(j+m+2)}\otimes w_{4j\ra 4(j+1)}.$$

$(3)$ If $r_0=2$, then $\Omega^{2}(Y_t^{(21)})$ is described with
$(7s\times 6s)$-matrix with one nonzero element that is of the following form{\rm:}
$$b_{(j+1)_s,1}=f_2(s,1)\kappa^{\ell+1}(\a_{3(j+m-1)})\kappa_0w_{4(j+m)+3\ra 4(j+m+1)+1}\otimes w_{4j\ra 4(j+1)}.$$

$(4)$ If $r_0=3$, then $\Omega^{3}(Y_t^{(21)})$ is described with
$(6s\times 8s)$-matrix with one nonzero element that is of the following form{\rm:}
$$b_{(j+1)_s,(1)_s}=-\kappa_0w_{4(j+m+1+f(s,1))+2\ra 4(j+m+1)+3}\otimes w_{4j\ra 4(j+1)}.$$

$(5)$ If $r_0=4$, then $\Omega^{4}(Y_t^{(21)})$ is described with
$(8s\times 9s)$-matrix with one nonzero element that is of the following form{\rm:}
$$b_{(j+1)_s,1}=-\kappa^{\ell+1}(\a_{3(j+m-1)})\kappa_0w_{4(j+m)+3\ra 4(j+m+1)+2}\otimes w_{4j\ra 4(j+1)}.$$

$(6)$ If $r_0=5$, then $\Omega^{5}(Y_t^{(21)})$ is described with
$(9s\times 8s)$-matrix with the following two nonzero elements{\rm:}
$$b_{(j+1)_s+f(s,1),(1)_s}=f_2(s,1)\kappa_0w_{4(j+m+1)+1\ra 4(j+m+1)+3}\otimes w_{4j\ra 4(j+1)};$$
$$b_{s+(j+1)_s-f(s,1),(1)_s}=f_2(s,1)\kappa_0w_{4(j+m+s+1)+1\ra 4(j+m+1)+3}\otimes w_{4j\ra 4(j+1)}.$$

$(7)$ If $r_0=6$, then $\Omega^{6}(Y_t^{(21)})$ is described with
$(8s\times 9s)$-matrix with one nonzero element that is of the following form{\rm:}
$$b_{(j+1)_s,(1)_s}=f_2(s,1)\kappa^{\ell+1}(\a_{3(j+m-1)})\kappa_0w_{4(j+m+1)\ra 4(j+m+1)+1}\otimes w_{4j\ra 4(j+1)}.$$

$(8)$ If $r_0=7$, then $\Omega^{7}(Y_t^{(21)})$ is described with
$(9s\times 8s)$-matrix with one nonzero element that is of the following form{\rm:}
$$b_{(j+1)_s,(1)_s}=f_2(s,1)\kappa^{\ell+1}(\g_{j+m-1})\kappa_0w_{4(j+m+1+f(s,1))+2\ra 4(j+m+2)}\otimes w_{4j\ra 4(j+1)}.$$

$(9)$ If $r_0=8$, then $\Omega^{8}(Y_t^{(21)})$ is described with
$(8s\times 6s)$-matrix with one nonzero element that is of the following form{\rm:}
$$b_{(j+1)_s,1}=f_2(s,1)\kappa^{\ell+1}(\a_{3(j+m-2)})\kappa_0w_{4(j+m)+3\ra 4(j+m+1)+2}\otimes w_{4j\ra 4(j+1)}.$$

$(10)$ If $r_0=9$, then $\Omega^{9}(Y_t^{(21)})$ is described with
$(6s\times 7s)$-matrix with one nonzero element that is of the following form{\rm:}
$$b_{j,(1)_s}=-f_2(s,1)\kappa^{\ell+1}(\g_{j+m-2})\kappa_0w_{4(j+m)+3\ra 4(j+m+1)+3}\otimes e_{4j}.$$

$(11)$ If $r_0=10$, then $\Omega^{10}(Y_t^{(21)})$ is described with
$(7s\times 6s)$-matrix with one nonzero element that is of the following form{\rm:}
$$b_{(j+1)_s,0}=-\kappa^{\ell+1}(\g_{j+m-2})\kappa_0w_{4(j+m+1)\ra 4(j+m+1)+3}\otimes w_{4j\ra 4(j+1)}.$$

\medskip
$({\rm II})$ Represent an arbitrary $t_0\in\N$ in the form
$t_0=11\ell_0+r_0$, where $0\le r_0\le 10.$ Then
$\Omega^{t_0}(Y_t^{(21)})$ is a $\Omega^{r_0}(Y_t^{(21)})$, whose left
components twisted by $\sigma^{\ell_0}$,
and coefficients multiplied by $(-1)^{\ell_0}$.
\end{pr}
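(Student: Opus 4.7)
The plan is to prove the proposition by the same inductive strategy used for the earlier cases (Propositions for cases 1--20): one builds the chain map $\{\Omega^i(Y_t^{(21)}) : Q_{t+i} \to Q_i\}_{i \ge 0}$ lifting the cocycle $Y_t^{(21)}$, and verifies at each step $r_0$ that the given matrix satisfies the commutativity condition
\[
d_{r_0-1} \cdot \Omega^{r_0}(Y_t^{(21)}) = \Omega^{r_0-1}(Y_t^{(21)}) \cdot d_{t+r_0-1}.
\]
For the base case $r_0 = 0$, the matrix displayed in item (1) is exactly the image of $Y_t^{(21)}$ under the identification $\Hom_\Lambda(Q_t, R) \simeq \Hom_\Lambda(Q_t, Q_0/\mathrm{rad})$, and the coefficient $-\kappa^\ell(\a_{15})$ comes from applying $\sigma^\ell$ to the single nonzero entry of $Y_t^{(21)}$ as defined in Section 5.

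The inductive step for each $r_0 = 1, \dots, 10$ reduces to a direct matrix computation in the quiver path algebra $K[\mathcal{Q}_s]/I^\prime$. The key algebraic input is the relation
\[
\a_{3t+2}\a_{3t+1}\a_{3t} = \a_{3(t+s)+2}\a_{3(t+s)+1}\a_{3(t+s)} \pmod{I^\prime},
\]
which is precisely the source of the coefficient $\kappa_0 = \kappa^\ell(\g_4)\kappa^\ell(\g_5)\kappa^\ell(\g_6) - \kappa^\ell(\a_{15})$: this coefficient encodes that the two length-three paths through the $\g$-arrows and through $\a_{15}$ are identified modulo $I^\prime$, so that a single boundary in the complex accounts for both contributions simultaneously. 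All other entries are built from single paths, and the pattern $(j+1)_s$ versus $s+(j+1)_s$ with the factor $f(s,1)$ or $f_2(s,1)$ records where the "wrap-around" over the cycle of length $2s$ happens.

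The main obstacle will be the bookkeeping of signs and index shifts in the cases $r_0 = 5, 6, 7$, where the ranks of the modules $Q_i$ are largest and the matrices $d_r$ described in Section \ref{sect_res} have the most nonzero entries. Here one has to verify that multiple contributions on each side of the commutativity equation cancel pairwise, using the sign-functions $f_1, f_2, f$ and the sign conventions in the definition of $\sigma$ (which in particular flips the sign of $\a_i$ when $(i)_3 \in \{0, 1\}$ and $(i)_{6s} < 3s$, etc.). The cancellations occur because each entry $\kappa^\ell(\a_{3(\cdot)})$ or $\kappa^\ell(\g_\cdot)$ picks up an additional sign from the inductive action of $\sigma$ that exactly matches the sign shifts built into the matrices $d_r$.

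Finally, part (II) follows immediately from Theorem \ref{resol_thm}: the differential $d_{11\ell_0 + r}$ is obtained from $d_r$ by applying $\sigma^{\ell_0}$ to all left tensor factors, so $\Omega^{11\ell_0 + r_0}(Y_t^{(21)})$ is obtained from $\Omega^{r_0}(Y_t^{(21)})$ by the same twist. The additional sign $(-1)^{\ell_0}$ arises because the single nonzero coefficient of $Y_t^{(21)}$ is $-\kappa^\ell(\a_{15})$, and each application of $\sigma$ to $\a_{15}$ contributes one minus sign through the clause $(i)_3 = 0$, $(i)_{6s} < 3s$ in the definition of $\sigma$; iterating $\ell_0$ times yields the factor $(-1)^{\ell_0}$ stated in the proposition.
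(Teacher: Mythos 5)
Your overall strategy---lifting the cocycle degree by degree and checking the chain-map identity $d_{r_0-1}\cdot\Omega^{r_0}(Y_t^{(21)})=\Omega^{r_0-1}(Y_t^{(21)})\cdot d_{t+r_0-1}$ against the differentials of Section 3---is the only viable route, and it is tacitly what the paper does: no proof is printed for any of the translate propositions, so the content of the statement is exactly the outcome of those matrix multiplications. But your proposal remains a plan; none of the eleven verifications is carried out, and the two places where you do commit to a concrete reason are both incorrect.

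First, the claim that $\kappa_0=\kappa^\ell(\g_4)\kappa^\ell(\g_5)\kappa^\ell(\g_6)-\kappa^\ell(\a_{15})$ ``encodes that the two length-three paths through the $\g$-arrows and through $\a_{15}$ are identified modulo $I'$'' does not match the relations of $R_s'$: the commutativity relations in $I'$ identify two $\a$-paths, $\a_{3t+2}\a_{3t+1}\a_{3t}$ and $\a_{3(t+s)+2}\a_{3(t+s)+1}\a_{3(t+s)}$, never a $\g$-path with an $\a$-path (the $\g$'s occur only in the zero relations $\a_{3t}\g_{t-1}\a_{3(t+s)-1}$). Moreover each $\kappa^\ell(\cdot)$ is a sign, so $\kappa_0\in\{-2,0,2\}$; it is the \emph{difference} of the signs that $\sigma^\ell$ assigns to two distinct contributions arising in the boundary computation, and whether it vanishes is precisely what must be computed, not a consequence of an identification of paths. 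Second, your derivation of the factor $(-1)^{\ell_0}$ in part (II) fails: you assert that $\sigma$ always flips the sign of $\a_{15}$ via the clause $(i)_3=0$, $(i)_{6s}<3s$, but for $s=1$ one has $(15)_6=3=3s$, so that clause does not apply and $\sigma(\a_{15})=+\a_{3(n+s)+15}$; for $s=3$ the same happens. The sign in part (II) is not read off from $\a_{15}$ alone---it comes from composing the lift across one full period with the isomorphism $\Omega^{11}({}_\Lambda R)\simeq{}_1R_\sigma$, and it genuinely varies from generator to generator (compare the translate propositions for cases 1, 4, 6, where no such factor appears). So the skeleton of the argument is right, but the proposition is only established once the deferred matrix computations are actually performed, and the heuristics you substitute for them would lead to wrong coefficients.
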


%%%%%%%%%%%%%%%%%%%%%%%%%%%%%%%%%%%%%%%%%%%%%%%%%%%%%%%%%%%%%%%%%%%%%%%%%%%%%%%%%%%%%%%%
%                                          22
%%%%%%%%%%%%%%%%%%%%%%%%%%%%%%%%%%%%%%%%%%%%%%%%%%%%%%%%%%%%%%%%%%%%%%%%%%%%%%%%%%%%%%%%
\begin{pr}[Translates for the case 22]
$({\rm I})$ Let $r_0\in\N$, $r_0<11$. $r_0$-translates of the
elements $Y^{(22)}_t$ are described by the following way.

$(1)$ If $r_0=0$, then $\Omega^{0}(Y_t^{(22)})$ is described with
$(6s\times 6s)$-matrix with the following elements $b_{ij}${\rm:}

If $0\le j<s$, then $$b_{ij}=
\begin{cases}
\kappa^\ell(\a_{3(j+m+5)})e_{4(j+m)}\otimes e_{4j},\quad i=j;\\
0,\quad\text{otherwise.}
\end{cases}$$

If $s\le j<5s$, then $b_{ij}=0$.

If $5s\le j<6s$, then $$b_{ij}=
\begin{cases}
-\kappa^\ell(\a_{3(j+m+5)})e_{4(j+m)+3}\otimes e_{4j+3},\quad i=j;\\
0,\quad\text{otherwise.}
\end{cases}$$

$(2)$ If $r_0=1$, then $\Omega^{1}(Y_t^{(22)})$ is described with
$(6s\times 7s)$-matrix with the following elements $b_{ij}${\rm:}

If $0\le j<s$, then $$b_{ij}=
\begin{cases}
\kappa_1w_{4(j+m)+1\ra 4(j+m+1)}\otimes e_{4j},\quad i=j;\\
\kappa_1w_{4(j+m)+2\ra 4(j+m+1)}\otimes w_{4j\ra 4j+1},\quad i=j+2s;\\
\kappa_1w_{4(j+m)+3\ra 4(j+m+1)}\otimes w_{4j\ra 4j+2},\quad i=j+4s;\\
-\kappa_1e_{4(j+m+1)}\otimes w_{4j\ra 4j+3},\quad i=j+6s;\\
0,\quad\text{otherwise,}
\end{cases}$$
where $\kappa_1=-\kappa^\ell(\a_{3(j+m+6)})$.

If $s\le j<3s$, then $$b_{ij}=
\begin{cases}
-e_{4(j+m+1)+1}\otimes w_{4(j+s)+1\ra 4(j+1)},\quad i=(j+1)_{2s};\\
w_{4(j+m)+3\ra 4(j+m+1)+1}\otimes w_{4(j+s)+1\ra 4(j+s)+2},\quad i=j+3s;\\
0,\quad\text{otherwise.}
\end{cases}$$

If $3s\le j<5s$, then $$b_{ij}=
\begin{cases}
-w_{4(j+m+1)+1\ra 4(j+m+1)+2}\otimes w_{4(j+s)+2\ra 4(j+1)},\quad i=(j+1)_{2s};\\
w_{4(j+m)+3\ra 4(j+m+1)+2}\otimes e_{4(j+s)+2},\quad i=j+s;\\
0,\quad\text{otherwise.}
\end{cases}$$

If $5s\le j<6s$, then $$b_{ij}=
\begin{cases}
\kappa_1w_{4(j+m+s+1+sf(j,6s-1))+1\ra 4(j+m+1)+3}\otimes w_{4j+3\ra 4(j+1)},\quad i=(j+1)_s;\\
\kappa_1w_{4(j+m+s+1+sf(j,6s-1))+2\ra 4(j+m+1)+3}\otimes w_{4j+3\ra 4(j+s+1+sf(j,6s-1))+1},\quad i=2s+(j+1)_s;\\
\kappa_1e_{4(j+m+1)+3}\otimes w_{4j+3\ra 4(j+s+1+sf(j,6s-1))+2},\quad i=4s+(j+1)_s;\\
-\kappa_1w_{4(j+m+1)\ra 4(j+m+1)+3}\otimes e_{4j+3},\quad i=j+s;\\
0,\quad\text{otherwise,}
\end{cases}$$
where $\kappa_1=-\kappa^\ell(\a_{3(j+m+6)})$.

$(3)$ If $r_0=2$, then $\Omega^{2}(Y_t^{(22)})$ is described with
$(7s\times 6s)$-matrix with the following elements $b_{ij}${\rm:}

If $0\le j<s$, then $$b_{ij}=
\begin{cases}
e_{4(j+m+s)+1}\otimes w_{4j\ra 4j+2},\quad i=j+3s;\\
0,\quad\text{otherwise.}
\end{cases}$$

If $s\le j<2s$, then $$b_{ij}=
\begin{cases}
-w_{4(j+m)-1\ra 4(j+m+s)+1}\otimes e_{4j},\quad i=j-s;\\
e_{4(j+m+s)+1}\otimes w_{4j\ra 4j+2},\quad i=j+3s;\\
0,\quad\text{otherwise.}
\end{cases}$$

If $2s\le j<4s$, then $b_{ij}=0$.

If $4s\le j<6s$, then $$b_{ij}=
\begin{cases}
\kappa_1w_{4(j+m+s)+1\ra 4(j+m)+3}\otimes e_{4j+2},\quad i=j-s;\\
-\kappa_1e_{4(j+m)+3}\otimes w_{4j+2\ra 4(j+1)},\quad i=(j+1)_s,\text{ }j<5s-1\text{ or }j=6s-1;\\
0,\quad\text{otherwise,}
\end{cases}$$
where $\kappa_1=\kappa^\ell(\a_{3(j+m+5)})f_1(j,5s)$.

If $6s\le j<7s$, then $$b_{ij}=
\begin{cases}
-\kappa^{\ell+1}(\a_{3(j+m)})e_{4(j+m+1)}\otimes e_{4j+3},\quad i=j-s;\\
0,\quad\text{otherwise.}
\end{cases}$$

$(4)$ If $r_0=3$, then $\Omega^{3}(Y_t^{(22)})$ is described with
$(6s\times 8s)$-matrix with the following elements $b_{ij}${\rm:}

If $0\le j<s$, then $$b_{ij}=
\begin{cases}
\kappa_1w_{4(j+m)+2\ra 4(j+m)+3}\otimes e_{4j},\quad i=j;\\
\kappa_1e_{4(j+m)+3}\otimes w_{4j\ra 4j+1},\quad i=j+2s;\\
0,\quad\text{otherwise,}
\end{cases}$$
where $\kappa_1=\kappa^\ell(\a_{3(j+m+5)})$.

If $s\le j<3s$, then $$b_{ij}=
\begin{cases}
-f_2((j)_s,s-1)f_1(j,2s)e_{4(j+m+1)+2}\otimes w_{4(j+s)+1\ra 4(j+1)},\quad i=(j+1)_{2s};\\
w_{4(j+m+1)\ra 4(j+m+1)+2}\otimes w_{4(j+s)+1\ra 4(j+s)+2},\quad i=j+3s;\\
0,\quad\text{otherwise.}
\end{cases}$$

If $3s\le j<5s$, then $$b_{ij}=
\begin{cases}
w_{4(j+m+1)\ra 4(j+m+s+1)+1}\otimes e_{4(j+s)+2},\quad i=j+s;\\
0,\quad\text{otherwise.}
\end{cases}$$

If $5s\le j<6s$, then $$b_{ij}=
\begin{cases}
-\kappa^{\ell+1}(\a_{3(j+m+1)})w_{4(j+m+s+1)+2\ra 4(j+m+2)}\otimes w_{4j+3\ra 4(j+1)},\\\quad\quad\quad i=(j+1)_s,\text{ }j<6s-1;\\
-\kappa^{\ell+1}(\a_{3(j+m+1)})w_{4(j+m+1)+3\ra 4(j+m+2)}\otimes w_{4j+3\ra 4(j+s+1)+1},\\\quad\quad\quad i=2s+(j+1)_s,\text{ }j<6s-1;\\
\kappa^{\ell+1}(\a_{3(j+m+1)})e_{4(j+m+2)}\otimes w_{4j+3\ra 4(j+s+1+sf(j,6s-1))+2},\quad i=4s+(j+1)_s;\\
-\kappa^{\ell+1}(\a_{3(j+m+1)})w_{4(j+m+s+1)+1\ra 4(j+m+2)}\otimes e_{4j+3},\quad i=j+s;\\
0,\quad\text{otherwise.}
\end{cases}$$

$(5)$ If $r_0=4$, then $\Omega^{4}(Y_t^{(22)})$ is described with
$(8s\times 9s)$-matrix with the following elements $b_{ij}${\rm:}

If $0\le j<s$, then $$b_{ij}=
\begin{cases}
-w_{4(j+m)\ra 4(j+m+s)+2}\otimes e_{4j},\quad i=j+s;\\
-w_{4(j+m+s)+1\ra 4(j+m+s)+2}\otimes w_{4j\ra 4j+1},\quad i=j+2s;\\
e_{4(j+m+s)+2}\otimes w_{4j\ra 4j+2},\quad i=j+5s;\\
0,\quad\text{otherwise.}
\end{cases}$$

If $s\le j<2s$, then $$b_{ij}=
\begin{cases}
-w_{4(j+m)-1\ra 4(j+m+s)+2}\otimes e_{4j},\quad i=j-s;\\
-w_{4(j+m)\ra 4(j+m+s)+2}\otimes e_{4j},\quad i=j;\\
w_{4(j+m+s)+1\ra 4(j+m+s)+2}\otimes w_{4j\ra 4j+1},\quad i=j+2s;\\
-e_{4(j+m+s)+2}\otimes w_{4j\ra 4j+2},\quad i=j+6s;\\
0,\quad\text{otherwise.}
\end{cases}$$

If $2s\le j<4s$, then $$b_{ij}=
\begin{cases}
-\kappa^{\ell+1}(\a_{3(j+m-1)})e_{4(j+m)+3}\otimes w_{4j+1\ra 4(j+1)},\quad i=(j+1)_s,\text{ }j<3s-1\text{ or }j=4s-1;\\
0,\quad\text{otherwise.}
\end{cases}$$

If $4s\le j<5s$, then $$b_{ij}=
\begin{cases}
-\kappa_1w_{4(j+m)+1\ra 4(j+m+1)}\otimes e_{4j+2},\quad i=j;\\
\kappa_1e_{4(j+m+1)}\otimes w_{4j+2\ra 4(j+1)},\quad i=s+(j+1)_s,\text{ }j<5s-1;\\
0,\quad\text{otherwise,}
\end{cases}$$
where $\kappa_1=-\kappa^{\ell+1}(\a_{3(j+m)})$.

If $5s\le j<6s$, then $$b_{ij}=
\begin{cases}
-\kappa_1w_{4(j+m+s)+2\ra 4(j+m+1)}\otimes e_{4j+2},\quad i=j+2s;\\
\kappa_1w_{4(j+m)+3\ra 4(j+m+1)}\otimes w_{4j+2\ra 4(j+1)},\quad i=(j+1)_s,\text{ }j=6s-1;\\
\kappa_1e_{4(j+m+1)}\otimes w_{4j+2\ra 4(j+1)},\quad i=s+(j+1)_s,\text{ }j=6s-1;\\
0,\quad\text{otherwise,}
\end{cases}$$
where $\kappa_1=-\kappa^{\ell+1}(\a_{3(j+m)})$.

If $6s\le j<7s-1$, then $$b_{ij}=
\begin{cases}
w_{4(j+m+1)\ra 4(j+m+s+1)+1}\otimes w_{4j+3\ra 4(j+1)},\quad i=s+(j+1)_s;\\
0,\quad\text{otherwise.}
\end{cases}$$

If $7s-1\le j<7s$, then $b_{ij}=0$.

If $7s\le j<8s$, then $$b_{ij}=
\begin{cases}
-w_{4(j+m)+3\ra 4(j+m+s+1)+1}\otimes e_{4j+3},\quad i=j+s;\\
w_{4(j+m+1)\ra 4(j+m+s+1)+1}\otimes w_{4j+3\ra 4(j+1)},\quad i=s+(j+1)_s,\text{ }j=8s-1;\\
0,\quad\text{otherwise.}
\end{cases}$$

$(6)$ If $r_0=5$, then $\Omega^{5}(Y_t^{(22)})$ is described with
$(9s\times 8s)$-matrix with the following elements $b_{ij}${\rm:}

If $0\le j<s$, then $$b_{ij}=
\begin{cases}
\kappa_1w_{4(j+m)+1\ra 4(j+m)+3}\otimes e_{4j},\quad i=j;\\
\kappa_1w_{4(j+m+s)+1\ra 4(j+m)+3}\otimes e_{4j},\quad i=j+s;\\
-\kappa_1e_{4(j+m)+3}\otimes w_{4j\ra 4j+2},\quad i=j+4s;\\
-\kappa_1e_{4(j+m)+3}\otimes w_{4j\ra 4(j+s)+2},\quad i=j+5s;\\
0,\quad\text{otherwise,}
\end{cases}$$
where $\kappa_1=\kappa^{\ell+1}(\a_{3(j+m-1)})$.

If $s\le j<2s$, then $$b_{ij}=
\begin{cases}
-\kappa_1w_{4(j+m+s)+1\ra 4(j+m+1)}\otimes e_{4j},\quad i=j-s;\\
\kappa_1e_{4(j+m+1)}\otimes w_{4j\ra 4(j+s)+1},\quad i=j+s;\\
-\kappa_1e_{4(j+m+1)}\otimes w_{4j\ra 4j+1},\quad i=j+2s;\\
0,\quad\text{otherwise,}
\end{cases}$$
where $\kappa_1=-\kappa^{\ell+1}(\a_{3(j+m)})$.

If $2s\le j<3s$, then $$b_{ij}=
\begin{cases}
e_{4(j+m+1)+1}\otimes w_{4j+1\ra 4(j+1)},\quad i=(j+1)_{2s};\\
w_{4(j+m)+3\ra 4(j+m+1)+1}\otimes w_{4j+1\ra 4j+2},\quad i=j+2s;\\
0,\quad\text{otherwise.}
\end{cases}$$

If $3s\le j<4s$, then $$b_{ij}=
\begin{cases}
e_{4(j+m+1)+1}\otimes w_{4j+1\ra 4(j+1)},\quad i=(j+1)_{2s};\\
0,\quad\text{otherwise.}
\end{cases}$$

If $4s\le j<5s$, then $b_{ij}=0$.

If $5s\le j<6s$, then $$b_{ij}=
\begin{cases}
w_{4(j+m)+3\ra 4(j+m+s+1)+2}\otimes e_{4(j+s)+2},\quad i=j-s;\\
0,\quad\text{otherwise.}
\end{cases}$$

If $6s\le j<7s$, then $$b_{ij}=
\begin{cases}
-w_{4(j+m)+3\ra 4(j+m+1)+1}\otimes e_{4(j+s)+2},\quad i=j-s;\\
0,\quad\text{otherwise.}
\end{cases}$$

If $7s\le j<8s$, then $b_{ij}=0$.

If $8s\le j<9s$, then $$b_{ij}=
\begin{cases}
\kappa_1w_{4(j+m+1)+2\ra 4(j+m+1)+3}\otimes e_{4j+3},\quad i=j-2s;\\
\kappa_1w_{4(j+m+1)+1\ra 4(j+m+1)+3}\otimes w_{4j+3\ra 4(j+1)},\quad i=(j+1)_s,\text{ }j<9s-1;\\
\kappa_1w_{4(j+m+s+1)+1\ra 4(j+m+1)+3}\otimes w_{4j+3\ra 4(j+1)},\quad i=s+(j+1)_s,\text{ }j<9s-1;\\
-\kappa_1e_{4(j+m+1)+3}\otimes w_{4j+3\ra 4(j+1)+2},\quad i=4s+(j+1)_s,\text{ }j<9s-1;\\
-\kappa_1e_{4(j+m+1)+3}\otimes w_{4j+3\ra 4(j+s+1)+2},\quad i=5s+(j+1)_s,\text{ }j<9s-1;\\
0,\quad\text{otherwise,}
\end{cases}$$
where $\kappa_1=\kappa^{\ell+1}(\a_{3(j+m)})$.

$(7)$ If $r_0=6$, then $\Omega^{6}(Y_t^{(22)})$ is described with
$(8s\times 9s)$-matrix with the following elements $b_{ij}${\rm:}

If $0\le j<2s$, then $$b_{ij}=
\begin{cases}
w_{4(j+m)\ra 4(j+m+s)+1}\otimes e_{4j},\quad i=(j)_s;\\
e_{4(j+m+s)+1}\otimes w_{4j\ra 4(j+s)+1},\quad i=j+3sf_0(j,s);\\
0,\quad\text{otherwise.}
\end{cases}$$

If $2s\le j<4s$, then $$b_{ij}=
\begin{cases}
\kappa^{\ell+1}(\a_{3(j+m)})e_{4(j+m+1)}\otimes w_{4j+1\ra 4(j+1)},\quad i=(j+1)_s;\\
0,\quad\text{otherwise.}
\end{cases}$$

If $4s\le j<6s$, then $$b_{ij}=
\begin{cases}
\kappa_1w_{4(j+m)+2\ra 4(j+m)+3}\otimes e_{4j+2},\quad i=j+s;\\
-\kappa_1e_{4(j+m)+3}\otimes w_{4j+2\ra 4j+3},\quad i=j+2s,\text{ }j\ge 5s;\\
0,\quad\text{otherwise,}
\end{cases}$$
where $\kappa_1=-\kappa^{\ell+1}(\a_{3(j+m-1)})$.

If $6s\le j<7s$, then $b_{ij}=0$.

If $7s\le j<8s$, then $$b_{ij}=
\begin{cases}
w_{4(j+m)+3\ra 4(j+m+s+1)+2}\otimes e_{4j+3},\quad i=j;\\
0,\quad\text{otherwise.}
\end{cases}$$

$(8)$ If $r_0=7$, then $\Omega^{7}(Y_t^{(22)})$ is described with
$(9s\times 8s)$-matrix with the following elements $b_{ij}${\rm:}

If $0\le j<s$, then $$b_{ij}=
\begin{cases}
\kappa_1w_{4(j+m+s)+2\ra 4(j+m+1)}\otimes e_{4j},\quad i=j+s;\\
-\kappa_1w_{4(j+m)+3\ra 4(j+m+1)}\otimes w_{4j\ra 4j+1},\quad i=j+2s;\\
\kappa_1e_{4(j+m+1)}\otimes w_{4j\ra 4j+2},\quad i=j+4s;\\
\kappa_1e_{4(j+m+1)}\otimes w_{4j\ra 4(j+s)+2},\quad i=j+5s;\\
0,\quad\text{otherwise,}
\end{cases}$$
where $\kappa_1=-\kappa^{\ell+1}(\a_{3(j+m)})$.

If $s\le j<2s$, then $b_{ij}=0$.

If $2s\le j<3s$, then $$b_{ij}=
\begin{cases}
-e_{4(j+m+1)+2}\otimes w_{4j+1\ra 4(j+1)},\quad i=(j+1)_{2s};\\
-w_{4(j+m)+3\ra 4(j+m+1)+2}\otimes e_{4j+1},\quad i=j;\\
0,\quad\text{otherwise.}
\end{cases}$$

If $3s\le j<4s$, then $b_{ij}=0$.

If $4s\le j<5s$, then $$b_{ij}=
\begin{cases}
-e_{4(j+m+s+1)+2}\otimes w_{4(j+s)+1\ra 4(j+1)},\quad i=(j+s+1)_{2s};\\
-w_{4(j+m)+3\ra 4(j+m+s+1)+2}\otimes e_{4(j+s)+1},\quad i=j-s;\\
0,\quad\text{otherwise.}
\end{cases}$$

If $5s\le j<7s$, then $b_{ij}=0$.

If $7s\le j<8s$, then $$b_{ij}=
\begin{cases}
-\kappa_1w_{4(j+m+1+sf(j,8s-1))+2\ra 4(j+m+1)+3}\otimes w_{4j+3\ra 4(j+1)},\quad i=s+(j+1)_s;\\
\kappa_1e_{4(j+m+1)+3}\otimes w_{4j+3\ra 4(j+s+1+sf(j,8s-1))+1},\quad i=2s+(j+1)_s;\\
\kappa_1w_{4(j+m+s+1)+1\ra 4(j+m+1)+3}\otimes e_{4j+3},\quad i=j-s;\\
\kappa_1w_{4(j+m+1)+1\ra 4(j+m+1)+3}\otimes e_{4j+3},\quad i=j;\\
0,\quad\text{otherwise,}
\end{cases}$$
where $\kappa_1=-\kappa^{\ell+1}(\a_{3(j+m)})$.

If $8s\le j<9s$, then $b_{ij}=0$.

$(9)$ If $r_0=8$, then $\Omega^{8}(Y_t^{(22)})$ is described with
$(8s\times 6s)$-matrix with the following elements $b_{ij}${\rm:}

If $0\le j<2s$, then $$b_{ij}=
\begin{cases}
w_{4(j+m)-1\ra 4(j+m+s)+2}\otimes e_{4j},\quad i=j,\text{ }j<s;\\
-e_{4(j+m+s)+2}\otimes w_{4j\ra 4(j+s)+1},\quad i=s+(j+s)_{2s};\\
-w_{4(j+m+s)+1\ra 4(j+m+s)+2}\otimes w_{4j\ra 4j+2},\quad i=j+3s;\\
0,\quad\text{otherwise.}
\end{cases}$$

If $2s\le j<4s$, then $$b_{ij}=
\begin{cases}
-\kappa_1w_{4(j+m)+2\ra 4(j+m)+3}\otimes e_{4j+1},\quad i=j-s;\\
\kappa_1w_{4(j+m+s)+1\ra 4(j+m)+3}\otimes w_{4j+1\ra 4j+2},\quad i=j+s;\\
\kappa_1e_{4(j+m)+3}\otimes w_{4j+1\ra 4(j+1)},\quad i=(j+1)_s,\text{ }j<3s-1\text{ or }j=4s-1;\\
0,\quad\text{otherwise,}
\end{cases}$$
where $\kappa_1=-\kappa^{\ell+1}(\a_{3(j+m-1)})$.

If $4s\le j<5s$, then $b_{ij}=0$.

If $5s\le j<6s$, then $$b_{ij}=
\begin{cases}
\kappa^{\ell+1}(\a_{3(j+m)})e_{4(j+m+1)}\otimes w_{4j+2\ra 4j+3},\quad i=j;\\
0,\quad\text{otherwise.}
\end{cases}$$

If $6s\le j<7s$, then $$b_{ij}=
\begin{cases}
-w_{4(j+m+1)\ra 4(j+m+s+1)+1}\otimes e_{4j+3},\quad i=j-s;\\
0,\quad\text{otherwise.}
\end{cases}$$

If $7s\le j<8s$, then $b_{ij}=0$.

$(10)$ If $r_0=9$, then $\Omega^{9}(Y_t^{(22)})$ is described with
$(6s\times 7s)$-matrix with the following elements $b_{ij}${\rm:}

If $0\le j<s$, then $$b_{ij}=
\begin{cases}
\kappa^{\ell+1}(\a_{3(j+m-1)})e_{4(j+m)+3}\otimes e_{4j},\quad i=j;\\
0,\quad\text{otherwise.}
\end{cases}$$

If $s\le j<3s$, then $$b_{ij}=
\begin{cases}
w_{4(j+m+1)\ra 4(j+m+1)+2}\otimes e_{4(j+s)+1},\quad i=j;\\
0,\quad\text{otherwise.}
\end{cases}$$

If $3s\le j<5s$, then $b_{ij}=0$.

If $5s\le j<6s$, then $$b_{ij}=
\begin{cases}
\kappa_1e_{4(j+m+2)}\otimes w_{4j+3\ra 4(j+s+1+sf(j,6s-1))+1},\quad i=s+(j+1)_s;\\
\kappa_1w_{4(j+m+s+1)+2\ra 4(j+m+2)}\otimes e_{4j+3},\quad i=j;\\
-\kappa_1w_{4(j+m+1)+3\ra 4(j+m+2)}\otimes w_{4j+3\ra 4(j+1)},\quad i=(j+1)_s,\text{ }j=6s-1;\\
0,\quad\text{otherwise,}
\end{cases}$$
where $\kappa_1=-\kappa^{\ell+1}(\a_{3(j+m+1)})$.

$(11)$ If $r_0=10$, then $\Omega^{10}(Y_t^{(22)})$ is described with
$(7s\times 6s)$-matrix with the following elements $b_{ij}${\rm:}

If $0\le j<s$, then $$b_{ij}=
\begin{cases}
-\kappa_1w_{4(j+m)\ra 4(j+m)+3}\otimes e_{4j},\quad i=j;\\
-\kappa_1w_{4(j+m+s)+1\ra 4(j+m)+3}\otimes w_{4j\ra 4j+1},\quad i=j+s;\\
\kappa_1w_{4(j+m)+1\ra 4(j+m)+3}\otimes w_{4j\ra 4(j+s)+1},\quad i=j+2s;\\
\kappa_1w_{4(j+m+s)+2\ra 4(j+m)+3}\otimes w_{4j\ra 4j+2},\quad i=j+3s;\\
-\kappa_1w_{4(j+m)+2\ra 4(j+m)+3}\otimes w_{4j\ra 4(j+s)+2},\quad i=j+4s;\\
\kappa_1e_{4(j+m)+3}\otimes w_{4j\ra 4j+3},\quad i=j+5s;\\
0,\quad\text{otherwise,}
\end{cases}$$
where $\kappa_1=-\kappa^{\ell+1}(\a_{3(j+m-1)})$.

If $s\le j<3s$, then $$b_{ij}=
\begin{cases}
-\kappa_1w_{4(j+m)+1\ra 4(j+m+1)}\otimes e_{4(j+s)+1},\quad i=j;\\
\kappa_1w_{4(j+m)+2\ra 4(j+m+1)}\otimes w_{4(j+s)+1\ra 4(j+s)+2},\quad i=j+2s;\\
\kappa_1w_{4(j+m)+3\ra 4(j+m+1)}\otimes w_{4(j+s)+1\ra 4j+3},\quad i=j+4s,\text{ }j<2s;\\
\kappa_1e_{4(j+m+1)}\otimes w_{4(j+s)+1\ra 4(j+1)},\quad i=(j+1)_s,\text{ }j<2s-1\text{ or }j=3s-1;\\
0,\quad\text{otherwise,}
\end{cases}$$
where $\kappa_1=-\kappa^{\ell+1}(\a_{3(j+m)})$.

If $3s\le j<4s$, then $$b_{ij}=
\begin{cases}
w_{4(j+m+1)\ra 4(j+m+s+1)+1}\otimes w_{4(j+s)+2\ra 4(j+1)},\quad i=(j+1)_s,\text{ }j<4s-1;\\
w_{4(j+m)+3\ra 4(j+m+s+1)+1}\otimes w_{4(j+s)+2\ra 4j+3},\quad i=j+2s;\\
0,\quad\text{otherwise.}
\end{cases}$$

If $4s\le j<5s-1$, then $b_{ij}=0$.

If $5s-1\le j<5s$, then $$b_{ij}=
\begin{cases}
w_{4(j+m+1)\ra 4(j+m+s+1)+1}\otimes w_{4(j+s)+2\ra 4(j+1)},\quad i=(j+1)_s;\\
0,\quad\text{otherwise.}
\end{cases}$$

If $5s\le j<6s-1$, then $b_{ij}=0$.

If $6s-1\le j<6s$, then $$b_{ij}=
\begin{cases}
-w_{4(j+m+1)\ra 4(j+m+1)+2}\otimes w_{4j+3\ra 4(j+1)},\quad i=(j+1)_s;\\
0,\quad\text{otherwise.}
\end{cases}$$

If $6s\le j<7s$, then $$b_{ij}=
\begin{cases}
-w_{4(j+m)+3\ra 4(j+m+1)+2}\otimes e_{4j+3},\quad i=j-s;\\
-w_{4(j+m+1)\ra 4(j+m+1)+2}\otimes w_{4j+3\ra 4(j+1)},\quad i=(j+1)_s,\text{ }j<7s-1;\\
0,\quad\text{otherwise.}
\end{cases}$$

\medskip
$({\rm II})$ Represent an arbitrary $t_0\in\N$ in the form
$t_0=11\ell_0+r_0$, where $0\le r_0\le 10.$ Then
$\Omega^{t_0}(Y_t^{(22)})$ is a $\Omega^{r_0}(Y_t^{(22)})$, whose left
components twisted by $\sigma^{\ell_0}$,
and coefficients multiplied by $(-1)^{\ell_0}$.
\end{pr}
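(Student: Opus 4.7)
The plan is to verify directly that the matrices given in parts (1)--(11) constitute a lifting of the cocycle $Y^{(22)}_t$ to a chain map between the minimal projective bimodule resolutions described in Section \ref{sect_res}. Writing $\varphi_i = \Omega^i(Y^{(22)}_t)\colon Q_{t+i} \to Q_i$, I must check that $\varphi_0$ realises $Y^{(22)}_t$ at level zero (i.e.\ $\varepsilon\varphi_0 = Y^{(22)}_t\varepsilon$) and that
\[
d_{i-1}\,\varphi_i \;=\; \varphi_{i-1}\,d_{t+i-1}
\]
for each $i = 1,\dots,11$. The zero-th step is immediate: the two non-zero diagonal blocks of $\Omega^0(Y^{(22)}_t)$ in part (1) (on the ranges $0\le j<s$ and $5s\le j<6s$) coincide, via the $Q_0$-summand description from Section \ref{sect_res}, with the two non-zero blocks of $Y^{(22)}_t$ defining the generator.

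For each $r_0 \in \{1,\dots,10\}$, I would then verify the commuting square by direct matrix multiplication using the explicit forms of $d_{r_0-1}$ and $d_{t+r_0-1}$ given in Section \ref{sect_res}. The verification decomposes by the range of the column index $j$ (the strata $0\le j<s$, $s\le j<3s$, $3s\le j<5s$, $5s\le j<6s$, etc.\ that appear in the descriptions of $d_r$), so that within each stratum one compares a small number of paths in $\mathcal Q_s$ modulo the relations defining $R_s^\prime$. The contribution of each non-zero entry $b_{ij}$ of $\varphi_i$, when multiplied by an entry of $d_{t+i-1}$, telescopes against the contribution of $\varphi_{i-1}$ applied to the corresponding entry of $d_{i-1}$; the paths on both sides coincide up to the defining relations (paths of length $5$ vanish, and the rhombus relations $\a_{3t+2}\a_{3t+1}\a_{3t}=\a_{3(t+s)+2}\a_{3(t+s)+1}\a_{3(t+s)}$ identify the two cancelling summands).

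For part (II), I would appeal to Theorem \ref{resol_thm}: since $Q_{11\ell_0+r_0}$ is obtained from $Q_{r_0}$ by applying $\sigma^{\ell_0}$ to all left tensor factors, and $d_{11\ell_0+r_0}$ is obtained from $d_{r_0}$ by the same twist, any lift at level $r_0$ extends to a lift at level $11\ell_0+r_0$ by twisting its left tensor components by $\sigma^{\ell_0}$; this lift is unique up to homotopy, which suffices for the cocycle class. The sign $(-1)^{\ell_0}$ arises from the antisymmetric piece of $Y^{(22)}_t$ (the $-\kappa^\ell(\a_{3(j+m)})$ in the summand on $5s\le j<6s$): each application of $\sigma$ introduces one overall sign change via the definition of $\sigma$ on the $\a$-arrows.

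The principal obstacle is the bookkeeping in the cases $r_0 = 5$, $r_0 = 7$ and $r_0 = 10$, where $\Omega^{r_0}(Y_t^{(22)})$ is described by $(9s\times 8s)$, $(9s\times 8s)$ and $(7s\times 6s)$ matrices with many row/column strata and several boundary exceptions (the distinctions $j<5s-1$ versus $j=6s-1$, the role of $f(j,9s-1)$ and $f(j,8s-1)$, and the special values at $j=s-1$). Verifying $d_{r_0-1}\varphi_{r_0} = \varphi_{r_0-1}d_{t+r_0-1}$ in these cases requires carefully matching the boundary correction terms coming from the functions $f$, $f_0$, $f_1$, $f_2$ introduced in Section \ref{sect_res}. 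These verifications are entirely routine but the sheer number of non-zero entries makes systematic case-splitting essential; once completed in each stratum, the statement follows from uniqueness of the lifting up to homotopy.
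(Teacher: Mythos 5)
Your proposal is correct and follows essentially the same route as the paper: the paper states these translate formulas as the result of a direct computational verification that the given matrices form a lifting of the cocycle to a chain map (i.e.\ that $d_{r_0-1}\varphi_{r_0}=\varphi_{r_0-1}d_{t+r_0-1}$ holds stratum by stratum, using the relations of $R_s^\prime$), with part (II) following from the $\sigma^{\ell_0}$-twisted structure of $Q_{11\ell_0+r_0}$ and $d_{11\ell_0+r_0}$ given in Theorem \ref{resol_thm} together with uniqueness of the lifting up to homotopy. Your only slip is notational: the level-zero condition is that $\varphi_0$ coincides with the matrix of $Y^{(22)}_t$ as a map $Q_t\to Q_0$ (so that $\varepsilon\varphi_0$ is the cocycle), not $\varepsilon\varphi_0=Y^{(22)}_t\varepsilon$, but your subsequent description makes clear you intend the right thing.
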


%%%%%%%%%%%%%%%%%%%%%%%%%%%%%%%%%%%%%%%%%%%%%%%%%%%%%%%%%%%%%%%%%%%%%%%%%%%%%%%%%%%%%%%%
%                                          23
%%%%%%%%%%%%%%%%%%%%%%%%%%%%%%%%%%%%%%%%%%%%%%%%%%%%%%%%%%%%%%%%%%%%%%%%%%%%%%%%%%%%%%%%
\begin{pr}[Translates for the case 23]
$({\rm I})$ Let $r_0\in\N$, $r_0<11$. $r_0$-translates of the
elements $Y^{(23)}_t$ are described by the following way.

$(1)$ If $r_0=0$, then $\Omega^{0}(Y_t^{(23)})$ is described with
$(6s\times 6s)$-matrix with one nonzero element that is of the following form{\rm:}
$$b_{j,0}=w_{4(j+m)\ra 4(j+m+1)}\otimes e_{4j}.$$

$(2)$ If $r_0=1$, then $\Omega^{1}(Y_t^{(23)})$ is described with
$(7s\times 7s)$-matrix with one nonzero element that is of the following form{\rm:}
$$b_{j,6s}=w_{4(j+m+1)\ra 4(j+m+2)}\otimes e_{4j+3}.$$

$(3)$ If $r_0=2$, then $\Omega^{2}(Y_t^{(23)})$ is described with
$(6s\times 6s)$-matrix with one nonzero element that is of the following form{\rm:}
$$b_{j,5s}=w_{4(j+m+1)\ra 4(j+m+2)}\otimes e_{4j+3}.$$

$(4)$ If $r_0=3$, then $\Omega^{3}(Y_t^{(23)})$ is described with
$(8s\times 8s)$-matrix with the following two nonzero elements{\rm:}
$$b_{j,4s}=w_{4(j+m+1)\ra 4(j+m+2)}\otimes e_{4j+2};$$
$$b_{j,5s}=w_{4(j+m+1)\ra 4(j+m+2)}\otimes e_{4j+2}.$$

$(5)$ If $r_0=4$, then $\Omega^{4}(Y_t^{(23)})$ is described with
$(9s\times 9s)$-matrix with one nonzero element that is of the following form{\rm:}
$$b_{j,s}=w_{4(j+m)\ra 4(j+m+1)}\otimes e_{4j}.$$

$(6)$ If $r_0=5$, then $\Omega^{5}(Y_t^{(23)})$ is described with
$(8s\times 8s)$-matrix with the following two nonzero elements{\rm:}
$$b_{j,2s}=w_{4(j+m+1)\ra 4(j+m+2)}\otimes e_{4j+1};$$
$$b_{j,3s}=w_{4(j+m+1)\ra 4(j+m+2)}\otimes e_{4j+1}.$$

$(7)$ If $r_0=6$, then $\Omega^{6}(Y_t^{(23)})$ is described with
$(9s\times 9s)$-matrix with the following two nonzero elements{\rm:}
$$b_{j,0}=w_{4(j+m)\ra 4(j+m+1)}\otimes e_{4j};$$
$$b_{j,8s}=w_{4(j+m+1)\ra 4(j+m+2)}\otimes e_{4j+3}.$$

$(8)$ If $r_0=7$, then $\Omega^{7}(Y_t^{(23)})$ is described with
$(8s\times 8s)$-matrix with the following two nonzero elements{\rm:}
$$b_{j,4s}=w_{4(j+m+1)\ra 4(j+m+2)}\otimes e_{4j+2};$$
$$b_{j,5s}=w_{4(j+m+1)\ra 4(j+m+2)}\otimes e_{4j+2}.$$

$(9)$ If $r_0=8$, then $\Omega^{8}(Y_t^{(23)})$ is described with
$(6s\times 6s)$-matrix with one nonzero element that is of the following form{\rm:}
$$b_{j,5s}=w_{4(j+m+1)\ra 4(j+m+2)}\otimes e_{4j+3}.$$

$(10)$ If $r_0=9$, then $\Omega^{9}(Y_t^{(23)})$ is described with
$(7s\times 7s)$-matrix with the following two nonzero elements{\rm:}
$$b_{j,s}=w_{4(j+m+1)\ra 4(j+m+2)}\otimes e_{4(j+s)+1};$$
$$b_{j,2s}=w_{4(j+m+1)\ra 4(j+m+2)}\otimes e_{4(j+s)+1}.$$

$(11)$ If $r_0=10$, then $\Omega^{10}(Y_t^{(23)})$ is described with
$(6s\times 6s)$-matrix with one nonzero element that is of the following form{\rm:}
$$b_{j,0}=w_{4(j+m)\ra 4(j+m+1)}\otimes e_{4j}.$$

\medskip
$({\rm II})$ Represent an arbitrary $t_0\in\N$ in the form
$t_0=11\ell_0+r_0$, where $0\le r_0\le 10.$ Then
$\Omega^{t_0}(Y_t^{(23)})$ is a $\Omega^{r_0}(Y_t^{(23)})$, whose left
components twisted by $\sigma^{\ell_0}$.
\end{pr}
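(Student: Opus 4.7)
The plan is to verify that each family of matrices claimed for $\Omega^{r_0}(Y_t^{(23)})$ defines a valid lift of the cocycle $Y_t^{(23)}$ through the first $r_0+1$ terms of the minimal projective bimodule resolution $(Q_\bullet, d_\bullet)$ of $R_s^\prime$ constructed in paragraph \ref{sect_res}. Concretely, for each $r_0 \in \{0,1,\dots,10\}$, I must show that the matrix $\varphi_{r_0}$ proposed in item ($r_0+1$) satisfies the commutative square
\begin{equation*}
d_{r_0-1}\circ \varphi_{r_0} = \varphi_{r_0-1}\circ d_{t+r_0-1},
\end{equation*}
with the base case $\varphi_0 = Y_t^{(23)}$ serving as the starting cocycle.

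First I would settle $r_0 = 0$: by construction the matrix $\Omega^0(Y_t^{(23)})$ must equal $Y_t^{(23)}$ itself, and indeed the single-entry formula $b_{j,0} = w_{4(j+m)\ra 4(j+m+1)}\otimes e_{4j}$ specialises (at $j=0$, $\ell=0$) to $w_{0\ra 4}\otimes e_0$, matching the definition. Then I would inductively construct $\varphi_{r_0}$ as a lift: given $\varphi_{r_0-1}$ satisfying the chain condition one level below, the composition $\varphi_{r_0-1}\circ d_{t+r_0-1}$ factors through $\Im d_{r_0-1}$ by exactness of the resolution, and projectivity of $Q_{t+r_0}$ gives an abstract lift; the point of the proposition is to exhibit the explicit choice. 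The proposed matrices are the natural candidates, obtained by matching each summand $P_{i,j}$ of $Q_{t+r_0}$ with an appropriate summand of $Q_{r_0}$ via a single path in the quiver $\mathcal Q_s$.

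The bulk of the work is verifying, for $r_0 = 1,\dots,10$, that the matrix product $d_{r_0-1}\cdot \varphi_{r_0}$ reproduces $\varphi_{r_0-1}\cdot d_{t+r_0-1}$ entry-by-entry. Because $Y_t^{(23)}$ has only a single nonzero position, each translate is supported on a very small set of entries, and the identities reduce to finitely many compositions of paths in $R_s^\prime$; these follow from the relations generating $I^\prime$ (vanishing of paths of length $5$ and the commutation $\a_{3t+2}\a_{3t+1}\a_{3t} = \a_{3(t+s)+2}\a_{3(t+s)+1}\a_{3(t+s)}$). No single check is hard, but the computation requires care at the boundary values of $j$ where the helper functions $f$, $f_1$, $f_2$ switch sign.

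For part (II), once part (I) is in hand, the periodicity $\Omega^{11\ell_0 + r_0}(Y_t^{(23)})$ is obtained from $\Omega^{r_0}(Y_t^{(23)})$ by twisting the left tensor components by $\sigma^{\ell_0}$; this follows formally from Theorem \ref{resol_thm}, which identifies $\Omega^{11\ell_0}({}_\Lambda R_s^\prime)$ with the twisted bimodule ${}_1(R_s^\prime)_{\sigma^{\ell_0}}$ and describes $d_{11\ell+r}$ as the $\sigma^\ell$-twist of $d_r$ on left tensor components. Thus the main obstacle throughout is administrative rather than conceptual: ensuring the index bookkeeping (ranges of $j$, parity arguments controlling the functions $f$, $f_1$, the residues $(i)_s$, $(i)_{2s}$) is consistent between the differential matrices of Section \ref{sect_res} and the lift matrices proposed here, across all eleven base cases.
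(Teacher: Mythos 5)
Your proposal matches the paper's (implicit) proof exactly: the paper offers no written argument for these translate propositions and clearly intends precisely the verification you describe — checking, degree by degree, that the displayed matrices satisfy the chain-map condition $d_{r_0-1}\circ\varphi_{r_0}=\varphi_{r_0-1}\circ d_{t+r_0-1}$ against the differentials of Section \ref{sect_res}, with part (II) following from the $\sigma$-twisted periodicity in Theorem \ref{resol_thm}. This is correct and is the same route; the only detail worth making explicit in the write-up is that the computation for case 23 produces no extra factor $(-1)^{\ell_0}$ in part (II), unlike several of the neighbouring cases.
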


%%%%%%%%%%%%%%%%%%%%%%%%%%%%%%%%%%%%%%%%%%%%%%%%%%%%%%%%%%%%%%%%%%%%%%%%%%%%%%%%%%%%%%%%
%                                          24
%%%%%%%%%%%%%%%%%%%%%%%%%%%%%%%%%%%%%%%%%%%%%%%%%%%%%%%%%%%%%%%%%%%%%%%%%%%%%%%%%%%%%%%%
\begin{pr}[Translates for the case 24]
$({\rm I})$ Let $r_0\in\N$, $r_0<11$. $r_0$-translates of the
elements $Y^{(24)}_t$ are described by the following way.

$(1)$ If $r_0=0$, then $\Omega^{0}(Y_t^{(24)})$ is described with
$(6s\times 6s)$-matrix with one nonzero element that is of the following form{\rm:}
$$b_{j,5}=w_{4j+3\ra 4(j+1)+3}\otimes e_{4j+3}.$$

$(2)$ If $r_0=1$, then $\Omega^{1}(Y_t^{(24)})$ is described with
$(7s\times 7s)$-matrix with the following two nonzero elements{\rm:}
$$b_{j,4s}=w_{4(j+m)+3\ra 4(j+m+1)+3}\otimes e_{4j+2};$$
$$b_{j,5s}=w_{4(j+m)+3\ra 4(j+m+1)+3}\otimes e_{4j+2}.$$

$(3)$ If $r_0=2$, then $\Omega^{2}(Y_t^{(24)})$ is described with
$(6s\times 6s)$-matrix with one nonzero element that is of the following form{\rm:}
$$b_{j,0}=w_{4(j+m)+3\ra 4(j+m+1)+3}\otimes e_{4j}.$$

$(4)$ If $r_0=3$, then $\Omega^{3}(Y_t^{(24)})$ is described with
$(8s\times 8s)$-matrix with the following two nonzero elements{\rm:}
$$b_{j-2s,2s}=w_{4(j+m)+2\ra 4(j+m)+3}\otimes w_{4j+1\ra 4j};$$
$$b_{j-2s,3s}=-w_{4(j+m)+2\ra 4(j+m)+3}\otimes w_{4j+1\ra 4j}.$$

$(5)$ If $r_0=4$, then $\Omega^{4}(Y_t^{(24)})$ is described with
$(9s\times 9s)$-matrix with the following two nonzero elements{\rm:}
$$b_{j,0}=e_{4(j+m)+3}\otimes w_{4j\ra 4(j+1)};$$
$$b_{j,s}=e_{4(j+m)}\otimes w_{4j\ra 4(j+1)}.$$

$(6)$ If $r_0=5$, then $\Omega^{5}(Y_t^{(24)})$ is described with
$(8s\times 8s)$-matrix with the following two nonzero elements{\rm:}
$$b_{j,0}=e_{4(j+m)+1}\otimes w_{4j\ra 4(j+1)};$$
$$b_{j,s}=e_{4(j+m)+1}\otimes w_{4j\ra 4(j+1)}.$$

$(7)$ If $r_0=6$, then $\Omega^{6}(Y_t^{(24)})$ is described with
$(9s\times 9s)$-matrix with one nonzero element that is of the following form{\rm:}
$$b_{j,0}=e_{4(j+m)}\otimes w_{4j\ra 4(j+1)}.$$

$(8)$ If $r_0=7$, then $\Omega^{7}(Y_t^{(24)})$ is described with
$(8s\times 8s)$-matrix with the following two nonzero elements{\rm:}
$$b_{j,0}=e_{4(j+m)+2}\otimes w_{4j\ra 4(j+1)};$$
$$b_{j,s}=e_{4(j+m)+2}\otimes w_{4j\ra 4(j+1)}.$$

$(9)$ If $r_0=8$, then $\Omega^{8}(Y_t^{(24)})$ is described with
$(6s\times 6s)$-matrix with one nonzero element that is of the following form{\rm:}
$$b_{j,0}=e_{4(j+m)+3}\otimes w_{4j\ra 4(j+1)}.$$

$(10)$ If $r_0=9$, then $\Omega^{9}(Y_t^{(24)})$ is described with
$(7s\times 7s)$-matrix with the following two nonzero elements{\rm:}
$$b_{j+s,0}=-w_{4(j+m)\ra 4(j+m)+3}\otimes w_{4j\ra 4j+1};$$
$$b_{j+2s,0}=w_{4(j+m)\ra 4(j+m)+3}\otimes w_{4j\ra 4(j+1)+1}.$$

$(11)$ If $r_0=10$, then $\Omega^{10}(Y_t^{(24)})$ is described with
$(6s\times 6s)$-matrix with the following nonzero elements{\rm:}
$$b_{j,0}=-w_{4(j+m)\ra 4(j+m+1)}\otimes e_{4j};$$
$$b_{j-5s,5s}=w_{4(j+m)\ra 4(j+m)+3}\otimes w_{4j+3\ra 4j};$$
$$b_{j,5s}=w_{4(j+m)+3\ra 4(j+m+1)+3}\otimes e_{4j+3}.$$

\medskip
$({\rm II})$ Represent an arbitrary $t_0\in\N$ in the form
$t_0=11\ell_0+r_0$, where $0\le r_0\le 10.$ Then
$\Omega^{t_0}(Y_t^{(24)})$ is a $\Omega^{r_0}(Y_t^{(24)})$, whose left
components twisted by $\sigma^{\ell_0}$.
\end{pr}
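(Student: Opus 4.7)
The plan is to prove the proposition by exhibiting the matrices $\Omega^{r_0}(Y_t^{(24)})$ explicitly as described, and verifying that together they assemble into a chain map lifting $Y_t^{(24)}$ through the minimal projective bimodule resolution constructed in Section~\ref{sect_res}. By definition, a translate $\Omega^{r_0}(f)$ of a cocycle $f:Q_t\rightarrow R$ is the $r_0$-th component of any chain map $\{\varphi_i:Q_{t+i}\rightarrow Q_i\}$ lifting $f$, and such a lift is unique up to homotopy. Hence it suffices to check, for the matrices $\varphi_{r_0}$ listed in items (1)--(11), the commutativity relations
$$
d_{r_0-1}\circ\varphi_{r_0}=\varphi_{r_0-1}\circ d_{t+r_0-1}\quad(r_0=1,\dots,10)
$$
together with the boundary condition $\varepsilon\circ\varphi_0=Y_t^{(24)}$ for the $r_0=0$ case.

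First I would handle the base case $r_0=0$: the matrix $\Omega^0(Y_t^{(24)})$ has a unique nonzero entry $b_{j,5}=w_{4j+3\rightarrow 4(j+1)+3}\otimes e_{4j+3}$, so applying $\varepsilon$ (multiplication) recovers the single entry $y_{5,5}=w_{3\rightarrow 7}\otimes e_3$ of $Y_t^{(24)}$ after accounting for the $\sigma^\ell$-twist prescribed by Part~(II). Next, for each $r_0=1,\dots,10$ I would verify the commutativity by straightforward matrix multiplication, using the explicit componentwise formulas for the differentials $d_r$ from Section~\ref{sect_res}. The translates $\Omega^{r_0}(Y_t^{(24)})$ are extremely sparse (each has at most three nonzero entries), so each check reduces to a small number of identities in the path algebra $R$ involving the idempotents $e_i$ and paths $w_{i\rightarrow j}$, modulo the relations in $I^\prime$. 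In particular, most of the terms produced by $d_{r_0-1}\circ\varphi_{r_0}$ and $\varphi_{r_0-1}\circ d_{t+r_0-1}$ vanish because the corresponding paths fall into the ideal generated by paths of length $5$ or by the zero relations of type $\alpha_{3t}\gamma_{t-1}\alpha_{3(t+s)-1}$.

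Part~(II) then follows formally from Theorem~\ref{resol_thm}. That theorem states that $d_{11\ell+r}$ is obtained from $d_r$ by applying $\sigma^\ell$ to every left tensor component, so if $\{\varphi_i\}_{0\le i\le 10}$ lifts $Y_t^{(24)}$ through $d_0,\dots,d_{10}$, then twisting every left component of each $\varphi_i$ by $\sigma^{\ell_0}$ produces a chain map lifting $Y_t^{(24)}$ through $d_{11\ell_0},\dots,d_{11\ell_0+10}$. Uniqueness (up to homotopy) of such a lift then identifies $\Omega^{11\ell_0+r_0}(Y_t^{(24)})$ with the $\sigma^{\ell_0}$-twist of $\Omega^{r_0}(Y_t^{(24)})$.

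The main obstacle is purely the volume and bookkeeping of the case-by-case matrix calculations: tracking indices modulo $s$ (through the auxiliary functions $f$, $h$, $f_0$, $f_1$, $f_2$), ensuring that the target row $i$ specified in each item matches what the differential produces, and correctly handling the edge cases $j=s-1$, $j=9s-1$, etc., where the row assignments jump due to the cyclic structure of $\mathcal{Q}_s$. No new conceptual ingredient beyond the explicit differentials is needed; the proof is mechanical verification, and could in principle be reduced to a finite computation checked componentwise.
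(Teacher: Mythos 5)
Your proposal is correct and follows exactly the route the paper (implicitly) takes: these translate propositions are justified by directly checking that the listed sparse matrices satisfy $\varepsilon\circ\varphi_0=Y_t^{(24)}$ and the commutation relations $d_{r_0-1}\circ\varphi_{r_0}=\varphi_{r_0-1}\circ d_{t+r_0-1}$ against the explicit differentials of Section~\ref{sect_res}, with Part~(II) following from the $\sigma^{\ell_0}$-twisted form of $d_{11\ell_0+r_0}$ in Theorem~\ref{resol_thm} and uniqueness of lifts up to homotopy. The only remark worth adding is that $Y^{(24)}_t$ occurs only for $s=1$ and $t=0$ (so $\ell=0$ and no initial twist is needed in the base case), which slightly shortens the bookkeeping you describe.
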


\section{Multiplications in $\HH^*(R)$}

From the descriptions of elements $Y^{(i)}_t$ and
its $\Omega$-translates we can find multiplications of the elements
using the formula \eqref{mult_formula}.

We will find a multiplication of elements of the types 4 and 3 for
$s>1$.

Consider two arbitrary elements $Y_{t_4}^{(4)}$ and $Y_{t_3}^{(3)}$.
For its degrees $t_4$ and $t_3$ we have:
\begin{align*}
t_4&=11\ell_4+1,\text{ }\ell_4(n+s)\equiv s(2s),\text{ }\ell_4\ndiv 2\text{ or }\myChar=2;\\
t_3&=11\ell_3+1,\text{ }\ell_3(n+s)\equiv 0(2s),\text{ }\ell_3\div 2\text{ or }\myChar=2.
\end{align*}
Let $t=t_4+t_3$; this is the degree of an element
$Y_{t_4}^{(4)}Y_{t_3}^{(3)}$. Then $t=11(\ell_4+\ell_3)+2$. Group of
the degree $t$ has type (5). $Y^{(3)}_t$ is an $(7s\times 6s)$-matrix with two nonzero elements
$y_{0,0}=w_{0\ra 1}\otimes e_0$ and $y_{0,s}=w_{0\ra 4s+1}\otimes e_0$. 
$\Omega^{t_3}(Y_{t_4}^{(4)})$ is an $(6s\times 7s)$-matrix that was described in proposition \ref{type4}.
Multiplication of $\Omega^{t_3}(Y_{t_4}^{(4)})$ and $Y^{(3)}_t$ an $(6s\times 6s)$-matrix 
with the following two nonzero elements:
$$b_{0,0}=\kappa^{\ell_4}(\a_0)\kappa^{\ell_3}(\a_1)\kappa^{\ell_3}(\a_2)w_{0\ra 3}\otimes e_0;$$
$$b_{0,3s-1}=-\kappa^{\ell_3}(\a_{3s-1})w_{0\ra 4s+2}\otimes w_{8s-3\ra 0}.$$
We have $\ell_3\div 2$, hence $\kappa^{\ell_3}(\a_1)=\kappa^{\ell_3}(\a_{3s-1})=1$, $\kappa^{\ell_3}(\a_2)=\kappa^{\ell_3}(\a_0)$,
$\kappa^{\ell_4}(\a_0)\kappa^{\ell_3}(\a_1)\kappa^{\ell_3}(\a_2)=\kappa^{\ell_4}(\a_0)\kappa^{\ell_3}(\a_0)=\kappa^{\ell_4+\ell_3}(\a_0)$.
$Y^{(5)}_t$ is an $(6s\times 6s)$-matrix with a single nonzero element $y_{0,0}=\kappa^\ell(\a_0)w_{0\ra 3}\otimes e_{0}$, 
hence $Y^{(4)}Y^{(3)}$ coincide with $Y^{(5)}$ for degree of type (5).

%%%%%%%%%%%%%%%%%%%%%%%%%%%%%%%%%%%%%%%%%%%%%%

Multiplications of other elements, except $Y^{(5)}$, $Y^{(10)}$, $Y^{(17)}$, $Y^{(19)}$ and
$Y^{(21)}$, are similarly considered. To get the whole picture we should prove the following lemma.

\begin{lem}$\text{ }$

$($a$)$ Let $Y^{(5)}$ be an arbitrary element from generators of the
corresponding type. Then there are elements $Y^{(3)}$ and $Y^{(4)}$
such as $Y^{(5)}=Y^{(3)}Y^{(4)}$.

$($b$)$ Let $Y^{(10)}$ be an arbitrary element from generators of
the corresponding type. Then there are elements $Y^{(3)}$ and
$Y^{(6)}$ such as $Y^{(10)}=Y^{(3)}Y^{(6)}$.

$($c$)$ Let $Y^{(17)}$ be an arbitrary element from generators of
the corresponding type. Then there are elements $Y^{(3)}$ and
$Y^{(15)}$ such as $Y^{(17)}=Y^{(3)}Y^{(15)}$.

$($d$)$ Let $Y^{(19)}$ be an arbitrary element from generators of
the corresponding type. Then there are elements $Y^{(3)}$ and
$Y^{(18)}$ such as $Y^{(19)}=Y^{(3)}Y^{(18)}$.

$($e$)$ Let $Y^{(21)}$ be an arbitrary element from generators of
the corresponding type. Then there are elements $Y^{(3)}$ and
$Y^{(20)}$ such as $Y^{(21)}=Y^{(3)}Y^{(20)}$.

\end{lem}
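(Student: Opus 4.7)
The plan is to verify each of (a)--(e) by an essentially uniform procedure: for a given putative factorization $Y^{(i)}=Y^{(j)}Y^{(k)}$, use formula \eqref{mult_formula} to write $\cl(Y^{(j)})\cdot\cl(Y^{(k)})=\cl\bigl(\Omega^{0}(Y^{(j)})\,\Omega^{t_k}(Y^{(k)})\bigr)$, read off the two factor matrices from the translate propositions in the preceding section, and multiply them. Since the additive structure theorem guarantees that in each of the relevant degrees the group $\HH^t(R)$ is one-dimensional, it suffices to exhibit one nonzero entry of the product and check that it matches, up to an invertible scalar, the corresponding entry of the target generator $Y^{(i)}$ described in Section on generators. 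The sample computation for $Y^{(4)}Y^{(3)}=Y^{(5)}$ given just above the lemma already illustrates the whole procedure, including how the $\kappa^{\ell}$-twisting coefficients combine via $\kappa^{\ell_1}\kappa^{\ell_2}=\kappa^{\ell_1+\ell_2}$ once one uses the parity hypothesis $\ell_3\div 2$ forced by condition (3) on $t_3$ of the generator $Y^{(3)}$.

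First, for each of the five parts I would pin down the admissible degrees. By the conditions on the list of types, $Y^{(3)}$ lives in degrees with $r=1$ and $\ell_3(n+s)+m\equiv 0\,(2s)$ with $\ell_3$ even (or $\myChar=2$), while $Y^{(4)},Y^{(6)},Y^{(15)},Y^{(18)},Y^{(20)}$ live in degrees with conditions that force $\ell_k(n+s)+m\equiv s\,(2s)$ or similar. Adding these, the degree of the product falls in the type whose generator is, respectively, $Y^{(5)},Y^{(10)},Y^{(17)},Y^{(19)},Y^{(21)}$, in perfect agreement with items (5), (10), (17), (19), (21) of the list on page \pageref{degs}. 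This checks that the factorizations are at least possible degree-wise.

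Next, for each part I would apply the explicit descriptions of $\Omega^{t_k}(Y^{(k)})$ from the corresponding translate proposition. Because $Y^{(3)}$ itself has only two nonzero entries concentrated in column $0$ and column $s$ of its $(7s\times 6s)$ matrix, the matrix product $\Omega^{0}(Y^{(3)})\cdot\Omega^{t_k}(Y^{(k)})$ reduces to selecting the rows of $\Omega^{t_k}(Y^{(k)})$ indexed by $0$ and $s$ (that is, the rows whose data reach the source vertex of $w_{0\ra 1}\otimes e_0$ and $w_{0\ra 4s+1}\otimes e_0$), composing with the corresponding $w_{0\ra 1}$ or $w_{0\ra 4s+1}$, and summing. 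In each of (a)--(e) this picks out a single surviving nonzero entry, which after the standard identifications $\sigma^{\ell}(\a_i)=\pm\a_{3(n+s)\ell+i}$ produces precisely the unique nonzero entry of $Y^{(5)}$, $Y^{(10)}$, $Y^{(17)}$, $Y^{(19)}$, or $Y^{(21)}$ as listed in Section 5, possibly multiplied by an invertible element of $K$.

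The main obstacle is bookkeeping: one must track the interaction between the parities $\ell_3\div 2$, $\ell_k\ndiv 2$ (or $\myChar=2$), and the sign/coefficient functions $\kappa^{\ell}$, $f_1$, $f_2$, $f$, so that the resulting scalar is verified to be nonzero in $K$ (which is where $\myChar\ne 3$ or $\myChar=3$ assumptions surface in items involving $Y^{(10)}$ and $Y^{(21)}$). In every case, however, the one-dimensionality of $\HH^{t}(R)$ in that degree means that once the product is shown to be a nonzero cocycle, it must coincide with the chosen generator up to an invertible scalar, and after rescaling the generator if necessary we may assume equality. This completes the lemma and, together with the entry-by-entry verification of the multiplication tables carried out for the remaining pairs, completes the proof of Theorem~\ref{main_thm}.
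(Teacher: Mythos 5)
Your overall strategy --- compute $\Omega^{0}(Y^{(3)})\cdot\Omega^{t_k}(Y^{(k)})$ from the translate propositions via formula \eqref{mult_formula}, track the $\kappa^{\ell}$ and sign bookkeeping, and invoke one-dimensionality of $\HH^{t}(R)$ in the target degree to identify the product with the generator up to an invertible scalar --- is exactly the mechanism the paper relies on; indeed the worked example $Y^{(4)}_{t_4}Y^{(3)}_{t_3}=Y^{(5)}_{t}$ placed just before the lemma is the template, and the paper's proof consists of little more than an appeal to those already-verified ``relations for type (3).''

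However, there is one point you leave unresolved, and it is precisely the point the paper's two-sentence proof supplies. The lemma is an existence statement for an \emph{arbitrary} generator $Y^{(5)}$ (resp.\ $Y^{(10)}, Y^{(17)}, Y^{(19)}, Y^{(21)}$): you must produce, for every admissible degree $t$ of type (5), a splitting $t=t_3+t_4$ with $t_3$ admissible of type (3) and $t_4$ admissible of type (4). Your degree check runs only in the forward direction --- you verify that a sum of a type-(3) degree and a type-(4) degree lands in a type-(5) degree, which, as you yourself note, only shows the factorizations are ``possible degree-wise.'' The missing observation is that $t_3=1$ always works: for $t=1$ one has $\ell=0$, $r=1$, $m=0$, so $\ell(n+s)+m\equiv 0\,(2s)$ and $\ell\div 2$ hold trivially, i.e.\ degree $1$ satisfies condition (3) for every $s$; and one checks directly that $t$ satisfies condition (5) if and only if $t-1$ satisfies condition (4) (the congruences differ exactly by the shift of $m$ from $1$ to $0$, and the parity conditions coincide), and likewise for the pairs $(10,6)$, $(17,15)$, $(19,18)$, $(21,20)$. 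With that observation the arbitrary $Y^{(i)}$ is realized as $Y^{(3)}_{1}Y^{(k)}_{t-1}$ and your computation closes the argument. As a minor aside, your remark that characteristic-$3$ hypotheses ``surface in items involving $Y^{(10)}$ and $Y^{(21)}$'' is not accurate --- conditions (10) and (21) carry no $\myChar=3$ clause --- but this does not affect the argument.
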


\begin{proof}
The degree 1 has type 3, for all $s$. It only remains to use the
relations for type (3).
\end{proof}

%%%%%%%%%%%%%%%%%%%%%%%%%%%%%%%%%%%%%%%%%%%%%%


\begin{thebibliography}{99}

\bibitem{Riedt}
C. Riedtmann, {\it Algebren, Darstellungsk\"ocher, \"Uberlagerungen
und zur\"uck}. --- Comment. Math. Helv., 1980, v. 55, 199--224.

\bibitem{Erd}
K. Erdmann, T. Holm, {\it Twisted bimodules and Hochschild
cohomology for self-injective algebras of class $A_n$}. --- Forum
Math., 1999, v. 11, 177--201.

\bibitem{Gen&Ka}
A. I. Generalov, M. A. Kachalova, {\it Bimodule Resolution of
M\"obius Algebras}. --- Zap. Nauchn. Semin. POMI, {\bf 321} (2005),
36--66.

\bibitem{Ka}
M. A. Kachalova, {\it Hochschild cohomology of M\"obius Algebras}.
--- Zap. Nauchn. Semin. POMI, {\bf 330} (2006), 173--200.

\bibitem{Pu}
M. A. Pustovykh, {\it Hochschild cohomology ring of M\"obius
algebras}. --- Zap. Nauchn. Semin. POMI, {\bf 388} (2011), 210--246.

\bibitem{Volkov1}
Yu. V. Volkov, A. I. Generalov, {\it Hochschild cohomology for
self-injective algebras of tree class $D_n$}. I.
--- Zap. Nauchn. Semin. POMI, {\bf 343} (2007), 121--182.

\bibitem{Volkov2}
Yu. V. Volkov, {\it Hochschild cohomology for self-injective
algebras of tree class $D_n$}. II. --- Zap. Nauchn. Semin. POMI,
{\bf 365} (2009), 63--121.

\bibitem{Volkov3}
Yu. V. Volkov, A. I. Generalov, {\it Hochschild cohomology for
self-injective algebras of tree class $D_n$}. III. --- Zap. Nauchn.
Semin. POMI, {\bf 386} (2011), 100--128.

\bibitem{Volkov4}
Yu. V. Volkov, {\it Hochschild cohomology for nonstandard
self-injective algebras of tree class $D_n$}. --- Zap. Nauchn.
Semin. POMI, {\bf 388} (2011), 48--99.

\bibitem{Volkov5}
Yu. V. Volkov, {\it Hochschild cohomology for self-injective
algebras of tree class $D_n$}. IV. --- Zap. Nauchn. Semin. POMI,
{\bf 388} (2011), 100--118.

\bibitem{Volkov6}
Yu. V. Volkov, {\it Hochschild cohomology for self-injective
algebras of tree class $D_n$}. V. --- Zap. Nauchn. Semin. POMI, {\bf
394} (2011), 140--173.

\bibitem{Pu2}
M. A. Pustovykh, {\it Hochschild cohomology ring for algebras of type $E_6$}. --- 
Zap. Nauchn. Semin. POMI, {\bf 423} (2014), 205--243.

\bibitem{Ha}
D. Happel, {\it Hochschild cohomology of finite-dimensional
algebras}. --- Lect. Notes Math., 1989, 1404, 108--126.

\bibitem{VGI}
Yu. V. Volkov, A. I. Generalov, S. O. Ivanov, {\it On construction
of bimodule resolutions with the help of Happel's lemma}. --- Zap.
Nauchn. Semin. POMI, {\bf 375} (2010), 61--70.

\end{thebibliography}
\end{document}